\newtheorem{theorem}{Theorem}[section]
\newtheorem{lemma}[theorem]{Lemma}
\newtheorem{proposition}[theorem]{Proposition}
\newtheorem{corollary}[theorem]{Corollary}
\newtheorem{example}[theorem]{Example}
\newtheorem{definition}[theorem]{Definition}
\newtheorem{remark}[theorem]{Remark}
\newtheorem{assumption}[theorem]{Assumption}
\newtheorem{conjecture}[theorem]{Conjecture}
\numberwithin{equation}{section}
\renewcommand{\paragraph}{%
  \@startsection{paragraph}{4}%
  {\z@}{1ex \@plus 1ex \@minus .2ex}{-.5em}%
  {\normalfont\normalsize\bfseries}%
}
\newcommand{\be}{\begin{equation}}
\newcommand{\ee}{\end{equation}}
\newcommand{\bthm}{\begin{theorem}}
\newcommand{\ethm}{\end{theorem}}
\newcommand{\blem}{\begin{lemma}}
\newcommand{\elem}{\end{lemma}}
\newcommand{\bpof}{\begin{proof}}
\newcommand{\epof}{\end{proof}}
\newcommand{\bcor}{\begin{corollary}}
\newcommand{\ecor}{\end{corollary}}
\newcommand{\bprop}{\begin{proposition}}
\newcommand{\eprop}{\end{proposition}}
\newcommand{\rank}{\mathrm{rank}}
\newcommand{\diag}{\mathrm{diag}}
\newcommand{\mc}[1]{\mathcal{#1}}
\newcommand{\mbb}[1]{\mathbb{#1}}
\newcommand{\transpose}{^\top\! }
\newcommand{\inner}[2]{\left\langle{#1},{#2}\right\rangle}
\newcommand{\Proj}{\mathrm{Proj}}
\newcommand{\T}{\mathrm{T}}
\newcommand{\Rmn}{{\mathbb{R}^{m\times n}}}
\newcommand{\D}{\mathrm{D}}
\newcommand{\calE}{\mathcal{E}}
\newcommand{\calU}{\mathcal{U}}
\newcommand{\calM}{\mathcal{M}}
\newcommand{\calN}{\mathcal{N}}
\newcommand{\calX}{\mathcal{X}}
\newcommand{\dist}{\mathrm{dist}}
\newcommand{\Lmap}{\mathbf{L}}
\newcommand{\Qmap}{\mathbf{Q}}
\newcommand{\oneimpliesone}{``1 $\Rightarrow\!$ 1''}
\newcommand{\twoimpliesone}{``2 $\Rightarrow\!$ 1''}
\newcommand{\kimpliesone}{``$k\! \Rightarrow\!$ 1''}
\newcommand{\kimpliesk}{``$k\! \Rightarrow\! k$''}
\newcommand{\localimplieslocal}{``local $\Rightarrow\!$ local''}
\newcommand{\reals}{\mathbb{R}} 
\newcommand{\im}{\operatorname{im}}
\newcommand{\Rmnlr}{\reals^{m \times n}_{\leq r}}
\newcommand{\RR}{\mathbb{R}}
\newcommand{\NN}{\mathbb{N}}
\providecommand{\keywords}[1]
{
  \small	
  \textbf{Keywords:} #1
}
\title{The effect of smooth parametrizations\\on nonconvex optimization landscapes}
\author{Eitan Levin\thanks{Dept.\ of Computing and Mathematical Sciences, California Institute of Technology (\href{mailto:eitanl@caltech.edu}{eitanl@caltech.edu})}
\and 
Joe Kileel\thanks{Dept.\ of Mathematics and Oden Institute for Computational Engineering and Sciences, University of Texas at Austin (\href{mailto:jkileel@math.utexas.edu}{jkileel@math.utexas.edu})}
\and 
Nicolas Boumal\thanks{Institute of Mathematics, \'Ecole polytechnique f\'ed\'erale de Lausanne (EPFL) (\href{mailto:nicolas.boumal@epfl.ch}{nicolas.boumal@epfl.ch})}
}
\date{\today}
\begin{document}

\maketitle

\begin{abstract}
We develop new tools to study landscapes in nonconvex optimization.
Given one optimization problem, we pair it with another by smoothly parametrizing the domain.
This is either for practical purposes (e.g., to use smooth optimization algorithms with good guarantees) or for theoretical purposes (e.g., to reveal that the landscape satisfies a strict saddle property).
In both cases, the central question is: how do the landscapes of the two problems relate?
More precisely: how do desirable points such as local minima and critical points in one problem relate to those in the other problem?
A key finding in this paper is that these relations are often determined by the parametrization itself, and are almost entirely independent of the cost function.
Accordingly, we introduce a general framework to study parametrizations by their effect on landscapes.
The framework enables us to obtain new guarantees for an array of problems, some of which were previously treated on a case-by-case basis in the literature.
Applications include: optimizing low-rank matrices and tensors through factorizations; solving semidefinite programs via the Burer--Monteiro approach; training neural networks by optimizing their weights and biases; and quotienting out symmetries.

\end{abstract}

\keywords{Overparametrization, symmetries in landscapes, benign nonconvexity, strict saddles, low rank optimization, Hadamard parametrization of simplex, optimization on manifolds.}

\tableofcontents

\section{Introduction}\label{sec:intro}

We consider pairs of optimization problems~\eqref{eq:P} and~\eqref{eq:Q} as defined below, where $\calE$ is a linear space, $\calM$ is a smooth manifold\footnote{All linear spaces and manifolds are assumed to be finite-dimensional.}, and $\varphi \colon \calM \to \calE$ is a smooth (over)parametrization of the search space $\calX = \varphi(\calM)$ of~\eqref{eq:P}.\footnote{The two-headed arrow~$\twoheadrightarrow$ in the diagram denotes a surjection.} Their optimal values are equal:

\vspace{2mm}
\noindent%
\begin{minipage}[c]{.4\linewidth}
    \[\begin{tikzcd}
        {\calM} \\
        {\qquad\calX \subseteq \calE} & \reals
        \arrow["\varphi"', two heads, from=1-1, to=2-1]
        \arrow["f"', from=2-1, to=2-2]
        \arrow["{g=f\circ\varphi}", from=1-1, to=2-2]
    \end{tikzcd}\]
\end{minipage}
\noindent%
\begin{minipage}[c]{.6\linewidth}
    \begin{align}
        \min_{y \in \calM} g(y) \label{eq:Q} \tag{Q} \\
        \min_{x \in \calX} f(x) \label{eq:P} \tag{P} 
    \end{align}
    \vspace{3mm}
\end{minipage}
We usually assume $f \colon \calE \to \reals$ is smooth ($C^{\infty}$), hence so is $g = f \circ \varphi \colon \calM \to \reals$ by composition.



Such pairs of problems~\eqref{eq:P} and~\eqref{eq:Q} arise in \textbf{two scenarios} (concrete examples follow):
\begin{enumerate}[(a)]
    \item Our task is to minimize $f$ on $\calX$ as in~\eqref{eq:P}, but we lack good algorithms to do so, e.g., because $\calX$ lacks regularity. In this case, we choose a smooth parametrization $\varphi$ of $\calX$ and run algorithms on the smooth problem~\eqref{eq:Q} instead.
    
    \item Our task is to minimize $g$ on $\calM$ as in~\eqref{eq:Q}, but its landscape is complex (e.g., due to symmetries). In this case, we factor $g$ through a smooth map $\varphi$ in the hope of revealing a problem~\eqref{eq:P} whose landscape is simpler and can be leveraged to analyze that of~\eqref{eq:Q}.
\end{enumerate}
In both cases, we run an optimization algorithm on the smooth problem~\eqref{eq:Q}. This algorithm may find desirable points $y$ on $\calM$ for~\eqref{eq:Q} (global or local minima, stationary points). For example, certain trust-region algorithms are guaranteed to accumulate at second-order stationary points--see~\cite{curtis2018concise} and an extension to manifolds~\cite[\S3]{levin2021finding}--and many first- and even zeroth-order methods converge to second-order stationary points from almost all initializations~\cite{antonakopoulos2022adagrad,lee2019first,zerothSaddleEscape}. However, in general such points need not map to desirable points $\varphi(y)$ on $\calX$ for~\eqref{eq:P}. Indeed, nonlinear parametrizations may severely distort landscapes, and notably may introduce spurious critical points. Algorithms running on~\eqref{eq:Q} are liable to terminate at an approximately stationary point near such a spurious point, and return a point whose image through $\varphi$ is nowhere near any stationary point for~\eqref{eq:P}.

In this paper, we characterize the properties that the parametrization $\varphi$ needs to satisfy for desirable points of~\eqref{eq:Q} to map to desirable points of~\eqref{eq:P}, that is, we develop a general framework to relate the landscapes of pairs of problems of the above form. Importantly, we observe that these properties are often entirely independent of the cost function $f$ in~\eqref{eq:P}, since many parametrizations map desirable points for~\eqref{eq:Q} to those for~\eqref{eq:P} for any cost function. Our framework enables us to unify and strengthen the analysis of a wealth of parametrizations arising in applications, hitherto studied case-by-case and often only for specific costs.

Parametrizations are ubiquitous. They arise in semidefinite programming~\cite{BM_det_guarantees,cifuentes2021burer}, low-rank optimization~\cite{ha2020criticallowrank,eitan_thesis,mishra2014fixed}, computer vision~\cite{shohan_rots}, inverse kinematics and trajectory planning~\cite[Chaps.~1,4]{siciliano2009modelling}, algebraic geometry~\cite[Chap.~17]{harris1992algebraic}, training neural networks~\cite{top_nns,lezcano2021geometric}, and risk minimization~\cite{Trager2020Pure,bach2021gradient,gradflow_NNs1}. The following are two concrete examples that illustrate the above two scenarios.

For an example of \textbf{scenario (a)}, consider minimizing a cost $f$ over the set $\calX=\Rmnlr$ of all $m\times n$ matrices of rank at most $r$. Unfortunately, standard algorithms running on~\eqref{eq:P} may converge to a non-stationary point because of the nonsmooth geometry of $\calX$~\cite{levin2021finding,olikier2022apocalypse}. Instead of trying to solve~\eqref{eq:P} directly, it is common to parametrize $\calX$ by the linear space $\calM=\RR^{m\times r}\times\RR^{n\times r}$ using the rank factorization $\varphi(L,R)=LR^\top$, and to solve~\eqref{eq:Q} instead. The resulting problem~\eqref{eq:Q} requires minimizing a smooth cost function over a linear space; there are several algorithms that converge to a second-order stationary point for such problems. Furthermore, any second-order stationary point for~\eqref{eq:Q} maps under $\varphi$ to a stationary point for~\eqref{eq:P} by~\cite[Thm.~1]{ha2020criticallowrank}. Thus, parametrization of $\calX$ by $\varphi$ gives us an algorithm converging to a stationary point for~\eqref{eq:P} by running standard algorithms on~\eqref{eq:Q}, even though similarly reasonable algorithms may fail to produce a stationary point when applied directly to~\eqref{eq:P}.

For an illustration of \textbf{scenario (b)}, consider finding the smallest eigenvalue of a $d\times d$ symmetric matrix $A$, which can be written in the form~\eqref{eq:Q} with $\calM$ the unit sphere in $\RR^d$ and $g(y)=y^\top Ay$. This problem is not convex, hence it could have bad local minima. 
Here is one way to reason that it does not (as is well known).
If $\lambda\in\RR^d$ denotes the vector of eigenvalues of $A$ and $U\in \mathrm{O}(d)$ is an orthogonal matrix of eigenvectors satisfying $A=U\mathrm{diag}(\lambda)U^\top$ (both of which are unknown), define $\varphi(y)=\mathrm{diag}(U^\top yy^\top U)\in\RR^d$ and $f(x)=\lambda^\top x$. 
It is easy to check that $g = f\circ\varphi$, and that $\calX=\varphi(\calM)$ is the standard simplex in $\RR^d$. 
The resulting problem~\eqref{eq:P} is convex in this case, hence each of its stationary points is a global minimum.
A corollary of the theory we develop in this paper is that any second-order stationary point for~\eqref{eq:Q} with $\varphi$ as above maps to a stationary point for~\eqref{eq:P}, for any cost function $f$---see Example~\ref{ex:eigenval_example}. 
Thus, we recover the well-known fact that any second-order stationary point for the eigenvalue problem~\eqref{eq:Q} is globally optimal. 
Even though the problem~\eqref{eq:P} cannot be solved directly in this case because $f$ and $\varphi$ are unknown, their mere existence can be used to show that the nonconvexity of~\eqref{eq:Q} is ``benign''. From this perspective, problem~\eqref{eq:P} reveals hidden convexity in problem~\eqref{eq:Q}. 
This hidden convexity is present more generally in lifts arising from Kostant's convexity theorem, extending this example to optimization of certain linear functions over certain Lie group orbits~\cite{leake2021optimization}. 


We state our main definitions and results relating the landscapes of~\eqref{eq:P} and~\eqref{eq:Q} in general, and instantiate these results on a number of specific lifts arising in the literature, in Section~\ref{sec:main_results}. Table~\ref{tab:mans} collects the notations and definitions for several sets used throughout the paper. 
\ifthenelse{\boolean{shortver}}{}
    {
Other notation and basic definitions are introduced in the text as needed and are collected for the reader's convenience in Appendix~\ref{apdx:notation}. 
}
\begin{table}[h]
    \centering
    \begin{tabular}{@{}ll@{}}
        \toprule
        Bounded-rank matrices & $\Rmnlr = \{X\in\RR^{m\times n}:\rank(X)\leq r\}$\\ \midrule
        Fixed-rank matrices & $\RR_{=r}^{m\times n} = \{X\in\RR^{m\times n}:\rank(X) = r\}$ \\ \midrule
        Symmetric matrices & $\mbb S^m = \{X\in\RR^{m\times m}:X=X^\top\}$\\ \midrule
        Nonnegative orthant & $\RR^m_{\geq0} = \{x\in\RR^m:x_i\geq 0 \textrm{ for all } i\}$ \\ \midrule
        Positive orthant & $\RR^m_{>0} = \{x\in\RR^m:x_i > 0 \textrm{ for all } i\}$ \\ \midrule
        Positive-semidefinite (PSD) matrices & $\mbb S^m_{\succeq0} = \{X\in\mbb S^m:X\succeq0\}$ \\ \midrule
        Positive-definite (PD) matrices & $\mbb S^m_{\succ0} = \{X\in\mbb S^m:X\succ0\}$ \\ \midrule
        Standard simplex & $\Delta^{n-1} = \left\{x\in\RR^n_{\geq 0} : \sum_{i=1}^nx_i=1\right\}$ \\ \midrule
        Stiefel matrices & $\mathrm{St}(m,r) = \{X\in\RR^{m\times r}:X^\top X=I_r\}$\\ \midrule
        Unit sphere & $\mathrm{S}^{m-1} = \mathrm{St}(m,1)$ \\ \midrule
        Orthogonal matrices & $\mathrm{O}(m) = \mathrm{St}(m,m)$ \\ \midrule
        Special orthogonal matrices & $\mathrm{SO}(m) = \{X\in\mathrm{O}(m):\det(X)=1\}$ \\ \midrule
        Invertible matrices & $\mathrm{GL}(m) = \{X\in\RR^{m\times m}:\det(X)\neq 0\}$\\ \midrule
        Grassmannian & $\mathrm{Gr}(m,r) = \{\mc S\subseteq\RR^m:\mc S \textrm{ is a linear subspace}, \dim\mc S = r\}$ \\ \bottomrule
    \end{tabular}
    \caption{Sets used frequently in the paper. Other sets that appear include low rank tensors and functions representable by fixed neural network architectures.}
    \label{tab:mans}
\end{table}

\section{Lifts and their properties}\label{sec:main_results}
We call the parametrization in~\eqref{eq:Q} a (smooth) \emph{lift} of $\calX$: 
\begin{definition}\label{def:lift}
A \emph{smooth lift} of $\calX\subseteq\calE$ is a smooth manifold $\calM$ together with a smooth map $\varphi\colon\calM\to\calE$ such that $\varphi(\calM)=\calX$. 
\end{definition}
As the two scenarios in Section~\ref{sec:intro} illustrate, understanding when lifts map desirable points for~\eqref{eq:Q} to desirable points for~\eqref{eq:P} yields guarantees for algorithms running on~\eqref{eq:Q}. 
Here desirable points might be minimizers (global or local) and stationary points (of first, second, or higher order).
The relation between these two sets of desirable points has been studied for various specific lifts and cost functions.
In this paper, we study this relation in general and answer the following question:
\begin{quote}
    \textit{Which lifts have the property that desirable points of~\eqref{eq:Q} map to desirable points of~\eqref{eq:P}, for \emph{\textbf{all}} cost functions $f$?}
\end{quote}
Surprisingly, we find that many lifts arising in practice satisfy such properties, yielding guarantees for algorithms running on~\eqref{eq:Q} that are independent of the particular cost function involved, and only depend on the geometry of the lift. We further show that whenever a lift does not preserve desirable points for all cost functions, then it fails to do so already for quite simple costs. 
In this case our results identify obstructions to proofs of guarantees for algorithms, which must then exploit the structure of the particular cost function at hand. 

To begin answering the above question, we note that global minima of~\eqref{eq:Q} always map under $\varphi$ to global minima of~\eqref{eq:P}, for all cost functions $f$. This holds simply because $\varphi(\calM) = \calX$, see Proposition~\ref{prop:global_min_equiv}.
Global minima are hard to find in general, so we study other types of desirable points such as local minima and stationary points. In contrast to global minima, these types of desirable points are not guaranteed to map\footnote{The converse question is simple: preimages of local minima are local minima by continuity of $\varphi$ (Proposition~\ref{prop:preim_of_loc_is_loc}), and preimages of stationary points are stationary by differentiability of $\varphi$ (Proposition~\ref{prop:preimage_of_stationary_is_critical}).} to each other under smooth lifts. In fact, it is possible for a local minimum of~\eqref{eq:Q} to map under $\varphi$ to a non-stationary point for~\eqref{eq:P}, see Example~\ref{ex:nodal_cubic_bad}. 
Thus, we define the following properties of smooth lifts that, when satisfied, yield a connection between desirable points for~\eqref{eq:Q} and those for~\eqref{eq:P}.
\begin{definition}[Desirable properties of lifts]\label{def:desirable_lifts}
    Suppose $\varphi\colon\calM\to\calX$ is a smooth lift.
    \begin{enumerate}[(a)]
        \item The lift $\varphi$ satisfies the {\localimplieslocal} property at $y \in \calM$ if, for all continuous $f \colon \calX \to \reals$, if $y$ is a local minimum for~\eqref{eq:Q} then $x = \varphi(y)$ is a local minimum for~\eqref{eq:P}.
	    We say $\varphi$ satisfies the {\localimplieslocal} property if it does so at all $y \in \calM$.
	    
	    \item The lift $\varphi$ satisfies the {\kimpliesone} property at $y$ for $k=1,2$ if for all $k$-times differentiable $f \colon \calX \to \reals$, if $y$ is a $k$th-order stationary point (``$k$-critical'' for short) for~\eqref{eq:Q} then $x = \varphi(y)$ is stationary for~\eqref{eq:P}.
	    We say $\varphi$ satisfies the {\kimpliesone} property if it does so at all $y \in \calM$.
    \end{enumerate}
\end{definition}
The precise definitions of each type of desirable points above is given in Section~\ref{sec:lifts}.
We fully characterize these properties and explain how to check them on specific examples. We then apply our results to study lifts arising in applications ranging from low-rank matrices and tensors to neural networks.

Note that {\twoimpliesone} at $y$ implies {\oneimpliesone} at $y$ since 2-critical points are 1-critical, but no other implication between the different properties holds in general---see Remark~\ref{rmk:rels_between_props}.
We also mention that $C^{\infty}$ smoothness is not necessary for the above properties or for their characterizations. For example, it suffices for the manifold $\calM$ and the lift $\varphi$ to be of class $C^k$ for the definition of {\kimpliesone} and its characterization to apply. For {\localimplieslocal} it suffices for $\calM$ to be a topological space satisfying certain properties (see Appendix~\ref{apdx:liftsopen}) and for $\varphi$ to be continuous.

Our characterizations of {\localimplieslocal} and {\oneimpliesone} are easy to state as follows.
\begin{theorem} \label{thm:localimplocalcharact}
	The lift $\varphi \colon \calM \to \calX$ satisfies {\localimplieslocal} at $y\in\calM$ if and only if it is open at $y$.
	If $\varphi$ does not satisfy {\localimplieslocal} at $y$, there is a \emph{smooth} cost $f$ such that $y$ is a local minimum for~\eqref{eq:Q} but $\varphi(y)$ is not a local minimum for~\eqref{eq:P}.
\end{theorem}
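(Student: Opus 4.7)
The plan is to prove the two directions of the equivalence in turn, arranging the construction for the ``only if'' direction to produce a smooth (not merely continuous) counterexample, so that both claims of the theorem follow at once.

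For the direction \emph{open at $y$ implies \localimplieslocal{} at $y$}, I would take any continuous $f \colon \calX \to \reals$ for which $y$ is a local minimum of $g = f \circ \varphi$, witnessed by a neighborhood $U$ of $y$ in $\calM$. Openness of $\varphi$ at $y$ means $\varphi(U)$ contains a neighborhood of $x := \varphi(y)$ in $\calX$. Any nearby $x' \in \varphi(U)$ has some preimage $y' \in U$, and $f(x') = g(y') \ge g(y) = f(x)$, so $x$ is a local minimum for~\eqref{eq:P}.

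For the converse, suppose $\varphi$ is not open at $y$: some neighborhood $U$ of $y$ satisfies that $\varphi(U)$ contains no neighborhood of $x_* := \varphi(y)$ in $\calX$, which yields a sequence $(x_n) \subseteq \calX \setminus \varphi(U)$ with $x_n \to x_*$. Using local compactness of $\calM$, I would shrink to a neighborhood $U'$ of $y$ with $\overline{U'}$ compact and $\overline{U'} \subseteq U$; then $K := \varphi(\overline{U'})$ is a compact subset of $\calE$ containing $x_*$ yet disjoint from $\{x_n\}_n$ (since $K \subseteq \varphi(U)$). Because $K$ is closed in $\calE$ and $x_n \notin K$, I pick an open ball $W \subseteq \calE$ centered at some $x_n$ with $\overline{W} \cap K = \emptyset$, and let $\rho \colon \calE \to \reals$ be a standard $C^\infty$ bump compactly supported in $W$ with $\rho(x_n) = 1$ and $\rho \ge 0$. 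Define $f := -\rho$.

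This $f$ is smooth on $\calE$. Since $\rho$ vanishes on $K \supseteq \varphi(U')$, the composite $g = f \circ \varphi$ is identically zero on $U'$, making $y$ a local minimum of $g$. But $f(x_*) = 0 > -1 = f(x_n)$ with $x_n \to x_*$ in $\calX$, so $x_*$ is not a local minimum of $f$ restricted to $\calX$. This furnishes the desired smooth counterexample, and since any smooth $f$ is continuous, it simultaneously falsifies \localimplieslocal{} at $y$. The only real subtlety is that the theorem demands smoothness on all of $\calE$ rather than merely continuity on $\calX$: this is what necessitates the local-compactness reduction from $U$ to $U'$, which traps the pathological sequence outside a compact image $K$ so that a Euclidean bump function does the separation cleanly; without this preliminary step it is not obvious how to build a smooth function on $\calE$ that is nonnegative on $\varphi(U)$ while negative at $\calX$-points arbitrarily close to $x_*$.
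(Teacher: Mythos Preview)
Your forward direction is fine and matches the paper. The converse has a genuine gap: the smooth $f$ you build does \emph{not} falsify local minimality of $x_* = \varphi(y)$.

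You center a single bump $\rho$ in a ball $W$ around one particular $x_n$, with $\overline W \cap K = \emptyset$. But then $f = -\rho$ is supported in $\overline W$, hence $f \equiv 0$ on the open set $\calE \setminus \overline W$, which contains $x_*$ (since $x_* \in K$). So in fact $x_*$ is a local minimum of $f$ on all of $\calE$, and in particular on $\calX$. Your sentence ``$f(x_*) = 0 > -1 = f(x_n)$ with $x_n \to x_*$'' conflates the one $x_n$ where you planted the bump with the whole sequence; for all other terms of the sequence you have $f(x_m) = 0$, so there is no sequence in $\calX$ converging to $x_*$ along which $f$ is negative.

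Your compactness reduction to $K = \varphi(\overline{U'})$ is a good idea and can be salvaged in two ways. One is to plant a bump at \emph{every} $x_n$ (or a subsequence), with radii $r_n \le \tfrac12\min(\dist(x_n,K),\dist(x_n,x_*))$ and small enough that the closed balls are pairwise disjoint; the resulting sum is smooth, vanishes on $K$, and equals $-1$ along a sequence converging to $x_*$. This is essentially the paper's construction, which it carries out in Appendix~\ref{apdx:liftsopen} via a chain of equivalent conditions (openness, SLP, ASLP) and builds $f$ as an infinite sum of shrinking bumps centered at the $x_i$. A second, more direct fix from your setup: take any smooth $h \colon \calE \to \reals_{\ge 0}$ with $h^{-1}(0) = K$ (such $h$ exists for any closed $K$ in a Euclidean space) and set $f = -h$; then $f \equiv 0$ on $K \supseteq \varphi(U')$ so $y$ is a local minimum of $g$, while $f(x_n) < 0$ for all $n$ and $x_n \to x_*$, so $x_*$ is not a local minimum for~\eqref{eq:P}.
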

By definition, the map $\varphi$ is open at $y\in\calM$ if it maps neighborhoods of $y$ (that is, sets containing $y$ in their interior) to neighborhoods of $\varphi(y)$ (in the subspace topology on $\calX$ from $\calE$)---a purely topological property.
Proving that openness is sufficient for {\localimplieslocal} is easy.
Proof of its necessity requires substantial work, deferred to Appendix~\ref{apdx:liftsopen}.
Our proof in the appendix provides the result in a more general, topological setting without using smoothness.
It also provides (possibly new) conditions which are equivalent to openness and may be easier to check for some lifts. 

Our characterization for {\oneimpliesone} involves the image of the differential of the lift map $\varphi$, and is proved in Section~\ref{sec:1imp1}.
\begin{theorem}\label{thm:oneimpliesone_char}
The lift $\varphi\colon\calM\to\calX$ satisfies {\oneimpliesone} at $y\in\calM$ if and only if $\im\D\varphi(y) = \T_x\calX$, where $x=\varphi(y)$. If $\varphi$ does not satisfy {\oneimpliesone}, there is a \emph{linear} cost $f$ such that $y$ is 1-critical for~\eqref{eq:Q} but $\varphi(y)$ is not stationary for~\eqref{eq:P}.
\end{theorem}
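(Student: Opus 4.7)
The plan is to reduce the statement to the chain rule plus an elementary linear-algebra separation argument, relying on two preliminary observations that hold for any smooth lift. First, the chain rule gives $\D g(y) = \D f(x) \circ \D\varphi(y)$ with $x = \varphi(y)$, so $y$ is 1-critical for~\eqref{eq:Q} if and only if $\D f(x)$ annihilates the linear subspace $\im \D\varphi(y) \subseteq \calE$. Second, the inclusion $\im \D\varphi(y) \subseteq \T_x\calX$ always holds: any $\xi \in \T_y\calM$ is realized as $\gamma'(0)$ for a smooth curve $\gamma$ in $\calM$ through $y$, and then $t \mapsto \varphi(\gamma(t))$ is a smooth curve in $\calX$ through $x$ with velocity $\D\varphi(y)[\xi]$, so this velocity lies in the (Bouligand) tangent cone $\T_x\calX$ by definition.

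For the sufficiency direction, I would assume $\im \D\varphi(y) = \T_x\calX$ and let $y$ be 1-critical for~\eqref{eq:Q}. The chain rule forces $\D f(x)$ to vanish on $\im \D\varphi(y)$, hence on all of $\T_x\calX$. The first-order condition for stationarity of $x$ in~\eqref{eq:P}, namely $\D f(x)[v] \ge 0$ for all $v \in \T_x\calX$, is then trivially satisfied (with equality). This direction uses no finer structure of $\calX$ beyond the definition of its tangent cone.

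For the necessity direction---where the subtlety is that the counterexample must be a \emph{linear} cost---I would suppose the inclusion is strict and pick some $v \in \T_x\calX \setminus \im \D\varphi(y)$. Because $\im \D\varphi(y)$ is a linear subspace of the finite-dimensional space $\calE$, it is closed, and orthogonal projection relative to any inner product on $\calE$ produces a nonzero $w \in \bigl(\im \D\varphi(y)\bigr)^\perp$ with $\langle w, v \rangle = \|w\|^2 > 0$. Setting $f(z) = -\langle w, z \rangle$ yields a linear cost for which $\D g(y)[\xi] = -\langle w, \D\varphi(y)[\xi] \rangle$ vanishes identically, so $y$ is 1-critical for~\eqref{eq:Q}; but $\D f(x)[v] = -\|w\|^2 < 0$ together with $v \in \T_x\calX$ witnesses that $x$ is not stationary for~\eqref{eq:P}.

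The content is thus elementary, and the main obstacle is really definitional rather than technical: one must fix the appropriate notion of tangent cone $\T_x\calX$ (which is nontrivial since $\calX$ need not be a manifold) and of stationarity for~\eqref{eq:P}, and check that the inclusion $\im \D\varphi(y) \subseteq \T_x\calX$ is automatic for that convention. Once the Bouligand tangent cone and the standard first-order condition $\D f(x)\vert_{\T_x\calX} \ge 0$ are adopted, both directions fall out of chain rule plus orthogonal decomposition, with no regularity assumption on $\calX$ beyond $\calX = \varphi(\calM)$. A pleasant byproduct is that the condition $\im \D\varphi(y) = \T_x\calX$ forces $\T_x\calX$ to be a linear subspace, explaining why \oneimpliesone{} already fails on simple singular examples such as the cuspidal cubic.
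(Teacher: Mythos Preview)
Your proposal is correct and follows essentially the same route as the paper: both directions hinge on the inclusion $\im\D\varphi(y)\subseteq\T_x\calX$ (the paper's Lemma~\ref{lem:Ay1_props}) together with the chain-rule characterization of 1-criticality as $\nabla f(x)\in(\im\D\varphi(y))^\perp$. The only cosmetic difference is in the necessity direction: the paper argues abstractly via biduals that $(\T_x\calX)^*\subsetneq(\im\Lmap_y)^\perp$ and picks any $w$ in the difference, whereas you build the separating $w$ constructively as the orthogonal projection of a chosen $v\in\T_x\calX\setminus\im\D\varphi(y)$ onto $(\im\D\varphi(y))^\perp$---your version is slightly more elementary, theirs slightly more conceptual, but the content is identical.
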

Here $\varphi$ is viewed as a smooth map between smooth manifolds $\calM\to\calE$, and its differential $\D\varphi(y)$ maps the tangent space $\T_y\calM$ to (in general, a subset of) the tangent \emph{cone} $\T_x\calX$, see Definition~\ref{def:tangentcone}. 
Since $\D\varphi(y)$ is a linear map and $\T_y\calM$ is a linear space, {\oneimpliesone} is rarely satisfied: unless all tangent cones of $\calX$ are linear subspaces, for \emph{every} smooth lift $\varphi$, there exists a (linear) $f$ such that some stationary point for~\eqref{eq:Q} does not map to a stationary point for~\eqref{eq:P}.

Our characterization for {\twoimpliesone} is more complicated, involving the second derivative of $\varphi$ as well. We state an equivalent condition for {\twoimpliesone}, as well as sufficient and necessary conditions for it that are easier to check in some applications, in Theorem~\ref{thm:2implies1_chain}. If {\twoimpliesone} fails at $y$, we show in Corollary~\ref{cor:2imp1_violating_cost} that there exists a convex quadratic cost $f$ such that $y$ is 2-critical for~\eqref{eq:Q} but $\varphi(y)$ is not stationary for~\eqref{eq:P}.
Note also that if {\oneimpliesone} holds at $y$ then so does {\twoimpliesone} by definition.

Understanding stationarity on $\calX$ requires knowledge of its tangent cones. These can be hard to characterize. We show that it is sometimes possible to obtain an explicit expression for the tangent cones simultaneously with proving {\oneimpliesone} and {\twoimpliesone} for some lift of $\calX$, see Sections~\ref{sec:low_rk_psd} and~\ref{sec:fiber_prod_lifts}. This is somewhat surprising since the tangent cones to $\calX$ are defined independently of any lift.
	
Given a set $\calX$, it is also natural to seek \emph{constructions} of a smooth lift $\varphi\colon\calM\to\calX$ satisfying desirable properties. We give a systematic construction of a map $\varphi\colon\calM\to\calX$ in Section~\ref{sec:fiber_prod_lifts} which maps a set $\calM$ surjectively onto $\calX$. When the set $\calM$ constructed in this way is a smooth manifold, we obtain a smooth lift and give conditions under which $\varphi$ satisfies each of the above properties. 

We now proceed to apply our results to study various lifts arising in the literature. 
\subsection{The sphere-to-simplex Hadamard lift} \label{sec:sphere_to_simplex}
There is growing interest in optimizing over the probability simplex $\calX=\Delta^{n-1}\subseteq \calE=\RR^n$ by lifting it to the sphere via the Hadamard lift
\begin{equation}
    \calM = \mathrm{S}^{n-1}, \qquad \varphi(y) = y \odot y,
    \label{eq:sphere_to_simplex}
    \tag{Had}
\end{equation}
where $\odot$ denotes the entrywise (Hadamard) product. Using this lift leads to fast algorithms for high-dimensional problems~\eqref{eq:Q}, see~\cite{NEURIPS2019_5cf21ce3,li2021simplex,chok2023convex}. This is also essentially the lift that appears in the eigenvalue example (scenario (b)) in Section~\ref{sec:intro}, see Example~\ref{ex:eigenval_example}.
This lift is particularly natural in applications involving probabilities since the push-forward under $\varphi$ of the standard metric on the sphere is the Fisher-Rao metric on the simplex~\cite[Prop.~2.1]{ay2017information}. 

We can characterize precisely where each of our desirable properties holds.
\begin{proposition}\label{prop:sphere_to_simplex}
    The lift~\eqref{eq:sphere_to_simplex} satisfies {\localimplieslocal} everywhere, {\oneimpliesone} at $y$ if and only if $y_i\neq 0$ for all $i$ (i.e., at preimages of the relative interior of the simplex), and {\twoimpliesone} everywhere.
\end{proposition}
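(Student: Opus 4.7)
The plan is to handle the three properties separately, applying the characterizations already established (Theorems~\ref{thm:localimplocalcharact} and~\ref{thm:oneimpliesone_char}) for the first two, and then doing a direct computation for \twoimpliesone.

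For \localimplieslocal, the reduction to openness makes the problem topological. For each $y\in\mathrm{S}^{n-1}$, I would construct a continuous local section $\psi\colon\Delta^{n-1}\to\mathrm{S}^{n-1}$ of $\varphi$ near $x=\varphi(y)$. The natural choice is $\psi(x')_i = \sgn(y_i)\sqrt{x'_i}$ when $y_i\neq 0$, and $\psi(x')_i = \sqrt{x'_i}$ when $y_i = 0$. By direct verification $\psi(x)=y$, $\varphi\circ\psi=\id$, and $\psi$ is continuous at $x$ (using continuity of $\sqrt{\cdot}$ at $0$ to handle zero entries). For any neighborhood $U$ of $y$, continuity of $\psi$ yields a neighborhood $V$ of $x$ in $\Delta^{n-1}$ with $\psi(V)\subseteq U$, so $\varphi(U)\supseteq V$.

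For \oneimpliesone, the differential is $\D\varphi(y)[v] = 2y\odot v$ restricted to $v\in\T_y\mathrm{S}^{n-1} = \{v:\langle y,v\rangle=0\}$. When all $y_i\neq 0$, the map $v\mapsto 2y\odot v$ is a linear isomorphism, and under this change of variables the constraint $\langle y,v\rangle=0$ becomes $\sum_i w_i = 0$; since $x\in\mathrm{int}(\Delta^{n-1})$, this matches $\T_x\Delta^{n-1}$ exactly. When some $y_i=0$, every element of $\im\D\varphi(y)$ vanishes in the corresponding coordinate, yet for $i$ with $x_i=0$ the tangent cone $\T_x\Delta^{n-1}$ contains vectors with a positive $i$-th entry (e.g.\ $e_i - e_j$ for any $j\in\supp(y)$). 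Thus $\im\D\varphi(y)\subsetneq\T_x\Delta^{n-1}$, and the characterization rules out \oneimpliesone.

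For \twoimpliesone, the only case not already handled by \oneimpliesone is $y$ with some zero entries. Let $Z = \{i : y_i=0\}$ and $S = \{i : y_i\neq 0\}$. Writing $x = y\odot y$, I would compute $\nabla g(y) = 2y\odot\nabla f(x)$, and the Riemannian gradient on the sphere. First-order criticality forces $\nabla f(x)_i = \mu$ for all $i\in S$, where $\mu := \langle x,\nabla f(x)\rangle$. For the Hessian, I would use the ambient formula
\[
\nabla^2 g(y)[v,v] = 2\langle v\odot v,\nabla f(x)\rangle + 4\langle y\odot v,\nabla^2 f(x)(y\odot v)\rangle,
\]
and the spherical correction $\Hess g(y)[v,v] = \nabla^2 g(y)[v,v] - 2\mu\|v\|^2$ on $\T_y\mathrm{S}^{n-1}$. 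Testing 2-criticality against $v=e_i$ for $i\in Z$ (which lies in $\T_y\mathrm{S}^{n-1}$ and annihilates the $y\odot v$ term) gives $2(\nabla f(x)_i - \mu)\geq 0$. Combining these two conclusions shows that $\langle\nabla f(x),w\rangle\geq 0$ for every $w\in\T_x\Delta^{n-1}$: decomposing $w$ as in the characterization of the tangent cone, $\sum_i\nabla f(x)_i w_i = \sum_{i\in Z}(\nabla f(x)_i - \mu)w_i \geq 0$ since $w_i\geq 0$ for $i\in Z$.

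The main obstacle is the \twoimpliesone step, since one must extract an inequality-type stationarity condition on the \emph{boundary} of $\Delta^{n-1}$ from a Hessian semidefiniteness condition on the sphere; the key simplification is that for $v=e_i$ with $i\in Z$ the cross term involving $y\odot v$ vanishes, so the Hessian test cleanly isolates the scalar $\nabla f(x)_i - \mu$. The other parts are mostly bookkeeping once the correct section and tangent-cone description are in hand.
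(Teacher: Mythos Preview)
Your proof is correct, but it takes a genuinely different route from the paper's. The paper does not argue directly on the sphere: instead, it recognizes the Hadamard lift as a \emph{fiber product lift} (Example~\ref{ex:sphere2simplex}), built from the one-dimensional squaring lift $\psi(t)=t^2\colon\RR\to\RR_{\geq 0}$ via the constraint map $F(x)=(x,\sum_i x_i-1)$. It then checks that $\psi$ satisfies {\localimplieslocal} everywhere, {\oneimpliesone} at $t\neq 0$, and the sufficient condition $A_t=\T_{t^2}\RR_{\geq 0}$ for {\twoimpliesone} at $t=0$, and invokes the general transfer results for fiber products (Propositions~\ref{prop:fib_prod_loc}, \ref{prop:1imp1_fiber_prod}, \ref{prop:fib_prod_2imp1}) together with the product result (Proposition~\ref{prop:prod_lifts}) to conclude for~\eqref{eq:sphere_to_simplex}. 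Your argument is more elementary and self-contained: an explicit signed-square-root section for openness, a direct computation of $\im\D\varphi(y)$ versus $\T_x\Delta^{n-1}$, and a bare-hands Riemannian Hessian test on $\mathrm{S}^{n-1}$ with $v=e_i$ for $i\in Z$ to extract the boundary inequality $\nabla f(x)_i\geq\mu$. What you lose is the unification---the paper's machinery simultaneously handles the ball, annulus, and Burer--Monteiro lifts (Corollary~\ref{cor:fiber_prod_exs_revisit}) and yields tangent-cone formulas as a byproduct; what you gain is a proof that requires no framework beyond standard sphere geometry and is arguably more transparent for this single example.
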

We prove this proposition in Corollary~\ref{cor:fiber_prod_exs_revisit} by showing that the lift~\eqref{eq:sphere_to_simplex} is a special case of our construction of lifts in Section~\ref{sec:fiber_prod_lifts} and can be analyzed using the general results we prove there. 

The relation between desirable points for~\eqref{eq:Q} and for~\eqref{eq:P} have been previously studied in~\cite{li2021simplex}, where the authors show that 2-critical points for~\eqref{eq:Q} map to 2nd-order KKT points for~\eqref{eq:P}, viewed as a nonlinear program, for any twice-differentiable cost. 
This is a strengthening of {\twoimpliesone}. 
The authors of~\cite{ding2023squared} prove similar relations between first- and second-order optimality conditions for problems~\eqref{eq:P} over $\RR^n_{\geq0}$, and for their lifts~\eqref{eq:Q} to $\RR^n$ via the entrywise-squaring lift in~\eqref{eq:sphere_to_simplex}.

The Hadamard lift also induces a lift of the set of column-stochastic matrices $\calX = \{X\in\RR^{n\times m}_{\geq0}: X^\top\mathbbm{1}_n=\mathbbm{1}_n\}$ to the product of spheres (called the oblique manifold~\cite[\S7.2]{optimOnMans}) via
\begin{equation}\label{eq:HadProd}\tag{HadProd}
    \calM = \left(\mathrm{S}^{n-1}\right)^m,\qquad \varphi(y_1,\ldots,y_m) = [y_1\odot y_1,\ldots,y_m\odot y_m].
\end{equation}
Here $\mathbbm{1}_n$ is the all-1's vector of length $n$ and $[x_1,\ldots,x_m]$ denotes horizontal concatenation of $m$ vectors $x_i$ of length $n$ to form an $n\times m$ matrix.
By studying products of lifts in Proposition~\ref{prop:prod_lifts}, we characterize the properties of~\eqref{eq:HadProd} in Example~\ref{ex:stochastic_mats} and obtain the following result.
\begin{proposition}\label{prop:stochastic_mats}
    The lift~\eqref{eq:HadProd} satisfies {\localimplieslocal} everywhere, {\oneimpliesone} at $(y_1,\ldots,y_k)$ if and only if $(y_i)_j\neq 0$ for all $i,j$, and {\twoimpliesone} everywhere.
\end{proposition}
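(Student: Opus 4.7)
The plan is to reduce Proposition~\ref{prop:stochastic_mats} to Proposition~\ref{prop:sphere_to_simplex} applied componentwise, via Proposition~\ref{prop:prod_lifts} on products of lifts. First I would observe that the set of column-stochastic matrices $\calX$ is literally $(\Delta^{n-1})^m$: the constraints $X \geq 0$ and $X^\top \mathbbm{1}_n = \mathbbm{1}_n$ decouple across columns, each saying that the corresponding column lies in the probability simplex. Correspondingly, $\calM = (\mathrm{S}^{n-1})^m$ is a product manifold, and $\varphi(y_1,\ldots,y_m) = (y_1 \odot y_1,\ldots,y_m \odot y_m)$ factors as the $m$-fold Cartesian product of the single Hadamard lift~\eqref{eq:sphere_to_simplex}.

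Next I would invoke Proposition~\ref{prop:prod_lifts} to transfer each of the three properties from the factors to the product. Combined with Proposition~\ref{prop:sphere_to_simplex}, which established that the single Hadamard lift satisfies \localimplieslocal{} everywhere, \oneimpliesone{} exactly at points with no zero entry, and \twoimpliesone{} everywhere, this yields the claimed characterizations at $(y_1,\ldots,y_m)\in\calM$: \localimplieslocal{} holds unconditionally; \oneimpliesone{} holds iff $(y_i)_j \neq 0$ for all $i,j$, since each factor must separately satisfy \oneimpliesone{} at $y_i$; and \twoimpliesone{} holds unconditionally.

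A subtlety worth emphasizing is that the three properties are quantified over \emph{all} cost functions $f\colon\calX\to\RR$, not merely those that decouple across columns, so one cannot naively combine the single-column results. This is precisely where the intrinsic characterizations of Theorems~\ref{thm:localimplocalcharact} and~\ref{thm:oneimpliesone_char} pay off: they depend only on the lift, not on $f$, and both behave compatibly with Cartesian products. For \oneimpliesone{}, the differential $\D\varphi(y_1,\ldots,y_m)$ is block diagonal with blocks $\D\varphi_i(y_i)$, so its image is the product of the factor images; and the tangent cone $\T_{\varphi(y)}(\Delta^{n-1})^m$ is the product of the tangent cones $\T_{y_i \odot y_i}\Delta^{n-1}$; hence the condition $\im \D\varphi(y) = \T_{\varphi(y)}\calX$ factors cleanly. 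For \localimplieslocal{}, openness of a product of maps between product spaces reduces to openness of each factor.

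The main obstacle in executing this plan is really establishing Proposition~\ref{prop:prod_lifts} itself, particularly for \twoimpliesone{}, whose characterization (Theorem~\ref{thm:2implies1_chain}) involves second derivatives of $\varphi$ together with the local geometry of $\T_x\calX$ in a subtler way than the first-order conditions; one must check that the second-order criterion is stable under products. Once Proposition~\ref{prop:prod_lifts} is in hand, the remaining argument is the componentwise application sketched above.
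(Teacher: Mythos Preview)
Your approach is essentially the paper's: recognize $\calX=(\Delta^{n-1})^m$, view \eqref{eq:HadProd} as the $m$-fold product of \eqref{eq:sphere_to_simplex}, and apply Proposition~\ref{prop:prod_lifts} together with Proposition~\ref{prop:sphere_to_simplex}. One point to tighten: Proposition~\ref{prop:prod_lifts}(c) only gives the \emph{if} direction for \oneimpliesone{}, and in general $\T_x(\mc Z_1\times\cdots\times\mc Z_k)\subseteq\T_{x_1}\mc Z_1\times\cdots\times\T_{x_k}\mc Z_k$ can be strict (Proposition~\ref{prop:prod_lifts}(a)). Your assertion that ``the tangent cone $\T_{\varphi(y)}(\Delta^{n-1})^m$ is the product of the tangent cones'' is exactly what is needed for the \emph{only if} direction, but it requires justification; the paper supplies it by noting that each $\Delta^{n-1}$ is closed convex, hence Clarke-regular, so equality holds by \cite[Prop.~6.41]{rockafellar2009variational}. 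With that fact in hand, $\im\Lmap_y^\varphi=\prod_i\im\Lmap_{y_i}^{\varphi_i}$ equals $\T_{\varphi(y)}\calX=\prod_i\T_{\varphi_i(y_i)}\Delta^{n-1}$ iff each factor satisfies \oneimpliesone{}, completing the equivalence.
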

Optimization over stochastic matrices has been applied to nonnegative matrix factorization~\cite{ahmed2020nonnegative}. Such optimization also arises in information theory~\cite{ben1988role}, where stochastic matrices represent transition probabilities of channels. 

\subsection{Smooth semidefinite programs via Burer--Monteiro} \label{sec:smth_sdps}
Consider the domain $\calX$ of a rank-constrained semidefinite program (SDP),
\begin{align*}
    \calX = \left\{X\in\mbb S_{\succeq0}^n: \rank(X)\leq r,\ \langle A_i,X\rangle=b_i \textrm{ for } i=1,\ldots,m\right\}\subseteq \calE=\mbb S^n, 
\end{align*}
where $\inner{U}{V}=\mathrm{Tr}(U^\top V)$ is the (Frobenius) inner product on $\calE$.
The Burer--Monteiro approach~\cite{burer2003nonlinear} to optimizing over $\calX$ consists of optimizing over the following parametrization instead:
\begin{equation}
    \calM = \{R\in \RR^{n\times r}:h_i(R)\coloneqq\langle A_iR,R\rangle - b_i = 0 \textrm{ for } i=1,\ldots,m\},\qquad \varphi(R)=RR^\top. 
    \label{eq:BM_lift}
    \tag{BM}
\end{equation}
Burer and Monteiro prove in~\cite[Prop.~2.3]{burer2005local} that local minima for~\eqref{eq:Q} map under $\varphi$ to local minima for~\eqref{eq:P} for linear costs $f$. 

Under some conditions on the $A_i, b_i$ (which are satisfied generically~\cite[Prop.~1]{cifuentes2021burer} as well as for several applications of interest~\cite{BM_det_guarantees}), the constraints $h_i(R) = 0$ constitute (constant-rank) local defining functions (in the sense of~\cite[Thm.~5.12]{lee_smooth}) for $\calM$, which is then an embedded submanifold of $\reals^{n \times r}$. In that case, $\calM$ and $\varphi$ constitute a smooth lift of $\calX$.
In~\cite{BM_det_guarantees}, assuming $f$ is linear (as is typical for SDPs), the authors use the assumption that the $h_i$ are local defining functions to prove that (in our terminology) rank-deficient 2-critical points for~\eqref{eq:Q} map under $\varphi$ to stationary points for~\eqref{eq:P}. This was also shown for nonlinear $f$ in~\cite{BM_JBAS}, though under more restrictive conditions on $A_i, b_i$ (e.g., $A_iA_j = 0$ for $i \neq j$). In all cases, these results allow to capture benign non-convexity when $f$ is convex, as then stationary points for~\eqref{eq:P} are global minima.

Using our framework, we can generalize these results to any (twice-differentiable) costs and remove the restrictions on the rank of the 2-critical points.
\begin{proposition}\label{prop:BM_lift}
    The Burer--Monteiro lift~\eqref{eq:BM_lift} satisfies {\localimplieslocal} everywhere. If $\calM$ in~\eqref{eq:BM_lift} is a smooth manifold with (constant-rank) local defining functions $\{h_i\}_{i=1}^m$, then this lift satisfies {\oneimpliesone} at $R$ if and only if $\rank(R)=r$, and {\twoimpliesone} everywhere.
\end{proposition}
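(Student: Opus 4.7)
The plan is to verify each property via the general criteria established earlier in the paper: Theorem~\ref{thm:localimplocalcharact} for \localimplieslocal, Theorem~\ref{thm:oneimpliesone_char} for \oneimpliesone, and the characterization of Theorem~\ref{thm:2implies1_chain} for \twoimpliesone. Throughout, set $X=\varphi(R)=RR^\top$, $s=\rank(R)$, and $G=\nabla f(X)$.

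First I would tackle \localimplieslocal by showing that $\varphi$ is open at every $R\in\calM$. The crux is the classical fact that the unconstrained map $\Phi\colon\RR^{n\times r}\to\mbb S^n$ with $\Phi(R)=RR^\top$ is open onto its image $\{Y\succeq 0:\rank Y\leq r\}$: this follows from $\mathrm{O}(r)$-invariance of $\Phi$ under right multiplication, compactness of $\mathrm{O}(r)$, and properness of $\Phi$ (since $\|R\|_F^2=\Tr(RR^\top)$), which together produce a homeomorphism $\RR^{n\times r}/\mathrm{O}(r)\cong\{Y\succeq 0:\rank Y\leq r\}$. Since the constraints $h_i(R)=0$ are exactly the pullback under $\Phi$ of the linear constraints defining $\calX$, we have $\calM=\Phi^{-1}(\calX)$, and openness of $\Phi$ transfers to $\varphi=\Phi|_\calM$ regarded as a map into $\calX$. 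Note this argument is purely topological and does not require $\calM$ to be a manifold.

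For \oneimpliesone, a direct computation gives $\D\varphi(R)[\dot R]=\dot R R^\top+R\dot R^\top$ and $\T_R\calM=\{\dot R:\langle A_iR,\dot R\rangle=0\ \forall i\}$. One checks that $\D\varphi(R)$ maps $\T_R\calM$ onto the linear subspace $\T_X\mbb S^n_{\succeq 0,=s}\cap\{W:\langle A_i,W\rangle=0\ \forall i\}$, and when $s=r$ this equals $\T_X\calX$ (as $X$ is a smooth point of $\calX$), so \oneimpliesone holds by Theorem~\ref{thm:oneimpliesone_char}. When $s<r$, the tangent cone $\T_X\calX$ strictly contains this linear subspace because of nonzero PSD ``rank-increment'' directions $K$ with $\im K\subseteq\ker X$, $\rank K\leq r-s$, and $\langle A_i,K\rangle=0$; the existence of such $K$ under the constant-rank hypothesis on $\{h_i\}$ forces \oneimpliesone to fail.

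For \twoimpliesone, suppose $R\in\calM$ is 2-critical for $g=f\circ\varphi$. The first-order condition produces Lagrange multipliers $\mu_i$ such that $S\coloneqq G-\sum_i\mu_iA_i$ satisfies $SR=0$. The central construction is the perturbation $\dot R=uv^\top$ for arbitrary $u\in\RR^n$ and any $v\in\ker R$: since $Rv=0$, both $\D\varphi(R)[\dot R]=0$ and $\langle A_iR,\dot R\rangle=0$ hold, so $\dot R\in\T_R\calM$. A chain-rule computation reduces the Riemannian Hessian along $\dot R$ to $\Hess g(R)[\dot R,\dot R]=2\|v\|^2u^\top Su$, which must be nonnegative by 2-criticality; with $v\neq 0$ and $u$ arbitrary, this forces $S\succeq 0$ whenever $R$ is rank-deficient (the rank-$r$ case reduces to \oneimpliesone). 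Combining $S\succeq 0$, $SR=0$, and the tangent cone decomposition $W=\dot R_0 R^\top+R\dot R_0^\top+K$ for $W\in\T_X\calX$ (with $K\succeq 0$, $\im K\subseteq\ker X$) yields $\langle G,W\rangle=\langle S,W\rangle=\Tr(SK)\geq 0$, which is first-order stationarity of $X$ for~\eqref{eq:P}. The main obstacle is rigorously justifying the tangent cone description of $\T_X\calX$ at rank-deficient $X$---both the nontriviality of the PSD cone part (needed for the failure of \oneimpliesone) and the decomposability of arbitrary $W\in\T_X\calX$ into smooth plus PSD-increment parts (needed for \twoimpliesone)---which rests on known descriptions of the tangent cone to $\mbb S^n_{\succeq 0,\leq r}$ together with transversality afforded by the constant-rank assumption on $\{h_i\}$.
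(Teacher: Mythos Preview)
Your approach is correct and takes a genuinely different route from the paper. Whereas the paper realizes~\eqref{eq:BM_lift} as a fiber product (Example~\ref{ex:smth_sdps}) and invokes Propositions~\ref{prop:fib_prod_loc}, \ref{prop:1imp1_fiber_prod} and~\ref{prop:fib_prod_2imp1} to transfer all three properties from the unconstrained lift $R\mapsto RR^\top$ analyzed in Proposition~\ref{prop:BM_lift_noA}, you argue each property directly: openness via the quotient $\RR^{n\times r}/\mathrm{O}(r)$ restricted to the preimage $\calM=\Phi^{-1}(\calX)$ (this is equivalent to Burer and Monteiro's SLP argument the paper cites), and \twoimpliesone\ via the rank-one perturbation $\dot R=uv^\top$ with $v\in\ker R$, which is the classical argument from~\cite{BM_det_guarantees,BM_JBAS} extended here to nonlinear $f$.

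You overstate the remaining obstacle. For \twoimpliesone\ you only need the \emph{easy} inclusion $\T_X\calX\subseteq\T_X(\mbb S^n_{\succeq 0}\cap\RR^{n\times n}_{\leq r})\cap\{\langle A_i,\cdot\rangle=0\}$ (tangent cone of an intersection sits inside the intersection of tangent cones), together with the description of $\T_X(\mbb S^n_{\succeq 0}\cap\RR^{n\times n}_{\leq r})$ from Proposition~\ref{prop:BM_lift_noA}; the hard reverse inclusion plays no role in your computation $\langle G,W\rangle=\langle S,W\rangle=\Tr(SK)\geq 0$. Likewise, for the failure of \oneimpliesone\ at rank-deficient $R$ you do not need to exhibit a feasible PSD direction abstractly: your own curve $c(t)\in\calM$ with $c'(0)=uv^\top$ (taking nonzero $u\perp\mathrm{col}(R)$ and $v\in\ker R$) produces $V=\lim_{t\to 0}(\varphi(c(t))-X)/t^2\in\T_X\calX$ with $V_\perp=\|v\|^2 uu^\top\neq 0$, and then $-V\notin\T_X\calX$ by the easy inclusion $\T_X\calX\subseteq\T_X\mbb S^n_{\succeq 0}$, so $\T_X\calX$ is not linear and Theorem~\ref{thm:oneimpliesone_char} applies.

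The trade-off: the paper's fiber-product route is systematic (it handles~\eqref{eq:BM_lift} uniformly alongside Examples~\ref{ex:sphere2ball}--\ref{ex:torus2annulus}) and, crucially, \emph{derives} the tangent cone formula~\eqref{eq:smth_sdp_tangent_cone} as a byproduct via the $A_y=\T_x\calX$ mechanism rather than assuming any part of it. Your direct approach stays closer to the existing SDP literature and makes the interaction between the Lagrange multipliers and the Hessian explicit, which is instructive in its own right.
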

We prove this result too in Corollary~\ref{cor:fiber_prod_exs_revisit} using general properties of our lift construction in Section~\ref{sec:fiber_prod_lifts}. 
Our theory also yields explicit expressions for the tangent cones to $\calX$ in~\eqref{eq:smth_sdp_tangent_cone}, which (to our knowledge) have not previously appeared in the literature. 

\subsection{Low-rank matrices}\label{sec:lowrk}
Consider the set $\calX=\Rmnlr$ of matrices in $\calE=\Rmn$ with rank at most $r$.
We study several natural lifts of this real algebraic variety.
The first one we study is based on the rank factorization of a matrix:
\begin{equation}
    \calM = \RR^{m\times r}\times\RR^{n\times r},\qquad \varphi(L,R) = LR^\top.
    \label{eq:LR_lift}
    \tag{LR}
\end{equation}
The authors of~\cite{ha2020criticallowrank} showed (in our terminology) that {\oneimpliesone} does not hold everywhere, but {\twoimpliesone} does.
We further proved in~\cite[Prop.~2.30]{eitan_thesis} that this lift does not satisfy {\localimplieslocal} everywhere either. 
We strengthen these results here using our unified framework, by characterizing precisely where each of these properties hold. The proof of the following proposition is given in Section~\ref{sec:pf_of_LR}.
\begin{proposition}\label{prop:LR_lift}
The lift of $\calX=\Rmnlr$ given by~\eqref{eq:LR_lift} satisfies:
\begin{itemize}
    \item {\localimplieslocal} at $(L,R)$ if and only if $\rank(L)=\rank(R)=\rank(LR^\top)$,
    \item {\oneimpliesone} at $(L,R)$ if and only if $\rank(L)=\rank(R)=r$,
    \item and {\twoimpliesone} everywhere on $\calM$~\cite{ha2020criticallowrank}.
\end{itemize}
\end{proposition}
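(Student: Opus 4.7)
The 2$\Rightarrow$1 assertion is already established in~\cite{ha2020criticallowrank}, so the plan is to focus on local$\Rightarrow$local and 1$\Rightarrow$1, each reduced to its general characterization (Theorems~\ref{thm:localimplocalcharact} and~\ref{thm:oneimpliesone_char}). Throughout I would use the $\GL(r)$-equivariance $\varphi(LQ^{-\top},RQ)=\varphi(L,R)$ to normalize $(L,R)$ to block form: whenever $\rank(L)=\rank(R)=\rank(LR^\top)$, one can simultaneously reduce $L=[L_1,0]$ and $R=[R_1,0]$ with $L_1,R_1$ of full column rank, the gauges being compatible precisely because $\mathrm{row}(L)$ and $\ker(R)$ are complementary in $\RR^r$ under this rank condition.

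For 1$\Rightarrow$1, I would compute $\D\varphi(L,R)[\dot L,\dot R]=\dot L R^\top+L\dot R^\top$; its image is a linear subspace of dimension $m\rank(R)+n\rank(L)-\rank(L)\rank(R)$, since $\{\dot L R^\top\}$ consists of matrices with rows in $\mathrm{col}(R)$, $\{L\dot R^\top\}$ of matrices with columns in $\mathrm{col}(L)$, and their intersection has dimension $\rank(L)\rank(R)$. The tangent cone $\T_X\Rmnlr$ at a rank-$s$ matrix $X$ is the well-known set of $Z$ whose projection onto $\mathrm{col}(X)^\perp$ on the left and $\mathrm{row}(X)^\perp$ on the right has rank at most $r-s$, and is a linear subspace iff $s=r$. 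Since Theorem~\ref{thm:oneimpliesone_char} demands $\im\D\varphi(L,R)=\T_X\Rmnlr$ and the left-hand side is always linear, we must have $\rank(LR^\top)=r$, forcing $\rank(L)=\rank(R)=r$. Conversely, at such $(L,R)$ the dimension count matches $r(m+n-r)=\dim\T_X\RR^{m\times n}_{=r}$ and $\im\D\varphi(L,R)\subseteq\T_X\RR^{m\times n}_{=r}=\T_X\Rmnlr$ by construction, so equality follows.

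For local$\Rightarrow$local, I use Theorem~\ref{thm:localimplocalcharact} to reduce to openness of $\varphi$ at $(L,R)$. Sufficiency under $\rank(L)=\rank(R)=\rank(X)=:s$: after the gauge reduction, any $X'$ near $X$ with $\rank(X')\leq r$ admits an SVD-based decomposition $X'=A+B$ where $A$ has rank $s$ and is close to $X$, while $B$ has rank at most $r-s$ and is small. I would then factor $A=L_1'(R_1')^\top$ with $(L_1',R_1')$ close to $(L_1,R_1)$ (by continuity of rank-$s$ factorizations up to $\GL(s)$) and $B=L_2'(R_2')^\top$ with $\|L_2'\|,\|R_2'\|=O(\sqrt{\|B\|})$; concatenation yields $(L',R')=([L_1',L_2'],[R_1',R_2'])$ near $(L,R)$ with $\varphi(L',R')=X'$, establishing openness.

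The necessity direction is the main technical challenge. Assume $r<\min(m,n)$ and $\rank(L)>\rank(X)$ (the case $\rank(R)>\rank(X)$ is symmetric). Then $\mathrm{col}(L)$ is a proper subspace of $\RR^m$ strictly containing $\mathrm{col}(X)$; pick a unit vector $e_\perp\in\mathrm{col}(L)^\perp$ and set $X'=X+\epsilon\,e_\perp v^\top$ for a nonzero $v\in\RR^n$. Then $X'\in\Rmnlr$ lies at distance $O(\epsilon)$ from $X$, but $\mathrm{col}(X')$ contains a direction with $e_\perp$-component of size comparable to $\epsilon$. For $L'$ at distance $\delta$ from $L$, the principal angle between $\mathrm{col}(L')$ and $\mathrm{col}(L)$ is $O(\delta)$, so every unit vector in $\mathrm{col}(L')$ has $e_\perp$-component $O(\delta)$; hence no preimage $(L',R')$ with $\|L'-L\|\leq\delta$ can satisfy $\mathrm{col}(X')\subseteq\mathrm{col}(L')$ once $\epsilon$ is chosen larger than a constant multiple of $\delta$, violating openness. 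The main obstacle is to make this principal-angle estimate quantitative and to handle the degenerate edge case $\rank(L)=m$ (which forces $m=r$), where a complementary second-order bound on the term $\delta L\,\delta R^\top$ in the gauge-normalized block coordinates replaces the first-order geometric argument and yields a $\sqrt{\epsilon}$-versus-$\delta$ scaling obstruction.
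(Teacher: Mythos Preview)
Your treatment of {\oneimpliesone} is correct and matches the paper's. Deferring {\twoimpliesone} to the cited reference is acceptable (the paper also reproves it via its $B_y$ machinery, but that is not required here). Your sufficiency argument for {\localimplieslocal} is a legitimate direct openness proof; the paper instead verifies SLP via SVD and a $\GL(r)$-fiber argument, but both routes work.

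The necessity direction for {\localimplieslocal} has a genuine gap. Your key claim---``for $L'$ at distance $\delta$ from $L$, every unit vector in $\mathrm{col}(L')$ has $e_\perp$-component $O(\delta)$''---is false when $\rank(L)<r$. A rank-deficient $L$ has nontrivial kernel, and perturbing along it injects arbitrary new directions into the column space: if $a_0\in\ker L$ is a unit vector, then $L'=L+\delta\,e_\perp a_0^\top$ satisfies $\|L'-L\|=\delta$ yet $e_\perp\in\mathrm{col}(L')$ exactly. Worse, in this regime your specific obstruction $X'=X+\epsilon\,e_\perp v^\top$ can actually be lifted near $(L,R)$: take $m=n=4$, $r=3$, $L=[e_1,e_2,0]$, $R=[e_1,0,0]$, so $\rank(X)=1<\rank(L)=2<r$; with $a_0=e_3\in\ker L\cap\ker R$ and $\delta=\sqrt{\epsilon}$ one checks that $L'=[e_1,e_2,\sqrt{\epsilon}\,e_\perp]$, $R'=[e_1,0,\sqrt{\epsilon}\,v]$ satisfy $L'R'^\top=X'$ with $\|L'-L\|,\|R'-R\|=O(\sqrt{\epsilon})$. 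So your rank-one perturbation does not witness failure of openness here, and the ``edge case'' you flag ($\rank(L)=m$) is not the obstacle---the real issue is $\rank(L)<r$.

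The paper avoids this by constructing a sequence $X_i\to X$ of \emph{full rank $r$}. Then the fiber $\varphi^{-1}(X_i)$ is a single $\GL(r)$-orbit, so any lift $\tilde L_i$ has $\mathrm{col}(\tilde L_i)=\mathrm{col}(X_i)$ exactly determined by $X_i$. By arranging $\mathrm{col}(X_i)$ to be a \emph{fixed} $r$-dimensional subspace not containing $\mathrm{col}(L)$, one gets an immediate contradiction to $\tilde L_i\to L$. The full-rank constraint is what makes the column-space argument rigid; your low-rank perturbation leaves too much freedom in the fiber.
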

The second lift we study for $\Rmnlr$ is the desingularization lift introduced in~\cite{desingular}. It is given by
\begin{equation}
    \calM = \{(X,\mc S)\in\RR^{m\times n}\times \mathrm{Gr}(n,n-r): \mc S\subseteq \ker X\},\qquad \varphi(X,\mc S) = X.
    \label{eq:desing_lift}
    \tag{Desing}
\end{equation}
Here $\mathrm{Gr}(n,n-r)$ is the Grassmann manifold of $(n-r)$-dimensional subspaces of $\RR^n$~\cite[\S9]{optimOnMans}.
We proved in~\cite[Prop.~2.37]{eitan_thesis} that this lift too does not satisfy {\localimplieslocal}.
The following proposition parallels the one above and is proved in Section~\ref{sec:pf_of_desing}.
\begin{proposition}\label{prop:desing_lift}
The lift of $\calX=\Rmnlr$ given by~\eqref{eq:desing_lift} satisfies:
\begin{itemize}
    \item {\localimplieslocal} at $(X,\mc S)$ if and only if $\rank(X)=r$; the same is true for {\oneimpliesone}.
    \item {\twoimpliesone} everywhere on $\calM$.
\end{itemize}
\end{proposition}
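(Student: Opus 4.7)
The plan is to apply Theorems~\ref{thm:localimplocalcharact}, \ref{thm:oneimpliesone_char}, and~\ref{thm:2implies1_chain} after setting up tangent data for this lift. By~\cite{desingular}, $\calM$ is a smooth manifold, realized as a vector bundle over $\Gr(n, n-r)$ whose fiber over $\mc S$ is the linear subspace of $X\in\RR^{m\times n}$ annihilating $\mc S$. A tangent vector at $(X,\mc S)$ takes the form $(\dot X, A)$ with $A \in \mathrm{Hom}(\mc S, \mc S^\perp) \cong T_{\mc S}\Gr(n,n-r)$, obeying the linearized constraint $\dot X|_{\mc S} + X\circ A = 0$. Since $X|_{\mc S^\perp}$ surjects onto $\im X$, letting $A$ vary gives
\[ \im\D\varphi(X, \mc S) \;=\; \{\dot X \in \RR^{m\times n}: \dot X(\mc S) \subseteq \im X\}. \]

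For \localimplieslocal{}, Theorem~\ref{thm:localimplocalcharact} reduces the claim to openness of $\varphi$ at $(X,\mc S)$. If $\rank(X) = r$, the fiber $\varphi^{-1}(X) = \{(X,\ker X)\}$ is a single point; rank is lower semicontinuous on $\RR^{m\times n}$, so on a neighborhood of $X$ in $\Rmnlr$ the rank is identically $r$, and $X' \mapsto (X',\ker X')$ is a continuous local inverse, so $\varphi$ is a local homeomorphism. If $\rank(X) = s < r$, we build a violation: for arbitrarily small $\epsilon > 0$, pick an $(n-r)$-dimensional subspace $\mc S_1 \subseteq \ker X$ bounded away from $\mc S$ and a rank-$r$ matrix $X'$ with $\|X'-X\| < \epsilon$ and $\ker X' = \mc S_1$ (obtained by perturbing $X$ on a suitable complement of $\mc S_1$). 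The unique preimage $(X', \mc S_1)$ is far from $(X, \mc S)$, so $\varphi$ is not open at $(X, \mc S)$.

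For \oneimpliesone{}, Theorem~\ref{thm:oneimpliesone_char} reduces the claim to whether $\im\D\varphi(X,\mc S) = T_X\Rmnlr$. When $\rank(X)=r$ and $\mc S=\ker X$, the displayed expression becomes $\{\dot X: \dot X\ker X \subseteq \im X\}$, which coincides with the tangent space $T_X\RR^{m\times n}_{=r}$ and with the tangent cone $T_X\Rmnlr$ at this smooth point of the variety. When $\rank(X) < r$, the tangent cone $T_X\Rmnlr$ is not a linear subspace (at $X = 0$ it equals $\Rmnlr$ itself), while $\im\D\varphi$ is, so equality fails and \oneimpliesone{} does not hold.

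For \twoimpliesone{}, invoke Theorem~\ref{thm:2implies1_chain} and examine second-order variations along curves $(X(t),\mc S(t))$ in $\calM$. Parametrizing $\mc S(t)$ by first-order generator $A \in \mathrm{Hom}(\mc S,\mc S^\perp)$ and a second-order slack $B \in \mathrm{Hom}(\mc S,\mc S^\perp)$, differentiating the constraint $X(t)\mc S(t) = 0$ twice yields $\ddot X|_\mc S = -2\,\dot X|_{\mc S^\perp}\!\circ A - X\circ B$. Projecting onto $(\im X)^\perp$ annihilates the $X\circ B$ term, so the reachable second-order contribution on $\mc S$ orthogonal to $\im X$ is
\[ \bigl\{-2\,\Proj_{(\im X)^\perp}\! \dot X|_{\mc S^\perp}\! \circ A \,:\, A \in \mathrm{Hom}(\mc S,\mc S^\perp),\ \dot X|_{\mc S^\perp} \in \mathrm{Hom}(\mc S^\perp,\RR^m)\bigr\}, \]
which produces precisely the rank-$(\leq r-s)$ contributions in $\mathrm{Hom}(\mc S,(\im X)^\perp)$ that distinguish $T_X\Rmnlr$ from $\im\D\varphi$, where $s = \rank(X)$. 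Matching an arbitrary $v \in T_X\Rmnlr$, decomposed in an SVD-adapted basis of $X$, against this second-order reachable set verifies the hypothesis of Theorem~\ref{thm:2implies1_chain} and concludes \twoimpliesone{}. The main obstacle is exactly this matching step: one must show that every element of the rank-$(\leq r-s)$ ``orthogonal block'' of $T_X\Rmnlr$ factors through the displayed composition, requiring a careful accounting of dimensions, subspace intersections, and the surjectivity of $X|_{\mc S^\perp}$ onto $\im X$.
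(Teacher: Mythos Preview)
Your arguments for {\localimplieslocal} and {\oneimpliesone} are correct. They are coordinate-free counterparts of what the paper does in the chart of Example~\ref{ex:desing_Qmap_comp}: the paper shows {\oneimpliesone} at rank-$r$ points by a dimension count on $\Lmap$ in that chart, deduces {\localimplieslocal} there via Proposition~\ref{prop:1imp1_implies_locimploc_at_smooth}, and refutes {\localimplieslocal} at lower rank by constructing an explicit sequence violating SLP---essentially your non-openness argument phrased differently.

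Your {\twoimpliesone} argument, however, has a real gap. The formula $\ddot X|_{\mc S} = -2\,\dot X|_{\mc S^\perp}\circ A - X\circ B$ is correct, but as written it does not verify any sufficient condition in Theorem~\ref{thm:2implies1_chain}. The first condition there, $\T_X\calX \subseteq A_y$, requires curves with $(\varphi\circ c)'(0)=\dot X=0$; but then $\dot X|_{\mc S^\perp}\circ A$ vanishes and only $-X\circ B \in \im\Lmap_y$ remains, so in fact $A_y = \im\Lmap_y$ for this lift and that condition \emph{fails} whenever $\rank(X)<r$. Your displayed ``reachable second-order set'' (with $A$ ranging freely over $\mathrm{Hom}(\mc S,\mc S^\perp)$ and $\dot X|_{\mc S^\perp}$ free) is therefore not $A_y$, and the ``rank-$(\leq r-s)$'' claim is not justified: without restricting $A$, the composition has rank bounded only by $r$.

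What actually works is the $B_y$ condition, which requires passing to limits: take $\dot X_i\to 0$ while $A_i\to\infty$ with the product $\dot X_i|_{\mc S^\perp}\circ A_i$ converging. The constraint $\dot X_i|_{\mc S}=-X\circ A_i\to 0$ then forces the blowing-up part of $A_i$ into $\mathrm{Hom}(\mc S,\ker X\cap\mc S^\perp)$, and $\dim(\ker X\cap\mc S^\perp)=r-s$; that is the genuine origin of the rank bound. The paper carries this out in the chart: choosing $w\in\ker Z$ (nonzero since $\rank Z<r$) and setting $\dot Z_i=i^{-1}uw^\top$, $\dot W_i=i\,wv^\top$ gives $\Lmap(\dot Z_i,\dot W_i)\to 0$ while $\Qmap(\dot Z_i,\dot W_i)=[-2uv^\top,\,0]\Pi$ is constant, so $B_y$ contains enough rank-one matrices to force $B_y^*=\{0\}=(\T_X\calX)^*$.
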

A potential advantage of the desingularization lift over the matrix factorization lift is that the preimage of a matrix, $\varphi^{-1}(X)$, is compact for the former but not for the latter.

Note that both lifts~\eqref{eq:LR_lift} and~\eqref{eq:desing_lift} satisfy {\oneimpliesone} and {\localimplieslocal} at preimages of rank-$r$ matrices, but the lift~\eqref{eq:LR_lift} further satisfies {\localimplieslocal} at ``balanced'' preimages of lower-rank matrices. 
We also mention that no smooth lift of $\Rmnlr$ can satisfy {\oneimpliesone} at preimages of lower-rank matrices by Theorem~\ref{thm:oneimpliesone_char}, since the tangent cones to such matrices are not linear spaces.

In~\cite{mishra2014fixed}, the authors experiment with various SVD-type lifts for optimization over matrices of rank exactly $r$. The following proposition, proved in \ifthenelse{\boolean{shortver}}{the arxiv version of this paper~\cite[App.~D]{levin2022effectARXIV}}{Appendix~\ref{apdx:svd_lift}}, gives some of the properties of these lifts, extended to parametrize all of $\Rmnlr$. 
\begin{proposition}\label{prop:svd_saxs}
The SVD lift and its modification from~\cite{mishra2014fixed} satisfy the following.
\begin{itemize}
    \item The SVD lift of $\Rmnlr$ given by
    
    \begin{equation}
        \calM = \mathrm{St}(m,r)\times\RR^r\times\mathrm{St}(n,r),\qquad \varphi(U,\sigma,V) = U\mathrm{diag}(\sigma) V^\top,
        \label{eq:SVD_lift}
        \tag{SVD}
    \end{equation}

    satisfies {\localimplieslocal} at $(U,\sigma,V)$ if and only if $|\sigma_1|, \ldots, |\sigma_r|$ are nonzero and distinct; the same holds for {\oneimpliesone}.
    
    \item The modified SVD lift 
    \begin{equation}
        \calM = \mathrm{St}(m,r)\times\mbb S^r\times\mathrm{St}(n,r),\qquad \varphi(U,M,V) = UM V^\top,
        \label{eq:MSVD_lift}
        \tag{MSVD}
    \end{equation}
    satisfies {\localimplieslocal} at $(U,M,V)$ if and only if the eigenvalues of $M$ satisfy $\lambda_i(M)+\lambda_j(M)\neq 0$ for all $i,j$; the same holds for {\oneimpliesone}.
\end{itemize}
\end{proposition}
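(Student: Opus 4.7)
The plan is to apply Theorems~\ref{thm:oneimpliesone_char} and~\ref{thm:localimplocalcharact}, reducing both properties at a point $y$ to a computation of the differential $\D\varphi(y)$ together with a check of openness of $\varphi$ at $y$. For both lifts I parametrize tangent vectors on the Stiefel factors by $\dot U = UA + U_\perp B$ with $A\in\RR^{r\times r}$ skew-symmetric and $B\in\RR^{(m-r)\times r}$, and similarly $\dot V = VC + V_\perp D$. For (SVD), writing $\Sigma = \diag(\sigma)$,
\[
    \D\varphi(U,\sigma,V)[\dot U,\dot\sigma,\dot V] = U(A\Sigma + \diag(\dot\sigma) - \Sigma C)V^\top + U_\perp B\Sigma V^\top + U\Sigma D^\top V_\perp^\top,
\]
while for (MSVD) the same expression holds with $\Sigma$ replaced by $M$ and $\diag(\dot\sigma)$ by $\dot M \in \mbb S^r$.

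For {\oneimpliesone}, I compare $\im\D\varphi(y)$ with the tangent cone $\T_x\Rmnlr$, which in SVD coordinates at a rank-$s$ matrix consists of matrices with all blocks free except a bottom-right $(m-s)\times(n-s)$ block constrained to rank at most $r-s$. For (SVD), the factor $\Sigma$ in the $B$ and $D$ terms shows that if some $\sigma_i=0$ then $\im\D\varphi(y)$ is a linear subspace strictly smaller than the nonlinear cone $\T_x\calX$. Otherwise the kernel equations decouple pairwise into $2\times 2$ systems with determinant $\sigma_i^2-\sigma_j^2$, so the kernel is trivial iff the $|\sigma_i|$ are distinct; combined with the matching dimension count $\dim\calM=r(m+n-r)=\dim\T_x\RR^{m\times n}_{=r}$, this gives $\im\D\varphi(y)=\T_x\calX$. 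For (MSVD), the kernel equation $AM+\dot M-MC=0$ with $\dot M$ symmetric forces $\{M,C-A\}=0$; diagonalizing $M=Q\Lambda Q^\top$, this reads $(\lambda_i+\lambda_j)(Q^\top(C-A)Q)_{ij}=0$, and a dimension count then shows $\im\D\varphi(y)=\T_x\calX$ exactly when $\lambda_i+\lambda_j\neq 0$ for all $i,j$ (including $i=j$, which forces $M$ invertible). In each failing subcase, the image is strictly smaller than $\T_x\calX$, so Theorem~\ref{thm:oneimpliesone_char} rules out {\oneimpliesone}.

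For {\localimplieslocal}, Theorem~\ref{thm:localimplocalcharact} reduces the claim to openness of $\varphi$ at $y$. When the stated condition holds, $\rank x = r$ so $\calX$ coincides locally with the smooth manifold $\RR^{m\times n}_{=r}$; $\D\varphi(y)$ is a linear isomorphism for (SVD) or a linear surjection for (MSVD), hence $\varphi$ is a local diffeomorphism (by the inverse function theorem) or a local submersion, and open in both cases. In the failure cases I choose a direction $v\in\T_x\calX\setminus\im\D\varphi(y)$ identified by the kernel computation and show that $x'=x+\varepsilon v$ has no preimage near $y$. If some $\sigma_i=0$ (resp.\ $\lambda_i=0$), $v$ can be taken as a rank-one matrix transverse to the fixed column $U_i$, and rank-one perturbation theory forces the new singular (resp.\ eigen-) vector of $x'$ to align with the direction of $v$, contradicting any small perturbation of $U_i$. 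If $|\sigma_i|=|\sigma_j|$ or $\lambda_i+\lambda_j=0$, I take $v$ to be a suitable rank-two perturbation in the $(i,j)$ subblock (symmetric for SVD with $\sigma_i=\sigma_j$, antisymmetric for $\sigma_i=-\sigma_j$ and for MSVD with $\lambda_i+\lambda_j=0$); the unique SVD (resp.\ MSVD) of $x'$ in the degenerate subspace then involves an $O(1)$ rotation of singular (resp.\ eigen-) vectors, precluding any nearby factorization.

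The main obstacle is this last step---ruling out nearby preimages when the fiber of $\varphi$ over $x$ is positive-dimensional, so that allowing $y$ to move along its fiber could in principle absorb the perturbation $x'-x$. The key observation is that the missing direction $v$ remains outside $\im\D\varphi(y')$ for every $y'$ in the fiber over $x$, so moving within the fiber does not enlarge the first-order reachable set at any representative; a quantitative implicit function theorem argument then upgrades this linearized obstruction to genuine non-openness in a small neighborhood. Once the subcases are assembled, the proposition follows.
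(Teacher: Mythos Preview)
Your treatment of {\oneimpliesone} and the positive direction of {\localimplieslocal} is correct and matches the paper: compute $\D\varphi$, reduce injectivity/surjectivity to the pairwise conditions on $|\sigma_i|$ (resp.\ $\lambda_i+\lambda_j$), and invoke Proposition~\ref{prop:1imp1_implies_locimploc_at_smooth} at smooth points. Your specific constructions for the negative direction of {\localimplieslocal} (rank-one perturbation when $\sigma_k=0$, rank-two rotation in a degenerate $(i,j)$ block) are also the same as the paper's, which builds explicit sequences $X_i\to X$ and shows via uniqueness of the SVD (after perturbing to distinct singular values) that no subsequence can be lifted near $(U,\sigma,V)$.

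The gap is in your final paragraph. The principle ``$v\notin\im\D\varphi(y')$ for every $y'$ in the fiber, hence a quantitative implicit function theorem rules out nearby preimages of $x+\varepsilon v$'' is false as stated. A clean counterexample is the Burer--Monteiro lift $\varphi(R)=RR^\top$ at $R=0$: the fiber over $0$ is $\{0\}$, and $\im\D\varphi(0)=\{0\}$, so \emph{every} nonzero $v$ satisfies your hypothesis; yet $\varphi$ is open at $0$ (Proposition~\ref{prop:BM_lift_noA}), since $\varepsilon vv^\top$ has the preimage $\sqrt{\varepsilon}\,ve_1^\top$ at distance $O(\sqrt{\varepsilon})$. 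First-order obstructions along the fiber do not upgrade to non-openness because second-order curves can still reach the missing directions. What actually makes your constructions work---and what the paper uses---is not an IFT argument but the rigidity of the SVD: once the perturbed $X_i$ has distinct singular values, \emph{all} its preimages are determined up to a finite sign group, and one checks directly that none of these finitely many candidates is close to $(U,\sigma,V)$. Replace the IFT paragraph with this explicit characterization of $\varphi^{-1}(X_i)$ and the argument goes through.
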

In~\cite[\S6.3]{mishra2014fixed}, the authors observed that Riemannian gradient descent running on~\eqref{eq:Q} gets stuck in a suboptimal point for a certain matrix completion problem using~\eqref{eq:SVD_lift} but not using~\eqref{eq:MSVD_lift}. We can use Proposition~\ref{prop:svd_saxs} to understand their observation. Their algorithm only generates iterates with strictly positive diagonals $\sigma$ in~\eqref{eq:SVD_lift} and strictly positive-definite middle factors $M$ in~\eqref{eq:MSVD_lift}, and can only converge to such points. Proposition~\ref{prop:svd_saxs} shows that~\eqref{eq:MSVD_lift} satisfies {\localimplieslocal} and {\oneimpliesone} in that region, while~\eqref{eq:SVD_lift} does not.

\subsection{Low-rank tensors}\label{sec:lowrk_tensors}
Tensor factorization formats correspond to lifts mapping factors to low-rank tensors, for various notions of tensor rank.
For example, the canonical polyadic (CP) decomposition of rank at most 1 corresponds to the lift of $\calX = \{X\in\RR^{n_1\times\cdots\times n_d}:\textrm{CP-rank}(X)\leq 1\}$~\cite{tensor_review} via tensor product $\otimes$ as:
\begin{align*}
    \calM = \RR^{n_1}\times\cdots\times\RR^{n_d},\qquad \varphi(v_1,\ldots,v_d) = v_1\otimes\cdots\otimes v_d.
\end{align*}
Other examples include CP decompositions of higher rank, Tucker and Tensor Train (TT) decompositions, and more generally tensor networks~\cite{tensor_review,cichocki2014tensor}. 
Surprisingly, none of these lifts satisfy {\twoimpliesone}: we derive this from more general obstructions to {\twoimpliesone} for multilinear lifts in Section~\ref{sec:multilinear_lifts}.
Here is one take-away: any stationarity guarantees for algorithms targeting second-order critical points over the factors in a tensor decomposition must exploit the structure of the cost function.

\subsection{Neural networks}\label{sec:NNs}
Training neural networks is done via lifts.
Indeed, here $\calM$ is the manifold of weights and biases of a fixed neural network architecture (typically a linear space; sometimes a product of spheres if normalization constraints are present).
The lift $\varphi$ maps a choice of weights and biases to \emph{the function} given by the corresponding neural network.
The image $\calX = \varphi(\calM)$ of this lift is the set of functions that can be represented by the architecture, viewed as a subset of some linear space $\calE$ of functions (e.g., an $L^p$ space\footnote{Even though these spaces are not finite-dimensional, our results still apply, see Appendix~\ref{apdx:liftsopen}.}).

The authors of~\cite{top_nns} show that such $\varphi$ is not open for any choice of (nonconstant) Lipschitz continuous activation functions.
Our Theorem~\ref{thm:localimplocalcharact} then implies that {\localimplieslocal} fails for all neural network lifts used in practice. 
Consequently, training such a neural network by optimizing over its weights and biases might yield a spurious local minimum that does not parametrize a local minimum in function space. In that case, a different parametrization of the same function might not be a local minimum for~\eqref{eq:Q}.

When the neural network architecture is linear with three or more layers, the corresponding lift is multilinear, hence does not satisfy {\twoimpliesone} by the same general obstructions from Section~\ref{sec:multilinear_lifts} we use for tensor decompositions. Similarly to the tensor case, this implies that proofs of guarantees for training algorithms must exploit the structure in specific cost functions (the loss).
Additional study of lifts defined by linear neural networks was done in~\cite{Trager2020Pure, kohn2021geometry}, where the authors characterize (in our terminology) {\oneimpliesone} for lifts defined by linear and linear convolutional architectures.

\subsection{Submersions and higher order stationary points}\label{sec:submersions}
All the sets $\calX$ we consider in this paper contain dense smooth submanifolds. Moreover, even though lifts of such sets $\calX$ do not satisfy {\oneimpliesone} everywhere on the lift, they do so at preimages of points on this submanifold, allowing us to prove much stronger guarantees.
More precisely, we define the following subset of $\calX$.
\begin{definition}\label{def:smth_locus}
    A point $x\in\calX$ is \emph{smooth} if there is an open neighborhood $U\subseteq \calE$ containing $x$ such that $U\cap\calX$ is a smooth embedded submanifold of $\calE$. It is called \emph{nonsmooth} or \emph{singular} otherwise.
    The \emph{smooth locus} of $\calX$, denoted $\calX^{\mathrm{smth}}$, is the set of smooth points of $\calX$.
\end{definition}
For all constraint sets in practical optimization problems we are aware of, $\calX^{\mathrm{smth}}$ is itself a smooth embedded submanifold of $\calE$. In general, it is a union of smooth embedded submanifolds, though possibly of different dimensions.
For example, if $\calX=\Rmnlr$ then $\calX^{\mathrm{smth}} = \RR^{m\times n}_{=r}$ and if $\calX=\Delta^{n-1}$ then $\calX^{\mathrm{smth}}=\Delta^{n-1}_{>0}$ consisting of strictly positive simplex vectors.
All the lifts we consider for these sets in Sections~\ref{sec:sphere_to_simplex} and~\ref{sec:lowrk} indeed satisfy {\oneimpliesone} on the preimages of $\calX^{\mathrm{smth}}$ (though that is not always the case).

If the lift satisfies {\oneimpliesone} at preimages of smooth points, then it is a submersion there and hence preserves not only local minima, but also stationary points of all orders. The following proposition, proved in Section~\ref{sec:1imp1}, formalizes this.
\begin{proposition}\label{prop:1imp1_implies_locimploc_at_smooth}
Let $y\in\varphi^{-1}(\calX^{\mathrm{smth}})\subseteq\calM$. If $\varphi$ satisfies {\oneimpliesone} at $y$, then it also satisfies {\localimplieslocal} and {\kimpliesk} for all $k\geq 1$ at $y$. 
\end{proposition}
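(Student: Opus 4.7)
The plan is to show that, under the hypotheses, $\varphi$ admits a smooth local section around $y$, and then to push the optimality of $y$ for~\eqref{eq:Q} through this section to optimality of $x=\varphi(y)$ for~\eqref{eq:P}. Since $x\in\calX^{\mathrm{smth}}$, there is an open neighborhood $U\subseteq\calE$ of $x$ on which $\calX\cap U$ is a smooth embedded submanifold of $\calE$, so in particular $\T_x\calX$ is a linear subspace. By Theorem~\ref{thm:oneimpliesone_char}, the hypothesis that {\oneimpliesone} holds at $y$ is equivalent to $\im\D\varphi(y)=\T_x\calX$. Restricting to a neighborhood of $y$ small enough that $\varphi$ takes values in $\calX\cap U$, we may view $\varphi$ as a smooth map between the two manifolds $\calM$ and $\calX\cap U$; so viewed, its differential at $y$ is surjective, and $\varphi$ is a submersion at $y$. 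The local submersion theorem then produces open neighborhoods $V\subseteq\calX\cap U$ of $x$ and $W\subseteq\calM$ of $y$ together with a smooth section $s\colon V\to W$ satisfying $s(x)=y$ and $\varphi\circ s=\id_V$.

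Given $s$, the {\localimplieslocal} conclusion follows at once: submersions are open maps, so $\varphi$ is open at $y$ and Theorem~\ref{thm:localimplocalcharact} applies. Equivalently, if $y$ is a local minimum of $g$ on $W$, then for every $x'\in V$ one has $f(x')=f(\varphi(s(x')))=g(s(x'))\geq g(y)=f(x)$, so $x$ is a local minimum of $f$ on the neighborhood $V$ of $x$ in $\calX$.

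For {\kimpliesk}, let $c\colon(-\varepsilon,\varepsilon)\to\calX$ be any smooth curve with $c(0)=x$, which, after shrinking $\varepsilon$, we may assume lies in $V$. Set $\gamma\coloneqq s\circ c$; this is a smooth curve in $\calM$ with $\gamma(0)=y$ and $\varphi\circ\gamma=c$ by the section property. Therefore $f\circ c=f\circ\varphi\circ\gamma=g\circ\gamma$ as real-valued functions of $t$, so their Taylor coefficients at $t=0$ agree to all orders. The $k$-th order necessary conditions for $y$ to be a local minimum of $g$, which hold by hypothesis, constrain the Taylor coefficients of $g\circ\gamma$ at $t=0$ for every smooth curve $\gamma$ through $y$ in $\calM$; specializing to curves of the form $s\circ c$ yields the corresponding constraints on the Taylor coefficients of $f\circ c$ at $t=0$ for every smooth curve $c$ through $x$ in $\calX$. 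Hence $x$ is $k$-critical for $f$. The main obstacle is the passage from the pointwise algebraic condition $\im\D\varphi(y)=\T_x\calX$ to the geometric existence of a smooth section near $y$: this is precisely where the hypothesis $x\in\calX^{\mathrm{smth}}$ is used, ensuring that $\T_x\calX$ is a genuine tangent space so that the submersion theorem, formulated between smooth manifolds, applies.
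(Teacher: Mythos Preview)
The proposal is correct and follows essentially the same approach as the paper: both use the {\oneimpliesone} characterization to conclude that $\varphi$, viewed as a map into the embedded submanifold $\calX\cap U$, is a submersion at $y$, then invoke openness of submersions for {\localimplieslocal} and curve-lifting (via local sections, which is~\cite[Thm.~4.26]{lee_smooth}) for {\kimpliesk}. Your phrasing ``the $k$-th order necessary conditions for $y$ to be a local minimum of $g$, which hold by hypothesis'' is slightly imprecise---the hypothesis is that $y$ is $k$-critical, not that it is a local minimum---but the intended meaning and the argument are sound.
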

Here {\kimpliesk} is defined analogously to Definition~\ref{def:desirable_lifts}, where $k$th-order stationarity (or ``$k$-criticality'' for short) of $x\in\calX^{\mathrm{smth}}$ is defined using curves similarly to 1- and 2-criticality~\cite[\S3.1.1]{cartis2018second}. This property can be used in proofs of benign nonconvexity. 

\begin{remark}[Relations between lift properties]\label{rmk:rels_between_props}
    Aside from Proposition~\ref{prop:1imp1_implies_locimploc_at_smooth}, the only relation between the three properties in Definition~\ref{def:desirable_lifts} is that {\oneimpliesone} at $y$ implies {\twoimpliesone} at $y$ (since 2-critical points are 1-critical).
    None of the other possible implications hold in general:
    The desingularization lift~\eqref{eq:desing_lift} shows that {\twoimpliesone} at $y$ implies neither {\oneimpliesone} nor {\localimplieslocal} at $y$ in general.
    The example $\varphi(x) = x^3$ viewed as a lift from $\calM=\RR$ to $\calX=\RR$ satisfies {\localimplieslocal} at the origin but neither {\twoimpliesone} nor {\oneimpliesone}, hence {\localimplieslocal} does not imply the other two properties.
    Finally, the standard parametrization of the cochleoid curve~\cite{snail_curve} satisfies {\oneimpliesone} but not {\localimplieslocal} at all preimages of the origin, hence {\oneimpliesone} does not imply {\localimplieslocal}.
\end{remark}

Submersions between smooth manifolds, including quotients by group actions~\cite[\S9.2]{optimOnMans} and several lifts arising in practice (Example~\ref{ex:submersions}), satisfy {\localimplieslocal} and {\kimpliesk} for all $k\geq 1$~\cite[Prop.~9.6]{optimOnMans}. Therefore, a lift $\varphi$ composed with a submersion $\psi$ as $\varphi \circ \psi$ 
inherits the properties of $\varphi$. We study such compositions of lifts in Section~\ref{sec:composition_of_lifts}, and apply our results to a composition used in the robotics and computer vision literature~\cite{shohan_rots} in Example~\ref{ex:shohan_rots}.

\section{Characterizations of lifts}\label{sec:lifts}
In this section, we relate the landscapes of~\eqref{eq:P} and~\eqref{eq:Q} and prove the characterizations of our lift properties stated in Section~\ref{sec:main_results}. To this end, we formally define the different types of desirable points we consider.
We first define the  (\emph{contingent} or \emph{Bouligand}) tangent cone\footnote{In this paper, a cone is a set $K$ such that $x\in K\implies \alpha x\in K$ for all $\alpha>0$.}~\cite[\S2.7]{clarke2008nonsmooth}.
\begin{definition} \label{def:tangentcone}
	The \emph{tangent cone} to $\calX$ at $x \in \calX$ is the set
	\begin{align*}
		\T_x\calX & = \left\{ v = \lim_{i \to \infty} \frac{x_i - x}{\tau_i} : x_i \in \calX, \tau_i > 0 \textrm{ for all } i, \tau_i \to 0 \right\}\subseteq \calE.
	\end{align*}
    This is a closed (not necessarily convex) cone~\cite[Lem.~3.12]{nonlin_optim}.
\end{definition}
In particular, if $\gamma$ is a differentiable curve in $\calX$ with $\gamma(0) = x$, then $\gamma'(0)\in\T_x\calX$.
If $x$ is a smooth point of $\calX$ (Definition~\ref{def:smth_locus}), then $\T_x\calX$ is the usual tangent space to $\calX$ at $x$~\cite[Ex.~6.8]{rockafellar2009variational}.

\begin{definition}[Desirable points for~\eqref{eq:P}]\label{def:downstairs_basic}
A point $x\in\calX$ is a
    \begin{enumerate}[(a)]
        \item \emph{global minimum} for~\eqref{eq:P} if $f(x)=\min_{x'\in\calX}f(x')$.
        \item \emph{local minimum} for~\eqref{eq:P} if there is a neighborhood $U\subseteq\calX$ of $x$ such that $f(x)=\min_{x'\in U}f(x')$.
        \item \emph{(first-order) stationary point} for~\eqref{eq:P} if $\D f(x)[v]\geq 0$ for all $v\in \T_x\calX$, or equivalently, if $\nabla f(x)$ is in the dual $(\T_x\calX)^*$ of the tangent cone.
    \end{enumerate}
\end{definition}
In words, $x$ is stationary if the cost function is non-decreasing to first order along all tangent directions at $x$.
Local minima of~\eqref{eq:P} are stationary~\cite[Thm.~3.24]{nonlin_optim}. 
The dual of a cone $K\subseteq\calE$ contained in a Euclidean space $\calE$ with inner product $\langle\cdot,\cdot\rangle$ is defined by
\begin{align*}
    K^* = \{x\in\calE: \langle x,x'\rangle\geq 0 \textrm{ for all } x'\in K\}.
\end{align*}
The equivalence in part (c) then follows since $\D f(x)[v]=\langle\nabla f(x),v\rangle$ by definition of the (Euclidean) gradient $\nabla f(x)$.
We use the following properties of dual cones throughout (see~\cite[Prop.~4.5]{deutsch2012best} for proofs):
\begin{itemize}
	\item The dual cone is always a closed convex cone.
	\item If $K_1\subseteq K_2$, then $K_2^*\subseteq K_1^*$.
	\item The bidual cone $K^{**} = (K^*)^*$ of $K$ is equal to the closure of its convex hull: $K^{**}=\overline{\mathrm{conv}}(K)$. In particular, $K^{**}\supseteq K$.
	\item If $K$ is a linear space, then its dual $K^*$ is equal to its orthogonal complement $K^{\perp}$.
\end{itemize}

Next, we define desirable points for~\eqref{eq:Q}. 
\begin{definition}[Desirable points for~\eqref{eq:Q}]\label{def:criticality_on_lift}
\begin{enumerate}[align=left]
    \item[]
    \item[(a)+(b)] Global and local minima for~\eqref{eq:Q} are defined exactly as for~\eqref{eq:P}. 
    \item[(c)] A point $y\in\calM$ is \emph{first-order stationary} (or ``1-critical'') for~\eqref{eq:Q} if for each smooth curve $c\colon \RR\to\calM$ satisfying $c(0)=y$, we have $(g\circ c)'(0)\geq 0$, or equivalently,\footnote{If $(g\circ c)'(0)>0$, let $\widetilde c(t)=c(-t)$ and note that $(g\circ\widetilde c)'(0)<0$.} $(g\circ c)'(0)=0$.
    
    \item[(d)] A point $y\in\calM$ is \emph{second-order stationary} (or ``2-critical'') for~\eqref{eq:Q} if it is 1-critical and $(g\circ c)''(0)\geq 0$ for all smooth curves $c\colon \RR\to\calM$ satisfying $c(0)=y$.
\end{enumerate}
\end{definition}
If $\calM$ is embedded in a linear space, first-order stationarity in Definition~\ref{def:criticality_on_lift}(c) coincides with Definition~\ref{def:downstairs_basic}(c) by~\cite[Ex.~6.8]{rockafellar2009variational}. 
Definition~\ref{def:criticality_on_lift} can be rephrased in terms of the Riemannian gradient and Hessian of $g$, as follows.
\begin{proposition}[{\cite[\S4.2, \S6.1]{optimOnMans}}]
A point $y\in\calM$ is $1$-critical for~\eqref{eq:Q} if and only if $\nabla g(y)=0$. It is 2-critical if and only if $\nabla g(y)=0$ and $\nabla^2g(y)\succeq0$. 
\end{proposition}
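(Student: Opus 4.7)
The plan is to reduce both equivalences to the chain rule on manifolds applied to smooth curves, exploiting the fact that the map sending a smooth curve $c\colon\RR\to\calM$ with $c(0)=y$ to its velocity $c'(0)\in\T_y\calM$ is surjective onto the tangent space.

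First I would handle 1-criticality. By the Riemannian chain rule, for any smooth curve $c$ with $c(0)=y$ one has $(g\circ c)'(0) = \innersmall{\nabla g(y)}{c'(0)}_y$, where $\langle\cdot,\cdot\rangle_y$ denotes the Riemannian metric on $\T_y\calM$. Since for every $v\in\T_y\calM$ there exists a smooth curve $c$ with $c(0)=y$ and $c'(0)=v$, the condition that $(g\circ c)'(0)=0$ for all such $c$ is equivalent to $\innersmall{\nabla g(y)}{v}_y=0$ for all $v\in\T_y\calM$, hence equivalent to $\nabla g(y)=0$. This proves the first equivalence.

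Next I would handle 2-criticality. Assume throughout this step that $\nabla g(y)=0$, which by the previous step is equivalent to 1-criticality. The key identity is the second-order chain rule, namely that for any smooth curve $c$ with $c(0)=y$,
\[
(g\circ c)''(0) \;=\; \innersmall{\nabla^2 g(y)[c'(0)]}{c'(0)}_y + \innersmall{\nabla g(y)}{c''(0)}_y,
\]
where $c''(0)$ is the intrinsic (Levi-Civita) acceleration. Under $\nabla g(y)=0$ the second term vanishes and the identity reduces to $(g\circ c)''(0)=\innersmall{\nabla^2 g(y)[c'(0)]}{c'(0)}_y$. Again using that every $v\in\T_y\calM$ arises as $c'(0)$ for some smooth curve $c$, requiring $(g\circ c)''(0)\geq 0$ for all $c$ is equivalent to $\innersmall{\nabla^2 g(y)[v]}{v}_y\geq 0$ for all $v\in\T_y\calM$, i.e., $\nabla^2 g(y)\succeq 0$.

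The main (minor) obstacle is the acceleration term $\innersmall{\nabla g(y)}{c''(0)}_y$ in the second-order chain rule: in general, $(g\circ c)''(0)$ is not simply the quadratic form of $\nabla^2 g(y)$ on $c'(0)$, and $c''(0)$ depends on a choice of connection. However, this difficulty disappears entirely once we use 1-criticality, since the obstructing term is annihilated by $\nabla g(y)=0$. The cited references \cite[\S4.2, \S6.1]{optimOnMans} provide the chain-rule identities above in full detail; the proof is essentially the content of those sections.
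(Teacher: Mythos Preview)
Your proposal is correct and follows the standard textbook argument. The paper does not actually give a proof of this proposition; it merely cites \cite[\S4.2, \S6.1]{optimOnMans}, and your argument is precisely the content of those sections. In fact, the same chain-rule identities you use reappear in the paper's proof of Lemma~\ref{lem:grad_and_hess_g}, where curves with $c_v''(0)=0$ (e.g., geodesics) are chosen so the acceleration term drops out directly.
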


We proceed to study the connections between desirable points for~\eqref{eq:Q} and~\eqref{eq:P}. As mentioned in Section~\ref{sec:main_results}, the connection between global minima of~\eqref{eq:Q} and~\eqref{eq:P} is straightforward.
\begin{proposition}\label{prop:global_min_equiv}
	A point $y \in \calM$ is a global minimum of~\eqref{eq:Q} if and only if $x = \varphi(y)$ is a global minimum of~\eqref{eq:P}.
\end{proposition}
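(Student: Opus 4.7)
The plan is to unfold the definitions and use the two defining properties of a lift from Definition~\ref{def:lift}: surjectivity of $\varphi$ onto $\calX$ and the factorization $g = f \circ \varphi$. Once these are combined, the equivalence reduces to a one-line rewriting, with no nontrivial analytic content.

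For the forward direction, I would take $y \in \calM$ to be a global minimum of~\eqref{eq:Q}, so that $g(y) \leq g(y')$ for every $y' \in \calM$. To show $x = \varphi(y)$ is a global minimum of~\eqref{eq:P}, I would pick an arbitrary $x' \in \calX$ and use surjectivity $\varphi(\calM) = \calX$ to write $x' = \varphi(y')$ for some $y' \in \calM$. Then $f(x') = f(\varphi(y')) = g(y') \geq g(y) = f(\varphi(y)) = f(x)$, which gives what we want.

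For the reverse direction, I would assume $x = \varphi(y)$ is a global minimum of~\eqref{eq:P} and check the same inequality for $g$: for any $y' \in \calM$, the image $\varphi(y')$ lies in $\calX$ by definition, so $g(y') = f(\varphi(y')) \geq f(\varphi(y)) = g(y)$. Hence $y$ is a global minimum of~\eqref{eq:Q}.

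There is no real obstacle here; the statement is essentially a tautology enforced by the two conditions in Definition~\ref{def:lift}. The only thing worth noting is that this is precisely where the surjectivity hypothesis $\varphi(\calM) = \calX$ is used in an essential way, whereas the reverse implication uses only the weaker inclusion $\varphi(\calM) \subseteq \calX$. This asymmetry is worth flagging because it is in stark contrast to the other properties treated later in the paper, which hinge on much finer local behavior of $\varphi$.
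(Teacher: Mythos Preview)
Your proof is correct and takes essentially the same approach as the paper: both rely on the surjectivity $\varphi(\calM)=\calX$ and the factorization $g=f\circ\varphi$. The paper compresses the two directions into a single line by observing that $\inf_{y\in\calM}g(y)=\inf_{x\in\calX}f(x)$, but the content is identical to what you wrote.
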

\begin{proof}
    Because $\varphi(\calM)=\calX$, we have $\inf_{y\in\calM}g(y)=\inf_{y\in\calM}f(\varphi(y))=\inf_{x\in\calX}f(x)=:p^*$. Therefore, $y$ is a global minimum for~\eqref{eq:Q} iff $g(y)=f(x)=p^*$ which happens iff $x$ is a global minimum for~\eqref{eq:P}. 
\end{proof}
Since computing global minima is hard, the remainder of this section is devoted to characterizing the properties in Definition~\ref{def:desirable_lifts} that yield connections between the other types of points.
\subsection{Local minima}
In this section, we investigate the relationship between the local minima of~\eqref{eq:P} and those of~\eqref{eq:Q}.
Preimages of local minima on $\calX$ are always local minima on $\calM$ merely because $\varphi$ is continuous.
\begin{proposition}\label{prop:preim_of_loc_is_loc}
	Let $x$ be a local minimum for~\eqref{eq:P}.
	Any $y \in \varphi^{-1}(x)$ is a local minimum for~\eqref{eq:Q}.
\end{proposition}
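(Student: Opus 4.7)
The plan is to use continuity of $\varphi$ to transport a local-minimum neighborhood of $x$ in $\calX$ up to a local-minimum neighborhood of $y$ in $\calM$. This is essentially a one-line argument once the definitions are unpacked, so the proof will be short; the key point is simply that $g = f \circ \varphi$ automatically inherits local minimality at any preimage of $x$ because $\varphi$ is continuous (indeed, smoothness is not needed here).

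First I would write down what the local-minimum hypothesis on $x$ gives us: a neighborhood $U \subseteq \calX$ of $x$ (in the subspace topology inherited from $\calE$) such that $f(x) \leq f(x')$ for every $x' \in U$. Next, I would set $V = \varphi^{-1}(U) \subseteq \calM$. Continuity of $\varphi \colon \calM \to \calX$ (which holds since $\varphi$ is smooth) together with the fact that $U$ is open in $\calX$ and contains $x = \varphi(y)$ shows that $V$ is an open neighborhood of $y$ in $\calM$.

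Finally, for any $y' \in V$ we have $\varphi(y') \in U$, so
\begin{equation*}
    g(y') \;=\; f(\varphi(y')) \;\geq\; f(x) \;=\; f(\varphi(y)) \;=\; g(y),
\end{equation*}
which shows $y$ is a local minimum for~\eqref{eq:Q}. There is no real obstacle here; the statement holds at this level of generality precisely because $\varphi$ is continuous and surjective onto $\calX$, and no hypothesis on smoothness, openness, or on the structure of $\calM$ is required. (This is also the only direction that holds without further assumptions on $\varphi$; the converse direction is exactly what the subsequent \localimplieslocal\ property is designed to capture.)
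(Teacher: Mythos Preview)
Your proof is correct and is essentially identical to the paper's own proof: both take a neighborhood $U$ of $x$ on which $f$ is minimized at $x$, pull it back via continuity of $\varphi$ to a neighborhood $\varphi^{-1}(U)$ of $y$, and then compare values of $g = f\circ\varphi$ there. The only cosmetic difference is notation ($V$ versus $\calU$); the argument and level of generality are the same.
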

\begin{proof}
	There exists a neighborhood $U$ of $x$ in $\calX$ such that $f(x) \leq f(x')$ for all $x' \in U$.
	Since $\varphi \colon \calM \to \calX$ is continuous, the set $\calU = \varphi^{-1}(U)$ is a neighborhood of $y$ in $\calM$.
	Pick an arbitrary $y' \in \calU$: it satisfies $\varphi(y') = x'$ for some $x' \in U$.
	Hence, $g(y) = f(x) \leq f(x') = g(y')$, i.e., $y$ is a local minimum of~\eqref{eq:Q}.
\end{proof}
Unfortunately, lifting can introduce \emph{spurious} local minima, that is, local minima for~\eqref{eq:Q} that exist only because of the lift and not because they were present in~\eqref{eq:P} to begin with.
\begin{example}[Nodal cubic]\label{ex:nodal_cubic_bad}
Consider the nodal cubic 
\begin{align}
    \calX = \{x\in\RR^2:x_2^2=x_1^2(x_1+1)\},
    \label{eq:nodal_cubic}
\end{align}
and the following lift,\footnote{The curve $\calM$ is obtained by blowing up $\calX$ at the origin in the sense of algebraic geometry~\cite[Ch.~17]{harris1992algebraic}.} as depicted in Figure~\ref{fig:alpha_curve}:
\begin{align}
    \calM = \{y\in\RR^3:y_1=y_3^2-1,\ y_2=y_1y_3\},\qquad \varphi(y_1,y_2,y_3)=(y_1,y_2).
    \label{eq:node_blowup}
\end{align}
Let $f(x)=-x_1-x_2$. Then the point $y=(0,0,1)$ is a local minimum for $g=f\circ\varphi$ but $\varphi(y)=(0,0)$ is not even stationary for $f$. Indeed, we have $(1,1)\in\T_{(0,0)}\calX$ and $\D f(0,0)[(1,1)]=-2<0$.
\begin{figure}[h]
    \centering
    \includegraphics[width=.5\linewidth]{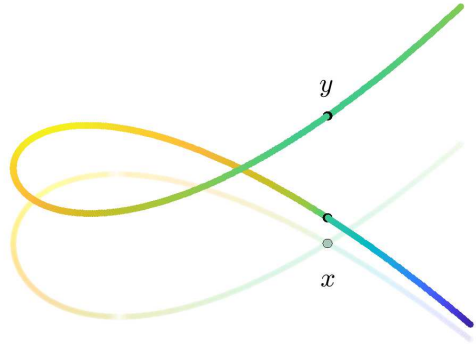}
    \caption{Nodal cubic in $\RR^2$ as the shadow of its lift in $\RR^3$, colored by the value of the function $f(x)=-x_1-x_2$. The highlighted points are $x=(0,0)$ (not stationary for~\eqref{eq:P}), and its two preimages on the lift, including $y=(0,0,1)$ (a spurious local minimum for~\eqref{eq:Q}).}
    \label{fig:alpha_curve}
\end{figure}
\end{example}
To ensure that a lift does not introduce spurious local minima, we need to verify that it satisfies the {\localimplieslocal} property (Definition~\ref{def:desirable_lifts}(a)).
We proceed to prove the easy direction of Theorem~\ref{thm:localimplocalcharact} stating that openness implies {\localimplieslocal}. The converse is more involved and is deferred to Appendix~\ref{apdx:liftsopen}.

\begin{proof}[Proof of Theorem~\ref{thm:localimplocalcharact}]
	Assume $\varphi$ is open at $y$, and that $y$ is a local minimum for~\eqref{eq:Q}.
	Then, there exists a neighborhood $\calU$ of $y$ on $\calM$ such that $g(y) \leq g(y')$ for all $y' \in \calU$.
	The set $U = \varphi(\calU)$ is a neighborhood of $x = \varphi(y)$ in $\calX$ by openness of $\varphi$ at $y$.
	Moreover, each $x' \in U$ is of the form $x' = \varphi(y')$ for some $y' \in \calU$.
	Therefore, $f(x) = g(y) \leq g(y') = f(x')$ for all $x' \in U$, that is, $x$ is a local minimum of~\eqref{eq:P}.
	For the converse, see Theorem~\ref{thm:openmapsonmanifolds}.
\end{proof}
Not all lifts of interest are open. In particular, all lifts of low-rank matrices in Section~\ref{sec:lowrk} as well as the neural network lifts in Section~\ref{sec:NNs} fail to be open. 


\begin{remark}\label{rmk:ESLP}
In Appendix~\ref{apdx:liftsopen}, we introduce an equivalent condition for openness of $\varphi$ at $y$ that we call the Subsequence Lifting Property (SLP), see Definition~\ref{def:preserv_loc_minima}(3); we find that it is sometimes easier to check. 
For example, Burer and Monteiro prove that the lift~\eqref{eq:BM_lift} satisfies {\localimplieslocal} in~\cite[Prop.~2.3]{burer2005local} by (in our terminology) proving SLP holds. 
\end{remark}
We note in passing that all continuous, surjective, open maps are quotient maps, hence if $\varphi$ is a smooth lift of $\calX$ satisfying {\localimplieslocal} then it is a quotient map from $\calM$ to $\calX$.

\subsection{Stationary points}\label{sec:kimpliesone}
In this section, we investigate the relationship between the first- and second-order stationary points for~\eqref{eq:Q} and (first-order) stationary points for~\eqref{eq:P}. To that end, we begin by relating the (Riemannian) gradient and Hessian of $g=f\circ\varphi$ to the (Euclidean) counterparts of $f$. 
This relation depends on the first and second derivatives of the lift $\varphi$.
\begin{definition}\label{def:LQ_maps}
    Let $\varphi\colon\calM\to\calX$ be a smooth lift and fix $y\in\calM$. For each $v\in \T_y\calM$, choose a curve $c_v$ on $\calM$ satisfying $c(0)=y$ and $c'(0)=v$. Define maps $\Lmap_y, \Qmap_y\colon\T_y\calM\to\calE$ by
    \begin{align*}
        \Lmap_y(v) & = (\varphi\circ c_v)'(0), &
        \Qmap_y(v) & = (\varphi\circ c_v)''(0).
    \end{align*}
    We write $\Lmap_y^{\varphi}$ and $\Qmap_y^{\varphi}$ when we wish to emphasize the lift.
\end{definition}
As a point of notation: $\varphi\circ c_v$ is a curve in $\calE$ hence $(\varphi\circ c_v)''$ denotes its Euclidean acceleration.
In contrast, $c_v$ is a curve on $\calM$ hence $c_v''$ denotes its Riemannian acceleration, see~\cite[\S5.8, \S8.12]{optimOnMans}.

Of course, $\Lmap_y$ is simply the differential $\D\varphi(y)$, and is therefore linear and independent of the choice of curves $c_v$. 
The map $\Qmap_y$ will play an important role in characterizing {\twoimpliesone} in Section~\ref{sec:2imp1}, where we also clarify its inconsequential dependence on the choice of curve $c_v$.
We explain how to compute $\Lmap_y$ and $\Qmap_y$ without explicitly choosing curves $c_v$ in Section~\ref{sec:Qmap_comp}. 

The gradients and Hessians of $f$ and $g = f \circ \varphi$ are neatly related as follows in terms of $\Lmap_y$.
\begin{definition}\label{def:phi_w}
    For any $w\in\calE$, define $\varphi_w\colon\calM\to\RR$ by $\varphi_w(y)=\langle w,\varphi(y)\rangle$.    
\end{definition}
\begin{lemma}\label{lem:grad_and_hess_g}
For any twice differentiable cost $f\colon\calE\to\RR$, any $y\in\calM$, and $x=\varphi(y)$, we have
\begin{align*}
    \nabla g(y) & = \Lmap_y^*(\nabla f(x)), &
    \nabla^2g(y) & = \Lmap_y^*\circ\nabla^2f(x)\circ\Lmap_y + \nabla^2\varphi_{\nabla f(x)}(y),
\end{align*}
where $\Lmap_y^*\colon\calE\to\T_y\calM$ is the adjoint of $\Lmap_y$. 
\end{lemma}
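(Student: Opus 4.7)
The plan is to derive both identities by applying the standard Riemannian calculus identity
\[
  (h \circ c)'(0) = \langle \nabla h(y), v\rangle_y, \qquad (h \circ c)''(0) = \langle \nabla^2 h(y)\,v, v\rangle_y + \langle \nabla h(y), c''(0)\rangle_y
\]
(valid for any $C^2$ function $h \colon \calM \to \reals$ and any smooth curve $c$ on $\calM$ with $c(0) = y$, $c'(0) = v$, where $c''(0) \in \T_y\calM$ is the Riemannian acceleration), combined with the Euclidean chain rule applied to the composition $g = f \circ \varphi$ viewed as a real-valued map along the curve $\varphi \circ c \colon \reals \to \calE$. The adjoint $\Lmap_y^*$ is defined with respect to the Riemannian inner product on $\T_y\calM$ and the Euclidean inner product on $\calE$, so that $\langle \Lmap_y^* u, v\rangle_y = \langle u, \Lmap_y v\rangle$ for every $u \in \calE$ and $v \in \T_y\calM$.

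For the gradient, I fix $v \in \T_y\calM$ and a smooth curve $c$ with $c(0) = y$, $c'(0) = v$. Since $\varphi \circ c$ is a smooth $\calE$-valued curve, the Euclidean chain rule gives $(g \circ c)'(0) = \D f(x)[(\varphi \circ c)'(0)] = \langle \nabla f(x), \Lmap_y v\rangle = \langle \Lmap_y^* \nabla f(x), v\rangle_y$. Comparing with $(g \circ c)'(0) = \langle \nabla g(y), v\rangle_y$ and letting $v$ range over $\T_y\calM$ yields $\nabla g(y) = \Lmap_y^*(\nabla f(x))$.

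For the Hessian, I differentiate once more. The Euclidean chain rule in $\calE$ gives
\[
  (g \circ c)''(0) = \langle \nabla^2 f(x)\, \Lmap_y v, \Lmap_y v\rangle + \langle \nabla f(x), (\varphi \circ c)''(0)\rangle,
\]
where $(\varphi \circ c)''(0)$ is a Euclidean acceleration. To convert the second term into intrinsic quantities on $\calM$, I apply the Riemannian second-derivative identity to the scalar function $\varphi_w$ with $w = \nabla f(x)$: since $\varphi_w \circ c = \langle w, \varphi \circ c\rangle$, differentiating twice yields $\langle w, (\varphi \circ c)''(0)\rangle = \langle \nabla^2 \varphi_w(y)\,v, v\rangle_y + \langle \nabla \varphi_w(y), c''(0)\rangle_y$, and the gradient formula (just proven) applied to the linear cost $x \mapsto \langle w, x\rangle$ gives $\nabla \varphi_w(y) = \Lmap_y^* w$. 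Combining everything, and using the Riemannian Hessian identity for $g$ together with the already-established $\nabla g(y) = \Lmap_y^* \nabla f(x)$, the two $c''(0)$-terms cancel, leaving
\[
  \langle \nabla^2 g(y)\,v, v\rangle_y = \langle \Lmap_y^* \nabla^2 f(x) \Lmap_y v, v\rangle_y + \langle \nabla^2 \varphi_{\nabla f(x)}(y)\,v, v\rangle_y
\]
for every $v \in \T_y\calM$. Both sides are quadratic forms in $v$ associated to self-adjoint endomorphisms of $\T_y\calM$, so polarization gives the claimed operator identity.

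The only mildly delicate step is the bookkeeping around $(\varphi \circ c)''(0)$: this is a Euclidean acceleration in $\calE$ that depends on the choice of curve $c$ (not just on $v$), whereas the Riemannian Hessian $\nabla^2 \varphi_w(y)$ depends only on $v$. The $\varphi_w$-trick handles this cleanly by packaging the curve-dependent piece into the extra term $\langle \Lmap_y^* w, c''(0)\rangle_y$, which is exactly what is needed to match the curve-dependent piece on the $g$-side and cancel it. No further smoothness assumptions on $\calM$ are needed beyond those already in force, and the argument is coordinate-free.
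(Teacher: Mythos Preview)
Your proof is correct and follows essentially the same approach as the paper's: both compute $(g\circ c)'(0)$ and $(g\circ c)''(0)$ via the Euclidean chain rule along $\varphi\circ c$ and match against the Riemannian gradient/Hessian identities on $\calM$. The only difference is that the paper chooses $c$ to be a geodesic (so $c''(0)=0$), which kills the $\langle \nabla g(y), c''(0)\rangle$ and $\langle \nabla\varphi_w(y), c''(0)\rangle$ terms outright and spares the cancellation step you carry out explicitly.
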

\begin{proof}
    For any $v\in\T_y\calM$, let $c_v$ be a smooth curve on $\calM$ satisfying $c_v(0)=y$, $c_v'(0)=v$ and $c_v''(0)=0$ (e.g., let $c_v$ be a geodesic).
    Let $\gamma_v=\varphi\circ c_v$: it satisfies $\gamma_v(0)=x$ and  $\gamma_v'(0)=\Lmap_y(v)$.
    Then
    \begin{align*}
        \langle\nabla g(y),v\rangle = (g\circ c_v)'(0) = (f\circ\gamma_v)'(0) = \langle\nabla f(x),\Lmap_y(v)\rangle = \langle\Lmap_y^*(\nabla f(x)),v\rangle.
    \end{align*} 
    Since this holds for all $v\in\T_y\calM$, we conclude that $\nabla g(y)=\Lmap_y^*(\nabla f(x))$. Next,
    \begin{align*}
        \langle\nabla^2g(y)[v],v\rangle = (g\circ c_v)''(0) = (f\circ\gamma_v)''(0) = \langle\nabla^2f(x)[\Lmap_y(v)],\Lmap_y(v)\rangle + \langle \nabla f(x),\gamma_v''(0)\rangle,
    \end{align*}
    where the first equality uses $c_v''(0)=0$, see~\cite[\S5.9]{optimOnMans}.
    On the other hand, with Definition~\ref{def:phi_w},
    \begin{align*}
        \langle\nabla^2\varphi_{\nabla f(x)}(y)[v],v\rangle = (\varphi_{\nabla f(x)}\circ c_v)''(0) = \left.\frac{\mathrm{d}^2}{\mathrm{d}t^2}\langle \nabla f(x),\gamma_v(t)\rangle\right|_{t=0} = \langle \nabla f(x),\gamma_v''(0)\rangle,
    \end{align*}
    hence
    \begin{align*}
        \langle\nabla^2g(y)[v],v\rangle &= \langle\nabla^2f(x)[\Lmap_y(v)],\Lmap_y(v)\rangle + \langle\nabla^2\varphi_{\nabla f(x)}(y)[v],v\rangle\\ &= \inner{\big(\Lmap_y^*\circ\nabla^2f(x)\circ\Lmap_y+\nabla^2\varphi_{\nabla f(x)}(y)\big)[v]}{v}.
    \end{align*}
    Since this holds for all $v\in\T_y\calM$ and both $\nabla^2g(y)$ and $\Lmap_y^*\circ\nabla^2f(x)\circ\Lmap_y+\nabla^2\varphi_{\nabla f(x)}(y)$ are self-adjoint linear maps on $\T_y\calM$, we conclude that they are equal.
\end{proof}
We turn to proving our characterizations of {\kimpliesone} for $k=1,2$ announced in Section~\ref{sec:main_results}.

\subsubsection{{\oneimpliesone}: Lifts preserving 1-critical points}\label{sec:1imp1}
Preimages of stationary points on $\calX$ are always 1-critical on $\calM$.
We show this after a helpful lemma. 
\begin{lemma}\label{lem:Ay1_props}
    Fix $y\in\calM$ and let $x=\varphi(y)$. Then $\im\Lmap_y\subseteq \T_x\calX$. Moreover, $y$ is 1-critical for~\eqref{eq:Q} if and only if $\nabla f(x)\in(\im\Lmap_y)^{\perp} = (\im\Lmap_y)^*$.
\end{lemma}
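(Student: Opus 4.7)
The plan is to prove the two claims separately and use Lemma~\ref{lem:grad_and_hess_g} for the characterization of $1$-criticality.

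First, I would prove the inclusion $\im\Lmap_y \subseteq \T_x\calX$. Take any $v \in \T_y\calM$ and a smooth curve $c_v$ on $\calM$ with $c_v(0)=y$ and $c_v'(0)=v$, so that $\Lmap_y(v) = (\varphi\circ c_v)'(0)$. The curve $\gamma = \varphi\circ c_v \colon \RR \to \calE$ lies entirely in $\calX$ since $\varphi$ maps into $\calX$, and $\gamma(0)=x$. Picking a positive sequence $\tau_i \to 0$ and setting $x_i = \gamma(\tau_i) \in \calX$, the difference quotients $(x_i - x)/\tau_i$ converge to $\gamma'(0) = \Lmap_y(v)$, which by Definition~\ref{def:tangentcone} places $\Lmap_y(v)$ in $\T_x\calX$.

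Next, I would establish the characterization of $1$-criticality. By Definition~\ref{def:criticality_on_lift}(c), $y$ is $1$-critical for~\eqref{eq:Q} iff $\nabla g(y) = 0$. Invoking the gradient formula in Lemma~\ref{lem:grad_and_hess_g}, this reads $\Lmap_y^*(\nabla f(x)) = 0$, i.e., $\nabla f(x) \in \ker \Lmap_y^*$. By the standard identity for adjoints of linear maps between finite-dimensional inner-product spaces, $\ker \Lmap_y^* = (\im \Lmap_y)^{\perp}$, giving the first of the two claimed equivalent forms.

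Finally, I would justify the identification $(\im\Lmap_y)^{\perp} = (\im\Lmap_y)^*$. Since $\Lmap_y = \D\varphi(y)$ is a linear map and $\T_y\calM$ is a linear space, $\im\Lmap_y$ is a linear subspace of $\calE$; and as noted in the bulleted list of properties of dual cones just after Definition~\ref{def:downstairs_basic}, the dual cone of a linear subspace coincides with its orthogonal complement.

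There is no real obstacle here: every step is either a direct application of Lemma~\ref{lem:grad_and_hess_g}, a one-line linear-algebra identity, or an immediate unpacking of the tangent cone definition. The only point requiring a moment's care is the first part, where one must observe that $(\varphi\circ c_v)'(0)$ actually fits the one-sided limiting form required by Definition~\ref{def:tangentcone} (which is handled by using $\tau_i>0$ and $x_i = (\varphi\circ c_v)(\tau_i)$).
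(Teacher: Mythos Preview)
Your proposal is correct and follows essentially the same approach as the paper: the inclusion $\im\Lmap_y\subseteq\T_x\calX$ is obtained directly from the curve definition of $\Lmap_y$ and Definition~\ref{def:tangentcone}, and the 1-criticality characterization uses $\nabla g(y)=\Lmap_y^*(\nabla f(x))$ from Lemma~\ref{lem:grad_and_hess_g} together with $\ker\Lmap_y^*=(\im\Lmap_y)^\perp$. Your version is slightly more explicit (spelling out the sequence $x_i=\gamma(\tau_i)$ and the reason $(\im\Lmap_y)^\perp=(\im\Lmap_y)^*$), but the argument is the same.
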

\begin{proof}
    The first claim follows from Definition~\ref{def:tangentcone} for the tangent cone $\T_x\calX$ and the fact that $\Lmap_y(v) = (\varphi\circ c_v)'(0)$ for a curve $c_v$ as in Definition~\ref{def:LQ_maps}. 
    For the second claim, $y$ is 1-critical for~\eqref{eq:Q} iff $\nabla g(y)=\Lmap_y^*(\nabla f(x))=0$, or equivalently, $\nabla f(x)\in\ker(\Lmap_y^*)=(\im\Lmap_y)^{\perp}$.
    %
\end{proof}
\begin{proposition}\label{prop:preimage_of_stationary_is_critical}
If $x\in\calX$ is stationary for~\eqref{eq:P}, then any $y\in\varphi^{-1}(x)$ is 1-critical for~\eqref{eq:Q}.
\end{proposition}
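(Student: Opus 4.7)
The plan is to deduce this directly from Lemma~\ref{lem:Ay1_props} together with the definition of stationarity in Definition~\ref{def:downstairs_basic}(c). Since $x$ is stationary for~\eqref{eq:P}, we have $\nabla f(x) \in (\T_x\calX)^*$. By Lemma~\ref{lem:Ay1_props}, $\im \Lmap_y \subseteq \T_x\calX$, so taking duals reverses the inclusion: $(\T_x\calX)^* \subseteq (\im \Lmap_y)^*$. Crucially, $\im \Lmap_y$ is a linear subspace (it is the image of the linear map $\Lmap_y = \D\varphi(y)$), so its dual cone coincides with its orthogonal complement $(\im \Lmap_y)^\perp$. Thus $\nabla f(x) \in (\im \Lmap_y)^\perp$, and the second part of Lemma~\ref{lem:Ay1_props} immediately gives that $y$ is 1-critical for~\eqref{eq:Q}.

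Alternatively, one can argue directly from the curve-based Definition~\ref{def:criticality_on_lift}(c) without invoking the lemma. Given any smooth curve $c$ on $\calM$ with $c(0)=y$, the composition $\gamma = \varphi \circ c$ is a smooth curve in $\calX$ with $\gamma(0)=x$ and $\gamma'(0) = \Lmap_y(c'(0)) \in \T_x\calX$. Stationarity of $x$ gives $\D f(x)[\gamma'(0)] \geq 0$, i.e., $(g\circ c)'(0) \geq 0$. Applying the same reasoning to the reversed curve $\tilde c(t)=c(-t)$ (also a smooth curve through $y$) yields $-(g\circ c)'(0) \geq 0$, so $(g\circ c)'(0) = 0$, matching the equivalent formulation of 1-criticality noted in Definition~\ref{def:criticality_on_lift}(c).

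I would write out the shorter first version, since Lemma~\ref{lem:Ay1_props} has just been proved in the excerpt and the bullet-point properties of dual cones (including $K^* = K^\perp$ for linear $K$) are explicitly listed right before Definition~\ref{def:criticality_on_lift}. There is no real obstacle here; the only subtlety worth flagging is the switch from a one-sided cone inequality to an equality, which is handled cleanly by passing to orthogonal complements in the linear-subspace case. The proof is essentially two lines.
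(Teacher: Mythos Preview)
Your first argument is correct and matches the paper's proof essentially line for line: stationarity gives $\nabla f(x)\in(\T_x\calX)^*$, the inclusion $\im\Lmap_y\subseteq\T_x\calX$ from Lemma~\ref{lem:Ay1_props} dualizes to $(\T_x\calX)^*\subseteq(\im\Lmap_y)^\perp$, and the second part of that lemma finishes it. Your alternative curve-based argument is also fine but unnecessary here.
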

\begin{proof}
    If $x\in\calX$ is stationary for~\eqref{eq:P}, then $\nabla f(x)\in(\T_x\calX)^*$. Since $\T_x\calX\supseteq\im\Lmap_y$, taking duals on both sides we get that $\nabla f(x)\in(\T_x\calX)^*\subseteq(\im\Lmap_y)^{\perp}$, hence $y$ is 1-critical for~\eqref{eq:Q} by Lemma~\ref{lem:Ay1_props}.
\end{proof}

The converse to Proposition~\ref{prop:preimage_of_stationary_is_critical} is false in general.
In fact, Example~\ref{ex:nodal_cubic_bad} shows that a lift need not even map local minima to stationary points on $\calX$. 
We therefore proceed to prove Theorem~\ref{thm:oneimpliesone_char} characterizing the {\oneimpliesone} property.


\begin{proof}[Proof of Theorem~\ref{thm:oneimpliesone_char}]
    Suppose $\im\Lmap_y=\T_x\calX$, so $(\im\Lmap_y)^{\perp}=(\im\Lmap_y)^*=(\T_x\calX)^*$. If $y$ is 1-critical for~\eqref{eq:Q}, then $\nabla f(x)\in (\im\Lmap_y)^{\perp}$ by Lemma~\ref{lem:Ay1_props}. Therefore, $\nabla f(x)\in(\T_x\calX)^*$, which is the definition of $x$ being stationary for~\eqref{eq:P}. Thus, {\oneimpliesone} holds.
    
    Now suppose $\im\Lmap_y\neq \T_x\calX$.
    This implies $(\T_x\calX)^* \neq (\im\Lmap_y)^{\perp}$.
    Indeed, otherwise we would have
    \begin{align*}
        \im\Lmap_y = (\im\Lmap_y)^{\perp\perp}=\left((\im\Lmap_y)^{\perp}\right)^*=(\T_x\calX)^{**} \supseteq \T_x\calX\supseteq\im\Lmap_y,    
    \end{align*}
    which would imply $\im\Lmap_y=\T_x\calX$.
    (The right-most inclusion above is by Lemma~\ref{lem:Ay1_props}.)
    Using $\im\Lmap_y\subseteq\T_x\calX$ again, we see that $(\T_x\calX)^*\subseteq (\im\Lmap_y)^{\perp}$.
    Therefore, the above observations imply that $(\T_x\calX)^*\subsetneq (\im\Lmap_y)^{\perp}$.
    Pick $w\in(\im\Lmap_y)^{\perp}\setminus(\T_x\calX)^*$ and define $f(x')=\langle w,x'\rangle$ for $x'\in\calE$. Then $\nabla f(x)=w\in(\im\Lmap_y)^{\perp}$ so $y$ is 1-critical for~\eqref{eq:Q} by Lemma~\ref{lem:Ay1_props}, but $\nabla f(x)\notin(\T_x\calX)^*$, so $x$ is not stationary for~\eqref{eq:P}. Hence {\oneimpliesone} is not satisfied at $y$. 

    This argument also shows that if $\varphi$ does not satisfy {\oneimpliesone} at $y$, then $y$ is 1-critical for~\eqref{eq:Q} but $x$ is not stationary for~\eqref{eq:P} if and only if $\nabla f(x)\in (\im\Lmap_y)^{\perp}\setminus(\T_x\calX)^*$, showing that if {\oneimpliesone} fails at $y$ then this is witnessed by a linear cost $f$.
\end{proof}

As discussed in Section~\ref{sec:main_results}, Theorem~\ref{thm:oneimpliesone_char} implies that {\oneimpliesone} rarely holds on all of $\calM$.
Nevertheless, {\oneimpliesone} does usually hold at preimages of smooth points, that is, points around which $\calX$ is a smooth embedded submanifold of $\calE$ as in Definition~\ref{def:smth_locus}.
We now prove Proposition~\ref{prop:1imp1_implies_locimploc_at_smooth}, stating that if {\oneimpliesone} holds at such points then {\localimplieslocal} and ``$k\! \Rightarrow\! k$'' hold there as well.
\begin{proof}[Proof of Proposition~\ref{prop:1imp1_implies_locimploc_at_smooth}]
    Let $U\subseteq\calE$ be an open neighborhood of $\varphi(y)$ in $\calE$ such that $U\cap\calX$ is a smooth embedded submanifold of $\calE$. Since $\varphi(\calM)=\calX$, we have $\varphi^{-1}(U\cap\calX)=\varphi^{-1}(U)=:V$, which is open in $\calM$ by continuity of $\varphi$. Therefore, $V$ is also a smooth manifold, since it is an open subset of $\calM$, and $\varphi|_V\colon V\to U\cap\calX$ is a smooth map between smooth manifolds.     
    By Theorem~\ref{thm:oneimpliesone_char}, $\varphi$ satisfies {\oneimpliesone} at $y$ iff $\T_x\calX=\im \D\varphi(y) = \im \D(\varphi|_V)(y)$, where $\varphi|_V$ is viewed as a map $V\to \calE$. 
    Since $U\cap\calX$ is an embedded submanifold of $\calE$, the differential of $\varphi|_V$ viewed as a map $V\to\calE$ coincides with its differential viewed as a map $V\to U\cap\calX$, hence the latter is a submersion near $y$~\cite[Prop.~4.1]{lee_smooth}. 
    By~\cite[Prop.~4.28]{lee_smooth}, this implies $\varphi$ is open at $y$, hence it satisfies {\localimplieslocal} at $y$ by Theorem~\ref{thm:localimplocalcharact}.
    To see that $\varphi$ further satisfies {\kimpliesk} for all $k\geq1$, note that any curve passing through $\varphi(y)$ is the image under $\varphi$ of a curve passing through $y$~\cite[Thm.~4.26]{lee_smooth}, and apply Definition~\ref{def:criticality_on_lift} for $k=1,2$ and~\cite[Eq.~(3.11)]{cartis2018second} for $k>2$. 
\end{proof}
The converse of Proposition~\ref{prop:1imp1_implies_locimploc_at_smooth} fails.
For example, $\varphi(y) = y^3$ viewed as a map $\RR\to\RR$ satisfies {\localimplieslocal} at $y=0$ but not {\oneimpliesone} since $\Lmap_y=0$ for this $y$.
That example also shows that {\oneimpliesone} can fail at the preimage of a smooth point. Likewise, {\oneimpliesone} can hold at the preimage of a nonsmooth point, as the standard parametrization of the cochleoid curve~\cite{snail_curve} shows.
%
The only examples of lifts we know of that satisfy {\oneimpliesone} everywhere are smooth maps between smooth manifolds that are submersions.
\begin{example}[Submersions]\label{ex:submersions}
Examples of submersions in optimization, that is, lifts of the form $\varphi\colon\calM\to\calX$ where $\calX$ is an embedded submanifold of $\calE$ and $\im\D\varphi(y)=\T_{\varphi(y)}\calX$ for all $y\in\calM$, include:
\begin{itemize}
    \item Quotient maps by smooth, free, and proper Lie group actions~\cite[\S9]{optimOnMans},~\cite[\S3.4]{absil_book}, used in particular to optimize over Grassmannians by lifting to Stiefel manifolds~\cite{edelman1998geometry}.
    \item The map $\mathrm{SO}(p)\to\mathrm{St}(p,d)$ taking the first $d$ columns of a rotation matrix, which is used in the rotation averaging algorithm of the robotics paper~\cite{shohan_rots}, see Example~\ref{ex:shohan_rots} below.
    \item Deep orthogonal linear networks, mapping $\mathrm{O}(p)^n\to\mathrm{O}(p)$ by  $\varphi(Q_1,\ldots,Q_n)=Q_1\cdots Q_n$, whose properties are studied in~\cite{ablin2020deep}.
\end{itemize}
Theorem~\ref{thm:oneimpliesone_char} and Proposition~\ref{prop:1imp1_implies_locimploc_at_smooth} show that these lifts satisfy {\oneimpliesone} and {\localimplieslocal}, and hence also {\kimpliesk} for all $k\geq1$.
\end{example}

Failure of a lift to satisfy {\oneimpliesone} means that it may introduce spurious critical points. In the next section, we characterize the {\twoimpliesone} property, which allows algorithms to avoid these spurious points by using second-order information.

\subsubsection{{\twoimpliesone}: Lifts mapping 2-critical points to 1-critical points}\label{sec:2imp1}
Since {\oneimpliesone} fails on many sets of interest, we proceed to study {\twoimpliesone}. As Section~\ref{sec:main_results} demonstrates, this property is satisfied for many interesting lifts. 
We begin by stating an equivalent characterization for {\twoimpliesone} involving the following set.
\begin{definition}\label{def:W_y}
    For $y\in\calM$ and $x=\varphi(y)\in\calX$, define
    \begin{align*}
        W_y = \Big\{ w \in \calE : \textrm{there exists a twice differentiable function } f \colon \calE \to \reals \\ \textrm{such that $\nabla f(x) = w$ and $y$ is 2-critical for~\eqref{eq:Q}} \Big\}.
    \end{align*}
    We write $W_y^{\varphi}$ when we wish to emphasize the lift.
\end{definition}

\begin{theorem}\label{thm:W_equiv_cond}
The lift $\varphi\colon\calM\to\calX$ satisfies {\twoimpliesone} at $y$ if and only if $W_y\subseteq(\T_x\calX)^*$ where $x=\varphi(y)$.
\end{theorem}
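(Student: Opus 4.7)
The plan is essentially to unfold definitions on both sides: each direction of the biconditional is almost tautological once one recognizes that stationarity of $x$ for~\eqref{eq:P} is a condition depending only on $\nabla f(x)$, not on the rest of $f$, and that $W_y$ is precisely the set of gradients $\nabla f(x)$ that can be realized by costs $f$ for which $y$ is 2-critical for~\eqref{eq:Q}.

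For the forward direction, I would assume {\twoimpliesone} holds at $y$ and pick an arbitrary $w \in W_y$. By Definition~\ref{def:W_y}, there exists a twice-differentiable $f \colon \calE \to \reals$ with $\nabla f(x) = w$ such that $y$ is 2-critical for $g = f \circ \varphi$. The {\twoimpliesone} property then delivers that $x = \varphi(y)$ is stationary for~\eqref{eq:P}, which by Definition~\ref{def:downstairs_basic}(c) means $w = \nabla f(x) \in (\T_x\calX)^*$. Since $w$ was arbitrary in $W_y$, this gives $W_y \subseteq (\T_x\calX)^*$.

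For the converse, suppose $W_y \subseteq (\T_x\calX)^*$ and let $f$ be any twice-differentiable cost such that $y$ is 2-critical for~\eqref{eq:Q}. Setting $w = \nabla f(x)$, the witness $f$ itself shows $w \in W_y$ by Definition~\ref{def:W_y}, so by hypothesis $\nabla f(x) \in (\T_x\calX)^*$, i.e., $x$ is stationary for~\eqref{eq:P}. This is precisely {\twoimpliesone} at $y$.

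There is essentially no obstacle here; the theorem records a reformulation of the property in a form amenable to subsequent analysis (presumably characterizing or bounding the set $W_y$ via $\Lmap_y$ and $\Qmap_y$, which will be the real work done in later results such as Theorem~\ref{thm:2implies1_chain} and Corollary~\ref{cor:2imp1_violating_cost}). The only thing worth double-checking is that the condition \emph{``$y$ is 2-critical for~\eqref{eq:Q}''} depends on $f$ only through quantities at $x$ that are consistent with Definition~\ref{def:W_y} admitting $f$ as a witness; this is immediate since 2-criticality of $y$ is a property of $g = f \circ \varphi$, which is well-defined once $f$ is given.
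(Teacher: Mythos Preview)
Your proof is correct and essentially identical to the paper's: both directions are immediate definition-unfolding, picking a witness $f$ for $w\in W_y$ in the forward direction and observing $\nabla f(x)\in W_y$ in the converse. The paper's proof is just your two middle paragraphs, stated more tersely.
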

\begin{proof}
    Say $\varphi$ satisfies {\twoimpliesone} at $y$ and let $w\in W_y$.
    Pick $f$ such that $y$ is 2-critical for~\eqref{eq:Q} and $\nabla f(x) = w$.
    By {\twoimpliesone}, we know $x$ is stationary for $f$, hence $w = \nabla f(x) \in (\T_x\calX)^*$.
    Conversely, say $W_y\subseteq(\T_x\calX)^*$ and let $y$ be 2-critical for~\eqref{eq:Q} with some cost $f$.
    Then $\nabla f(x)\in W_y\subseteq(\T_x\calX)^*$, hence $x$ is stationary for~\eqref{eq:P}. This shows {\twoimpliesone}.
\end{proof}
Since the 2-criticality of $y$ for~\eqref{eq:Q} only depends on the first two derivatives of $f$, we can restrict the functions $f$ in Definition~\ref{def:W_y} to be of class $C^{\infty}$ or even quadratic polynomials whose Hessians are a multiple of the identity, as the following proposition shows.
\begin{proposition}\label{prop:W_mult_of_id}
For $y\in\calM$ and $x=\varphi(y)$, the set $W_y$ in Definition~\ref{def:W_y} satisfies:
\begin{align*}
    W_y = \left\{w\in\calE: \exists\alpha>0 \textrm{ s.t. $y$ is 2-critical for~\eqref{eq:Q} with } f(x') = \langle x',w\rangle + \frac{\alpha}{2}\|x'-x\|^2\right\}.
\end{align*}
In particular, $W_y$ is a convex cone. 
\end{proposition}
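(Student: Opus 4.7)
The $\supseteq$ inclusion is immediate, since the candidate $\tilde f(x') := \inner{x'}{w} + \tfrac{\alpha}{2}\|x' - x\|^2$ is $C^{\infty}$ with $\nabla \tilde f(x) = w$, so if $y$ is 2-critical for $\tilde f \circ \varphi$ then $w \in W_y$ by definition. For the reverse inclusion, I will start from any twice-differentiable $f$ witnessing $w \in W_y$: Lemma~\ref{lem:grad_and_hess_g} says 2-criticality of $y$ for $f \circ \varphi$ is equivalent to $\Lmap_y^*w = 0$ together with $M := \Lmap_y^* \circ \nabla^2 f(x) \circ \Lmap_y + \nabla^2 \varphi_w(y) \succeq 0$. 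For $\tilde f$ one has $\nabla \tilde f(x) = w$ and $\nabla^2 \tilde f(x) = \alpha I$, so the same lemma reduces 2-criticality of $y$ for $\tilde f \circ \varphi$ to the identical gradient condition plus $\alpha A + B \succeq 0$, where $A := \Lmap_y^* \Lmap_y$ and $B := \nabla^2 \varphi_w(y)$. The whole task thus reduces to producing $\alpha > 0$ with $\alpha A + B \succeq 0$, given only $M \succeq 0$.

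The key structural step I will establish is $\ker(B|_{\ker \Lmap_y}) \subseteq \ker B$ as subspaces of $\T_y\calM$. Decomposing $\T_y\calM = \ker \Lmap_y \oplus K$ orthogonally with $K := (\ker \Lmap_y)^{\perp}$, vanishing of $A$ on $\ker \Lmap_y$ makes $M|_{\ker \Lmap_y} = B|_{\ker \Lmap_y}$, so $M \succeq 0$ forces $B|_{\ker \Lmap_y} \succeq 0$. Moreover, any $v_0 \in \ker \Lmap_y$ with $\langle B v_0, v_0\rangle = 0$ has $\langle M v_0, v_0\rangle = 0$, and the standard fact that a PSD operator vanishes on the zero-set of its quadratic form then gives $M v_0 = B v_0 = 0$. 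With this I refine the decomposition to $\T_y\calM = N \oplus V_1 \oplus K$ where $N := \ker(B|_{\ker \Lmap_y})$ and $V_1 := N^{\perp} \cap \ker \Lmap_y$. On $N$ both $A$ and $B$ act trivially, including all cross terms into $V_1 \oplus K$, so $\alpha A + B$ is block-diagonal with a zero block on $N$ and a block on $V_1 \oplus K$ whose $(1,1)$-entry is $B|_{V_1} \succ 0$ and $(2,2)$-entry is $\alpha A|_K + B|_K$ with $A|_K \succ 0$ (by injectivity of $\Lmap_y$ on $K$). A Schur-complement reduction converts PSD of the latter block into $\alpha A|_K + B|_K - B|_{V_1,K}^*(B|_{V_1})^{-1} B|_{V_1,K} \succeq 0$, which holds for all sufficiently large $\alpha$.

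The convex cone property then follows from the resulting description by linearity of $\nabla^2 \varphi_w$ in $w$: scaling $w$ by $\lambda > 0$ scales an admissible $\alpha$ by $\lambda$, and summing two admissible pairs $(w_1, \alpha_1), (w_2, \alpha_2)$ yields an admissible $(w_1+w_2, \alpha_1+\alpha_2)$ by adding the two PSD conditions. The main obstacle is the structural compatibility $\ker(B|_{\ker \Lmap_y}) \subseteq \ker B$: without it $\alpha A + B$ can fail PSD for every $\alpha$ (take $A = \mathrm{diag}(0,1)$ with $B$ having zero diagonal and off-diagonal entries $1$), so the joint PSD hypothesis on $M$ must be exploited rather than just its restriction to $\ker \Lmap_y$.
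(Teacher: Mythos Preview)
Your argument is correct, but it takes a substantially longer route than the paper's. The paper's key observation is a one-line comparison: given a witness $f$ with $M = \Lmap_y^*\,\nabla^2 f(x)\,\Lmap_y + B \succeq 0$, simply take any $\alpha \geq \lambda_{\max}(\nabla^2 f(x))$ and note that
\[
\alpha A + B - M \;=\; \Lmap_y^*\bigl(\alpha I - \nabla^2 f(x)\bigr)\Lmap_y \;\succeq\; 0,
\]
so $\alpha A + B \succeq M \succeq 0$ immediately. You instead discard the specific structure of $M - B$ (keeping only $M \succeq 0$ and $M|_{\ker\Lmap_y} = B|_{\ker\Lmap_y}$) and then rebuild enough information via the kernel inclusion $\ker(B|_{\ker\Lmap_y}) \subseteq \ker B$ and a Schur-complement argument. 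That works, and your structural step is in fact precisely the content of the paper's later characterization of $W_y$ in Proposition~\ref{prop:2implies1_with_Qmap}(c), so your detour is not wasted---it just front-loads analysis that the paper postpones. For the present proposition, however, the direct comparison $\alpha A + B \succeq M$ is both shorter and more transparent. The convex-cone arguments are essentially the same in both proofs.
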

\begin{proof}
    The inclusion $\supseteq$ is clear from the definition of $W_y$.
    Conversely, if $w$ is in $W_y$ then $y$ is 2-critical for~\eqref{eq:Q} for some $f$ with $\nabla f(x)=w$.
    Let $g=f\circ\varphi$ and $\alpha=\lambda_{\max}(\nabla^2f(x))$, and define 
    \begin{align*}
        f_{\alpha}(x')=\langle w,x'\rangle + \frac{\alpha}{2}\|x-x'\|^2,\quad g_{\alpha} = f_{\alpha}\circ\varphi.    
    \end{align*} 
    Note that $\nabla f_{\alpha}(x)=w$ and, by Lemma~\ref{lem:grad_and_hess_g}, we have $\nabla g_{\alpha}(y)=\Lmap_y^*(w)=\nabla g(y)=0$ and
    \begin{align*}
        \nabla^2 g_{\alpha}(y) &= \Lmap_y^*\circ \nabla^2 f_{\alpha}(x)\circ\Lmap_y + \nabla^2\varphi_{\nabla f_{\alpha}(x)}(y) = \Lmap_y^*\circ (\alpha I)\circ \Lmap_y + \nabla^2\varphi_w(y)\\ &\succeq \Lmap_y^*\circ \nabla^2f(x)\circ\Lmap_y + \nabla^2\varphi_w(y) = \nabla^2 g(y) \succeq 0.
    \end{align*}
    Thus, $y$ is 2-critical for $g_{\alpha}$. This shows the reverse inclusion. 

    $W_y$ is a convex cone since if $w_1, w_2$ are in $W_y$ as witnessed by functions $f_1, f_2$, then any $w = \lambda_1 w_1 + \lambda_2 w_2$ with $\lambda_1, \lambda_2 \geq 0$ is in $W_y$ as witnessed by $f = \lambda_1 f_1 + \lambda_2 f_2$.
\end{proof}

Proposition~\ref{prop:W_mult_of_id} shows that if {\twoimpliesone} is not satisfied at $y$, then there exists a simple strongly convex quadratic cost $f$ for which $y$ is 2-critical for~\eqref{eq:Q} but $x=\varphi(y)$ is not stationary for~\eqref{eq:P}.
\begin{corollary}\label{cor:2imp1_violating_cost}
Suppose $\varphi$ does not satisfy {\twoimpliesone} at $y\in\calM$ and denote $x=\varphi(y)$. Then $W_y\setminus(\T_x\calX)^*\neq\emptyset$ and for any $w$ in that set there exists $\alpha>0$ such that if $f(x')=\langle w,x'\rangle + \frac{\alpha}{2}\|x'-x\|^2$, then $y$ is 2-critical for~\eqref{eq:Q} but $x$ is not stationary for~\eqref{eq:P}.
\end{corollary}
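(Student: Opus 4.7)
The plan is to derive this result as a direct corollary of Theorem~\ref{thm:W_equiv_cond} combined with Proposition~\ref{prop:W_mult_of_id}, since both halves of the statement are essentially the non-trivial contrapositives of those two prior results. There is no real new content to establish; the task is just to chain the existing pieces together correctly.

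First I would handle the claim $W_y \setminus (\T_x\calX)^* \neq \emptyset$. Theorem~\ref{thm:W_equiv_cond} states that {\twoimpliesone} at $y$ is equivalent to $W_y \subseteq (\T_x\calX)^*$. Since we are assuming {\twoimpliesone} fails at $y$, the inclusion must fail, which immediately gives a point $w \in W_y$ with $w \notin (\T_x\calX)^*$, i.e.\ $W_y \setminus (\T_x\calX)^* \neq \emptyset$.

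Next I would pick an arbitrary $w$ in that nonempty set and produce the promised quadratic cost. Here I would invoke Proposition~\ref{prop:W_mult_of_id}, which rewrites $W_y$ exactly as the set of $w \in \calE$ for which some $\alpha > 0$ makes $y$ a 2-critical point for~\eqref{eq:Q} with respect to the specific cost $f(x') = \langle w, x' \rangle + \frac{\alpha}{2}\|x' - x\|^2$. Applied to our $w$, this yields the desired $\alpha$ together with the 2-criticality of $y$ for~\eqref{eq:Q}. To finish, I would compute $\nabla f(x) = w + \alpha (x - x) = w$, so that $\nabla f(x) = w \notin (\T_x\calX)^*$, which by definition of stationarity means $x$ is not stationary for~\eqref{eq:P}.

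I do not expect any real obstacle: the only thing to be careful about is not inadvertently re-deriving the characterization in Proposition~\ref{prop:W_mult_of_id}, since that already packages the membership-in-$W_y$ statement into the specific canonical quadratic cost we need. Treating both Theorem~\ref{thm:W_equiv_cond} and Proposition~\ref{prop:W_mult_of_id} as black boxes makes this a two-line argument.
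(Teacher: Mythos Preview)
Your proposal is correct and matches the paper's approach exactly: the paper presents this corollary as an immediate consequence of Theorem~\ref{thm:W_equiv_cond} and Proposition~\ref{prop:W_mult_of_id} without giving a separate proof, and your two-step argument (contrapositive of Theorem~\ref{thm:W_equiv_cond} for nonemptiness, then Proposition~\ref{prop:W_mult_of_id} for the specific quadratic cost, followed by the gradient computation $\nabla f(x)=w\notin(\T_x\calX)^*$) is precisely the intended chain.
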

We conjecture that the reverse inclusion in Theorem~\ref{thm:W_equiv_cond} always holds (it does for all the lifts in Section~\ref{sec:main_results}).
If this is indeed true, then $\varphi$ satisfies the {\twoimpliesone} property at $y$ if and only if $(\T_x\calX)^* = W_y$, neatly echoing the condition for {\oneimpliesone}, namely, $(\T_x\calX)^* = (\im \Lmap_y)^\perp$.
\begin{conjecture} \label{conj:reverseinclusionTxXdual}
	It always holds that $(\T_x \calX)^* \subseteq W_y$.
\end{conjecture}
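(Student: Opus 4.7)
The plan is, given $w \in (\T_x\calX)^*$, to exhibit a quadratic $f(x') = \langle w, x'\rangle + \tfrac{1}{2}\langle H(x'-x), x'-x\rangle$ whose Hessian $H = \nabla^2 f(x)$ makes $y$ a 2-critical point of~\eqref{eq:Q}. By Proposition~\ref{prop:W_mult_of_id} such a quadratic suffices, so via Lemma~\ref{lem:grad_and_hess_g} the task splits into (a) verifying $\Lmap_y^*(w) = 0$, and (b) finding a self-adjoint $H$ on $\calE$ such that $\Lmap_y^* H \Lmap_y + A \succeq 0$ on $\T_y\calM$, where $A = \nabla^2 \varphi_w(y)$.

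For (a), I would use that $\im\Lmap_y \subseteq \T_x\calX$ (Lemma~\ref{lem:Ay1_props}) together with the fact that $\im\Lmap_y$ is a linear subspace: both $u$ and $-u$ lie in $\T_x\calX$ for every $u \in \im\Lmap_y$, so the hypothesis $w \in (\T_x\calX)^*$ forces $\langle w, u\rangle = 0$, i.e., $w \perp \im\Lmap_y$.

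For (b), let $K := \ker \Lmap_y$. First, $A$ is PSD on $K$: for $v \in K$, pick $c_v$ with $c_v''(0) = 0$ (a geodesic), so $\gamma_v = \varphi \circ c_v$ satisfies $\gamma_v(0) = x$, $\gamma_v'(0) = \Lmap_y v = 0$, and $\gamma_v''(0) = \Qmap_y(v)$. The expansion $\gamma_v(t) = x + \tfrac{t^2}{2}\Qmap_y(v) + o(t^2)$ lies in $\calX$, and taking $\tau_i = t_i^2/2 \to 0^+$ in Definition~\ref{def:tangentcone} gives $\Qmap_y(v) \in \T_x\calX$; the dual cone property and the Lemma~\ref{lem:grad_and_hess_g} computation then yield $\langle A v, v\rangle = \langle w, \Qmap_y(v)\rangle \geq 0$. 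Second, as $H$ ranges over self-adjoint operators on $\calE$, $\Lmap_y^* H \Lmap_y$ kills $K$ (hence is supported on $K^\perp \times K^\perp$) and realizes every self-adjoint bilinear form there, since $\Lmap_y|_{K^\perp}$ is a bijection onto $\im\Lmap_y$. Writing $A$ in block form with respect to $\T_y\calM = K \oplus K^\perp$ with diagonal blocks $A_{KK}, A_{K^\perp K^\perp}$ and off-diagonal block $A_{KK^\perp}$, the generalized Schur complement criterion reduces $\Lmap_y^* H \Lmap_y + A \succeq 0$ to: $A_{KK} \succeq 0$ (just proved), the range inclusion $\im A_{KK^\perp} \subseteq \im A_{KK}$, and a positivity condition on the $K^\perp$ block that is easily achievable by choosing $H$ large enough on $\im\Lmap_y$.

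The hard part is the range inclusion $\im A_{KK^\perp} \subseteq \im A_{KK}$. The hypothesis $w \in (\T_x\calX)^*$ controls only the diagonal block $A_{KK}$, whereas $A_{KK^\perp}$ encodes mixed derivatives $\langle w, \D^2\varphi(y)[v_K, v_\perp]\rangle$ for $v_K \in K$, $v_\perp \in K^\perp$. To close this gap, I would consider a null direction $v_K$ of $A|_K$ (so that $\Qmap_y(v_K)$ lies on the face of $\T_x\calX$ annihilated by $w$) and curves $c_\epsilon$ on $\calM$ through $y$ with initial velocity $v_K + \epsilon v_\perp$; expanding $\langle w, \varphi(c_\epsilon(t))\rangle$ in powers of $t$ and $\epsilon$, and using that this quantity is bounded below by $\langle w, x\rangle$ along any sequence in $\calX$ tangent to $\T_x\calX$ at $x$, one may extract an inequality that forces $\langle w, \D^2\varphi(y)[v_K, v_\perp]\rangle = 0$ whenever $\langle A v_K, v_K\rangle = 0$. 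If this geometric route is insufficient, I would fall back on the fact that the lifts in Section~\ref{sec:main_results} are all semi-algebraic and invoke a Lojasiewicz-type inequality; failing both, this suggests the conjecture may require restricting to lifts whose image $\calX$ satisfies a compatibility condition on second-order tangent sets matching the first-order dual structure.
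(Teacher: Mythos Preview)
The statement you are attempting to prove is a \emph{conjecture} in the paper, not a theorem; the paper offers no proof, only the remark that it holds in all the examples considered. So there is no ``paper's own proof'' to compare against. Your proposal is therefore an attempted resolution of an open problem, and it should be read as such.

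Your reduction is correct and in fact recovers the paper's explicit description of $W_y$ (Proposition~\ref{prop:2implies1_with_Qmap}(c)): membership $w\in W_y$ is equivalent to $w\in A_y^*$ together with the implication $\langle\nabla^2\varphi_w(y)[v],v\rangle=0\Rightarrow\nabla^2\varphi_w(y)[v]=0$ for all $v\in\ker\Lmap_y$. In your block notation this is exactly $A_{KK}\succeq0$ plus the range inclusion $\im A_{KK^\perp}\subseteq\im A_{KK}$. Your verification that $w\in(\T_x\calX)^*$ forces $w\perp\im\Lmap_y$ and $A_{KK}\succeq0$ is clean and matches Proposition~\ref{prop:AB_basics}(b) (which gives $A_y\subseteq\T_x\calX$, hence $(\T_x\calX)^*\subseteq A_y^*$). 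So you have correctly isolated the sole obstruction: the range inclusion.

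The argument you sketch for the range inclusion has a genuine gap. You write that $\langle w,\varphi(c_\epsilon(t))\rangle$ ``is bounded below by $\langle w,x\rangle$ along any sequence in $\calX$ tangent to $\T_x\calX$ at $x$''. This does not follow from $w\in(\T_x\calX)^*$: the dual-cone hypothesis controls only the \emph{first-order} behavior $\langle w,x'-x\rangle=t\langle w,v\rangle+o(t)$ for $v\in\T_x\calX$, and when $\langle w,v\rangle=0$ (precisely the face case you need) the sign is governed by second-order terms that can be negative. In particular, for your curves with velocity $v_K+\epsilon v_\perp$, the image curve has nonzero first derivative $\epsilon\Lmap_y(v_\perp)$, so you cannot conclude that $\Qmap_y(v_K+\epsilon v_\perp)\in\T_x\calX$, and no inequality on $\langle w,\D^2\varphi(y)[v_K,v_\perp]\rangle$ falls out. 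Your fallback to semi-algebraicity and {\L}ojasiewicz inequalities is not developed enough to assess, and in any case the conjecture is stated for arbitrary smooth lifts.

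In summary: you have reproduced the paper's characterization of $W_y$ and pinpointed why the conjecture is nontrivial, but the ``hard part'' remains open after your proposal, just as it does in the paper.
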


The description of $W_y$ can be complicated. It is therefore worthwhile to derive sufficient conditions for {\twoimpliesone} that are easier to check. 
We do so by identifying two (admittedly technical) sets whose duals contain $\nabla f(x)$ if $x=\varphi(y)$ and $y$ is 2-critical for~\eqref{eq:Q}. The sufficient conditions then require the duals of these two subsets to be contained in $(\T_x\calX)^*$. 
\begin{definition}\label{def:A_B_sets}
    For $y\in\calM$, define
    \begin{align*}
        &A_y = \{w\in\calE:\exists c\colon \RR\to\calM \textrm{ smooth s.t. } c(0)=y,\ (\varphi\circ c)'(0) = 0,\ (\varphi\circ c)''(0)=w\},\\
        &B_y = \{w\in\calE:\exists c_i\colon \RR\to\calM \textrm{ smooth s.t. } c_i(0)=y,\ \lim_{i\to\infty}(\varphi\circ c_i)'(0) = 0,\ \lim_{i\to\infty}(\varphi\circ c_i)''(0) = w\}.
    \end{align*}
    We write $A_y^{\varphi}, B_y^{\varphi}$ when we wish to emphasize the lift.
\end{definition}
The following are the basic properties these two sets satisfy.
We give further expressions for $A_y, B_y$ and $W_y$ in Proposition~\ref{prop:2implies1_with_Qmap} below.
\begin{proposition}\label{prop:AB_basics}
Fix $y\in\calM$ and denote $x=\varphi(y)$. 
\begin{enumerate}[(a)]
    \item $A_y$ and $B_y$ are cones, and $B_y$ is closed.
    \item $A_y\subseteq\T_x\calX$ and $\im\Lmap_y\subseteq A_y\subseteq B_y$. Moreover, $\im\Lmap_y + A_y = A_y$ and $\im\Lmap_y + B_y = B_y$.
    \item If $y$ is 2-critical for $g=f\circ\varphi$, then $\nabla f(x)\in B_y^*$. 
    \item $W_y\subseteq B_y^*\subseteq A_y^*\subseteq(\im\Lmap_y)^{\perp}$.
\end{enumerate}
\end{proposition}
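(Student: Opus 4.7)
My plan is to establish parts (a)--(d) in sequence, exploiting the curve-based definitions of $A_y$ and $B_y$ together with the chain rule for $\varphi$ applied to curves on $\calM$. For part (a), I would verify the cone property by the reparametrization $c_\lambda(t) = c(\sqrt{\lambda}\,t)$ of a witnessing curve $c$: this yields $(\varphi\circ c_\lambda)'(0)=0$ and $(\varphi\circ c_\lambda)''(0) = \lambda w$, so $A_y$ (and similarly $B_y$) is closed under multiplication by $\lambda > 0$. Closedness of $B_y$ is a standard diagonal argument: given $w_n \to w$ with $w_n \in B_y$, select from each witnessing family a single curve whose image has first derivative of norm at most $1/n$ and second derivative within $1/n$ of $w_n$; the resulting family witnesses $w \in B_y$.

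For part (b), the inclusion $A_y \subseteq \T_x\calX$ follows by Taylor-expanding $\varphi(c(t)) = x + \tfrac{t^2}{2}w + o(t^2)$ for a witnessing curve $c$ and reading off $w$ as a limit $(x_i - x)/\tau_i$ with $\tau_i = t_i^2/2 \downarrow 0$. For $\im\Lmap_y \subseteq A_y$, given $v \in \T_y\calM$ and a curve $c_v$ with $c_v'(0) = v$, the reparametrization $\tilde c(t) = c_v(t^2/2)$ satisfies $(\varphi\circ\tilde c)'(0) = 0$ and $(\varphi\circ\tilde c)''(0) = \Lmap_y(v)$ by a direct chain-rule computation in $\calE$. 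The inclusion $A_y \subseteq B_y$ is immediate from a constant sequence of witnessing curves. The identity $\im\Lmap_y + A_y = A_y$ is the main point of friction: the $\supseteq$ direction is trivial since $0 \in \im\Lmap_y$, and for $\subseteq$, given $u = \Lmap_y(v)$ and $w \in A_y$ witnessed by $c$, I would work in a chart of $\calM$ around $y$ and define $\hat c(t) = c(t) + \tfrac{t^2}{2}v$ (with $v$ identified with its coordinate vector). Expanding $\varphi \circ \hat c$ by the chain rule shows that $\hat c'(0) = c'(0)$ leaves the bilinear cross-term $\D^2\varphi(y)[c'(0),c'(0)]$ unchanged, while the shift in $\hat c''(0)$ adds exactly $\D\varphi(y)[v] = u$. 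Hence $(\varphi\circ\hat c)'(0) = 0$ and $(\varphi\circ\hat c)''(0) = w + u$. The same chart-level shift, applied curve-by-curve in a family witnessing $w \in B_y$, handles $\im\Lmap_y + B_y = B_y$.

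For part (c), I would fix $w \in B_y$ with witnessing curves $c_i$ and set $\gamma_i = \varphi \circ c_i$. Two-criticality of $y$ for $g = f\circ\varphi$ gives $(g \circ c_i)''(0) \geq 0$, and the standard expansion
\[
(f\circ \gamma_i)''(0) = \langle\nabla f(x),\gamma_i''(0)\rangle + \langle \nabla^2 f(x)[\gamma_i'(0)],\gamma_i'(0)\rangle \geq 0.
\]
As $i\to\infty$, $\gamma_i'(0)\to 0$ kills the quadratic term, and $\gamma_i''(0)\to w$ leaves $\langle\nabla f(x),w\rangle \geq 0$, i.e.\ $\nabla f(x)\in B_y^*$. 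Part (d) then chains routine dual-cone facts: $W_y \subseteq B_y^*$ by (c); $A_y \subseteq B_y$ from (b) reverses to $B_y^* \subseteq A_y^*$; and $\im\Lmap_y \subseteq A_y$ reverses to $A_y^* \subseteq (\im\Lmap_y)^* = (\im\Lmap_y)^\perp$ using that $\im\Lmap_y$ is a linear subspace.

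The main obstacle is the identity $\im\Lmap_y + A_y = A_y$ (and its $B_y$-counterpart), which requires producing a single smooth manifold curve realizing the sum without disturbing the cross-term in the second derivative of $\varphi\circ c$. Everything else reduces to scaling and Taylor-expansion identities closely related to the gradient/Hessian computation in Lemma~\ref{lem:grad_and_hess_g}; the chart-level quadratic correction to an existing witnessing curve is the one genuinely new construction the proof requires.
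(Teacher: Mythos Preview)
The proposal is correct and follows essentially the same approach as the paper: the scaling $c(\sqrt{\lambda}\,t)$ for the cone property, the diagonal argument for closedness of $B_y$, the Taylor expansion for $A_y\subseteq\T_x\calX$, the reparametrization $c_v(t^2/2)$ for $\im\Lmap_y\subseteq A_y$, and the chain-rule limit argument for (c) all match the paper verbatim. The only cosmetic difference is in proving $\im\Lmap_y + A_y = A_y$: you modify the curve in a chart by adding $\tfrac{t^2}{2}v$, whereas the paper invokes Lemma~\ref{lem:diff_of_accel}(b), which constructs the same modified curve via the exponential map $\exp_y(tv + \tfrac{t^2}{2}u)$; both produce a curve with the same velocity and shifted intrinsic acceleration, yielding $(\varphi\circ\hat c)''(0) = w + \Lmap_y(v)$ by the same chain-rule bookkeeping.
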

\begin{proof}
\ifthenelse{\boolean{shortver}}{Part (a) and the second half of part (b) are straightforward, see~\cite[App.~B]{levin2022effectARXIV}.}
{
The proofs of part (a) and the second half of part (b) are given in Appendix~\ref{apdx:AB_sets}.}
\begin{enumerate}[(a)]
    \item[(b)] If $c\colon \RR\to\calM$ satisfies $c(0)=y$ and $(\varphi\circ c)'(0)=0$, then $(\varphi\circ c)(t)\in\calX$ for all $t$ and $(\varphi\circ c)(t)= x + (t^2/2)(\varphi\circ c)''(0) + \mc O(t^3)$, hence by Definition~\ref{def:tangentcone} we have
    \begin{align*}
        (\varphi\circ c)''(0) = \lim_{t\to 0}\frac{(\varphi \circ c)(t) - x}{t^2/2} \in \T_x\calX.
    \end{align*}
    This shows $A_y\subseteq\T_x\calX$.
    
    If $w\in\im\Lmap_y$ so $w=\Lmap_y(v)$ for some $v\in\T_y\calM$, let $c\colon \RR\to\calM$ be a curve satisfying $c(0)=y$ and $c'(0)=v$. Define $\widetilde c\colon \RR\to\calM$ by $\widetilde c(t)=c(t^2/2)$, and note that $\widetilde c(0)=y$, $(\varphi\circ\widetilde c)'(0)=0$, and $(\varphi\circ \widetilde c)''(0) = (\varphi\circ c)'(0) = w$. Hence $w$ is in $A_y$. This shows $\im\Lmap_y\subseteq A_y$.
    
    It is clear that $A_y\subseteq B_y$ from Definition~\ref{def:A_B_sets}.
    
    \item[(c)] Suppose $y$ is $2$-critical for $g=f\circ\varphi$ and $w\in B_y$. Let $c_i\colon \RR\to\calM$ witness $w\in B_y$. Because $y$ is 1-critical, $(g\circ c_i)'(0)=0$ for all $i$. Because $y$ is 2-critical, for all $i$ we have
    \begin{align*}
        (g\circ c_i)''(0) = \langle \nabla f(x),(\varphi\circ c_i)''(0)\rangle + \langle \nabla^2f(x)[(\varphi\circ c_i)'(0)],(\varphi\circ c_i)'(0)\rangle\geq 0. 
    \end{align*}
    Taking $i\to\infty$, we conclude that $\langle\nabla f(x),w\rangle\geq0$ and hence $\nabla f(x)\in B_y^*$ as claimed. 
    
    \item[(d)] If $w\in W_y$ then there exists $f$ such that $\nabla f(x)=w$ and $y$ is 2-critical for~\eqref{eq:Q}, hence $w\in B_y^*$ by part (c). The other inclusions follow by taking duals in part (b). \qedhere
\end{enumerate}
\end{proof}
\ifthenelse{\boolean{shortver}}{
We remark that neither the inclusion $B_y\subseteq \T_x\calX$ nor $\T_x\calX\subseteq B_y$ hold in general, see~\cite[App.~B]{levin2022effectARXIV}.}
{We remark that neither the inclusion $B_y\subseteq \T_x\calX$ nor $\T_x\calX\subseteq B_y$ hold in general, see the end of Appendix~\ref{apdx:AB_sets}.}
Combining part (d) above with Theorem~\ref{thm:W_equiv_cond} yields the following sufficient conditions for {\twoimpliesone}.
\begin{corollary}\label{cor:AB_suff_cond}
Fix $y\in\calM$ and denote $x=\varphi(y)$.
\begin{enumerate}[(a)]
    \item If $A_y^*\subseteq (\T_x\calX)^*$, and in particular if $A_y=\T_x\calX$, then {\twoimpliesone} holds at $y$.
    \item If $B_y^*\subseteq (\T_x\calX)^*$, then {\twoimpliesone} holds at $y$.
\end{enumerate}
\end{corollary}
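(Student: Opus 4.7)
The plan is to assemble this corollary directly from Theorem~\ref{thm:W_equiv_cond} and the chain of inclusions established in Proposition~\ref{prop:AB_basics}(d), without any new construction. Recall that Theorem~\ref{thm:W_equiv_cond} reduces {\twoimpliesone} at $y$ to the set-theoretic condition $W_y\subseteq(\T_x\calX)^*$, and Proposition~\ref{prop:AB_basics}(d) gives the free inclusions $W_y\subseteq B_y^*\subseteq A_y^*$. So the whole proof is just transitivity of $\subseteq$ applied twice.

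For part (b), I would write: assume $B_y^*\subseteq(\T_x\calX)^*$; then by Proposition~\ref{prop:AB_basics}(d), $W_y\subseteq B_y^*\subseteq(\T_x\calX)^*$, and Theorem~\ref{thm:W_equiv_cond} immediately yields {\twoimpliesone} at $y$. For part (a), I would argue identically, using the longer inclusion $W_y\subseteq B_y^*\subseteq A_y^*\subseteq(\T_x\calX)^*$ from Proposition~\ref{prop:AB_basics}(d) together with the hypothesis $A_y^*\subseteq(\T_x\calX)^*$.

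For the ``in particular'' clause in (a), I would note that Proposition~\ref{prop:AB_basics}(b) already supplies $A_y\subseteq\T_x\calX$, which dualizes to $(\T_x\calX)^*\subseteq A_y^*$. Hence the equality $A_y=\T_x\calX$ forces $A_y^*=(\T_x\calX)^*$, and in particular the inclusion hypothesis of part (a) is satisfied. Thus this case is subsumed.

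There is no genuine obstacle here: once Theorem~\ref{thm:W_equiv_cond} and Proposition~\ref{prop:AB_basics} are in hand, the statement is a one-line deduction in each case. The only thing worth flagging explicitly in the write-up is to invoke the order-reversing property of $(\cdot)^*$ (which is already collected in the bulleted list of dual-cone properties stated between Definitions~\ref{def:downstairs_basic} and~\ref{def:criticality_on_lift}) when passing from $A_y\subseteq\T_x\calX$ to $(\T_x\calX)^*\subseteq A_y^*$ in the ``in particular'' reduction. The proof should therefore fit in three or four lines.
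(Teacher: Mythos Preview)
Your proposal is correct and matches the paper's own argument exactly: the paper presents this corollary as an immediate consequence of combining Proposition~\ref{prop:AB_basics}(d) with Theorem~\ref{thm:W_equiv_cond}, with no additional proof given. The only minor simplification is that the ``in particular'' clause is even more immediate than you suggest---if $A_y=\T_x\calX$ then $A_y^*=(\T_x\calX)^*$ trivially, without needing to invoke Proposition~\ref{prop:AB_basics}(b) or the order-reversal of duals.
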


The conditions in Corollary~\ref{cor:AB_suff_cond} yield simpler proofs of {\twoimpliesone} for many lifts.
For example, the condition in Corollary~\ref{cor:AB_suff_cond}(a) holds for the Burer--Monteiro lift of Section~\ref{sec:smth_sdps}.
While it does not hold for the lifts of low-rank matrices in Section~\ref{sec:lowrk}, they do satisfy the stronger condition in Corollary~\ref{cor:AB_suff_cond}(b).
In fact, the condition in Corollary~\ref{cor:AB_suff_cond}(b) holds in all the examples satisfying {\twoimpliesone} that we consider.
It would be interesting to determine whether it is necessary as well.

We now state and prove the chain of implications we find the most useful for verifying or refuting {\twoimpliesone}, as well as for computing tangent cones (see Section~\ref{sec:low_rk_psd}). 
\begin{theorem}\label{thm:2implies1_chain}
Let $\varphi\colon\calM\to\calX$ be a smooth lift and fix $y\in\calM$. We have the following chain of implications for {\twoimpliesone}:
\begin{align*}
		&\T_x \calX \subseteq A_y\\
		\iff& \T_x \calX = A_y\\
		\implies& B_y^* \subseteq (\T_x \calX)^* \\
		\implies& W_y \subseteq (\T_x \calX)^* \\
		\iff& \varphi \textrm{ satisfies {\twoimpliesone} at } y \\
		\implies& (\im \Lmap_y)^\perp \cap (\Qmap_y(\T_y\calM))^* \subseteq (\T_x \calX)^*.
	\end{align*}
\end{theorem}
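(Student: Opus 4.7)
The plan is to walk the chain link by link, with almost every arrow dropping out of Proposition~\ref{prop:AB_basics} and Theorem~\ref{thm:W_equiv_cond}; only the last arrow requires a short construction. The two ``iff'' statements are essentially free: the first one collapses because $A_y \subseteq \T_x\calX$ already holds unconditionally by Proposition~\ref{prop:AB_basics}(b), so any inclusion $\T_x\calX \subseteq A_y$ is automatically an equality; and the equivalence between $W_y \subseteq (\T_x\calX)^*$ and {\twoimpliesone} at $y$ is exactly Theorem~\ref{thm:W_equiv_cond}.

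For the two middle implications I would use Proposition~\ref{prop:AB_basics}(b) to get $A_y \subseteq B_y$ and dualize, yielding $B_y^* \subseteq A_y^* = (\T_x\calX)^*$ under the hypothesis $\T_x\calX = A_y$. The next implication is even shorter: Proposition~\ref{prop:AB_basics}(d) provides $W_y \subseteq B_y^*$, so $B_y^* \subseteq (\T_x\calX)^*$ immediately gives $W_y \subseteq (\T_x\calX)^*$.

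The only real work is in the last implication. Assuming {\twoimpliesone} holds at $y$ and fixing $w \in (\im\Lmap_y)^\perp \cap (\Qmap_y(\T_y\calM))^*$, I would try the simplest candidate cost, the linear function $f(x') = \langle w, x'\rangle$, so that $\nabla f(x) = w$ and $\nabla^2 f(x) = 0$. Lemma~\ref{lem:grad_and_hess_g} then gives $\nabla g(y) = \Lmap_y^*(w) = 0$ since $w \perp \im\Lmap_y$, so $y$ is 1-critical. Because $\nabla g(y) = 0$, the standard identity $(g\circ c_v)''(0) = \langle \nabla^2 g(y)[v], v\rangle$ holds for \emph{every} smooth curve $c_v$ at $y$ with $c_v'(0) = v$, not only for geodesics; choosing the curves that define $\Qmap_y$ and invoking linearity of $f$, this expression collapses to $\langle w, \Qmap_y(v)\rangle$, which is nonnegative by the second hypothesis on $w$. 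Hence $y$ is 2-critical for $g$, and {\twoimpliesone} delivers $w = \nabla f(x) \in (\T_x\calX)^*$.

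The main point I would worry about is the apparent dependence of $\Qmap_y$ on the (unspecified) curve choice in Definition~\ref{def:LQ_maps}. Fortunately the same mechanism that makes 2-criticality well-posed does the job here: since $\nabla g(y) = 0$ absorbs the Riemannian-acceleration term in the second-derivative formula, the equality $\langle \nabla^2 g(y)[v], v\rangle = \langle w, \Qmap_y(v)\rangle$ is valid for any curve used to define $\Qmap_y$, in line with the paper's remark that this dependence is inconsequential.
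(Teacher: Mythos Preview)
Your proposal is correct and follows essentially the same route as the paper's proof: the first four links are handled identically (via Proposition~\ref{prop:AB_basics}(b),(d) and Theorem~\ref{thm:W_equiv_cond}), and for the last link both you and the paper use the linear cost $f(x')=\langle w,x'\rangle$ to exhibit 2-criticality. The only cosmetic differences are that the paper phrases the final step as a contrapositive and handles the curve ambiguity via Lemma~\ref{lem:diff_of_accel}(a) (i.e., $(\varphi\circ c)''(0)\equiv\Qmap_y(v)\bmod\im\Lmap_y$ together with $w\perp\im\Lmap_y$), whereas you invoke the curve-independence of $(g\circ c)''(0)$ when $\nabla g(y)=0$; these are equivalent observations.
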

\begin{proof}
    The equivalence of the first two conditions follows by Proposition~\ref{prop:AB_basics}(b). The second condition implies the third by Proposition~\ref{prop:AB_basics}(b) as well. The third condition implies the fourth by Proposition~\ref{prop:AB_basics}(d), which itself is equivalent to {\twoimpliesone} at $y$ by Theorem~\ref{thm:W_equiv_cond}. 
    
    The last implication gives a necessary condition for {\twoimpliesone} to hold. Suppose there exists $w\in (\im \Lmap_y)^\perp \cap (\Qmap_y(\T_y\calM))^* \setminus (\T_x \calX)^*$.
	Define $f(x') = \inner{w}{x'}$, whose gradient and Hessian at $x$ are $\nabla f(x) = w$ and $\nabla^2 f(x) = 0$.
	For any curve $c\colon I\to\calM$ satisfying $c(0)=y$, denote $v=c'(0)\in\T_y\calM$.
    Let $g = f \circ \varphi$.
    Note that $(g\circ c)'(0)=\inner{w}{\Lmap_y(v)}=0$ since $w\in(\im\Lmap_y)^{\perp}$ and
    \begin{align*}
        (g\circ c)''(0) = (f\circ\varphi\circ c)''(0) = \langle w,(\varphi\circ c)''(0)\rangle = \langle w,\Qmap_y(v)\rangle \geq0,
    \end{align*}
    where the second equality follows from the chain rule, the third equality follows from Lemma~\ref{lem:diff_of_accel}(a) below, and the inequality follows from $w\in(\Qmap_y(\T_y\calM))^*$. Thus, $y$ is 2-critical for~\eqref{eq:Q}.
	However, $\nabla f(x)=w \notin (\T_x \calX)^*$ so $x$ is not stationary for~\eqref{eq:P}, hence {\twoimpliesone} does not hold at $y$.
\end{proof}

Our goal now is to derive more explicit expressions for the sets $A_y,B_y,W_y$ in terms of the maps $\Lmap_y$ and $\Qmap_y$ from Definition~\ref{def:LQ_maps}. Such expressions allow us to compute these sets in specific examples. 
To do so, we first recall that the value of $\Qmap_y(v)$ depends on the choice of curve $c_v$ in Definition~\ref{def:LQ_maps}. Before proceeding, we characterize the ambiguity in $\Qmap_y(v)$ arising from different such choices, verifying that it causes no issues.
\begin{lemma}\label{lem:diff_of_accel}
For each $y\in\calM$ and $v\in\T_y\calM$, let $c_v\colon I\to\calM$ be a curve satisfying $c_v(0)=y$ and $c_v'(0)=v$, so we can set $\Qmap_y(v)=(\varphi\circ c_v)''(0)$ according to Definition~\ref{def:LQ_maps}. 
\begin{enumerate}[(a)]
    \item For any other curve $c$ satisfying $c(0)=y$ and $c'(0)=v$, we have $(\varphi\circ c)''(0)-(\varphi\circ c_v)''(0)=\Lmap_y(c''(0)-c_v''(0))\in\im\Lmap_y$.
    
    \item For any $u\in\T_y\calM$, there exists a curve $c$ as in part (a) satisfying $c''(0)-c_v''(0)=u$, hence $(\varphi\circ c)''(0)-(\varphi\circ c_v)''(0)=\Lmap_y(u)$.
\end{enumerate}
In particular,
$
    \{(\varphi\circ c)''(0): c(0)=y \textrm{ and } c'(0)=v\} = \Qmap_y(v) + \im\Lmap_y.
$
%
%
%
\end{lemma}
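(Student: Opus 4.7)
The plan is to work in a local coordinate chart around $y$ and reduce everything to curves in a Euclidean space. The key observation is that if two smooth curves on $\calM$ share the same position and velocity at $t=0$, then the Christoffel-symbol contributions to their Riemannian accelerations depend only on $(y,v)$ and therefore cancel in the difference; hence $c''(0) - c_v''(0)$ equals the difference of the coordinate-expressed (Euclidean) accelerations of the two coordinate curves. This cancellation is the main technical point.

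For part (a), fix a chart $\psi \colon U \to \RR^{d}$ around $y$ with $d = \dim \calM$, and set $\widetilde c = \psi \circ c$ and $\widetilde c_v = \psi \circ c_v$. Writing $\varphi \circ c = (\varphi \circ \psi^{-1}) \circ \widetilde c$ and applying the chain rule twice yields
\begin{align*}
(\varphi \circ c)''(0) = \D(\varphi \circ \psi^{-1})(0)[\widetilde c\,''(0)] + \D^2(\varphi \circ \psi^{-1})(0)[\widetilde c\,'(0),\widetilde c\,'(0)],
\end{align*}
and analogously for $c_v$. Since $\widetilde c\,'(0) = \widetilde c_v'(0)$, subtracting eliminates the quadratic term. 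After identifying $\T_y\calM$ with $\RR^d$ via $\D\psi(y)$, the remaining linear-in-acceleration term is $\Lmap_y(\widetilde c\,''(0) - \widetilde c_v''(0))$, and by the Christoffel cancellation $\widetilde c\,''(0) - \widetilde c_v''(0)$ is identified with $c''(0) - c_v''(0)$, proving (a).

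For part (b), given $u \in \T_y\calM$ with coordinate representation $\widetilde u \in \RR^d$, define $\widetilde c(t) = \widetilde c_v(t) + \tfrac{t^2}{2}\widetilde u$ on an interval small enough that $\widetilde c$ stays in $\psi(U)$, and let $c = \psi^{-1} \circ \widetilde c$ (one may extend $c$ smoothly to all of $\RR$ via a bump function that equals $1$ on a neighborhood of $0$, without affecting the 2-jet at $t=0$). By construction $c(0) = y$ and $c'(0) = v$, and $\widetilde c\,''(0) - \widetilde c_v''(0) = \widetilde u$; the Christoffel cancellation again gives $c''(0) - c_v''(0) = u$, and combining with part (a) gives $(\varphi \circ c)''(0) - (\varphi \circ c_v)''(0) = \Lmap_y(u)$.

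The concluding equality then follows at once: the inclusion $\subseteq$ is part (a), since any curve $c$ with $c(0)=y$, $c'(0)=v$ contributes $(\varphi\circ c)''(0) = \Qmap_y(v) + \Lmap_y(c''(0) - c_v''(0)) \in \Qmap_y(v) + \im\Lmap_y$, and the inclusion $\supseteq$ is part (b), since every $u \in \T_y\calM$ is realized as $c''(0) - c_v''(0)$ for a suitable curve $c$. The only delicate step is the Christoffel cancellation, which is standard once one writes the coordinate formula $c''(0) = \widetilde c\,''(0) + \Gamma_y(v,v)$ and observes that $\Gamma_y(v,v)$ depends only on $y$ and $v = c'(0)$, not on the second-order behavior of $c$.
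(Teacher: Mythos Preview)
Your argument is correct. It proceeds differently from the paper, which never fixes a chart: for part (a) the paper pairs $(\varphi\circ c)''(0)$ against an arbitrary $w\in\calE$ via the scalar function $\varphi_w=\langle w,\varphi(\cdot)\rangle$, then applies the Riemannian second-derivative identity $(\varphi_w\circ c)''(0)=\langle\nabla^2\varphi_w(y)[v],v\rangle+\langle\nabla\varphi_w(y),c''(0)\rangle$ together with $\nabla\varphi_w(y)=\Lmap_y^*(w)$ (Lemma~\ref{lem:grad_and_hess_g}); the Hessian term depends only on $(y,v)$ and cancels in the difference, which is exactly your Christoffel cancellation seen intrinsically. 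For part (b) the paper uses the exponential map, setting $c(t)=\exp_y(tv+\tfrac{t^2}{2}a)$ with $a$ chosen so that $c''(0)=c_v''(0)+u$, whereas you add a quadratic term in coordinates. Your chart-based route is a bit more elementary (no Riemannian Hessian machinery), while the paper's route yields as a byproduct the identity $\langle w,\Qmap_y(v)\rangle=\langle\nabla^2\varphi_w(y)[v],v\rangle$ for $w\in(\im\Lmap_y)^\perp$ (equation~\eqref{eq:lin_quadr_Qmap_rel}), which is used repeatedly later (e.g.\ Proposition~\ref{prop:2implies1_with_Qmap}).
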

\begin{proof}
\begin{enumerate}[(a)]
    \item For any $w\in\calE$, recall the function $\varphi_w(y)=\langle w,\varphi(y)\rangle$ from Definition~\ref{def:phi_w}. Let $c\colon I\to\calM$ be a curve satisfying $c(0)=y$ and $c'(0)=v$. Then, on the one hand, 
    \begin{align*}
        \left.\frac{\mathrm{d}^2}{\mathrm{d}t^2} \varphi_w(c(t)) \right|_{t=0} = \left.\frac{\mathrm{d}^2}{\mathrm{d}t^2} \inner{w}{(\varphi\circ c)(t)} \right|_{t=0} = \langle w,(\varphi\circ c)''(0)\rangle.
    \end{align*} 
    On the other hand, using the Riemannian structure on $\calM$, 
    \begin{align*}
        \left.\frac{\mathrm{d}^2}{\mathrm{d}t^2} \varphi_w(c(t)) \right|_{t=0} = \langle\nabla^2\varphi_w(y)[c'(0)],c'(0)\rangle + \langle \nabla\varphi_w(y),c''(0)\rangle.
    \end{align*} 
    By Lemma~\ref{lem:grad_and_hess_g}, we have $\nabla \varphi_w(y)=\Lmap_y^*(w)$, so $\langle\nabla\varphi_w(y),c''(0)\rangle = \langle w,\Lmap_y(c''(0))\rangle$. We conclude that
    \begin{align}
        \langle w,(\varphi\circ c)''(0)\rangle = \langle\nabla^2\varphi_w(y)[v],v\rangle + \langle w,\Lmap_y(c''(0))\rangle.
        \label{eq:w_lin_form}
    \end{align}
    The first term on the right-hand side is independent of $c$.
    Thus, for any $w\in\calE$ we have
    \begin{align*}
        \langle w,(\varphi\circ c)''(0) - (\varphi\circ c_v)''(0)\rangle = \langle w,\Lmap_y(c''(0)-c_v''(0))\rangle, 
    \end{align*}
    which proves the claim. 
    
    \item For the first claim, set $c(t)=\exp_y(tv + t^2(c_v''(0)-u)/2)$ where $\exp$ is the exponential map of $\calM$~\cite[Exer.~5.46]{optimOnMans}. The second claim follows from part (a). \qedhere
\end{enumerate}
\end{proof}
Lemma~\ref{lem:diff_of_accel} shows that the possible values of $\Qmap_y(v)$ (depending on the choice of curve $c_v$ in Definition~\ref{def:LQ_maps}) differ by an element of $\im\Lmap_y$, and conversely, every element of $\Qmap_y(v)+\im\Lmap_y$ can be obtained by an appropriate choice of $c_v$. Consequently, if $w\in(\im\Lmap_y)^{\perp}$, then the inner product $\langle w,\Qmap_y(v)\rangle$ is independent of the choice of $c_v$ in Definition~\ref{def:LQ_maps}. In fact, \eqref{eq:w_lin_form} shows that it is a quadratic form in $v\in \T_y\calM$ given by:
\begin{align}
    \langle w,\Qmap_y(v)\rangle = \langle\nabla^2\varphi_w(y)[v],v\rangle\quad \forall v\in\T_y\calM.
    \label{eq:lin_quadr_Qmap_rel}
\end{align}
We stress that this identity requires $w \in (\im \Lmap_y)^\perp$ in general. It allows us to view $\langle w,\Qmap_y(v)\rangle$ interchangeably as either a quadratic form in $v$ on $\T_y\calM$ or a linear form in $w$ on $(\im\Lmap_y)^{\perp}$. 
\begin{remark}[Disambiguation of $\Qmap_y$]\label{rmk:nonunique_Q} 
    \ifthenelse{\boolean{shortver}}{%
    Lemma~\ref{lem:diff_of_accel} implies that it is natural to define $\Qmap_y$ on the quotient $\calE/\im\Lmap_y$ to avoid the above ambiguities, but it is less convenient to use in practice---see~\cite[Rmk.~3.25]{levin2022effectARXIV}. 
    Accordingly, in the remainder of the paper we often refer to $\Qmap_y$ modulo $\im\Lmap_y$.
    }
    {
    Given the above ambiguity in $\Qmap_y$, it would be natural to define the codomain of $\Qmap_y$ to be the quotient vector space $\calE/\im\Lmap_y$. This would make it independent of the choice of $c_v$.
    Subsets of the quotient $\calE/\im\Lmap_y$ are in bijection with subsets of $\calE$ that are closed under addition with $\im\Lmap_y$ (i.e.\ subsets $S\subseteq\calE$ such that $S+\im\Lmap_y = S$), which includes $A_y$ and $B_y$. Subsets of the dual vector space to $\calE/\im\Lmap_y$ are in bijection with subsets of $(\im\Lmap_y)^{\perp}$, which includes $W_y$. Hence we could equivalently phrase our conditions for {\twoimpliesone} in terms of subsets of $\calE/\im\Lmap_y$ and its dual. 
    However, we have several techniques to obtain expressions for $\Qmap_y$ without explicitly choosing curves, see Section~\ref{sec:Qmap_comp} below. Thus, Definition~\ref{def:LQ_maps} mirrors the computations we do more closely. In practice, we view $\Qmap_y$ as taking values in $\calE$, and consider two maps differing by elements of $\im\Lmap_y$ as equivalent for the purpose of verifying {\twoimpliesone} since they yield the same sets $A_y,B_y,W_y$.
    }   
\end{remark}


We now express the sets $A_y,B_y,W_y$ appearing in our conditions for {\twoimpliesone} in terms of the maps $\Lmap_y$ and $\Qmap_y$. We explain how to compute $\Lmap_y$ and $\Qmap_y$ in various settings in Section~\ref{sec:Qmap_comp} below.
\begin{proposition}\label{prop:2implies1_with_Qmap}
For any $y\in\calM$,
\begin{enumerate}[(a)]
    \item $A_y = \Qmap_y(\ker\Lmap_y) + \im\Lmap_y$.
    
    \item $B_y = \underset{\substack{(v_i)_{i\geq1}:\Lmap_y(v_i)\to 0}}{\bigcup}\lim_{i\to\infty}(\Qmap_y(v_i)+ \im\Lmap_y).\footnote{A sequence $(v_i+W)_{i\geq 1}$ of translates of a subspace $W$ of a (topological) vector space $V$ converges (necessarily to another translate of $W$) iff there exist $w_i\in W$ such that $(v_i+w_i)_{i\geq 1}\subseteq V$ converges in $V$.}$
    \item $W_y = \left\{w\in A_y^*: \forall v\in\ker\Lmap_y, \langle \nabla^2\varphi_w(y)[v],v\rangle=0 \implies \nabla^2\varphi_w(y)[v] = 0\right\}$.
\end{enumerate}
\end{proposition}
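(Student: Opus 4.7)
For parts (a) and (b), the plan is to directly unwind the definitions via Lemma~\ref{lem:diff_of_accel}. For a smooth curve $c$ on $\calM$ with $c(0)=y$ and $v=c'(0)$, the first derivative $(\varphi\circ c)'(0)=\Lmap_y(v)$ vanishes iff $v\in\ker\Lmap_y$; and with $v$ fixed, the final display of Lemma~\ref{lem:diff_of_accel} says $(\varphi\circ c)''(0)$ traces out exactly the affine set $\Qmap_y(v)+\im\Lmap_y$ as $c$ varies. Taking the union over $v\in\ker\Lmap_y$ gives (a). For (b), the same reasoning applied to sequences shows $w\in B_y$ iff there exist $v_i\in\T_y\calM$ with $\Lmap_y(v_i)\to 0$ and elements $w_i\in\Qmap_y(v_i)+\im\Lmap_y$ with $w_i\to w$, which is precisely the stated description.

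For part (c), the plan is to reduce $W_y$ to a concrete family of costs via Proposition~\ref{prop:W_mult_of_id}: $w\in W_y$ iff there exists $\alpha>0$ such that $y$ is 2-critical for $f_\alpha\circ\varphi$ with $f_\alpha(x')=\langle w,x'\rangle+\frac{\alpha}{2}\|x'-x\|^2$. Using Lemma~\ref{lem:grad_and_hess_g}, 1-criticality of $y$ becomes $w\in(\im\Lmap_y)^\perp$ and the Hessian simplifies to $\alpha\Lmap_y^*\Lmap_y+\nabla^2\varphi_w(y)$, so $w\in W_y$ reduces to asking whether some $\alpha>0$ achieves $\alpha L^*L+Q\succeq 0$ on $\T_y\calM$, where $L=\Lmap_y$ and $Q=\nabla^2\varphi_w(y)$. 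Given $w\in(\im L)^\perp$, identity~\eqref{eq:lin_quadr_Qmap_rel} together with the formula for $A_y$ from part (a) shows that ``$\langle Qv,v\rangle\geq 0$ for all $v\in\ker L$'' is equivalent to $w\in A_y^*$; what remains is to prove that this PSD-on-kernel condition, augmented by the implication ``$\langle Qv,v\rangle=0\Rightarrow Qv=0$'' for $v\in\ker L$, is both necessary and sufficient for the existence of such $\alpha$.

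The main obstacle is this last linear-algebraic equivalence, and in particular the sufficiency direction. Necessity of the extra implication is a one-line gradient argument: if $v_0\in\ker L$ has $\langle Qv_0,v_0\rangle=0$, then $v_0$ globally minimizes the PSD form $F(v)=\alpha\|Lv\|^2+\langle Qv,v\rangle$ at value zero, so its gradient $2(\alpha L^*Lv_0+Qv_0)=2Qv_0$ must vanish. For sufficiency, I would use the extra implication to decompose $\T_y\calM=N\oplus V_0^+\oplus(\ker L)^\perp$ orthogonally, where $N=\ker L\cap\ker Q$ and $V_0^+=N^\perp\cap\ker L$. A short computation (using self-adjointness of $Q$ together with $Q|_N=0$) shows $F$ is independent of the $N$-component, while by construction $Q$ is positive definite as a quadratic form on $V_0^+$, and $\alpha L^*L$ is positive definite on $(\ker L)^\perp$ for every $\alpha>0$ since $L|_{(\ker L)^\perp}$ is injective. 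A Schur complement computation with respect to the $(\ker L)^\perp$ block then shows that $\alpha L^*L+Q$ is PSD on $V_0^+\oplus(\ker L)^\perp$ for all sufficiently large $\alpha$, completing the sufficiency direction and hence the characterization of $W_y$.
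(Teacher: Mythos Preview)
Your proposal is correct and follows essentially the same approach as the paper: parts (a) and (b) are proved exactly as in the paper via Lemma~\ref{lem:diff_of_accel}, and part (c) uses the same reduction through Proposition~\ref{prop:W_mult_of_id} and Lemma~\ref{lem:grad_and_hess_g} to the question of when $\alpha\Lmap_y^*\Lmap_y+\nabla^2\varphi_w(y)\succeq 0$ for some $\alpha>0$. The only minor difference is in the linear-algebra step: the paper applies the generalized Schur complement (with pseudoinverse) directly to the block decomposition $\ker\Lmap_y\oplus(\ker\Lmap_y)^\perp$, whereas you first split off the null space $N=\ker\Lmap_y\cap\ker\nabla^2\varphi_w(y)$ and then apply the ordinary Schur complement on the positive-definite remainder---both routes yield the same characterization.
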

\ifthenelse{\boolean{shortver}}{}
    {
We always have
\begin{align}
    B_y \supseteq \left\{\lim\nolimits_i\Qmap_y(v_i):\Lmap_y(v_i)\to 0\right\} + \im\Lmap_y,
    \label{eq:By_nice_subset}
\end{align}
but inclusion may be strict depending on the curves $c_v$ in Definition~\ref{def:LQ_maps}. 
In practice, the expressions for $\Qmap_y$ we obtain using our techniques from Section~\ref{sec:Qmap_comp} below are smooth, and the subset of $B_y$ in~\eqref{eq:By_nice_subset} is large enough to prove {\twoimpliesone} in every example where it holds. 
}
\begin{proof}
\begin{enumerate}[(a)]
    \item If $w\in A_y$, then $w=(\varphi\circ c)''(0)$ for some smooth curve $c$ on $\calM$ such that $c(0)=y$ and $0=(\varphi\circ c)'(0) = \Lmap_y(c'(0))$, so $c'(0)\in\ker\Lmap_y$. By Lemma~\ref{lem:diff_of_accel}(a), we have $w \in \Qmap_y(c'(0)) + \im\Lmap_y$, showing $A_y\subseteq \Qmap_y(\ker\Lmap_y)+\im\Lmap_y$. 
    
    Conversely, suppose $w = \Qmap_y(v) + \Lmap_y(u)$ for some $v\in\ker\Lmap_y$ and $u\in\T_y\calM$. By Lemma~\ref{lem:diff_of_accel}(b), there is a smooth curve $c$ on $\calM$ satisfying $c(0)=y$, $c'(0)=v$ and $(\varphi\circ c)''(0) = w$. Since $(\varphi\circ c)'(0)=\Lmap_y(v)=0$, this shows $\Qmap_y(\ker\Lmap_y)+\im\Lmap_y\subseteq A_y$.
    
    \item If $w\in B_y$, then there are smooth curves $c_i$ such that $c_i(0)=y$, $\Lmap_y(c_i'(0))\to0$ and $(\varphi\circ c_i)''(0)\to w$. By Lemma~\ref{lem:diff_of_accel}(a), we have $(\varphi\circ c_i)''(0)\in \Qmap_y(c_i'(0)) + \im\Lmap_y$. Because $\lim_i(\varphi\circ c_i)''(0) = w$ exists, we conclude that $\lim_i(\Qmap_y(c_i'(0))+\im\Lmap_y) = w + \im\Lmap_y$ exists as well, and $w$ is contained in this limit. This shows the inclusion $\subseteq$ in the claim. 
    
    Conversely, suppose $w\in \lim_i(\Qmap_y(v_i)+\im\Lmap_y)$ for some sequence $(v_i)_{i\geq 1}\subseteq \T_y\calM$ such that $\Lmap_y(v_i)\to0$. Then there exist $u_i\in\T_y\calM$ such that $w = \lim_i(\Qmap_y(v_i) + \Lmap_y(u_i))$. By Lemma~\ref{lem:diff_of_accel}(b), there exist curves $c_i$ satisfying $c_i(0)=y$, $c_i'(0)=v_i$ and $(\varphi\circ c_i)''(0) = \Qmap_y(v_i) + \Lmap_y(u_i)$. Then $(\varphi\circ c_i)'(0)=\Lmap_y(v_i)\to 0$ and $(\varphi\circ c_i)''(0)\to w$, so $w\in B_y$ and hence the reverse inclusion in the claim also holds.

    \item Let $x=\varphi(y)$. By Proposition~\ref{prop:W_mult_of_id}, a vector $w\in\calE$ is contained in $W_y$ iff there exists $\alpha>0$ such that $y$ is 2-critical for~\eqref{eq:Q} with cost $g_{\alpha}=f_{\alpha}\circ\varphi$ where $f_{\alpha}(x') = \langle w,x'\rangle + \tfrac{\alpha}{2}\|x'-x\|^2$. By Lemma~\ref{lem:grad_and_hess_g}, this is equivalent to
    \begin{align*}
        \nabla g_{\alpha}(y) = \Lmap_y^*(w) = 0,\qquad \nabla^2g_{\alpha}(y) = \alpha \, \Lmap_y^*\circ\Lmap_y^{} + \nabla^2\varphi_w(y)\succeq0.
    \end{align*}
    In other words, $w\in W_y$ iff $w\in(\im\Lmap_y)^{\perp}$ and $\nabla^2\varphi_w(y) + \alpha \, \Lmap_y^*\circ\Lmap_y^{}\succeq0$ for some $\alpha>0$. To understand when the second condition holds, we decompose $\T_y\calM=\ker\Lmap_y\oplus(\ker\Lmap_y)^{\perp}$ and express the relevant self-adjoint operators on $\T_y\calM$ in block matrix form with respect to a basis compatible with this decomposition. 
    More explicitly, choose a basis as described above and denote $n=\dim\ker\Lmap_y$ and $m=\dim(\ker\Lmap_y)^{\perp}$.
    Assume first that $m>0$.
    We represent $\nabla^2\varphi_w(y)$ and $\alpha \, \Lmap_y^*\circ\Lmap_y^{}$ in that basis as
    \begin{align*}
        \nabla^2\varphi_w(y) &= \begin{bmatrix} \Phi_1 & \Phi_2\\ \Phi_2^\top & \Phi_3\end{bmatrix},\quad &&\textrm{with }\ \Phi_1\in\mbb S^n, \Phi_3\in\mbb S^m.\\
        \alpha \, \Lmap_y^*\circ\Lmap_y^{} &= \begin{bmatrix} 0 & 0\\ 0 & \alpha \Psi\end{bmatrix},\quad &&\textrm{with }\ \Psi\in\mbb S^m_{\succ 0}.
    \end{align*}
	Thus,
	\begin{align*}
		w & \in W_y & \iff && w \in (\im \Lmap_y)^\perp \textrm{ and } \exists \alpha>0 \textrm{ such that } \begin{bmatrix}
			\Phi_1 & \Phi_2 \\ \Phi_2\transpose & \Phi_3 + \alpha\Psi
		\end{bmatrix} & \succeq 0.
	\end{align*}
	By the generalized Schur complement theorem~\cite[Thm.~1.20]{zhang2005schur}, the block-matrix on the right-hand side is positive semidefinite exactly if
	\begin{align*}
		\Phi_1 \succeq 0, && \im \Phi_2 \subseteq \im \Phi_1 && \textrm{ and } && \Phi_3 + \alpha\Psi  \succeq \Phi_2\transpose \Phi_1^\dagger \Phi_2^{},
	\end{align*}
	where $\Phi_1^\dagger$ is the Moore--Penrose pseudo-inverse of $\Phi_1$.
	The last condition holds upon choosing $\alpha \geq \lambda_{\max}(\Phi_2^\top\Phi_1^{\dagger}\Phi_2^{} - \Phi_3) / \lambda_{\min}(\Psi)$.
	Thus, we deduce the following expression for $W_y$:
	\begin{align*}
		W_y = \{ w \in (\im \Lmap_y)^\perp : \Phi_1 \succeq 0 \textrm{ and } \im \Phi_2 \subseteq \im \Phi_1 \},\\ 
	\end{align*}
	with $\Phi_1$ and $\Phi_2$ as defined above.
	We now work out basis-free characterizations of the properties $\Phi_1 \succeq 0$ and $\im \Phi_2 \subseteq \im \Phi_1$.
	
	First, notice that $\Phi_1 \succeq 0$ iff
	\begin{align*}
	    \begin{bmatrix} v_1\transpose & \mathbf{0}_{m}\transpose \, \end{bmatrix} \begin{bmatrix} \Phi_1 & \Phi_2\\ \Phi_2^\top & \Phi_3\end{bmatrix}\begin{bmatrix} v_1 \\ \mathbf{0}_{m}\end{bmatrix}\geq0 \quad \textrm{for all } v_1\in\RR^n,
	\end{align*}
	or in basis-free terms, $\inner{\nabla^2\varphi_w(y)[v]}{v}\geq0$ for all $v\in\ker\Lmap_y$. If $w\in(\im\Lmap_y)^{\perp}$ then~\eqref{eq:lin_quadr_Qmap_rel} shows that this is also equivalent to $\inner{w}{\Qmap_y(v)}\geq0$ for all $v\in\ker\Lmap_y$, which is in turn equivalent to $\inner{w}{\Qmap_y(v) + \Lmap_y(u)}\geq0$ for all $v\in\ker\Lmap_y$ and $u\in\T_y\calM$. This last condition is just $w\in A_y^*$ by part (a).
	
	Second, we must understand for which vectors $w$ it holds that $\im \Phi_2 \subseteq \im \Phi_1$, or equivalently, $\ker \Phi_1^{} \subseteq \ker \Phi_2\transpose$ (recall that $\Phi_1^\top=\Phi_1^{}$). If $\Phi_1\succeq0$, then $v_1\in\ker\Phi_1$ iff $v_1\transpose \Phi_1^{} v_1^{} = 0$. Moreover, if $v_1\in\ker\Phi_1$ then 
	\begin{align*}
	    \begin{bmatrix} \Phi_1 & \Phi_2\\ \Phi_2^\top & \Phi_3\end{bmatrix}\begin{bmatrix} v_1 \\ \mathbf{0}_{m}\end{bmatrix} = \begin{bmatrix} \mathbf{0}_n \\ \Phi_2\transpose v_1^{} \end{bmatrix},
	\end{align*} 
	which vanishes iff $v_1\in\ker\Phi_2\transpose$.
    Thus, assuming $\Phi_1 \succeq 0$, the inclusion $\im\Phi_2\subseteq\im\Phi_1$ is equivalent to the implication
	\begin{align*}
	   \begin{bmatrix} v_1\transpose & \mathbf{0}_{m}\transpose \, \end{bmatrix} \begin{bmatrix} \Phi_1 & \Phi_2\\ \Phi_2^\top & \Phi_3\end{bmatrix}\begin{bmatrix} v_1 \\ \mathbf{0}_{m}\end{bmatrix} = 0 \implies \begin{bmatrix} \Phi_1 & \Phi_2\\ \Phi_2^\top & \Phi_3\end{bmatrix}\begin{bmatrix} v_1 \\ \mathbf{0}_{m}\end{bmatrix} = 0,\qquad \textrm{ for all } v_1\in\RR^n.
	\end{align*}
	In basis-free terms, we have shown that, if $\Phi_1\succeq0$, then $\im\Phi_2\subseteq\im\Phi_1$ is equivalent to the implication $\inner{\nabla^2\varphi_w(y)[v]}{v}=0\implies \nabla^2\varphi_w(y)[v]=0$ holding for all $v\in\ker\Lmap_y$. Putting everything together,
	\begin{align*}
		W_y & = \{ w \in (\im \Lmap_y)^\perp : \Phi_1 \succeq 0 \textrm{ and } \im \Phi_2 \subseteq \im \Phi_1 \}\\ 
		    & = \{w\in (\im\Lmap_y)^{\perp}: w\in A_y^* \textrm{ and } \forall v\in\ker\Lmap_y, \inner{\nabla^2\varphi_w(y)[v]}{v}=0\implies \nabla^2\varphi_w(y)[v]=0\}\\
		    & = \{w\in A_y^*: \forall v\in\ker\Lmap_y, \inner{\nabla^2\varphi_w(y)[v]}{v}=0\implies \nabla^2\varphi_w(y)[v]=0\},
	\end{align*}
	where the last equality holds because $A_y^*\subseteq(\im\Lmap_y)^{\perp}$ by Proposition~\ref{prop:AB_basics}(b). This is the claimed expression for $W_y$.
	%

    If $m=0$, or equivalently, if $\Lmap_y=0$, then $w\in W_y$ iff $w\in(\im\Lmap_y)^{\perp}=\calE$ and $\nabla^2\varphi_w(y)\succeq0$. This in turn is equivalent to $w\in A_y^*=(\Qmap_y(\ker\Lmap_y))^*\cap(\im\Lmap_y)^{\perp}$ by~\eqref{eq:lin_quadr_Qmap_rel}, so $W_y=A_y^*$ in this case. Conversely, if $w\in A_y^*$ and $\Lmap_y=0$ then $\nabla^2\varphi_w(y)\succeq0$ so the condition in the claimed expression for $W_y$ is satisfied automatically: it also evaluates to $A_y^*$.
    This verifies that the claimed expression for $W_y$ holds for $m=0$ as well. \qedhere
\end{enumerate}\end{proof}
\subsection{Composition of lifts}\label{sec:composition_of_lifts}
In this section, we ask: when are lift properties preserved under composition?
We use the following proposition both to compute $\Lmap_y$ and $\Qmap_y$ in various settings, and to study some of the lifts appearing in the literature in Sections~\ref{sec:fiber_prod_lifts} and~\ref{sec:examples}.
\begin{proposition}\label{prop:composition_of_lifts}
Let $\varphi\colon\calM\to\calX$ be a smooth lift, and let $\psi\colon\mc N\to\calM$ be a smooth map between smooth manifolds such that $\varphi\circ\psi\colon\mc N\to\calX$ is surjective. Both $\varphi$ and $\varphi \circ \psi$ are smooth lifts for $\calX$. For $z\in\mc N$ and $y=\psi(z)\in\calM$, the following hold.
\begin{enumerate}[(a)]
    \item If $\varphi\circ\psi$ satisfies {\localimplieslocal} at $z$, then $\varphi$ satisfies {\localimplieslocal} at $y$. If $\psi$ is open (in particular, if $\psi$ is a submersion) at $z$, and if $\varphi$ satisfies {\localimplieslocal} at $y$, then $\varphi\circ\psi$ satisfies {\localimplieslocal} at $z$. 
    
    \item If $\varphi\circ\psi$ satisfies {\oneimpliesone} or {\twoimpliesone} at $z$, then $\varphi$ satisfies the corresponding property at $y$. If $\psi$ is a submersion at $z$ and $\varphi$ satisfies {\oneimpliesone} or {\twoimpliesone} at $y$, then $\varphi\circ\psi$ satisfies the corresponding property at $z$.
    
    \item If $\psi$ is a submersion at $z$, then 
    \begin{align*}
        \Lmap_z^{\varphi\circ\psi} = \Lmap_y^{\varphi} \circ \Lmap_z^{\psi} && \textrm{ and } && \Qmap_z^{\varphi\circ\psi} \equiv \Qmap_y^{\varphi} \circ \Lmap_z^{\psi} \mod \im\Lmap_z^{\varphi\circ\psi}.
    \end{align*}
    Moreover, $\im\Lmap_z^{\varphi\circ\psi} = \im\Lmap_y^{\varphi}$, $A_z^{\varphi\circ\psi} = A_y^{\varphi}$, $B_z^{\varphi\circ\psi} = B_y^{\varphi}$, and $W_z^{\varphi\circ\psi} = W_y^{\varphi}$.
\end{enumerate}
\end{proposition}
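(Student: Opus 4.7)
For part (a), translate {\localimplieslocal} into openness via Theorem~\ref{thm:localimplocalcharact}. For the forward direction, any neighborhood $\mc U$ of $y$ in $\mc M$ pulls back to a neighborhood $\psi^{-1}(\mc U)$ of $z$ in $\mc N$ by continuity of $\psi$, so openness of $\varphi\circ\psi$ at $z$ makes $(\varphi\circ\psi)(\psi^{-1}(\mc U)) \subseteq \varphi(\mc U)$ a neighborhood of $\varphi(y)$ in $\calX$, whence so is $\varphi(\mc U)$. Conversely, if $\psi$ is open at $z$ and $\varphi$ is open at $y$, then for any neighborhood $\mc V$ of $z$ the set $\psi(\mc V)$ is a neighborhood of $y$, so $\varphi(\psi(\mc V)) = (\varphi\circ\psi)(\mc V)$ is a neighborhood of $\varphi(y)$.

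For part (b), the key observation is that for any smooth $\psi\colon\mc N\to\mc M$ and any cost $g\colon\mc M\to\RR$, any smooth curve $c$ on $\mc N$ through $z$ pushes forward to a smooth curve $\tilde c = \psi\circ c$ on $\mc M$ through $y$ with $g\circ\psi\circ c = g\circ\tilde c$; consequently $k$-criticality of $y$ for $g$ always implies $k$-criticality of $z$ for $g\circ\psi$. Applying this with $g = f\circ\varphi$ yields the forward directions of both {\oneimpliesone} and {\twoimpliesone}: if the composition satisfies ``$k\Rightarrow 1$'' at $z$ and $y$ is $k$-critical for $f\circ\varphi$, then $z$ is $k$-critical for $f\circ\varphi\circ\psi$, hence $x = \varphi(y)$ is stationary for $f$. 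For the converses, the submersion assumption at $z$ (via the local submersion normal form) allows any curve $\tilde c$ on $\mc M$ through $y$ to be lifted to a smooth curve $c$ on $\mc N$ through $z$ with $\psi\circ c = \tilde c$, so the reverse implication ($k$-criticality of $z$ for $g\circ\psi$ implies $k$-criticality of $y$ for $g$) also holds, and the result follows as before.

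For part (c), the usual chain rule $\D(\varphi\circ\psi)(z) = \D\varphi(y)\circ\D\psi(z)$ immediately gives $\Lmap_z^{\varphi\circ\psi} = \Lmap_y^{\varphi}\circ\Lmap_z^{\psi}$. The $\Qmap$ identity follows by picking a smooth curve $c$ on $\mc N$ with $c(0)=z$, $c'(0)=v$ and observing that $\tilde c := \psi\circ c$ is a smooth curve on $\mc M$ with $\tilde c(0)=y$, $\tilde c'(0)=\Lmap_z^{\psi}(v)$, and $(\varphi\circ\psi\circ c)''(0) = (\varphi\circ\tilde c)''(0)$; by Lemma~\ref{lem:diff_of_accel}(a), this value differs from $\Qmap_y^{\varphi}(\Lmap_z^{\psi}(v))$ by an element of $\im\Lmap_y^{\varphi}$. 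Surjectivity of $\Lmap_z^{\psi}$ (from the submersion assumption) then yields $\im\Lmap_z^{\varphi\circ\psi} = \Lmap_y^{\varphi}(\im\Lmap_z^{\psi}) = \im\Lmap_y^{\varphi}$, so the ambiguity quotients in the $\Qmap$ identity agree. For the equalities $A_z^{\varphi\circ\psi} = A_y^{\varphi}$, $B_z^{\varphi\circ\psi} = B_y^{\varphi}$, and $W_z^{\varphi\circ\psi} = W_y^{\varphi}$, apply the same curve-push/curve-lift dictionary from part (b): pushing curves from $\mc N$ to $\mc M$ gives the inclusions ``$\subseteq$'' directly from Definitions~\ref{def:A_B_sets} and~\ref{def:W_y}, while lifting curves via the submersion gives ``$\supseteq$''.

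The main subtlety lies in $B_z^{\varphi\circ\psi} = B_y^{\varphi}$, where one must lift an entire \emph{sequence} of curves $\tilde c_i$ from $\mc M$ to $\mc N$. This is handled by choosing a smooth local section $\sigma$ of $\psi$ on a neighborhood of $y$ (available since $\psi$ is a submersion at $z$) and setting $c_i = \sigma\circ\tilde c_i$, so that $(\varphi\circ\psi\circ c_i)^{(k)}(0) = (\varphi\circ\tilde c_i)^{(k)}(0)$ for $k=1,2$, transferring the limiting behavior that defines $B$.
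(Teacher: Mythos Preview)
Your proposal is correct and follows essentially the same approach as the paper. The only stylistic difference is that for part~(b) with {\oneimpliesone} and for the equalities $A_z^{\varphi\circ\psi}=A_y^{\varphi}$, $B_z^{\varphi\circ\psi}=B_y^{\varphi}$ in part~(c), the paper works through the algebraic characterizations (Theorem~\ref{thm:oneimpliesone_char} and Proposition~\ref{prop:2implies1_with_Qmap}) rather than directly from the curve-based definitions, so that for $B_y$ it only needs to lift \emph{tangent vectors} $v_i$ via surjectivity of $\Lmap_z^{\psi}$ instead of entire curves via a local section; your uniform curve-push/curve-lift argument is an equally valid route to the same conclusions.
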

\ifthenelse{\boolean{shortver}}
{The proof is straightforward, see~\cite[App.~C.1]{levin2022effectARXIV}.}
{The proof is straightforward but long, and is deferred to Appendix~\ref{apdx:comps}.}
Here we denote $v\equiv w\mod\im\Lmap_y$ to mean $v-w\in\im\Lmap_y$. 
By Lemma~\ref{lem:diff_of_accel}, equality of $\Qmap_z^{\varphi\circ\psi}$ and $\Qmap_y^{\varphi}$ modulo $\im\Lmap_y^{\varphi}=\im\Lmap_z^{\varphi\circ\psi}$ means that either one can be used to verify {\twoimpliesone}.

Proposition~\ref{prop:composition_of_lifts} shows that, given a smooth lift $\varphi\colon\calM\to\calX$, there is no benefit to further lifting $\calM$ to another smooth manifold through $\psi\colon\calN\to\calM$ in terms of our properties.
Indeed, if $\varphi$ does not satisfy one of our properties, then neither does $\varphi\circ\psi$ for any smooth $\psi$ (we cannot `fix' a bad lift by lifting it further).
On the other hand, this proposition also tells us that our properties, as well as the sets involved in their characterization, are preserved under submersions.
This notably means lift properties can be checked through charts of $\calM$.
Moreover, for lifts to a manifold $\calM$ which is a quotient of another manifold $\overline{\calM}$ (these arise naturally when quotienting by group actions, see~\cite[\S9]{optimOnMans}), Proposition~\ref{prop:composition_of_lifts} allows us to verify our properties on the total space $\overline{\calM}$, which is often easier.
\begin{remark}\label{rmk:horiz_space_2imp1}
If $\psi\colon\mc N\to\calM$ is a submersion, for each $z\in\mc N$, let $V_z=\ker\D\psi(z)$ and $H_z=(\ker\D\psi(z))^{\perp}$ be the so-called vertical and horizontal spaces at $z$, which satisfy $\T_z\mc N=V_z\oplus H_z$ and $H_z\cong\T_{\psi(z)}\calM$. Proposition~\ref{prop:composition_of_lifts} implies that $\Lmap_z^{\varphi\circ\psi} = \Lmap_z^{\varphi\circ\psi}\circ\Proj_{H_z}$ and $\Qmap_z^{\varphi\circ\psi} \equiv \Qmap_z^{\varphi\circ\psi}\circ \Proj_{H_z}$ where $\Proj_{H_z}$ denotes orthogonal projection onto $H_z$, so it suffices to consider the restrictions of $\Lmap_z^{\varphi\circ\psi}$ and $\Qmap_z^{\varphi\circ\psi}$ to the horizontal space at $z$. The latter is often simpler than $\T_z\mc N$, see~\cite[\S9.4]{optimOnMans}.
\end{remark}

We end this section with an implicit application of Proposition~\ref{prop:composition_of_lifts} seen in the robotics and computer vision literature.
\begin{example}[Shohan rotation averaging]\label{ex:shohan_rots}
In~\cite{shohan_rots}, Dellaert et al.\ estimate a set of $n$ rotations of $\RR^d$ from noisy measurements of pairs of relative rotations. Their algorithm involves the Burer--Monteiro lift~\eqref{eq:BM_lift} with $\calM=\mathrm{St}(p,d)^n$ for appropriate $p\geq d$, composed with the submersion $\psi\colon \mathrm{SO}(p)^n\to\mathrm{St}(p,d)^n$ extracting the first $d$ columns of each matrix. 
Using our framework, we can analyze this composition and thereby strengthen the guarantees proved in~\cite{shohan_rots}. Indeed, since the Burer--Monteiro lift satisfies {\twoimpliesone} and {\localimplieslocal} by Proposition~\ref{prop:BM_lift} and 
$\psi$ is a submersion, Proposition~\ref{prop:composition_of_lifts} shows that the composed lift satisfies {\localimplieslocal} and {\twoimpliesone} as well. 
Furthermore, if every stationary point for the original low-rank SDP of~\cite{shohan_rots} is globally optimal (e.g., if the conditions of~\cite{BM_det_guarantees} hold), then any 2-critical point for their lifted problem is globally optimal and therefore the lifted problem enjoys benign nonconvexity.

\end{example}

\subsection{Computing $\Lmap_y$ and $\Qmap_y$}\label{sec:Qmap_comp}
Theorem~\ref{thm:2implies1_chain} gives several conditions for {\twoimpliesone} that (together with Proposition~\ref{prop:2implies1_with_Qmap}) can be checked using $\Lmap_y$ and $\Qmap_y$ from Definition~\ref{def:LQ_maps}.
We therefore consider various strategies for computing $\Lmap_y$ and $\Qmap_y$ depending on how we can access $\calM$.
Since $\Qmap_y$ is only defined modulo $\im\Lmap_y$, different methods may yield different expressions, any of which can be used to verify {\twoimpliesone}. 


\paragraph{$\calM$ through charts:} Suppose we are given a chart $\psi\colon U\to\calM$ on $\calM$, which is a diffeomorphism from an open subset $U\subseteq\calE'$ of some linear space $\calE'$ onto its image, and let $y\in\psi(U)$. 
Then we can compose $\varphi$ with $\psi$ to obtain a lift to a linear space $\widetilde\varphi=\varphi\circ\psi$. 
By Proposition~\ref{prop:composition_of_lifts}, the lift $\varphi$ satisfies {\oneimpliesone} or {\twoimpliesone} at $y\in\calM$ if and only if $\widetilde\varphi$ satisfies the corresponding property at $z=\psi^{-1}(y)$. 
Thus, it suffices to compute $\Lmap_z^{\widetilde\varphi}$ and $\Qmap_z^{\widetilde\varphi}$ and use them to check {\twoimpliesone} at $z$. 
Since $U$ is an open subset of a linear space $\calE'$, it is natural to compute $\Lmap_z^{\widetilde\varphi}$ and $\Qmap_z^{\widetilde\varphi}$ directly from Definition~\ref{def:LQ_maps} using curves $\widetilde c_{\widetilde v}(t) = z + t\widetilde v$ which are straight lines through $z$ in direction $\widetilde v\in \calE'=\T_zU$.
This choice yields the expressions
\begin{align}
    \Lmap_z^{\widetilde\varphi}(\widetilde v) = (\widetilde\varphi\circ\widetilde c_{\widetilde v})'(0)=\D\widetilde\varphi(z)[\widetilde v],\quad \textrm{and}\quad \Qmap_z^{\widetilde\varphi}(\widetilde v) = (\widetilde\varphi\circ\widetilde c_{\widetilde v})''(0)=\D^2\widetilde\varphi(z)[\widetilde v,\widetilde v],
    \label{eq:L_and_Q_on_chart}
\end{align} 
where $\D\widetilde\varphi(z)$ and $\D^2\widetilde\varphi(z)$ are the ordinary first- and second-order derivative maps of $\widetilde\varphi$ viewed as a map between linear spaces $\calE'\to\calE$. 
In particular, if $\calM$ is itself a linear space (e.g., for the~\eqref{eq:LR_lift} lift), we may take $U=\calE'=\calM$ and $\psi=\mathrm{id}$ and use~\eqref{eq:L_and_Q_on_chart} with $\widetilde\varphi=\varphi$.

\paragraph{$\calM$ embedded in a linear space:} Suppose now that $\calM$ is an embedded submanifold of another linear space $\calE'$.
By~\cite[Prop.~3.31]{optimOnMans}, the lift $\varphi$ can be extended to a smooth map on a neighborhood $V$ of $\calM$ in $\calE'$, denoted by $\overline{\varphi}\colon V\to\calE$.
This means $\overline{\varphi}$ is a smooth map defined on an open subset $V\subseteq\calE'$ containing $\calM$ and it satisfies $\overline{\varphi}|_{\calM}=\varphi$.
If $c_v$ is a curve on $\calM$ passing through $y\in\calM\subseteq V$ with velocity $v\in\T_y\calM\subseteq \T_yV=\calE'$, then $\varphi\circ c_v = \overline{\varphi}\circ c_v$ because the curve is contained in $\calM$ where $\overline{\varphi}$ agrees with $\varphi$.
Denote by $u_v = \ddot c_v(0)$ the ordinary (extrinsic) acceleration of $c_v$ at $t=0$, viewed as a curve in $\calE'$.
Then Definition~\ref{def:LQ_maps} and the chain rule give
\begin{align*}
    \Lmap_y(v) = (\overline{\varphi}\circ c_v)'(0) = \D\overline{\varphi}(y)[v], && \Qmap_y(v) = (\overline{\varphi}\circ c_v)''(0) = \D^2\overline{\varphi}(y)[v,v] + \D\overline{\varphi}(y)[u_v].
\end{align*}
To better understand $u_v$, let $h\colon\calE'\to\RR^k$ be a local defining function for $\calM$ around $y$, that is, $h$ is smooth, $\calM$ is locally its zero-set, and $\rank\, \D h(y')=k$ for all $y'$ around $y$.
For a curve $c_v$ as above, we have $h(c_v(t))\equiv 0$ around $t = 0$, so in particular $(h\circ c_v)'(0) = (h\circ c_v)''(0)=0$.
By the chain rule, the latter equations can be written as
\begin{align}
    \D h(y)[v] = 0 && \textrm{ and } && \D^2h(y)[v,v] + \D h(y)[u_v] = 0.
    \label{eq:1st_and_2nd_eqn_pass1}
\end{align} 
Conversely, for any $v,u_v\in \calE'$ satisfying~\eqref{eq:1st_and_2nd_eqn_pass1}, there exists a curve on $\calM$ passing through $y$ with velocity $v$ and extrinsic acceleration $u_v$ by~\cite[Prop.~13.13]{rockafellar2009variational}.
Thus, the expressions~\eqref{eq:1st_and_2nd_eqn_pass1} describe all possible velocities and extrinsic accelerations of curve as they pass through $y$.
This set of all possible such accelerations of curves on $\calM$ passing through $y\in\calM$ with velocity $v\in\T_y\calM$ is the \emph{second-order tangent set} to $\calM$ at $y$ for $v$, and is denoted by $\T^2_{y,v}\calM$~\cite[Def.~13.11]{rockafellar2009variational}.
The above discussion shows that
\begin{align}
    \T_y\calM = \ker\D h(y) && \textrm{ and } && \T^2_{y,v}\calM = \left\{u\in\calE':\D h(y)[u] = -\D^2h(y)[v,v]\right\}.
    \label{eq:1st_and_2nd_ord_tngnts}
\end{align}
As a result, for any extension $\overline{\varphi}$ of $\varphi$ and all $v \in \T_y\calM$, we have
\begin{align}
    \Lmap_y(v) = \D\overline{\varphi}(y)[v] && \textrm{ and } && \Qmap_y(v) = \D^2\overline{\varphi}(y)[v,v] + \D\overline{\varphi}(y)[u_v]\ \textrm{ for some } u_v\in\T^2_{y,v}\calM.
    \label{eq:L_and_Q_embedded}
\end{align}
Note that $\T^2_{y,v}\calM$ is an affine subspace of $\calE'$ which is a translate of the subspace $\T_y\calM$, as can be seen from~\eqref{eq:1st_and_2nd_ord_tngnts}.
Therefore, while different choices of $u_v$ lead to different expressions for $\Qmap_y$, they are all equal modulo $\im\Lmap_y$.


\paragraph{$\calM$ as a quotient manifold:} Suppose next that $\calM$ is a quotient manifold of $\overline{\calM}$ with quotient map $\pi\colon\overline{\calM}\to\calM$ \cite[\S9]{optimOnMans}.
Then $\overline{\varphi}=\varphi\circ\pi$ gives a smooth lift of $\calX$ to $\overline{\calM}$. Since $\pi$ is a submersion, Proposition~\ref{prop:composition_of_lifts} and Remark~\ref{rmk:horiz_space_2imp1} imply that to check {\twoimpliesone}, we need only compute $\Lmap_z$ and $\Qmap_z$ for $\overline{\varphi}$ restricted to the horizontal spaces $(\ker\D\pi(z))^{\perp}$ using the preceding two methods.

\paragraph{Computing $\nabla^2\varphi_w$:} To check the equivalent condition in Theorem~\ref{thm:2implies1_chain} for any presentation of $\calM$, we need to compute $W_y$. If we use Proposition~\ref{prop:2implies1_with_Qmap} to do so, we need an expression for the Riemannian Hessian $\nabla^2\varphi_w(y)$ where $\varphi_w(y)=\langle w, \varphi(y)\rangle$ for $w\in(\im\Lmap_y)^{\perp}$. Given $\Qmap_y$, we can obtain $\nabla^2\varphi_w(y)$ as the unique self-adjoint operator on $\T_y\calM$ that defines the quadratic form~\eqref{eq:lin_quadr_Qmap_rel}. Conversely, if we compute $\nabla^2\varphi_w(y)$ for all $w\in(\im\Lmap_y)^{\perp}$, e.g., using the techniques from~\cite[\S5.5]{optimOnMans}, we can set $\Qmap_y(v)$ to be the unique element of $(\im\Lmap_y)^{\perp}$ satisfying~\eqref{eq:lin_quadr_Qmap_rel}, providing another way to compute $\Qmap_y$. 

We now illustrate the above techniques for computing $\Lmap_y$ and $\Qmap_y$.
The following example uses charts.
\begin{example}[Desingularization of $\Rmnlr$]\label{ex:desing_Qmap_comp} 
Consider the desingularization lift~\eqref{eq:desing_lift} of bounded rank matrices $\calX=\Rmnlr$.
We compute $\Lmap$ and $\Qmap$ using charts. For $(X_0,\mc S_0)\in\calM$, let $Y_0\in\RR^{n\times(n-r)}$ be a matrix satisfying $\mathrm{col}(Y_0)=\mc S_0$, so $X_0Y_0=0$. Since $\rank(Y_0)=n-r$, we can find $n-r$ linearly independent rows in $Y_0$. Let $J\in\RR^{(n-r)\times(n-r)}$ be the invertible submatrix of $Y_0$ obtained by extracting these rows, and let $\Pi\in\RR^{n\times n}$ be a permutation matrix sending those $n-r$ rows to the first rows. Then 
\begin{align*}
    \Pi Y_0 J^{-1} = \begin{bmatrix} I_{n-r}\\ W_0\end{bmatrix} && \textrm{ and } && X_0 = \begin{bmatrix}-Z_0W_0, & Z_0\end{bmatrix}\Pi && \textrm{for some } Z_0\in\RR^{m\times r} \textrm{ and } W_0\in\RR^{r\times(n-r)},
\end{align*}
where the second identity is implied by $0 = X_0Y_0J^{-1}=X_0\Pi^\top(\Pi Y_0 J^{-1})$. Accordingly, a chart $\psi\colon \RR^{m\times r}\times \RR^{r\times(n-r)}\to\calM$ containing $(X_0,\mc S_0)$ is given by
\begin{align*}
    \psi(Z, W) = \left(\begin{bmatrix} -ZW, & Z\end{bmatrix}\Pi,\ \mathrm{col}\!\left(\Pi^\top\begin{bmatrix} I_{n-r}\\ W\end{bmatrix}\right)\right).
\end{align*}
Composing with $\varphi$, we obtain the lift $\widetilde\varphi(Z,W) = \varphi(\psi(Z,W))=\begin{bmatrix} -ZW, & Z\end{bmatrix}\Pi$, and by~\eqref{eq:L_and_Q_on_chart},
\begin{align}\label{eq:desing_LQ_maps_chart}
\begin{split} 
    &\Lmap_{(Z,W)}^{\widetilde\varphi}(\dot Z,\dot W) = \D\widetilde\varphi(Z,W)[\dot Z,\dot W] = \begin{bmatrix} -\dot ZW - Z\dot W, & \dot Z\end{bmatrix}\Pi,\\
    &\Qmap_{(Z,W)}^{\widetilde\varphi}(\dot Z,\dot W) = \D^2\widetilde\varphi(Z,W)[(\dot Z,\dot W),(\dot Z,\dot W)] = \begin{bmatrix} -2\dot Z\dot W, & 0\end{bmatrix}\Pi.
\end{split}
\end{align}
For $V=\begin{bmatrix}V_1, & V_2\end{bmatrix}\Pi\in\RR^{m\times n}$ where $V_2$ is $m\times r$, the Hessian $\nabla^2\widetilde\varphi_V(Z,W)$ is the ordinary Euclidean Hessian of $\widetilde\varphi_V(Z,W)=\langle V,\widetilde \varphi(Z,W)\rangle$, given by
\begin{align}
    \nabla^2\widetilde\varphi_V(Z,W)[\dot Z,\dot W] = \begin{bmatrix} -V_1\dot W^\top, & \dot Z^\top V_1\end{bmatrix}.
    \label{eq:desing_hess_chart}
\end{align}
We use the above expressions to show that this lift satisfies {\twoimpliesone} everywhere on $\calM$ in Section~\ref{sec:pf_of_desing}. 

\end{example} 

Next, we illustrate the embedded submanifold case on the following low-dimensional example, which shows that the necessary condition for {\twoimpliesone} in the last implication of Theorem~\ref{thm:2implies1_chain} is not sufficient. 
\begin{example}\label{ex:nec_cond_isnt_suff}
Consider the lift of the unit disk $\calX$ in $\RR^2$ to 
\begin{align*}
    \calM = \{y\in\RR^3:y_1^2+y_2^2+y_3^4=1\},\qquad \varphi(y)=(y_1,y_2).
\end{align*} 
Note that $\calM$ is an embedded submanifold of $\RR^3$ with defining function $h(y)=y_1^2+y_2^2+y_3^4-1$, since $\nabla h(y)\neq0$ for all $y\in\calM$. The first two derivatives of $h$ are
\begin{align*}
    \D h(y)[\dot y] = 2y_1\dot y_1 + 2y_2\dot y_2 + 4y_3^3\dot y_3,\qquad \D^2 h(y)[\dot y,\dot y] = 2\dot y_1^2 + 2\dot y_2^2 + 12y_3^2\dot y_3^2.
\end{align*}
Let $y=(1,0,0)$ and $x=\varphi(y)=(1,0)$. We get from~\eqref{eq:1st_and_2nd_ord_tngnts} that
\begin{align*}
    \T_y\calM = \{\dot y\in\RR^3:\dot y_1 = 0\},\qquad \T^2_{y,\dot y}\calM = (-\dot y_2^2,0,0)+\T_y\calM.
\end{align*}
Because $\varphi$ extends to a linear map $\overline{\varphi}(y)=(y_1,y_2)$ on all of $\RR^3$, whose first two derivatives are $\D\overline{\varphi}(y)[\dot y]=(\dot y_1,\dot y_2)$ and $\D^2\overline{\varphi}(y)[\dot y,\dot y]=0$, we have from~\eqref{eq:L_and_Q_embedded} that
\begin{align*}
    \Lmap_y(\dot y)=(0,\dot y_2) && \textrm{ and } && \Qmap_y(\dot y) = (-\dot y_2^2,0).
\end{align*}
On the other hand, 
\begin{align*}
    \T_x\mathcal{X} = \{\dot x\in\RR^2:\dot x_1\leq 0\} \supsetneq \im\Lmap_y = \{\dot x\in\RR^2:\dot x_1 = 0\}. 
\end{align*}
Since $\im\Lmap_y\neq\T_x\calX$, {\oneimpliesone} does not hold at $y$. 
\ifthenelse{\boolean{shortver}}{All the sufficient conditions for {\twoimpliesone} in Theorem~\ref{thm:2implies1_chain} fail as well, see~\cite[Ex.~3.31]{levin2022effectARXIV}.}{
Checking our sufficient conditions, Proposition~\ref{prop:2implies1_with_Qmap} gives
\begin{align*}
    A_y = \Qmap_y(\mathrm{ker}(\Lmap_y)) + \im\Lmap_y = \im\Lmap_y,\qquad B_y = \bigcup_{\substack{\Lmap_y(\dot y_i)\to 0\\ (\dot y_i)_1=0}}\lim_i(\Qmap_y(\dot y_i) + \im\Lmap_y) = \im\Lmap_y,
\end{align*} 
where the last equality follows since $\Lmap_y(\dot y_i) = (0,(\dot y_i)_2)\to 0$ implies $\Qmap_y(\dot y_i) = (-(\dot y_i)_2^2,0)\to0$.
Thus, all sufficient conditions in Theorem~\ref{thm:2implies1_chain} fail. 
}

For the equivalent condition in Theorem~\ref{thm:2implies1_chain}, if $w\in A_y^*$, or equivalently, $w=(w_1,0)$, then $\nabla^2\varphi_w(y)$ is the unique self-adjoint operator on $\T_y\calM$ satisfying
\begin{align*}
    \langle\nabla^2\varphi_w(y)[\dot y],\dot y\rangle = \langle w,\mathbf{Q}_y(\dot y)\rangle = -w_1\dot y_2^2, && \textrm{ hence } && \nabla^2\varphi_w(y)[\dot y] = (0, -w_1\dot y_2,0).
\end{align*}
For $u\in \ker\Lmap_y$, or equivalently, $u = (0,0,u_3)$, we get $\langle \nabla^2\varphi_w(y)[u],u\rangle = 0$ and $\nabla^2\varphi_w(y)[u] = 0$. Proposition~\ref{prop:2implies1_with_Qmap} then shows that $W_y = A_y^*\not\subseteq (\T_x\mathcal{X})^*$, hence {\twoimpliesone} does not hold.

Nevertheless, the necessary condition in Theorem~\ref{thm:2implies1_chain} does hold. Indeed,
\begin{align*}
    \Qmap_y(\T_y\calM) + \im\Lmap_y = \{(-\dot y_2^2,0):\dot y_2\in\RR\} + \{(0,\dot y_2):\dot y_2\in\RR\} = \T_x\calX,
\end{align*}
so taking duals on both sides yields the desired condition.
\end{example}

\subsection{Low-rank PSD matrices, and computing tangent cones}\label{sec:low_rk_psd}
As an application of the theory we developed so far, we consider the set of bounded-rank PSD matrices 
\begin{align*}
    \calX = \{X\in\RR^{n\times n}: X^\top = X,\ X\succeq0,\ \rank(X)\leq r\} = \mbb S_{\succeq0}^n\cap\RR^{n\times n}_{\leq r},
\end{align*}
with $r<n$ together with its lift to $\calM=\RR^{n\times r}$ via the factorization map $\varphi(R)=RR^\top$. 
This is a special case of the Burer--Monteiro lift for SDPs~\eqref{eq:BM_lift} without constraints ($m=0$).
Constructions in the next section enable us to deduce the properties of the general lift from this special case.
\begin{proposition}\label{prop:BM_lift_noA}
The lift $\varphi(R)=RR^\top$ from $\calM=\RR^{n\times r}$ to $\calX = \RR^{n\times n}_{\leq r}\cap\mbb S_{\succeq0}^n$ satisfies
\begin{itemize}
    \item {\localimplieslocal} and {\twoimpliesone} everywhere, and
    \item {\oneimpliesone} at $R\in\calM$ if and only if $\rank(R)=r$.
\end{itemize} 
Moreover, with $X = RR\transpose$, the sufficient condition $A_R = \T_{X}\calX$ for {\twoimpliesone} holds everywhere, and
\begin{align*}
    \T_X\calX &= \T_X\RR^{n\times n}_{\leq r}\cap\T_X\mbb S_{\succeq0}^n\\ &= \{V\in\mbb S^n: V_{\perp}\succeq0,\ \rank(V_{\perp})\leq r-\rank(X) \textrm{ where } V_{\perp} = \Proj_{\mathrm{col}(X)^{\perp}}V\Proj_{\mathrm{col}(X)^{\perp}}\}\\
    &= \left\{U\begin{pmatrix} V_1 & V_2\\ V_2^\top & V_3\end{pmatrix}U^\top : V_1\in\mbb S^{\rank(X)},\ V_3\succeq0,\ \rank(V_3)\leq r-\rank(X)\right\},
\end{align*}
where in the last line $U\in O(n)$ is an eigenmatrix for $X$ satisfying $X=U\Sigma U^\top$ with $\Sigma\in\RR^{n\times n}$ diagonal containing the eigenvalues of $X$ in descending order.
\end{proposition}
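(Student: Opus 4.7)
\textbf{Proof plan for Proposition~\ref{prop:BM_lift_noA}.}

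The plan is to compute the maps $\Lmap_R$ and $\Qmap_R$ explicitly, use them to describe $A_R$, and then identify $A_R$ with the two candidate descriptions of $\T_X\calX$. This will simultaneously establish the tangent cone formulas, the equivalent condition $A_R = \T_X\calX$, and (via Corollary~\ref{cor:AB_suff_cond}(a)) the property {\twoimpliesone}. Since $\calM = \RR^{n\times r}$ is a linear space, the techniques of Section~\ref{sec:Qmap_comp} applied to the chart $\psi = \mathrm{id}$ give
\begin{align*}
    \Lmap_R(\dot R) &= \D\varphi(R)[\dot R] = R\dot R^\top + \dot R R^\top, & \Qmap_R(\dot R) &= \D^2\varphi(R)[\dot R,\dot R] = 2\dot R\dot R^\top.
\end{align*}
Because $\varphi(U R Q) = U\varphi(R)U^\top$ for $(U,Q)\in \On\times\Or$, and all the objects in the proposition (the tangent cone $\T_X\calX$, the image of $\Lmap_R$, and $A_R$) transform equivariantly under this action, I can pass to an SVD-normal form and assume $R = \bigl(\begin{smallmatrix} R_1 & 0\\ 0 & 0\end{smallmatrix}\bigr)$ with $R_1 \in \GL(s)$ and $s = \rank(R) = \rank(X)$. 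In this frame a direct block-matrix calculation gives $\im \Lmap_R = \{V\in \mbb S^n : V_{22} = 0\}$ (the bottom-right $(n-s)\times(n-s)$ block vanishes), and $\dot R \in \ker\Lmap_R$ iff its bottom-left block vanishes and its top-left block solves $R_1\dot A^\top + \dot A R_1^\top=0$; its top-right and bottom-right blocks $\dot B, \dot D$ are free. Evaluating $2\dot R\dot R^\top$ and reducing modulo $\im\Lmap_R$ (which kills everything but the $V_{22}$ block), Proposition~\ref{prop:2implies1_with_Qmap}(a) then gives
\[
A_R = \Qmap_R(\ker\Lmap_R) + \im\Lmap_R = \bigl\{V \in \mbb S^n : V_{22} = 2\dot D\dot D^\top \text{ for some } \dot D \in \RR^{(n-s)\times (r-s)}\bigr\},
\]
which is exactly the third expression in the proposition's formula for $\T_X\calX$.

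Next I would close the tangent-cone calculation by a sandwich argument. By Proposition~\ref{prop:AB_basics}(b), $A_R \subseteq \T_X\calX$. Since $\calX \subseteq \Rnn_{\leq r} \cap \mbb S^n_{\succeq 0}$, we automatically have $\T_X\calX \subseteq \T_X\Rnn_{\leq r}\cap \T_X\mbb S^n_{\succeq 0}$. Standard (and independent) computations of the latter two tangent cones---namely $\T_X \mbb S^n_{\succeq 0} = \{V\in\mbb S^n : V_\perp \succeq 0\}$ and $\T_X \Rnn_{\leq r} = \{V : \rank(V_\perp) \leq r-s\}$ in the normal form---show that their intersection is exactly the set I identified as $A_R$. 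Hence all three sets in the chain coincide, proving the second equality in the proposition, the displayed three-line expression for $\T_X\calX$, and the sufficient condition $A_R = \T_X\calX$ of Corollary~\ref{cor:AB_suff_cond}(a). The property {\twoimpliesone} everywhere then follows immediately from Theorem~\ref{thm:2implies1_chain}.

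For {\oneimpliesone}, I would simply invoke Theorem~\ref{thm:oneimpliesone_char}: it suffices to check when $\im \Lmap_R = \T_X\calX$. Comparing the two explicit descriptions above, equality holds iff the constraint ``$V_{22} \succeq 0$ and $\rank(V_{22}) \leq r-s$'' forces $V_{22} = 0$, which is equivalent to $r - s = 0$, i.e., $\rank(R) = r$.

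The main obstacle I expect is proving {\localimplieslocal} everywhere, since openness of $\varphi$ is not directly implied by the tangent-cone analysis (it can, and does, hold even where {\oneimpliesone} fails). I would verify it via the equivalent Subsequence Lifting Property (SLP) of Remark~\ref{rmk:ESLP}: given $X_k \to X$ in $\calX$, I need a sequence $R_k \to R$ with $R_k R_k^\top = X_k$. Using a spectral decomposition $X_k = U_k \Sigma_k U_k^\top$, I can produce a rank-$r$ square-root $R_k^{(0)} = U_k [\Sigma_k^{1/2}]_{:,1:r}$ and then correct by the Procrustes-optimal orthogonal $Q_k \in \Or$ aligning $R_k^{(0)}$ to $R$. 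The fact that $\varphi^{-1}(X)$ is a single $\Or$-orbit, compactness of $\Or$, and continuity of the polar factor on a neighborhood of $R$ will give $R_k^{(0)} Q_k \to R$ along a subsequence. This is essentially the argument of~\cite[Prop.~2.3]{burer2005local} specialized to the unconstrained case and can be cited accordingly.
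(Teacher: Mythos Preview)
Your proposal is correct and follows essentially the same strategy as the paper's proof: compute $\Lmap_R$ and $\Qmap_R$, identify $A_R$, and close via the sandwich $A_R \subseteq \T_X\calX \subseteq \T_X\Rnn_{\leq r}\cap\T_X\mbb S^n_{\succeq 0} \subseteq A_R$, with {\localimplieslocal} handled by the Burer--Monteiro SLP argument. The only real difference is presentational: you pass to an SVD normal form for $R$ and compute with block matrices, whereas the paper works coordinate-free using $V_\perp = \Proj_{\mathrm{col}(R)^\perp}V\Proj_{\mathrm{col}(R)^\perp}$ and a Cholesky-plus-rotation argument to produce the required $\dot R\in\ker\Lmap_R$; both routes yield the same description of $A_R$ and the same chain of inclusions.
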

\begin{proof}
\ifthenelse{\boolean{shortver}}{The {\localimplieslocal} property for this lift was proved in~\cite[Prop.~2.3]{burer2005local} by (in our terminology) establishing SLP (the condition from Remark~\ref{rmk:ESLP}).}
{For {\localimplieslocal}, we sketch the proof from~\cite{burer2005local} of (in our terminology) SLP, the condition from Remark~\ref{rmk:ESLP} that is equivalent to {\localimplieslocal} by Theorem~\ref{thm:openmapsonmanifolds}. Fix $R\in\calM$ and $X=RR^\top$. For any sequence $(X_i)_{i\geq 1}\subseteq\calX$ converging to $X$, let $X_i=U_i\Sigma_i U_i^\top$ be a size-$r$ eigendecomposition for each $X_i$ (so $\Sigma_i\in\RR^{r\times r}$ is a diagonal matrix of eigenvalues of $X_i$, possibly including zeros). Let $R_i = U_i\Sigma_i^{1/2}$ and note that $\|R_i\|=\|X_i\|^{1/2}$ which is bounded, hence after passing to a subsequence we may assume that $\lim_iR_i=\widetilde R$ exists. By continuity of $\varphi$, we have $X=\lim_iX_i=\lim_i\varphi(R_i) = \varphi(\lim_iR_i)=\widetilde R\widetilde R^\top$, hence $\widetilde R\widetilde R^\top = RR^\top$. By~\cite[Lemma~2.5]{burer2005local}, there is an orthogonal $Q\in O(r)$ satisfying $R=\widetilde RQ$, so $(R_iQ)_{i\geq 1}\subseteq\calM$ is a sequence converging to $R$ satisfying $\varphi(R_iQ)=X_i$. This proves SLP holds.}

    For $V\in\RR^{n\times n}$ and $X=RR^\top\in\calX$, define 
    \begin{align*}
        V_{\perp} \coloneqq \Proj_{\ker(X)}V\Proj_{\ker(X)} = \Proj_{\mathrm{col}(R)^{\perp}}V\Proj_{\mathrm{col}(R)^{\perp}},
    \end{align*}
    where we used the fact that $\mathrm{col}(R)=\mathrm{col}(X) = \mathrm{ker}(X)^{\perp}$.
    The tangent cone at $X$ to $\mbb S_{\succeq0}^n$ is given by \cite[Eq.~(9)]{hiriart2012fresh}
    \begin{align}
        \T_X\mbb S_{\succeq0}^n &= \{V\in\mbb S^n: \langle Vu,u\rangle\geq 0,\ \textrm{for all } u\in\mathrm{ker}(X)\} = \{V\in\mbb S^n:V_{\perp}\succeq0\} \nonumber\\
        &= \left\{U\begin{pmatrix} V_1 & V_2\\ V_2^\top & V_3\end{pmatrix}U^\top:V_1\in\RR^{\rank(X)\times\rank(X)},\ V_3\succeq0\right\},
        \label{eq:tangent_cone_PSD}
    \end{align}
    and the tangent cone at $X$ to $\RR^{n\times n}_{\leq r}$ is given by~\cite[Thm.~3.2]{schneider2015convergence}
    \begin{align*}
        \T_X\RR^{n\times n}_{\leq r} &= \{V\in\RR^{n\times n}: \rank(V_{\perp})\leq r-\rank(X)\}\\
        &= \left\{U\begin{pmatrix} V_1 & V_2\\ \widetilde V_2 & V_3\end{pmatrix} U^\top: V_1\in\RR^{\rank(X)\times\rank(X)},\ \rank(V_3)\leq r-\rank(X)\right\}. 
    \end{align*}
    Hence the intersection $\T_X\RR^{n\times n}_{\leq r}\cap\T_X\mbb S_{\succeq0}^n$ is given by the claimed expression. Furthermore, the tangent cone to an intersection is always included in the intersection of the tangent cones, which follows easily from Definition~\ref{def:tangentcone}. Hence $\T_X\calX\subseteq \T_X\RR^{n\times n}_{\leq r}\cap\T_X\mbb S_{\succeq0}^n$ and it suffices to show the reverse inclusion. We do so simultaneously with proving {\twoimpliesone}.
    
    Since $\calM$ is a linear space, the expressions~\eqref{eq:L_and_Q_on_chart} with the identity chart give
    \begin{align*}
        &\Lmap_R(\dot R) = \D\varphi(R)[\dot R] = R\dot R^\top + \dot R R^\top,\\
        &\Qmap_R(\dot R) = \D^2\varphi(R)[\dot R,\dot R] = 2\dot R\dot R^\top. 
    \end{align*}
    Therefore,
    \begin{align}
        \im\Lmap_R = \{V\in\mbb S^n: V_{\perp}  = 0\}.
        \label{eq:BM_lift_imLmap}
    \end{align}
    Indeed, if $V\in\im\Lmap_R$ then $V=R\dot R^\top + \dot R R^\top$ for some $\dot R\in\RR^{n\times r}$ and hence $V_{\perp}=0$ since $\Proj_{\mathrm{col}(R)^{\perp}}R=0$, while if $V\in\mbb S^n$ satisfies $V_{\perp} = 0$ then 
    \begin{align*}
        V = \Proj_{\mathrm{col}(R)}V + V\Proj_{\mathrm{col}(R)} - \Proj_{\mathrm{col}(R)}V\Proj_{\mathrm{col}(R)} = \Lmap_R\left(VR^{\dagger\top} - \frac{1}{2}RR^{\dagger}VR^{\dagger\top}\right)\in\im\Lmap_R,
    \end{align*}
    where $RR^{\dagger} = \Proj_{\mathrm{col}(R)}$.
    Furthermore, we have 
    \begin{align*}
        \Qmap_R(\ker\Lmap_R) \supseteq \{V\in\mbb S^n_{\succeq0}: \rank(V)\leq r-\rank(X)\}.
    \end{align*}
    Indeed, if $V\in\mbb S^n_{\succeq0}$ with $\rank(V)\leq r-\rank(X)$, let $V=2\dot R_0^{} \dot R_0\transpose$ be a (rescaled) Cholesky decomposition with $\dot R_0\in\RR^{n\times r}$, so $\rank(\dot R_0)=\rank(V)$. Since 
    \begin{align*}
        \dim\mathrm{col}(R^\top)=\rank(R^\top)=\rank(R)=\rank(X)\leq r-\rank(\dot R_0)=\dim\mathrm{ker}(\dot R_0),   
    \end{align*}
    there is an orthogonal matrix $Q\in O(r)$ satisfying $Q\mathrm{col}(R^\top)\subseteq\mathrm{ker}(\dot R_0)$. Let $\dot R=\dot R_0Q$ and note that $V=2\dot R\dot R^\top$ so $\Qmap_R(\dot R)=V$, and $\dot RR^\top = 0$ so $\dot R\in\ker\Lmap_R$.
    Thus, with Proposition~\ref{prop:2implies1_with_Qmap}(a),
    \begin{align*}
        A_R = \Qmap_R(\ker\Lmap_R) + \im\Lmap_R \supseteq \T_X\RR^{n\times n}_{\leq r}\cap\T_X\mbb S_{\succeq0}^n.
    \end{align*}
    On the other hand, by Proposition~\ref{prop:AB_basics}(b), we have $A_R\subseteq \T_X\calX$. Thus, we have the chain of inclusions
    \begin{align*}
        \T_X\RR^{n\times n}_{\leq r}\cap\T_X\mbb S_{\succeq0}^n\subseteq \Qmap_R(\ker\Lmap_R) + \im\Lmap_R \subseteq \T_X\calX \subseteq \T_X\RR^{n\times n}_{\leq r}\cap\T_X\mbb S_{\succeq0}^n,
    \end{align*}
    so all the above inclusions are equalities. In particular, we obtain the claimed expression for $\T_X\calX$ and {\twoimpliesone} everywhere on $\calM$ by Theorem~\ref{thm:2implies1_chain}. Our claims about {\oneimpliesone} follow from~\eqref{eq:BM_lift_imLmap} and Theorem~\ref{thm:oneimpliesone_char}.
\end{proof}
%
%
Finding an explicit expression for tangent cones can be difficult in general. 
In Proposition~\ref{prop:BM_lift_noA}, the set $\calX$ was an intersection of two sets whose tangent cones are known, namely $\RR^{n\times n}_{\leq r}$ and $\mbb S_{\succeq0}^n$, which gave us an inclusion $\T_X\calX\subseteq \T_X\RR^{n\times n}_{\leq r}\cap\T_X\mbb S_{\succeq0}^n$. 
However, the tangent cone to an intersection can be strictly contained in the intersection of the tangent cones.\footnote{For example, consider intersecting the circle in the plane with one of its tangent lines.}
The proof of {\twoimpliesone} in Proposition~\ref{prop:BM_lift_noA} proceeds by showing $A_R = \T_X\RR^{n\times n}_{\leq r}\cap\T_X\mbb S_{\succeq0}^n$, which gives $\T_X\calX = \T_X\RR^{n\times n}_{\leq r}\cap\T_X\mbb S_{\succeq0}^n = A_R$ because $A_R\subseteq\T_X\calX$ by Proposition~\ref{prop:AB_basics}(b). 
This simultaneously gives us {\twoimpliesone} and an expression for the tangent cone.

This illustrates a more general and, as far as we know, novel technique of getting expressions for the tangent cones using lifts. 
Generalizing the above discussion, if we have an inclusion $\T_x\calX\subseteq S$ for some set $S$ and we are able to prove $A_y\supseteq S$ for some $y\in\varphi^{-1}(x)$, then we must have $\T_x\calX = S$ by Proposition~\ref{prop:AB_basics}(b). 
In this case, we also conclude that {\twoimpliesone} holds at $y$ by Theorem~\ref{thm:2implies1_chain}. 
In Section~\ref{sec:fiber_prod_lifts}, we shall see another setting in which we naturally have a superset for $\T_x\calX$ (see Lemma~\ref{lem:tangent_cone_fiber_prod_inclusion}), and which allows us to derive expressions for $\T_x\calX$ from lifts satisfying {\oneimpliesone} and {\twoimpliesone}. 
\ifthenelse{\boolean{shortver}}{}{
If $\calX$ is defined by polynomial equalities and inequalities, 
a particular superset of the tangent cone, called the algebraic tangent cone, can be computed using Gr\"obner bases~\cite[\S9.7]{cox2013ideals}.}

A general condition implying that the tangent cone to an intersection is the intersection of the tangent cones is given in~\cite[Thm.~6.42]{rockafellar2009variational}. That condition does not apply to $\calX=\RR^{n\times n}_{\leq r}\cap\mbb S_{\succeq0}^n$ because $\RR^{n\times n}_{\leq r}$ is not Clarke-regular in the sense of~\cite[Def.~6.4]{rockafellar2009variational}. Our approach circumvents Clarke regularity, exploiting the existence of an appropriate lift instead. 


\section{Constructing lifts via fiber products}
\label{sec:fiber_prod_lifts}
In this section, we give a systematic construction of lifts for a large class of sets $\calX$. If the resulting lifted space is a smooth manifold, we also give conditions under which the lift satisfies our desirable properties. Moreover, under these conditions we can obtain expressions for the tangent cones to $\calX$. We shall see that several natural lifts, including the Hadamard and Burer--Monteiro lifts from Section~\ref{sec:main_results}, are special cases of this construction. 

Suppose the set $\calX$ is presented in the form
\begin{align*}
    \calX = \{x\in\calE: F(x)\in \mc Z\} = F^{-1}(\mc Z),
\end{align*}
where $\mc Z\subseteq\calE'$ is some subset of a linear space and $F\colon\calE\to\calE'$ is smooth. This form is general---any set $\calX$ can be written in this form by letting $F$ be the identity and $\mc Z=\calX$. However, we shall see that our framework is most useful when $\mc Z$ is a product of simple sets for which we have smooth lifts satisfying desirable properties. For example, any set defined by $k$ smooth equalities $g_i(x)=0$ and $\ell$ smooth inequalities $h_j(x)\geq 0$ can be written in this form by letting $F(x)=(g_1(x),\ldots,g_k(x),h_1(x),\ldots,h_{\ell}(x))$ and $\mc Z=\{0\}^k\times\RR_{\geq 0}^{\ell}$. We can also incorporate semidefiniteness and rank constraints of smooth functions of $x$ by taking Cartesian products of $\mc Z$ with $\Rmnlr$ or $\mbb S^n_{\succeq0}$. 

Suppose now that we have a smooth lift $\psi\colon\mc N\to\mc Z$. We can use this lift of $\mc Z$ to construct a lift of $\calX$ by taking the \emph{fiber product} of $F$ and $\psi$.
\begin{definition}\label{def:fib_prod_lift}
Let $\calX$ be a subset of $\calE$ defined by a smooth map $F \colon \calE \to \calE'$ and a set $\mc Z \subseteq \calE'$ as $\calX = F^{-1}(\mc Z)$. Suppose $\psi\colon\mc N\to\mc Z$ is a smooth lift of $\mc Z$ to the smooth manifold $\mc N$. Then the \emph{fiber product lift} of $\calX$ with respect to $F$ and $\psi$ is $\varphi\colon\mc M_{F,\psi}\to\calX$ where
\begin{align*}
    \calM_{F,\psi} = \{(x,y)\in\calE\times\mc N:F(x) = \psi(y)\} && \textrm{ and } && \varphi(x, y) = x.
\end{align*}
Here $\calM_{F,\psi}$ is the (set-theoretic) \emph{fiber product} of the maps $F\colon\cal E\to\calE'$ and $\psi\colon\mc N\to\calE'$.
\end{definition}
The following commutative diagram illustrates Definition~\ref{def:fib_prod_lift}.
Its top horizontal arrow is the coordinate projection $\pi(x,y) = y$.
\begin{center}
\begin{tikzcd}
            &[-2.7em] {\mathcal{M}_{F,\psi}} \arrow[d, "\varphi"', two heads] \arrow[r, "\pi"] & \mathcal{N} \arrow[d, "\psi", two heads] &[-2em]              \\
\mathcal{E} & \mathcal{X} \arrow[r, "F"'] \arrow[l, phantom, "\supseteq"]                       & \mathcal{Z} \arrow[r, phantom, "\subseteq"]              & \mathcal{E}'
\end{tikzcd}
\end{center}

The fiber product $\calM_{F,\psi}$ need not be a smooth manifold even when both $F$ and $\psi$ are smooth maps between smooth manifolds. 
Accordingly, we make the following assumption:
\begin{assumption}\label{assu:fib_prod_def_eqn} 
The differential of $(x,y)\mapsto F(x)-\psi(y)$ has constant rank in a neighborhood of $\calM_{F,\psi}$ in $\calE\times\mc N$.
\end{assumption}
Assumption~\ref{assu:fib_prod_def_eqn} not only implies $\calM_{F,\psi}$ is a smooth embedded submanifold of $\calE\times\mc N$, but also that $F(x) - \psi(y) = 0$ is a (constant-rank) defining function for it (in the sense of~\cite[Thm.~5.12]{lee_smooth}). 
Under this assumption, the tangent space to $\calM_{F,\psi}$ is given by
\begin{align}
    \T_{(x,y)}\calM_{F,\psi} = \{(\dot x,\dot y)\in\calE\times\T_y\mc N:\D F(x)[\dot x] = \D\psi(y)[\dot y]\}. 
    \label{eq:fib_prod_tangent_cone}
\end{align}

We proceed to give some examples of the above construction. We then study fiber product lifts in general and instantiate our results on these examples.
\begin{example}[Sphere to ball]\label{ex:sphere2ball}
Let $\calX = \{x\in\RR^n:\|x\|_2\leq 1\}$ be the unit Euclidean ball. Let $\mc Z = \RR_{\geq 0}$ and $F(x)=1-x\transpose x$ so $\calX=F^{-1}(\mc Z)$. Let $\psi\colon\RR\to\RR_{\geq 0}$ be the smooth lift $\psi(y)=y^2$. Then $\calM_{F,\psi}=\{(x,y)\in\RR^n\times\RR:1-x\transpose x = y^2\}$, which is just the unit sphere in $\RR^{n+1}$, and $\varphi(x,y)=x$ projects onto the first $n$ coordinates. This lift is used in~\cite[\S2.7]{Phan2019} to apply a solver for quadratic programming over the sphere~\eqref{eq:Q} to quadratic programs over the ball~\eqref{eq:P}.
\end{example}
\begin{example}[Sphere to simplex]\label{ex:sphere2simplex}
The Hadamard lift~\eqref{eq:sphere_to_simplex} from Section~\ref{sec:sphere_to_simplex} can be obtained as a special case of Definition~\ref{def:fib_prod_lift}. 
Indeed, let $\calX = \Delta^{n-1}=\{x\in\RR^n:x\geq 0,\ \sum_{i=1}^nx_i=1\}$ be the standard simplex, let $\mc Z=\RR_{\geq 0}^n\times \{0\}$ and $F(x)=(x,\sum_{i=1}^nx_i-1)$ so $\calX=F^{-1}(\mc Z)$. Let $\psi\colon \RR^n\to \mc Z$ be $\psi(y)=(y^{\odot 2},0)$ where superscript $\odot 2$ denotes entrywise squaring. Then 
\begin{align*}
    \calM_{F,\psi} = \left\{(x,y)\in\RR^n\times\RR^n:x=y^{\odot 2},\ \sum_{i=1}^nx_i=1\right\} = \left\{(x,y)\in\RR^n\times \RR^n:x=y^{\odot 2},\ \|y\|_2=1\right\},
\end{align*}
and $\varphi(x,y)=x$. It is easy to check that the coordinate projection $\pi$ defines a diffeomorphism of $\calM_{F,\psi}$ with $\mathrm{S}^{n-1}$, and that the composition $\varphi\circ\pi^{-1}\colon\mathrm{S}^{n-1}\to \calX$ yields the Hadamard lift~\eqref{eq:sphere_to_simplex} from Section~\ref{sec:sphere_to_simplex}.
By Proposition~\ref{prop:composition_of_lifts}, the fiber product lift of the simplex is equivalent (for the purposes of checking our desirable properties) to the lift~\eqref{eq:sphere_to_simplex}.
\end{example}

\begin{example}[Torus to annulus]\label{ex:torus2annulus}
Let $\calX = \{x\in \RR^n:r_1 \leq \|x\|_2\leq r_2\}$, where we assume $0<r_1<r_2$.
Let $\mc Z=\RR_{\geq0}^2$ and $F(x)=(x\transpose x-r_1^2,r_2^2-x\transpose x)$.
Let $\psi\colon \RR^2\to\mc Z$ be $\psi(y)=y^{\odot 2}$ so
\begin{align*}
    \calM_{F,\psi} &= \{(x,y)\in\RR^n\times\RR^2:x\transpose x-r_1^2=y_1^2,\ r_2^2-x\transpose x=y_2^2\},\\
    &= \left\{(x,y)\in\RR^n\times \RR^2: \|x\|_2 = \sqrt{r_1^2+y_1^2},\ \|y\|_2 = \sqrt{r_2^2-r_1^2}\right\}.
\end{align*}
This is an $n$-dimensional manifold diffeomorphic to $\mathrm{S}^{n-1}\times\mathrm{S}^1$, with diffeomorphism
\begin{align*}
    \Phi(x,y) = \begin{bmatrix} (r_1^2+y_1^2)^{-1/2}x, & (r_2^2-r_1^2)^{-1/2}y\end{bmatrix}.
\end{align*}
Viewed differently, the equivalent (by Proposition~\ref{prop:composition_of_lifts}) lift $\varphi\circ\Phi^{-1}$ is the composition 
\begin{align*}
    \mathrm{S}^{n-1}\times \mathrm{S}^1\to\mathrm{S}^{n-1}\times\Delta^1\to \calX,
\end{align*}
where the first map is the Hadamard lift from the sphere to the simplex from the preceding example, and the second map is $(y,\theta)\mapsto \sqrt{\theta_1 r_1^2 + \theta_2r_2^2}y$.
If $n=2$, then $\calX$ is an annulus and $\calM$ is a torus. 
\end{example}
\begin{example}[Smooth SDPs]\label{ex:smth_sdps}
The Burer--Monteiro lift~\eqref{eq:BM_lift} from Section~\ref{sec:smth_sdps} is also a special case of Definition~\ref{def:fib_prod_lift}.
To see this, in the notation of Section~\ref{sec:smth_sdps}, let
\begin{align*}
    \mc Z = (\mbb S_{\succeq0}^n\cap\RR^{n\times n}_{\leq r})\times\{0\}^m, &&
    F(X)=(X,\langle A_1,X\rangle-b_1,\ldots,\langle A_m,X\rangle-b_m),
\end{align*}
and $\psi(R) = (RR\transpose,0,\ldots,0)$ defined on $\mc N = \RR^{n\times r}$. In this case,
\begin{align*}
    \calM_{F,\psi} = \{(X,R)\in \mbb S^n\times \RR^{n\times r}: X=RR^\top, \langle A_iR,R\rangle = b_i\},
\end{align*}
which the projection $\pi$ maps diffeomorphically onto the set $\calM$ in~\eqref{eq:BM_lift}. 
Furthermore, Assumption~\ref{assu:fib_prod_def_eqn} is equivalent in this case to the assumption in Proposition~\ref{prop:BM_lift} that $h_i(R)=\langle A_iR,R\rangle-b_i$ are local defining functions. 
Thus, proving Proposition~\ref{prop:BM_lift} is equivalent to proving the corresponding properties for the fiber product lift above under Assumption~\ref{assu:fib_prod_def_eqn}. 
\end{example}

Now that we have seen several examples of fiber product lifts, we ask: when do desirable properties of the lift $\psi\colon\mc N\to\mc Z$ imply the corresponding properties for the fiber product lift $\varphi\colon\calM_{F,\psi}\to\calX$?
This is answered by the next few propositions.
\begin{proposition}\label{prop:fib_prod_loc}
Under Assumption~\ref{assu:fib_prod_def_eqn}, if $\psi\colon\mc N\to\mc Z$ satisfies {\localimplieslocal} at $y\in\mc N$, then $\varphi\colon\calM_{F,\psi}\to \calX$ satisfies {\localimplieslocal} at $(x,y)\in\calM_{F,\psi}$ for any $x\in \pi^{-1}(y)$.
\end{proposition}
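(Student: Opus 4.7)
The plan is to invoke Theorem~\ref{thm:localimplocalcharact} to reduce the claim to the purely topological statement that $\varphi$ is open at $(x,y)$, given that $\psi$ is open at $y$. Assumption~\ref{assu:fib_prod_def_eqn} is used only to ensure $\calM_{F,\psi}$ is a smooth manifold so that $\varphi$ qualifies as a smooth lift; the openness argument itself uses only continuity of $F$.

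The main step is a direct diagram chase in the fiber product. A basic open neighborhood of $(x,y)$ in the subspace topology of $\calM_{F,\psi}\subseteq\calE\times\mc N$ has the form $\mc U=(U\times V)\cap\calM_{F,\psi}$ with $U\subseteq\calE$ open around $x$ and $V\subseteq\mc N$ open around $y$. Unpacking the definitions yields
\[
\varphi(\mc U)=\{x'\in U\cap\calX:F(x')\in\psi(V)\}.
\]
Since $\psi$ is open at $y$, the set $\psi(V)$ is a neighborhood of $\psi(y)=F(x)$ in the subspace topology of $\mc Z\subseteq\calE'$, so there is an open $W\subseteq\calE'$ with $F(x)\in W$ and $W\cap\mc Z\subseteq\psi(V)$. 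By continuity of $F$, the set $U\cap F^{-1}(W)\cap\calX$ is a neighborhood of $x$ in $\calX$, and any $x'$ in it satisfies $F(x')\in W\cap\mc Z\subseteq\psi(V)$, hence lies in $\varphi(\mc U)$. This exhibits a neighborhood of $x$ in $\calX$ inside $\varphi(\mc U)$ and concludes the proof.

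I do not anticipate any serious obstacle; the only nontrivial input is the openness characterization of Theorem~\ref{thm:localimplocalcharact}. A stylistic alternative would be to work through the Subsequence Lifting Property of Remark~\ref{rmk:ESLP}: given $x_i\to x$ in $\calX$, apply SLP for $\psi$ at $y$ to the sequence $F(x_i)\to F(x)=\psi(y)$ in $\mc Z$ (using continuity of $F$) to extract a subsequence with preimages $y_{i_k}\in\psi^{-1}(F(x_{i_k}))$ satisfying $y_{i_k}\to y$, and observe that $(x_{i_k},y_{i_k})\in\calM_{F,\psi}$ converges to $(x,y)$ with $\varphi(x_{i_k},y_{i_k})=x_{i_k}$. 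Either route is essentially a one-page argument.
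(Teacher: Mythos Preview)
Your proposal is correct and follows essentially the same approach as the paper: reduce to openness via Theorem~\ref{thm:localimplocalcharact}, use Assumption~\ref{assu:fib_prod_def_eqn} to identify the manifold topology on $\calM_{F,\psi}$ with the subspace topology, and then compute $\varphi((U\times V)\cap\calM_{F,\psi})=\{x'\in U\cap\calX:F(x')\in\psi(V)\}$ and argue it is a neighborhood of $x$ using openness of $\psi$ at $y$ and continuity of $F$. Your treatment of the ``neighborhood versus open set'' distinction (introducing $W\subseteq\calE'$) is slightly more careful than the paper's phrasing, and your SLP alternative is also valid, but the substance is the same.
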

\begin{proof}
By Theorem~\ref{thm:localimplocalcharact}, it is equivalent to show that openness of $\psi$ at $y$ implies openness of $\varphi$ at $(x,y)$. Assumption~\ref{assu:fib_prod_def_eqn} implies that $\calM_{F,\psi}$ is an embedded submanifold of $\calE\times\mc N$, hence its manifold topology coincides with the subspace topology induced from $\calE\times\mc N$. Thus, to show $\varphi$ is open at $(x,y)$, it suffices to show that $\varphi((U\times V)\cap\calM_{F,\psi})$ is open for any open $U\subseteq\calE$ containing $x$ and open $V\subseteq\mc N$ containing $y$, since such sets form a basis for the subspace topology on $\calM_{F,\psi}$. Since $\psi$ is open at $y$, we get that $\psi(V)\subseteq\mc Z$ is open. Since $F$ is continuous, $F^{-1}(\psi(V))\subseteq\calX$ is open. Since $\varphi(x,y)=x$, we have
\begin{align*}
    \varphi((U\times V)\cap\calM_{F,\psi}) &= \{x\in U\cap\calX: \exists\ y\in V \textrm{ s.t. } F(x)=\psi(y)\}\\ &= \{x\in U\cap\calX: F(x)\in \psi(V)\} = (U\cap\calX)\cap F^{-1}(\psi(V)),
\end{align*}
which is open in $\calX$ as the intersection of two open sets. Thus, $\varphi$ is open at $(x,y)$.
\end{proof}
Note that the above proof, and hence the conclusion of Proposition~\ref{prop:fib_prod_loc}, apply more generally whenever $\calM_{F,\psi}$ is endowed with the subspace topology induced from $\calE\times\mc N$ (but is not necessarily a smooth manifold) and when all maps involved are continuous (but not necessarily smooth).

We now turn to studying {\oneimpliesone} and {\twoimpliesone}. Along the way, we give another instance of the technique for finding tangent cones via lifts outlined in Section~\ref{sec:low_rk_psd}. To do so, we begin by giving a superset of the tangent cone, obtained from the fact that $\calX$ is an inverse image~\cite[Thm.~6.31]{rockafellar2009variational}.
As usual, $\D F(x)^{-1}$ denotes the preimage under the differential (which may not be invertible). 
\begin{lemma}\label{lem:tangent_cone_fiber_prod_inclusion}
The following inclusion holds for all $x \in \calX$:
\begin{align*}
    \T_x\calX \subseteq \{\dot x\in\calE:\D F(x)[\dot x]\in \T_{F(x)}\mc Z\} = \D F(x)^{-1}(\T_{F(x)}\mc Z).
\end{align*}
\end{lemma}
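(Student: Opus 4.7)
My plan is to unwind the definition of the tangent cone at $x$ and push the resulting sequence through $F$ using a first-order Taylor expansion, then recognize the limit as an element of $\T_{F(x)}\mc Z$.

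Concretely, let $\dot x\in\T_x\calX$. By Definition~\ref{def:tangentcone} there exist $x_i\in\calX$ and $\tau_i>0$ with $\tau_i\to 0$ such that $(x_i-x)/\tau_i\to\dot x$. Since $F$ is smooth (in particular $C^1$) on $\calE$, we can write
\begin{equation*}
F(x_i)-F(x) = \D F(x)[x_i-x] + R(x_i-x),
\end{equation*}
where $\|R(v)\|/\|v\|\to 0$ as $v\to 0$. Dividing by $\tau_i$ gives
\begin{equation*}
\frac{F(x_i)-F(x)}{\tau_i} = \D F(x)\!\left[\frac{x_i-x}{\tau_i}\right] + \frac{R(x_i-x)}{\tau_i}.
\end{equation*}
The first term converges to $\D F(x)[\dot x]$ by linearity and continuity of $\D F(x)$. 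For the remainder term, the sequence $(x_i-x)/\tau_i$ is bounded (it converges), so in particular $x_i\to x$; writing $\|R(x_i-x)/\tau_i\| = \bigl(\|R(x_i-x)\|/\|x_i-x\|\bigr)\cdot\|x_i-x\|/\tau_i$ shows it tends to $0$ (handling the trivial case $x_i=x$ separately). Hence $(F(x_i)-F(x))/\tau_i\to \D F(x)[\dot x]$.

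Finally, since $x_i\in\calX = F^{-1}(\mc Z)$ and $x\in\calX$, we have $F(x_i),F(x)\in\mc Z$ and $\tau_i>0$ with $\tau_i\to 0$, so Definition~\ref{def:tangentcone} applied to $\mc Z$ at $F(x)$ yields $\D F(x)[\dot x]\in\T_{F(x)}\mc Z$. This is exactly the claimed inclusion $\T_x\calX\subseteq \D F(x)^{-1}(\T_{F(x)}\mc Z)$. There is no real obstacle here; the argument is a direct chain-rule statement for tangent cones and is essentially the content of \cite[Thm.~6.31]{rockafellar2009variational} which the authors cite.
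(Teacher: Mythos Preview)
Your proof is correct and follows essentially the same approach as the paper: take a tangent vector via sequences from Definition~\ref{def:tangentcone}, use the first-order Taylor expansion of $F$ to show $(F(x_i)-F(x))/\tau_i\to \D F(x)[\dot x]$, and then recognize this limit as an element of $\T_{F(x)}\mc Z$. The paper's argument is the same, just slightly more terse in handling the remainder term.
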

\begin{proof}
If $v\in\T_x\calX$ then by Definition~\ref{def:tangentcone} there exist sequences $(x_i)_{i\geq 1}\subseteq\calX$ converging to $x$ and $(\tau_i)_{i\geq 1}\subseteq\RR_{>0}$ converging to zero satisfying $v = \lim_{i\to\infty}\frac{x_i-x}{\tau_i}$. Because $F$ is differentiable at $x$, we have $F(x_i) = F(x) + \D F(x)[x_i-x] + o(\|x_i-x\|),$
so
\begin{align*}
    \D F(x)[v] = \lim_{i\to\infty}\frac{F(x_i)-F(x)}{\tau_i}.
\end{align*}
Since $F(x_i)\in\mc Z$ for all $i$, we conclude that $\D F(x)[v]\in \T_{F(x)}\mc Z$ by Definition~\ref{def:tangentcone}.
\end{proof}

\begin{proposition}\label{prop:1imp1_fiber_prod}
    Let $(x,y)\in\calM_{F,\psi}$.
    Under Assumption~\ref{assu:fib_prod_def_eqn}, if $\psi$ satisfies {\oneimpliesone} at $y\in\mc N$, then $\varphi$ satisfies {\oneimpliesone} at $(x,y)$, and equality holds in Lemma~\ref{lem:tangent_cone_fiber_prod_inclusion}.
\end{proposition}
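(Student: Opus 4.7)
The plan is to reduce {\oneimpliesone} at $(x,y)$ to an image-of-differential condition via Theorem~\ref{thm:oneimpliesone_char}, compute that image explicitly using the tangent space formula~\eqref{eq:fib_prod_tangent_cone} for $\calM_{F,\psi}$, and then use the {\oneimpliesone} hypothesis on $\psi$ to identify the result with the superset from Lemma~\ref{lem:tangent_cone_fiber_prod_inclusion}. Equality in Lemma~\ref{lem:tangent_cone_fiber_prod_inclusion} will then be forced by sandwiching.

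First I would note that $\D\varphi(x,y)[\dot x,\dot y]=\dot x$ since $\varphi$ is the coordinate projection. Combining this with~\eqref{eq:fib_prod_tangent_cone} (which is available because Assumption~\ref{assu:fib_prod_def_eqn} makes $\calM_{F,\psi}$ a smooth embedded submanifold with that tangent space), I obtain
\begin{align*}
    \im \D\varphi(x,y) &= \{\dot x\in\calE : \exists\, \dot y\in\T_y\mc N \text{ with } \D F(x)[\dot x] = \D\psi(y)[\dot y]\}\\
    &= \D F(x)^{-1}(\im \D\psi(y)).
\end{align*}
By Theorem~\ref{thm:oneimpliesone_char} applied to $\psi$ at $y$, the hypothesis gives $\im \D\psi(y) = \T_{\psi(y)}\mc Z = \T_{F(x)}\mc Z$, so
\begin{align*}
    \im \D\varphi(x,y) = \D F(x)^{-1}(\T_{F(x)}\mc Z).
\end{align*}

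Next, by Lemma~\ref{lem:Ay1_props} (applied to the lift $\varphi$), we always have $\im \D\varphi(x,y) \subseteq \T_x\calX$, while Lemma~\ref{lem:tangent_cone_fiber_prod_inclusion} gives the reverse inclusion $\T_x\calX \subseteq \D F(x)^{-1}(\T_{F(x)}\mc Z) = \im \D\varphi(x,y)$. Sandwiching yields
\begin{align*}
    \T_x\calX = \im \D\varphi(x,y) = \D F(x)^{-1}(\T_{F(x)}\mc Z),
\end{align*}
which simultaneously proves equality in Lemma~\ref{lem:tangent_cone_fiber_prod_inclusion} and, via Theorem~\ref{thm:oneimpliesone_char}, the {\oneimpliesone} property for $\varphi$ at $(x,y)$.

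I do not anticipate a serious obstacle here: the argument is essentially a two-line diagram chase once the tangent space~\eqref{eq:fib_prod_tangent_cone} is in hand. The only subtlety worth flagging is that the formula~\eqref{eq:fib_prod_tangent_cone} genuinely requires Assumption~\ref{assu:fib_prod_def_eqn} (constant-rank, not just $\calM_{F,\psi}$ being a manifold), since we need the tangent space to be cut out exactly by the linearized constraint $\D F(x)[\dot x] = \D\psi(y)[\dot y]$ rather than a proper subset of it; this is exactly what the constant-rank local defining function hypothesis supplies via~\cite[Thm.~5.12]{lee_smooth}.
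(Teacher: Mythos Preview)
Your proof is correct and essentially identical to the paper's: both compute $\im\D\varphi(x,y)=\D F(x)^{-1}(\im\D\psi(y))$ from~\eqref{eq:fib_prod_tangent_cone}, substitute $\im\D\psi(y)=\T_{F(x)}\mc Z$ via Theorem~\ref{thm:oneimpliesone_char}, and sandwich using Lemma~\ref{lem:tangent_cone_fiber_prod_inclusion} together with the standing inclusion $\im\Lmap\subseteq\T_x\calX$. The only cosmetic difference is that the paper cites Proposition~\ref{prop:AB_basics}(b) for that last inclusion while you cite Lemma~\ref{lem:Ay1_props}, which is equivalent.
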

\begin{proof}
Since $\psi$ satisfies {\oneimpliesone} at $y$, Theorem~\ref{thm:oneimpliesone_char} yields $\im\Lmap_y^{\psi} = \T_{\psi(y)}\mc Z = \T_{F(x)}\mc Z$. 
Assumption~\ref{assu:fib_prod_def_eqn} implies that $\calM_{F,\psi}$ is an embedded submanifold of $\calE\times\mc N$. Since $\varphi$ extends to $\overline{\varphi}(x,y)=x$ defined on all of $\calE\times\mc N$, we get from~\eqref{eq:L_and_Q_embedded} that $\Lmap_{(x,y)}^{\varphi}(\dot x,\dot y)=\dot x$ for all $(\dot x,\dot y)\in\T_{(x,y)}\calM_{F,\psi}$. By~\eqref{eq:fib_prod_tangent_cone},
\begin{align*}
    \im\Lmap_{(x,y)}^{\varphi} = \D F(x)^{-1}(\im\Lmap_y^{\psi}) = \D F(x)^{-1}(\T_{F(x)}\mc Z).
\end{align*}
Using Lemma~\ref{lem:tangent_cone_fiber_prod_inclusion} and Proposition~\ref{prop:AB_basics}(b), we get the chain of inclusions 
\begin{align*}
    \T_x\calX\subseteq \D F(x)^{-1}(\T_{F(x)}\mc Z) = \im\Lmap_{(x,y)}^{\varphi} \subseteq\T_x\calX.   
\end{align*}
We conclude that all these sets are equal and hence that {\oneimpliesone} holds for $\varphi$ at $(x,y)$.
\end{proof}

\begin{proposition}\label{prop:fib_prod_2imp1}
Under Assumption~\ref{assu:fib_prod_def_eqn}, if $\psi$ satisfies the sufficient condition $A_y^{\psi} = \T_{\psi(y)}\mc Z$ for {\twoimpliesone} at $y\in\mc N$ (recall Theorem~\ref{thm:2implies1_chain}), then $\varphi$ satisfies the sufficient condition $A_{(x,y)}^{\varphi}=\T_x\calX$ for {\twoimpliesone} at $(x,y)\in\calM_{F,\psi}$, and equality holds in Lemma~\ref{lem:tangent_cone_fiber_prod_inclusion}.
\end{proposition}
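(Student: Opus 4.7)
The plan is to mirror the proof of Proposition~\ref{prop:1imp1_fiber_prod} one order higher, by establishing the identity
\[
    A_{(x,y)}^{\varphi} = \D F(x)^{-1}(A_y^{\psi}).
\]
Combined with the hypothesis $A_y^{\psi}=\T_{F(x)}\mc Z$, Lemma~\ref{lem:tangent_cone_fiber_prod_inclusion}, and Proposition~\ref{prop:AB_basics}(b), this yields the chain
\[
    A_{(x,y)}^{\varphi} = \D F(x)^{-1}(\T_{F(x)}\mc Z) \supseteq \T_x\calX \supseteq A_{(x,y)}^{\varphi},
\]
forcing equality throughout. This simultaneously gives $A_{(x,y)}^{\varphi}=\T_x\calX$ (the sufficient condition for {\twoimpliesone} at $(x,y)$ by Theorem~\ref{thm:2implies1_chain}) and equality in Lemma~\ref{lem:tangent_cone_fiber_prod_inclusion}.

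For the $\subseteq$ direction of the first display, I would take any smooth curve $c=(c_1,c_2)\colon\RR\to\calM_{F,\psi}$ witnessing $w\in A_{(x,y)}^{\varphi}$ and differentiate the defining relation $F\circ c_1 = \psi\circ c_2$ once and twice at $t=0$. Using $c_1'(0)=0$ and $c_1''(0)=w$ immediately yields $(\psi\circ c_2)'(0)=0$ and $(\psi\circ c_2)''(0)=\D F(x)[w]$, so $\D F(x)[w]\in A_y^{\psi}$ by Definition~\ref{def:A_B_sets}.

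For the reverse inclusion, which is the main content, I would start from $w\in\calE$ with $\D F(x)[w]\in A_y^{\psi}$ and, via Definition~\ref{def:A_B_sets}, pick a smooth curve $c_2\colon\RR\to\mc N$ with $c_2(0)=y$, $(\psi\circ c_2)'(0)=0$, and $(\psi\circ c_2)''(0)=\D F(x)[w]$. Passing to a chart on $\mc N$ around $y$, the expansion $(\psi\circ c_2)''(0) = \D^2\psi(y)[c_2'(0),c_2'(0)] + \D\psi(y)[c_2''(0)]$ shows that the pair $(w,c_2''(0))$ satisfies the linear equation defining the second-order tangent set $\T^2_{(x,y),(0,c_2'(0))}\calM_{F,\psi}$ associated to the defining map $h(x,y)=F(x)-\psi(y)$ as in~\eqref{eq:1st_and_2nd_ord_tngnts}. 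A smooth curve on $\calM_{F,\psi}$ with the prescribed first- and second-order data at $t=0$ then exists by the realizability discussion around~\eqref{eq:1st_and_2nd_eqn_pass1}, and its first coordinate witnesses $w\in A_{(x,y)}^{\varphi}$.

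The main obstacle is the careful use of Assumption~\ref{assu:fib_prod_def_eqn}: constant rank of $h(x,y)=F(x)-\psi(y)$ near $\calM_{F,\psi}$ is exactly what makes $h$ (after passing to a chart on $\mc N$) a local defining function for $\calM_{F,\psi}$, thereby both certifying the linear description of the second-order tangent set used above and guaranteeing realizability of prescribed second-order data by smooth curves on $\calM_{F,\psi}$. The remainder of the argument is routine matching of Taylor coefficients.
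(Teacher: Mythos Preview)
Your argument is correct. The overall strategy---showing $A_{(x,y)}^{\varphi}\supseteq \D F(x)^{-1}(A_y^{\psi})$ and then closing the chain with Lemma~\ref{lem:tangent_cone_fiber_prod_inclusion} and Proposition~\ref{prop:AB_basics}(b)---matches the paper's exactly (the $\subseteq$ direction you also sketch is unnecessary, but harmless). Where you diverge is in how you establish the key inclusion.

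The paper unpacks $A_y^{\psi}$ via Proposition~\ref{prop:2implies1_with_Qmap}(a) as $\Qmap_y^{\psi}(\ker\Lmap_y^{\psi})+\im\Lmap_y^{\psi}$, writes $\D F(x)[\dot x]=\Qmap_y^{\psi}(v)+\Lmap_y^{\psi}(u)$ with $v\in\ker\Lmap_y^{\psi}$, and then constructs an \emph{arbitrary} curve on $\calM_{F,\psi}$ with velocity $(0,v)$ (only first-order data prescribed). Differentiating $F\circ c_x=\psi\circ c_y$ twice, the paper finds that the resulting first-coordinate acceleration $c_x''(0)$ differs from the target $\dot x$ by an element of $\im\Lmap_{(x,y)}^{\varphi}$, and concludes via Proposition~\ref{prop:2implies1_with_Qmap}(a) again. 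You instead stay with the curve-based Definition~\ref{def:A_B_sets}: given a curve $c_2$ on $\mc N$ witnessing $\D F(x)[w]\in A_y^{\psi}$, you check directly that the pair $(w,c_2''(0))$ (in a chart) satisfies the second-order tangent equations for $\calM_{F,\psi}$, and invoke the realizability result around~\eqref{eq:1st_and_2nd_eqn_pass1} to produce a curve on $\calM_{F,\psi}$ whose first coordinate hits $w$ \emph{exactly}. Your route avoids the $\Qmap,\Lmap$ bookkeeping and the subtraction step, at the cost of prescribing second-order data on the curve---hence leaning on the realizability result~\cite[Prop.~13.13]{rockafellar2009variational}, which the paper's softer argument does not need. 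One minor point worth making explicit: the realizability discussion in Section~\ref{sec:Qmap_comp} is phrased for full-rank defining functions, whereas Assumption~\ref{assu:fib_prod_def_eqn} only grants constant rank; you should note that the constant-rank theorem lets you pass locally to a full-rank defining function before applying it.
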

\begin{proof}
By Lemma~\ref{lem:tangent_cone_fiber_prod_inclusion}, we always have $\T_x\calX \subseteq \D F(x)^{-1}(\T_{F(x)}\mc Z)$. For the reverse inclusion and the desired sufficient condition for {\twoimpliesone}, it suffices to prove that $\D F(x)^{-1}(\T_{F(x)}\mc Z)\subseteq A_{(x,y)}^{\varphi}$ since $A_{(x,y)}^{\varphi}\subseteq\T_x\calX$ by Proposition~\ref{prop:AB_basics}(b).

Suppose $\D F(x)[\dot x]\in\T_{F(x)}\mc Z$.
By hypothesis, $\T_{F(x)}\mc Z = \T_{\psi(y)}\mc Z = A_y^{\psi}$,
so by Proposition~\ref{prop:2implies1_with_Qmap}(a):
\begin{align}
    \D F(x)[\dot x] = \Qmap_y^{\psi}(v) + \Lmap_y^{\psi}(u),\quad \textrm{for some } v\in\ker\Lmap_y^{\psi} \textrm{ and } u\in\T_y\mc N.
    \label{eq:fib_prod_2imp1_expr1}
\end{align}
Because $v\in\ker\Lmap_y^{\psi}$, we have $(0,v)\in\T_{(x,y)}\calM_{F,\psi}$ by~\eqref{eq:fib_prod_tangent_cone}.
Let $t \mapsto c(t) = (c_x(t),c_y(t))$ be a smooth curve on $\calM_{F,\psi}$ passing through $(x,y)$ with velocity $(0,v)$ at $t = 0$.
Because $F(c_x(t))=\psi(c_y(t))$ for all $t$ near 0, differentiating this expression twice we get 
\begin{align*}
    \D^2 F(x)[c_x'(0), c_x'(0)] + \D F(x)[c_x''(0)] = (\psi\circ c_y)''(0).
\end{align*}
The first term vanishes since $c_x'(0) = 0$.
Using Definition~\ref{def:LQ_maps} for $\Qmap_y^{\psi}$ with Lemma~\ref{lem:diff_of_accel}, we obtain
\begin{align}
    \D F(x)[c_x''(0)]=\Qmap_y^{\psi}(v) + \Lmap_y^{\psi}(u'),\quad \textrm{for some } u'\in\T_y\mc N.
    \label{eq:fib_prod_2imp1_expr2}
\end{align}
Subtracting~\eqref{eq:fib_prod_2imp1_expr2} from~\eqref{eq:fib_prod_2imp1_expr1} yields (using Definition~\ref{def:LQ_maps} for $\Lmap_y^{\psi}$ and~\eqref{eq:fib_prod_tangent_cone} for $\T_{(x,y)}\calM_{F,\psi}$)
\begin{align*}
    \D F(x)[\dot x-c_x''(0)] = \Lmap_y^{\psi}(u-u') = \D\psi(y)[u-u'], && \textrm{ hence } && (\dot x-c_x''(0),u-u')\in\T_{(x,y)}\calM_{F,\psi}.
\end{align*}
Since $\D\varphi(x, y)[\dot x, \dot y] = \dot x$, it follows that $\dot x-c_x''(0)\in\im\Lmap_{(x,y)}^{\varphi}$.
By Definition~\ref{def:LQ_maps} and Lemma~\ref{lem:diff_of_accel} again, there exists $w\in\T_{(x,y)}\calM_{F,\psi}$ satisfying
\begin{align*}
    c_x''(0) + \Lmap_{(x,y)}^{\varphi}(w) = \Qmap_{(x,y)}^{\varphi}(0,v) \in \Qmap_{(x,y)}^{\varphi}(\ker\Lmap_{(x,y)}^{\varphi}),
\end{align*} 
from which we see $\dot x\in \Qmap_{(x,y)}^{\varphi}(\ker\Lmap_{(x,y)}^{\varphi})+\im\Lmap_{(x,y)}^{\varphi} = A_{(x,y)}^{\varphi}$ (again with Proposition~\ref{prop:2implies1_with_Qmap}(a)). 
\end{proof}
We remark that other sufficient conditions for equality in Lemma~\ref{lem:tangent_cone_fiber_prod_inclusion} to be achieved are given in~\cite[Exer.~6.7, Thm.~6.31]{rockafellar2009variational}. However, they do not apply to Example~\ref{ex:smth_sdps} ($\mc Z$ is not Clarke-regular and $\D F(X)$ may not be surjective). In contrast, our approach via lifts does apply to this example, and gives {\twoimpliesone} and an expression for the tangent cones simultaneously, see Corollary~\ref{cor:fiber_prod_exs_revisit} below.

As the examples in the beginning of this section illustrate, $\mc Z$ is often a product of sets. It is therefore useful to note that a product of lifts satisfying desirable properties also satisfies those properties:
\begin{proposition}\label{prop:prod_lifts}
Suppose $\mc Z_i\subseteq\calE_i$ for $i=1,\ldots,k$ are subsets admitting smooth lifts $\psi_i\colon\mc N_i\to\mc Z_i$. Let $\mc Z=\mc Z_1\times\cdots\times\mc Z_k$ and $\psi=\psi_1\times\cdots\times\psi_k\colon\mc N_1\times\cdots\times\mc N_k\to\mc Z$, which is a smooth lift of $\mc Z$. Then the following hold.
\begin{enumerate}[(a)]
    \item $\T_{z}\mc Z \subseteq \T_{z_1}\mc Z_1\times\cdots\times\T_{z_k}\mc Z_k$ (the inclusion may be strict, see~\cite[Prop.~6.41]{rockafellar2009variational}).
    \item $\psi$ satisfies {\localimplieslocal} at $y$ if and only if $\psi_i$ satisfies {\localimplieslocal} at $y_i$ for all $i$.
    \item We have $\im\Lmap_{y}^{\psi}=\im\Lmap_{y_1}^{\psi_1}\times\cdots\times\im\Lmap_{y_k}^{\psi_k}$. In particular, $\psi$ satisfies {\oneimpliesone} at $y$ if $\psi_i$ satisfies {\oneimpliesone} at $y_i$ for all $i$, in which case equality in (a) holds. 
    \item We have $\Qmap_{y}^{\psi}\equiv\Qmap_{y_1}^{\psi_1}\times\cdots\times\Qmap_{y_k}^{\psi_k}\mod\im\Lmap_{y}^{\psi}$. Moreover, $A_{y}^{\psi}=A_{y_1}^{\psi_1}\times\cdots\times A_{y_k}^{\psi_k}$ and likewise for $B_{y}^{\psi}$ and $W_{y}^{\psi}$. In particular, $\psi$ satisfies {\twoimpliesone} at $y$ if $\psi_i$ satisfies {\twoimpliesone} at $y_i$ for all $i$. 
    \item $\psi$ satisfies the sufficient condition $A_{y}^{\psi}=\T_{\psi(y)}\mc Z$ for {\twoimpliesone} if $\psi_i$ satisfies the corresponding conditions $A_{y_i}^{\psi_i}=\T_{\psi_i(y_i)}\mc Z_i$ for all $i$.
\end{enumerate}
\end{proposition}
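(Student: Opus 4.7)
The plan is to exploit the fact that the product lift $\psi = \psi_1 \times \cdots \times \psi_k$ makes every relevant construction (tangent cones, differentials, second-order data, the sets $A_y,B_y,W_y$) decompose componentwise. The tangent space of the product manifold $\mc N = \mc N_1 \times \cdots \times \mc N_k$ factors canonically as $\T_y\mc N = \prod_i \T_{y_i}\mc N_i$, so I will repeatedly pass between global and componentwise descriptions.

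For part (a), I would start from Definition~\ref{def:tangentcone}: any $v \in \T_z\mc Z$ is a limit of $(z^{(j)}-z)/\tau_j$ with $z^{(j)}\in\mc Z$; writing $z^{(j)} = (z_1^{(j)},\ldots,z_k^{(j)})$ with $z_i^{(j)}\in\mc Z_i$ and taking componentwise limits places each $v_i$ in $\T_{z_i}\mc Z_i$. Part (b) follows from Theorem~\ref{thm:localimplocalcharact}: openness is a topological property, and a product of continuous maps between product spaces is open at $y = (y_1,\ldots,y_k)$ if and only if each factor is open at $y_i$, which is a standard check using the product-of-open-sets basis for the product topology.

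For part (c), since $\D\psi(y) = \D\psi_1(y_1) \oplus \cdots \oplus \D\psi_k(y_k)$ acting on $\T_y\mc N = \prod_i \T_{y_i}\mc N_i$, the image factors as claimed. If every $\psi_i$ satisfies {\oneimpliesone} at $y_i$, then Theorem~\ref{thm:oneimpliesone_char} gives $\im\Lmap_{y_i}^{\psi_i} = \T_{z_i}\mc Z_i$, hence $\im\Lmap_y^\psi = \prod_i \T_{z_i}\mc Z_i$; combining with (a) and the general inclusion $\im\Lmap_y^\psi \subseteq \T_z\mc Z$ from Proposition~\ref{prop:AB_basics}(b) forces equality throughout, giving both {\oneimpliesone} for $\psi$ and equality in (a). For part (d), I would take product curves $c_v(t) = (c_{v_1}(t),\ldots,c_{v_k}(t))$ in Definition~\ref{def:LQ_maps}, so that $\psi\circ c_v$ splits coordinatewise and $\Qmap_y^\psi(v)$ equals $(\Qmap_{y_i}^{\psi_i}(v_i))_i$ up to the ambiguity absorbed by $\im\Lmap_y^\psi$ (Lemma~\ref{lem:diff_of_accel}). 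Then $A_y^\psi$ factors directly from Proposition~\ref{prop:2implies1_with_Qmap}(a) since $\ker\Lmap_y^\psi = \prod_i \ker\Lmap_{y_i}^{\psi_i}$, and $B_y^\psi$ factors from Proposition~\ref{prop:2implies1_with_Qmap}(b) since limits in a product are componentwise limits. For $W_y^\psi$, the cleanest route is Definition~\ref{def:W_y}: if $w_i \in W_{y_i}^{\psi_i}$ with witness cost $f_i$, then $f(z_1,\ldots,z_k) = \sum_i f_i(z_i)$ witnesses $(w_1,\ldots,w_k) \in W_y^\psi$; conversely, any witness $f$ for $w \in W_y^\psi$ can be restricted to each factor (freezing the other coordinates at $z_j$) to produce witness costs $f_i$ for each $w_i \in W_{y_i}^{\psi_i}$, using that $\nabla g(y) = 0$ and $\nabla^2 g(y) \succeq 0$ restrict to the corresponding conditions on each factor through the product tangent-space decomposition. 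The statement on {\twoimpliesone} then follows by Theorem~\ref{thm:W_equiv_cond} together with (a).

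Part (e) is essentially a corollary of (a) and (d): if each $A_{y_i}^{\psi_i} = \T_{z_i}\mc Z_i$, then $A_y^\psi = \prod_i \T_{z_i}\mc Z_i \supseteq \T_z\mc Z$ by (a), while the reverse inclusion $A_y^\psi \subseteq \T_z\mc Z$ holds by Proposition~\ref{prop:AB_basics}(b), so $A_y^\psi = \T_z\mc Z$ and Theorem~\ref{thm:2implies1_chain} delivers {\twoimpliesone}. The one step I expect to require the most care is the converse direction in the $W_y$ decomposition and the corresponding restriction-of-witnesses argument, because one must ensure that the restricted costs remain twice differentiable and that the Riemannian gradient/Hessian conditions restrict compatibly across factors; once the product decomposition $\T_y\mc N = \prod_i \T_{y_i}\mc N_i$ and the corresponding block-diagonal form of $\nabla^2(f\circ\psi)(y)$ are in hand, however, this is a routine verification.
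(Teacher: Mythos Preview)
Your proposal is correct and follows essentially the same approach as the paper for parts (a), (b), (c), (e), and for the $\Qmap$, $A_y$, $B_y$ claims in (d).

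The one place where you take a genuinely different route is the decomposition $W_y^\psi = \prod_i W_{y_i}^{\psi_i}$. The paper computes $\nabla^2\psi_{(w_1,\ldots,w_k)}(y)$ as a block-diagonal operator with blocks $\nabla^2(\psi_i)_{w_i}(y_i)$ and then applies the characterization of $W_y$ from Proposition~\ref{prop:2implies1_with_Qmap}(c) together with the factorization of $\ker\Lmap_y^\psi$. You instead work directly from Definition~\ref{def:W_y}: for $\prod_i W_{y_i}^{\psi_i} \subseteq W_y^\psi$ you build the separable witness $f = \sum_i f_i$, and for the reverse inclusion you freeze all but one coordinate of a witness $f$ for $w\in W_y^\psi$ to obtain witnesses $f_i$ for each $w_i$, using that the embedding $\T_{y_i}\mc N_i \hookrightarrow \T_y\mc N$ makes the 2-criticality conditions restrict. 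Your argument is more elementary in that it avoids Proposition~\ref{prop:2implies1_with_Qmap}(c) entirely; the paper's argument is more uniform with how it handles $A_y$ and $B_y$. Both are short and complete, and your caution about the restriction step is well placed but, as you anticipated, the block-diagonal structure of $\nabla^2 g(y)$ with respect to the product decomposition makes it routine.
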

\ifthenelse{\boolean{shortver}}{The proof is straightforward, see~\cite[App.~C.2]{levin2022effectARXIV}.}
{The proof is given in Appendix~\ref{apdx:prods}. 
Equality in Proposition~\ref{prop:prod_lifts}(a) is achieved when each $\mc Z_i$ is Clarke-regular at $z_i$~\cite[Prop.~6.41]{rockafellar2009variational}.}
By Remark~\ref{rmk:nonunique_Q}, equality of $\Qmap_{y}^{\psi}$ and $\Qmap_{y_1}^{\psi_1}\times\cdots\times\Qmap_{y_k}^{\psi_k}$ modulo $\im\Lmap_{y}^{\psi}$ means that either one can be used to verify {\twoimpliesone}.

We can now revisit the examples from the beginning of this section.
\begin{corollary}\label{cor:fiber_prod_exs_revisit}
The lifts in Examples~\ref{ex:sphere2ball}, \ref{ex:sphere2simplex}, \ref{ex:torus2annulus} and \ref{ex:smth_sdps} satisfy the following.
\begin{itemize}
    \item The sphere to ball lift in Example~\ref{ex:sphere2ball} satisfies {\localimplieslocal} everywhere, {\oneimpliesone} at $y$ if and only if $y\neq 0$ (i.e., at preimages of the interior of the ball), and {\twoimpliesone} everywhere.
    
    \item The sphere to simplex lift~\eqref{eq:sphere_to_simplex} satisfies {\localimplieslocal} everywhere, {\oneimpliesone} at $y$ if and only if $y_i\neq 0$ for all $i$ (i.e., at preimages of the relative interior of the simplex), and {\twoimpliesone} everywhere.
    
    \item The lift of the annulus in Example~\ref{ex:torus2annulus} satisfies {\localimplieslocal} everywhere, {\oneimpliesone} at $y$ if and only if $y_1,y_2\neq0$ (i.e., at preimages of the interior of the annulus), and {\twoimpliesone} everywhere. 
    
    \item The Burer--Monteiro lift~\eqref{eq:BM_lift} under the smoothness assumption satisfies {\localimplieslocal} everywhere, {\oneimpliesone} at $R$ if and only if $\rank(R)=r$ (i.e., at preimages of points of rank $r$), and {\twoimpliesone} everywhere. Moreover, we get an expression for the tangent cones to $\calX$:
    \begin{align}
        \T_X\calX = \{V\in\mbb S^n: V\in \T_X\mbb S_{\succeq0}^n\cap\T_X\RR_{\leq r}^{n\times n} \textrm{ and } \inner{A_i}{V} = 0 \textrm{ for all } i \}.
        \label{eq:smth_sdp_tangent_cone}
    \end{align}
\end{itemize}
\end{corollary}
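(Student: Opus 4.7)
The plan is to realize each of the four examples as a fiber product built from a small set of atomic lifts, and then derive every claim by applying the general fiber product machinery of Propositions~\ref{prop:fib_prod_loc}, \ref{prop:1imp1_fiber_prod}, \ref{prop:fib_prod_2imp1}, together with the product lemma (Proposition~\ref{prop:prod_lifts}). The only nontrivial atomic lifts are the scalar squaring map $\psi_0 \colon \RR \to \RR_{\geq 0}$, $y \mapsto y^2$ (which generates the first three examples by taking Cartesian products), and the unconstrained Burer--Monteiro lift $R \mapsto RR\transpose$ from Proposition~\ref{prop:BM_lift_noA} (which takes care of Example~\ref{ex:smth_sdps}).

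First I would analyze $\psi_0$ by hand. Openness at $y=0$ is immediate because $(-\epsilon,\epsilon)$ maps onto $[0,\epsilon^2)$, which is open in $\RR_{\geq 0}$ with the subspace topology; hence $\psi_0$ satisfies {\localimplieslocal} everywhere by Theorem~\ref{thm:localimplocalcharact}. The chart formulas~\eqref{eq:L_and_Q_on_chart} yield $\Lmap_y(\dot y) = 2y\dot y$ and $\Qmap_y(\dot y) = 2\dot y^2$, so $\im \Lmap_y = \RR = \T_{y^2}\RR_{\geq 0}$ iff $y\neq 0$, giving {\oneimpliesone} exactly there by Theorem~\ref{thm:oneimpliesone_char}. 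At $y = 0$, Proposition~\ref{prop:2implies1_with_Qmap}(a) gives $A_0 = \Qmap_0(\RR) + \{0\} = \RR_{\geq 0} = \T_0\RR_{\geq 0}$, so the sufficient condition $A_y^{\psi_0} = \T_{\psi_0(y)}\RR_{\geq 0}$ in Theorem~\ref{thm:2implies1_chain} holds at every $y$ (it is automatic at $y\neq 0$ from {\oneimpliesone}). Proposition~\ref{prop:prod_lifts} then transfers all three properties componentwise to $\psi_0^{\times n}$, with {\oneimpliesone} holding iff every component of $y$ is nonzero, and $A^\psi_y = \T_{\psi(y)}\mc Z$ holding throughout.

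With this in hand, the first three examples follow by checking Assumption~\ref{assu:fib_prod_def_eqn} and invoking Propositions~\ref{prop:fib_prod_loc}, \ref{prop:1imp1_fiber_prod}, \ref{prop:fib_prod_2imp1}. For each of Examples~\ref{ex:sphere2ball}, \ref{ex:sphere2simplex}, \ref{ex:torus2annulus}, the differential of $(x,y)\mapsto F(x)-\psi(y)$ is routinely seen to have constant (in fact full) rank on the fiber product, so Assumption~\ref{assu:fib_prod_def_eqn} holds. This gives {\localimplieslocal} and {\twoimpliesone} everywhere, and the \emph{if} direction of the claimed {\oneimpliesone} characterizations. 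For the converse, I would argue directly via Theorem~\ref{thm:oneimpliesone_char}: at a point where some component of $y$ vanishes, one has $\im \Lmap_{(x,y)}^\varphi = \D F(x)^{-1}(\im \Lmap_y^\psi)$, which is a proper linear subspace of $\D F(x)^{-1}(\T_{\psi(y)}\mc Z) = \T_x \calX$ (the latter equality coming from equality in Lemma~\ref{lem:tangent_cone_fiber_prod_inclusion}, itself granted by Proposition~\ref{prop:fib_prod_2imp1}). In each of the three cases the tangent cone $\T_x\calX$ is a half-space or an orthant face that is strictly larger than the corresponding linear subspace, so the equality fails.

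For Burer--Monteiro, Example~\ref{ex:smth_sdps} realizes~\eqref{eq:BM_lift} as a fiber product in which the nontrivial component of $\psi$ is precisely $R\mapsto RR\transpose$, and Assumption~\ref{assu:fib_prod_def_eqn} is equivalent to the constant-rank defining-function hypothesis. Proposition~\ref{prop:BM_lift_noA} supplies the full set of properties for this atomic lift: {\localimplieslocal} everywhere, {\oneimpliesone} iff $\rank(R) = r$, and the sufficient condition $A_R = \T_{RR\transpose}(\mbb S_{\succeq 0}^n \cap \RR^{n\times n}_{\leq r})$ for {\twoimpliesone} everywhere. Propositions~\ref{prop:fib_prod_loc}, \ref{prop:1imp1_fiber_prod}, \ref{prop:fib_prod_2imp1} then transport these to~\eqref{eq:BM_lift}, and the converse \emph{only if} for {\oneimpliesone} follows as in the previous paragraph since $\T_X\calX$ inherits the non-linear ``$V_3 \succeq 0$'' constraint from $\T_X(\mbb S_{\succeq 0}^n \cap \RR_{\leq r}^{n\times n})$ while $\im \Lmap_{(X,R)}^\varphi$ is linear. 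Finally, the tangent cone formula~\eqref{eq:smth_sdp_tangent_cone} drops out by combining the equality case in Lemma~\ref{lem:tangent_cone_fiber_prod_inclusion} supplied by Proposition~\ref{prop:fib_prod_2imp1} with the product decomposition $\T_{F(X)}\mc Z = \T_X(\mbb S_{\succeq 0}^n\cap\RR^{n\times n}_{\leq r}) \times \{0\}^m$ from Proposition~\ref{prop:prod_lifts}(c), and noting that $\D F(X)[V] = (V, \inner{A_1}{V},\ldots,\inner{A_m}{V})$. The main subtlety I expect is the converse of {\oneimpliesone} in the Burer--Monteiro case, where one must verify that the linear constraints $\inner{A_i}{\cdot} = 0$ do not ``linearize'' the nonlinear part of the tangent cone at rank-deficient $R$; the explicit block-matrix form from Proposition~\ref{prop:BM_lift_noA} makes this essentially automatic.
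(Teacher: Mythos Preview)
Your proposal is correct and follows essentially the same route as the paper: analyze the scalar squaring lift $\psi_0(y)=y^2$ and the unconstrained $R\mapsto RR^\top$ lift directly, transfer to products via Proposition~\ref{prop:prod_lifts}, then push everything through the fiber product machinery (Propositions~\ref{prop:fib_prod_loc}, \ref{prop:1imp1_fiber_prod}, \ref{prop:fib_prod_2imp1}) to obtain {\localimplieslocal}, the sufficient direction of {\oneimpliesone}, {\twoimpliesone}, and the tangent-cone formula. The paper's proof is more terse on the ``only if'' for {\oneimpliesone}---it simply cites Theorem~\ref{thm:oneimpliesone_char}---whereas you spell out the mechanism (the tangent cone is non-linear at the relevant points), but the underlying argument is the same.
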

In particular, this proves Propositions~\ref{prop:sphere_to_simplex} and~\ref{prop:BM_lift}.
Note that an expression for $\T_X\mbb S_{\succeq0}^n\cap\T_X\RR_{\leq r}^{n\times n}$ is derived in Proposition~\ref{prop:BM_lift} (incidentally, also as a consequence of the sufficient condition for {\twoimpliesone} used in Proposition~\ref{prop:fib_prod_2imp1}). 

\begin{proof}
For the first three bullet points, consider the lift $\psi(y)=y^2$ from $\mc N=\RR$ to $\mc Z=\RR_{\geq 0}$. Observe that it satisfies {\localimplieslocal} everywhere, {\oneimpliesone} at $y\neq 0$, and the sufficient condition $A_y=\T_{\psi(y)}\mc Z$ for {\twoimpliesone} at $y=0$. 
Indeed, at $y\neq 0$ we have $\Lmap_y(\dot y)=2y\dot y$ which is an isomorphism of $\T_y\mc N=\RR$ and $\T_{y^2}\mc Z=\RR$; and at $y=0$ we have $\Lmap_y=0$ and $\Qmap_y(\dot y)=2\dot y^2$ by~\eqref{eq:L_and_Q_on_chart} so $A_y=\Qmap_y(\ker\Lmap_y)+\im\Lmap_y=\RR_{\geq 0} = \T_0\mc Z$. Propositions~\ref{prop:fib_prod_loc} and~\ref{prop:1imp1_fiber_prod}--\ref{prop:prod_lifts} imply that the first three lifts satisfy {\localimplieslocal} and {\twoimpliesone} everywhere, and give the claimed ``if'' directions for {\oneimpliesone}. The ``only if'' directions follow from Theorem~\ref{thm:oneimpliesone_char}.

For the Burer--Monteiro lift, consider the lift $\psi(R)=RR^\top$ from $\mc N=\RR^{n\times r}$ to $\mc Z=\mbb S_{\succeq 0}^n\cap\RR^{n\times n}_{\leq r}$. Proposition~\ref{prop:BM_lift_noA} shows that $\psi$ satisfies {\localimplieslocal} and the sufficient condition $A_R = \T_{RR^\top}\mc Z$ for {\twoimpliesone} everywhere, as well as {\oneimpliesone} at points $R$ of rank $r$. Therefore, Propositions~\ref{prop:fib_prod_loc} and~\ref{prop:1imp1_fiber_prod}--\ref{prop:prod_lifts} imply that the Burer--Monteiro lift satisfies {\localimplieslocal} and {\twoimpliesone} everywhere and {\oneimpliesone} at $R$ if $\rank(R)=r$. The {\oneimpliesone} property does not hold at other points by Theorem~\ref{thm:oneimpliesone_char}. Proposition~\ref{prop:fib_prod_2imp1} gives the claimed expression for the tangent cones to $\calX$.
\end{proof}

\begin{example}\label{ex:eigenval_example}
We can now revisit the example from Section~\ref{sec:intro} about computing the smallest eigenvalue of a symmetric matrix $A = U\diag(\lambda)U\transpose$ with $U$ orthogonal.
There, 
\begin{align*}
    \calX = \Delta^{d-1}, &&
    \calM=\mathrm{S}^{d-1} && \textrm{ and } && \varphi(y)=\mathrm{diag}(U^\top yy^\top U).
\end{align*}
Observe that $\varphi(y)=(U^\top y)^{\odot 2}$, which is the composition of the diffeomorphism $y\mapsto U^\top y$ from the sphere to itself and the Hadamard lift from Example~\ref{ex:sphere2simplex}. We conclude that this lift satisfies {\twoimpliesone} everywhere on $\calM$ by Proposition~\ref{prop:composition_of_lifts} and Corollary~\ref{cor:fiber_prod_exs_revisit}. Therefore, any 2-critical point for~\eqref{eq:Q} maps to a stationary point for~\eqref{eq:P}, for any cost $f$. If $f$ is convex, then since $\calX$ is also convex any stationary point for~\eqref{eq:P} is globally optimal. Thus, in this case any 2-critical point for~\eqref{eq:Q} is globally optimal and its nonconvexity is benign. This is well-known for the eigenvalue problem, which corresponds to the case of linear $f$.
\end{example}
\begin{example}\label{ex:stochastic_mats}
    Proposition~\ref{prop:prod_lifts} together with Corollary~\ref{cor:fiber_prod_exs_revisit} implies the properties stated in Proposition~\ref{prop:stochastic_mats} for the lift~\eqref{eq:HadProd} of stochastic matrices. Indeed, the set of stochastic matrices $\calX = \{X\in\RR^{n\times m}_{\geq0}: X^\top \mathbbm{1}_n=\mathbbm{1}_n\}$ is just the product of simplices $\calX=\left(\Delta^{n-1}\right)^m$, and the lift~\eqref{eq:HadProd} is precisely the $m$-fold power lift of~\eqref{eq:sphere_to_simplex}. Thus, Proposition~\ref{prop:prod_lifts}(b)-(d) yields {\localimplieslocal} and {\twoimpliesone} everywhere and {\oneimpliesone} at tuples $(y_i)$ with no zero entries. Furthermore, since simplices are closed convex sets and hence Clarke-regular~\cite[Thm.~6.9]{rockafellar2009variational}, the tangent cone to their product is equal to the product of tangent cones (i.e., equality in Proposition~\ref{prop:prod_lifts}(a) holds) by~\cite[Prop.~6.41]{rockafellar2009variational}, and hence {\oneimpliesone} does not hold elsewhere.
\end{example}


\section{Analysis of low rank lifts}\label{sec:examples}
In this section, we use our theory to prove the remaining results from Section~\ref{sec:main_results} concerning lifts of low rank matrices and tensors.
\ifthenelse{\boolean{shortver}}{Some of the straightforward but technical arguments are omitted here and are given in the arxiv version of this paper~\cite{levin2022effectARXIV}.}{}

\subsection{Proof of Proposition~\ref{prop:LR_lift} ($LR\transpose$ lift)}\label{sec:pf_of_LR}
\ifthenelse{\boolean{shortver}}{The {\localimplieslocal} property at ``balanced'' factorizations $(L,R)$ satisfying $\rank(L)=\rank(R)=\rank(LR^\top)$ was proved in~\cite[Prop.~2.34]{eitan_thesis} by showing that (in our terminology) SLP holds there. We show {\localimplieslocal} does not hold at other pairs $(L,R)$ anywhere else by disproving SLP there via an explicit construction, see the arxiv version for details~\cite[Sec.~5.1]{levin2022effectARXIV}.}
{For {\localimplieslocal}, we follow~\cite[Prop.~2.34]{eitan_thesis} and show SLP holds at every ``balanced'' $(L,R)$ factorizations, i.e., one satisfying $\rank(L)=\rank(R)=\rank(LR^\top)$.\footnote{Compare this argument to the proof of Proposition~\ref{prop:BM_lift}. In both cases, the proof relies on a characterization of the fibers of $\varphi$ or a subset of them.} 
Let $X=LR^\top$ and suppose $(X_i)_{i\in\NN}\subseteq\calX$ converges to $X$. Let $X_i=U_i\Sigma_iV_i^\top$ be a size-$r$ SVD of $X_i$ where $\Sigma_i\in\RR^{r\times r}$ is diagonal with the first $r$ singular values of $X_i$ (possibly including zeros) on the diagonal. Let $L_i=U_i\Sigma_i^{1/2}$ and $R_i=V_i\Sigma_i^{1/2}$. These satisfy $\varphi(L_i,R_i)=L_iR_i^\top=X_i$ and $\|L_i\|=\|R_i\|=\|X_i\|^{1/2}$. 
Since $\|X_i\|$ are bounded, after passing to a subsequence we may assume that the limit $(L_{\infty},R_{\infty})=\lim_i(L_i,R_i)$ exists. 
By continuity of $\varphi$, we must have $L_{\infty}R_{\infty}^\top = X$. 
Since $L_i^\top L_i=R_i^\top R_i=\Sigma_i$ for all $i$, we also have $L_{\infty}^\top L_{\infty} = R_{\infty}^\top R_{\infty}$. This implies $\rank(L_{\infty}) = \rank(R_{\infty})=\rank(X)$ by considering the polar decompositions of $L_{\infty},R_{\infty}$, see~\cite[Lem.~2.33]{eitan_thesis}. 
By~\cite[Lem.~2.32]{eitan_thesis}, there exists $J\in\mathrm{GL}(r)$ satisfying $L = L_{\infty}J$ and $R = R_{\infty}J^{-\top}$. 
Therefore, $(L_iJ,R_iJ^{-\top})$ converges to $(L,R)$ and is a lift of $X_i$, showing that SLP holds. We conclude that $\varphi$ satisfies {\localimplieslocal} at $(L,R)$ such that $\rank(L)=\rank(R)=\rank(LR^\top)$ by Theorem~\ref{thm:openmapsonmanifolds}.

Conversely, we show that if $\rank(L), \rank(R)$ and $\rank(LR^\top)$ are not all equal then {\localimplieslocal} does not hold at $(L,R)$ by constructing a sequence $(X_i)$ converging to $X=LR^\top$ no subsequence of which can be lifted to a sequence converging to $(L,R)$. It always holds that $\rank(LR^\top)\leq\min\{\rank(L),\rank(R)\}$. Assume $\rank(X)<\rank(L)$ (in particular, $\rank(X)<r$). The case $\rank(X)<\rank(R)$ is similar. Define 
\begin{align*}
    L_i=\Proj_{\mathrm{col}(X)}L+i^{-1}L_{\perp},\quad \textrm{and}\quad R_i=\Proj_{\mathrm{row}(X)}R+i^{-1}R_{\perp},
\end{align*}
for $L_{\perp}\in\RR^{m\times r}$ satisfying 
\begin{enumerate}[(a)]
    \item $\mathrm{col}(L_{\perp})\perp\mathrm{col}(X)$;
    \item $\rank(L_{\perp})=r-\rank(X)$.
    \item $\mathrm{col}(L_{\perp})\neq \mathrm{col}(L)\cap\mathrm{col}(X)^{\perp}$;
\end{enumerate} 
The matrix $R_{\perp}\in\RR^{n\times r}$ satisfies the analogous conditions. Such $L_{\perp}$ can be obtained by letting its first column be an arbitrary nonzero vector $v_1\in \mathrm{col}(L)^{\perp}$ (which exists since $\rank(L)\leq r<\min\{m,n\}$), letting the next $r-\rank(X)-1$ columns be linearly independent vectors in $\mathrm{col}(X)^{\perp}\cap\mathrm{span}\{v_1\}^{\perp}$ (which exist because $r-\rank(X)-1 \leq \min\{m,n\} - \mathrm{rank}(X)-1 \leq \dim\mathrm{col}(X)^{\perp}\cap\mathrm{span}\{v_1\}^{\perp}$), and the remaining $\rank(X)$ columns to be zero.

Note that $\rank(L_i)=\rank(R_i)=r$ for all $i$ by conditions (a) and (b). Define $X_i=L_iR_i^\top$ which converges to $X$ as $i\to\infty$, and suppose $(\tilde L_{i_j},\tilde R_{i_j})$ is a lift of a subsequence of $(X_i)$ converging to $(L,R)$. Because $\rank(L_i)=\rank(R_i)=r$, we also have $\rank(X_i)=r$, and there exist $J_{i_j}\in\mathrm{GL}(r)$ such that $(\tilde L_{i_j},\tilde R_{i_j})=(L_{i_j}J_{i_j},R_{i_j}J_{i_j}^{-\top})$, see~\cite[Lem.~2.32]{eitan_thesis}. 
However, we have $\mathrm{col}(\widetilde L_{i_j}) = \mathrm{col}(L_{i_j}) = \mathrm{col}(X)\oplus \mathrm{col}(L_{\perp})$ is constant and not equal to $\mathrm{col}(L)$ by condition (c), contradicting $\widetilde L_{i_j}\to L$.
Thus, no subsequence of $(X_i)$ can be lifted, showing that $\varphi$ does not satisfy {\localimplieslocal} at $(L,R)$ by Theorem~\ref{thm:openmapsonmanifolds}. For an explicit example of a cost $f$ and point $(L,R)$ which is a local minimum for~\eqref{eq:Q} but such that $LR^\top$ is not a local minimum for~\eqref{eq:P}, see~\cite[Prop.~2.30]{eitan_thesis}.}

For {\oneimpliesone} and {\twoimpliesone}, note that $\calM$ is a linear space, hence~\eqref{eq:L_and_Q_on_chart} gives
\begin{align*}
    \Lmap_{(L,R)}(\dot L,\dot R) = \dot LR^\top + L\dot R^\top, && \Qmap_{(L,R)}(\dot L,\dot R) = 2\dot L\dot R^\top. 
\end{align*}
If $\rank(LR^\top)=r$, then~\cite[Prop.~2.15]{eitan_thesis} (which is a slight generalization of the proof of {\oneimpliesone} in Proposition~\ref{prop:BM_lift}) shows $\im\Lmap_{(L,R)}=\T_{LR^\top}\calX$ hence {\oneimpliesone} holds. If $\rank(LR^\top)<r$ then $\T_{LR^\top}\calX$ is not a linear space~\cite[Thm.~2.2]{hosseini2019coneslowrank}, hence {\oneimpliesone} does not hold at $(L,R)$ by Theorem~\ref{thm:oneimpliesone_char}.

To show {\twoimpliesone} holds everywhere on $\calM$, it suffices to show $B_{(L,R)}^*\subseteq (\T_X\calX)^*$ whenever $\rank(LR^\top)<r$ by Theorem~\ref{thm:2implies1_chain}. Since $\rank(LR^\top)<r$ we must have either $\rank(L)<r$ or $\rank(R)<r$, assume the former (the case $\rank(R)<r$ is similar). Then there exists $w\in\RR^r$ such that $Lw=0$ and $\|w\|^2=1$. For any $u\in\RR^m$, $v\in\RR^n$, and $i\in\NN$, let $\dot L_i = i^{-1} uw^\top$ and $\dot R_i = (i/2) vw^\top$. Then
\begin{align*}
    \Lmap_{(L,R)}(\dot L_i,\dot R_i) = i^{-1}uw^\top R^\top \xrightarrow{i\to\infty}0, &&
    \Qmap_{(L,R)}(\dot L_i,\dot R_i) = uv^\top,
\end{align*}
showing that $uv^\top\in B_{(L,R)}$. Thus, $B_{(L,R)}$ contains all rank-1 matrices, showing that $B_y^*=\{0\} = (\T_X\calX)^*$. 

\subsection{Proof of Proposition~\ref{prop:desing_lift} (desingularization lift)}\label{sec:pf_of_desing}
\ifthenelse{\boolean{shortver}}
{We show that {\localimplieslocal} does not hold at $(X,\mc S)\in\calM$ if $\rank(X)<r$ by disproving SLP via an explicit construction, see the arxiv version~\cite[App.~5.2]{levin2022effectARXIV}.}
{We begin by showing {\localimplieslocal} does not hold at $(X,\mc S)\in\calM$ if $\rank(X)<r$. Since $(X,\mc S)\in\calM$, the subspace $\mc S\subseteq\RR^n$ has dimension $n-r$ and $\mc S\subseteq\ker(X)$. We construct a sequence converging to $X$ such that no subsequence of it can be lifted to a sequence converging to $(X,\mc S)$, demonstrating that SLP does not hold at $(X,\mc S)$. Let $X=U\Sigma V^\top$ be a thin SVD for $X$ where $U\in\mathrm{St}(m,\rank(X)),\ V\in\mathrm{St}(n,\rank(X))$. Since $\mc S\subseteq\ker(X)$, we have $\mathrm{col}(V)\subseteq\mc S^{\perp}$. Let $U_{\perp}\in\mathrm{St}(m,r-\rank(X))$ have columns orthogonal to those of $U$, and $V_{\perp}\in\mathrm{St}(n,r-\rank(X))$ have columns orthogonal to those of $V$, such that some column of $V_{\perp}$ is contained in $\mc S$. Define 
\begin{align*}
    X_i = X + i^{-1}U_{\perp}V_{\perp}^\top\in\RR^{m\times n}.
\end{align*}
Note that $X_i\to X$, that $\rank(X_i)=r$ for all $i$, and that $\ker(X_i) = \ker(X)\cap\mathrm{col}(V_{\perp})^{\perp} =:\mc S'\neq\mc S$ for all $i$. Suppose $(X_{i_j},\mc S_{i_j})\in\calM$ is a lift of a subsequence converging to $(X,\mc S)$. Since $\rank(X_i)=r$, we must have $\mc S_{i_j}=\ker(X_{i_j})=\mc S'$ for all $j$. Thus, $\lim_j\mc S_{i_j}=\mc S'\neq\mc S$, a contradiction. Therefore, no subsequence of $(X_i)$ can be lifted, so we conclude that $\varphi$ does not satisfy {\localimplieslocal} at $(X,\mc S)$ by Theorem~\ref{thm:openmapsonmanifolds}.
For an explicit example of a cost $f$ and point $(X,\mc S)\in\calM$ that is a local minimum for~\eqref{eq:Q}, but such that $X$ is not a local minimum for~\eqref{eq:P}, see~\cite[Prop.~2.37]{eitan_thesis}. 

}
We show {\localimplieslocal} does hold at $(X,\mc S)\in\calM$ if $\rank(X)=r$ by showing that {\oneimpliesone} holds there, which suffices by Proposition~\ref{prop:1imp1_implies_locimploc_at_smooth}. 

For {\oneimpliesone} and {\twoimpliesone}, we use the results of Example~\ref{ex:desing_Qmap_comp}. 
\ifthenelse{\boolean{shortver}}{Using the notation of that example, recall that every $(X,\mc S)\in\calM$ is in the image of a chart $\psi(Z,W)$, and that $\Lmap_{(Z,W)}$ and $\Qmap_{(Z,W)}$ in this chart are given by~\eqref{eq:desing_LQ_maps_chart}.}
{Recall from that example that every $(X,\mc S)\in\calM$ is in the image of a chart of the form
\begin{align*}
    \psi(Z,W) = \left(\begin{bmatrix} -ZW, & Z\end{bmatrix}\Pi, \mathrm{col}\!\left(\Pi^\top\begin{bmatrix} I_{n-r}\\ W\end{bmatrix}\right)\right),\qquad (Z,W)\in\RR^{m\times r}\times\RR^{r\times (n-r)}
\end{align*}
for some permutation matrix $\Pi\in\RR^{n\times n}$. Proposition~\ref{prop:composition_of_lifts} implies that our desirable properties hold at $(Z,W)$ iff they hold at $(X,\mc S)$, so it suffices to consider the composed lift 
\begin{align*}
    \widetilde{\varphi}(Z,W)=\begin{bmatrix} -ZW, & Z\end{bmatrix}\Pi =: X,   
\end{align*}
defined on the linear space $\RR^{m\times r}\times\RR^{r\times(n-r)}$. For this lift, we computed $\Lmap_{(Z,W)}$ and $\Qmap_{(Z,W)}$ in~\eqref{eq:desing_LQ_maps_chart}:
\begin{align*}
    \Lmap_{(Z,W)}(\dot Z,\dot W) = \begin{bmatrix} -\dot ZW - Z\dot W, & \dot Z\end{bmatrix}\Pi,\qquad
    \Qmap_{(Z,W)}(\dot Z,\dot W) = \begin{bmatrix} -2\dot Z\dot W, & 0\end{bmatrix}\Pi.
\end{align*}}

Suppose $\rank(X)=r$ and $(X, \mc S)=\psi(Z,W)$. Note that $\mathrm{col}(X)=\mathrm{col}(Z)$, so $\rank(Z)=r$. If $\Lmap_{(Z,W)}(\dot Z,\dot W) = 0$, then $\dot Z=0$ and $Z\dot W=0$. This implies $\dot W=0$ since $Z$ has full column rank. Thus, $\Lmap_{(Z,W)}$ is injective, but since its domain has dimension $(m+n-r)r=\dim\RR^{m\times n}_{=r}$, we conclude that it is an isomorphism. Thus, {\oneimpliesone} holds at $(Z,W)$ by Theorem~\ref{thm:oneimpliesone_char}. If $\rank(X)<r$ then $\T_X\calX$ is not a linear space~\cite[Thm.~2.2]{hosseini2019coneslowrank}, hence {\oneimpliesone} cannot hold for any lift by Theorem~\ref{thm:oneimpliesone_char}.

Suppose $\rank(X)<r$. We show {\twoimpliesone} holds at $(Z,W)$ by showing that $B_{(Z,W)}^*\subseteq(\T_X\calX)^*$. To that end, note that $\rank(Z)=\rank(X)<r$, so there is a unit vector $w\in\RR^r$ satisfying $Zw=0$. Let $u\in\RR^m$ and $v\in\RR^{n-r}$ be arbitrary. For any $i\in\NN$, let $\dot Z_i=i^{-1} uw^\top$ and $\dot W_i=i wv^\top$. Then 
\begin{align*}
    &\Lmap_{(Z,W)}(\dot Z_i,\dot W_i) = \begin{bmatrix} -i^{-1}uw^\top W - i(Zw)v^\top, & i^{-1} uw^\top\end{bmatrix}\Pi\xrightarrow{i\to\infty} 0,\\
    &\Qmap_{(Z,W)}(\dot Z_i,\dot W_i) \equiv \begin{bmatrix} -2uv^\top, & 0\end{bmatrix}\Pi. 
\end{align*}
We conclude that 
\begin{align*}
    B_{(Z,W)}\supseteq (\RR^{m\times (n-r)}_{\leq 1}\times\{0\})\Pi + \im\Lmap_{(Z,W)} \implies B_{(Z,W)}^*\subseteq (\{0\}\times \RR^{m\times r})\Pi\cap(\im\Lmap_{(Z,W)})^{\perp}.
\end{align*}
To characterize $(\im\Lmap_{(Z,W)})^{\perp}$, observe that $V=\begin{bmatrix}V_1, & V_2\end{bmatrix}\Pi\in\RR^{m\times n}$ with $V_1\in\RR^{m\times(n-r)}$ satisfies $V\in(\im\Lmap_{(Z,W)})^{\perp}$ iff the following holds for all $(\dot Z,\dot W)\in\RR^{m\times r}\times\RR^{r\times(n-r)}$:
\begin{align*}
    \langle V,\Lmap_{(Z,W)}(\dot Z,\dot W)\rangle = \langle V_1, -\dot ZW - Z\dot W\rangle + \langle V_2,\dot Z\rangle = \langle \dot Z, V_2-V_1W^\top\rangle - \langle\dot W,Z^\top V_1\rangle = 0.
\end{align*}
This is equivalent to $V_2=V_1W^\top$ and $Z^\top V_1=0$. Thus, if $V_1=0$ then $V=0$, hence $B_{(Z,W)}^*=\{0\}=(\T_X\calX)^*$. This shows {\twoimpliesone} holds at $(Z,W)$, and hence also at $(X,\mc S)$ by Proposition~\ref{prop:composition_of_lifts}. 


\subsection{Multilinear lifts, and tensors}\label{sec:multilinear_lifts}
In this section, we prove two obstructions to {\twoimpliesone} for multilinear lifts, which apply in particular to lifts defined by tensor factorizations and linear neural networks as discussed in Sections~\ref{sec:lowrk_tensors}-\ref{sec:NNs}.
\begin{proposition}\label{prop:multilin_lifts_nonnorm}
Suppose $\varphi\colon\calM\to\calX\subseteq\calE$ is a smooth lift where $\calM\subseteq \calE' = \calE_1\times\cdots\times\calE_d$ is a smooth embedded submanifold of a product of Euclidean spaces $\calE_i$, and $\varphi$ is defined on all of $\calE'$ and is multilinear in its $d$ arguments. If $\calM$ contains a point $(y_1,\ldots,y_d)$ such that $y_i=0$ for three indices $i$, and $0=\varphi(y_1,\ldots,y_d)$ is not an isolated point of $\calX$, then $\varphi$ does not satisfy {\twoimpliesone} at $(y_1,\ldots,y_d)$.
\end{proposition}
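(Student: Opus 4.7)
The plan is to apply the necessary condition for \twoimpliesone{} from the last line of Theorem~\ref{thm:2implies1_chain}, namely $(\im\Lmap_y)^{\perp}\cap(\Qmap_y(\T_y\calM))^* \subseteq (\T_x\calX)^*$. I will show that the hypothesis that three of the $y_i$ vanish, together with multilinearity of $\varphi$, forces both $\Lmap_y$ and $\Qmap_y$ to be identically zero. This collapses the left-hand side of the necessary condition to all of $\calE$, which in turn forces $\T_x\calX=\{0\}$. The non-isolation hypothesis on $x=0$ will then provide a contradiction.

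First, since $\calM$ is embedded in $\calE'$ and $\varphi$ is defined on all of $\calE'$, I would use the formulas~\eqref{eq:L_and_Q_embedded} with the extension $\overline\varphi=\varphi$: for any $v\in\T_y\calM$ and any $u_v\in\T^2_{y,v}\calM$,
\begin{align*}
    \Lmap_y(v) &= \D\varphi(y)[v], &
    \Qmap_y(v) &= \D^2\varphi(y)[v,v] + \D\varphi(y)[u_v].
\end{align*}
By $d$-linearity of $\varphi$, the first derivative is a sum of $d$ terms, each obtained from $\varphi(y_1,\ldots,y_d)$ by replacing one slot with a component of $v$; the second derivative is a sum over ordered pairs $(i,j)$ with $i\ne j$ of terms replacing slots $i$ and $j$ with components of $v$.

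Next I would exploit the vanishing of $y_{i_1},y_{i_2},y_{i_3}$ at three distinct indices. In each term of $\D\varphi(y)[v]$ exactly one slot is filled by a component of $v$, so at least two of $\{i_1,i_2,i_3\}$ still contain a zero $y_{i_k}$; hence every such term is zero by multilinearity, and $\Lmap_y=0$. In each term of $\D^2\varphi(y)[v,v]$ exactly two slots are filled by components of $v$, so at least $3-2=1$ of the zero $y_{i_k}$ survives, and the term again vanishes. Combined with $\D\varphi(y)[u_v]=0$ (since $\D\varphi(y)=0$), this gives $\Qmap_y(v)=0$ for every $v\in\T_y\calM$.

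Consequently $(\im\Lmap_y)^{\perp}=\calE$ and $(\Qmap_y(\T_y\calM))^* = \{0\}^* = \calE$, so the necessary condition from Theorem~\ref{thm:2implies1_chain} becomes $\calE\subseteq(\T_x\calX)^*$, which is equivalent to $\T_x\calX=\{0\}$ (otherwise any nonzero $v\in\T_x\calX$ gives $\langle-v,v\rangle<0$). Because $x=0$ is not isolated in $\calX$, there is a sequence $(x_n)\subseteq\calX\setminus\{0\}$ with $x_n\to 0$. Taking $\tau_n=\|x_n\|>0$, the unit vectors $x_n/\tau_n$ lie on the unit sphere of $\calE$, and a subsequence converges to some unit vector $v$, which belongs to $\T_0\calX\setminus\{0\}$ by Definition~\ref{def:tangentcone}. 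This contradicts $\T_x\calX=\{0\}$, so the necessary condition fails and \twoimpliesone{} cannot hold at $y$. The main obstacle is purely notational: carefully expanding the multilinear first and second derivatives and checking the pigeonhole count $3-2\geq1$ that makes the quadratic term vanish.
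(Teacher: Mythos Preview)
Your proposal is correct and follows essentially the same approach as the paper's proof: both use multilinearity and the three-zero hypothesis to show $\Lmap_y=0$ and $\Qmap_y=0$, then invoke the necessary condition from Theorem~\ref{thm:2implies1_chain} to force $(\T_0\calX)^*=\calE$, and finally use non-isolation of $0$ together with a compactness argument on the unit sphere to produce a nonzero tangent vector. The pigeonhole counting you describe is exactly the mechanism the paper uses (made explicit in its long version), and your sequence argument at the end matches the paper's verbatim.
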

\begin{proof}
Let $(y_1,\ldots,y_d)\in\calE'$. Note that if $y_i=0$ for some $i$, then $\varphi(y_1,\ldots,y_d)=0$ by the multilinearity of $\varphi$. 
\ifthenelse{\boolean{shortver}}{Similarly, multilinearity gives $\D\varphi(y_1,\ldots,y_d) = \D^2\varphi(y_1,\ldots,y_d) = 0$ if $y_i=0$ for at least three indices $i$.}{
Also, since $\varphi$ is multilinear and defined on all of $\calE'$, its differential in the ambient Euclidean space is 
\begin{align*}
    \D\varphi(y_1,\ldots,y_d)[\dot y_1,\ldots,\dot y_d] &= \left.\frac{d}{dt}\right|_{t=0}\varphi(y_1+t\dot y_1,\ldots, y_d+t\dot y_d)\\ &= \sum_{i=1}^d\varphi(y_1,\ldots,y_{i-1},\dot y_i,y_{i+1},\ldots,y_d).
\end{align*}
Similarly,
\begin{align*}
    &\D^2\varphi(y_1,\ldots,y_d)[(\dot y_1,\ldots,\dot y_d),(\dot y_1,\ldots,\dot y_d)] = \left.\frac{d}{dt}\right|_{t=0}\D\varphi(y_1+t\dot y_1,\ldots,y_d+t\dot y_d)[\dot y_1,\ldots,\dot y_d]\\ &= 2\sum_{1\leq i<j\leq d}\varphi(y_1,\ldots,y_{i-1},\dot y_i,y_{i+1},\ldots,y_{j-1},\dot y_j,y_{j+1},\ldots,y_d).
\end{align*}
In particular, if $y_i=0$ for at least three indices $i$, then $\D\varphi(y_1,\ldots,y_d) = \D^2\varphi(y_1,\ldots,y_d) = 0$.}
Hence~\eqref{eq:L_and_Q_embedded} gives $\Lmap_{(y_1,\ldots,y_d)}=0$ and $\Qmap_{(y_1,\ldots,y_d)}=0$.
This implies 
\begin{align*}
    (\im\Lmap_{(y_1,\ldots,y_d)})^{\perp}\cap(\Qmap_{(y_1,\ldots,y_d)}(\T_{(y_1,\ldots,y_d)}\calM))^* = \calE.
\end{align*}
The necessary condition for {\twoimpliesone} given by the last implication in Theorem~\ref{thm:2implies1_chain} is satisfied iff $(\T_0\calX)^*=\calE$, or equivalently $\T_0\calX=\{0\}$. This holds if and only if $0$ is an isolated point of $\calX$, since if $(x_i)\subseteq\calX\setminus\{0\}$ is a sequence converging to 0, then after passing to a subsequence $(x_i/\|x_i\|)$ converges and gives a nonzero element of $\T_0\calX$. 
\end{proof}
Proposition~\ref{prop:multilin_lifts_nonnorm} implies that the lifts corresponding to linear neural networks, as well as standard tensor decompositions such as CPD, Tensor Train (TT), and Tucket, all do not satisfy {\twoimpliesone} as points with at least three zero factors.

Proposition~\ref{prop:multilin_lifts_nonnorm} might suggest that failure of {\twoimpliesone} can be avoided by normalizing the arguments of the lift to have unit norm. Specifically, by multilinearity of $\varphi$ we have 
\begin{align*}
    \varphi(y_1,\ldots,y_d) = \left(\prod_{i=1}^d\|y_i\|\right)\varphi\!\left(\frac{y_1}{\|y_1\|},\ldots,\frac{y_d}{\|y_d\|}\right),\quad \textrm{whenever } y_i\neq 0 \textrm{ for all } i.
\end{align*}
Using this observation, one could replace a lift $\varphi\colon\RR^{n_1}\times\cdots\times\RR^{n_d}\to\calX$ to a product of Euclidean spaces by a lift $\psi\colon\RR\times\mathrm{S}^{n_1-1}\times\cdots\times\mathrm{S}^{n_d-1}$ to a product of $\RR$ and several spheres, satisfying $\psi(\lambda,x_1,\ldots,x_d)=\lambda\varphi(x_1,\ldots,x_d)$. Only one factor can be zero in this new lift, so Proposition~\ref{prop:multilin_lifts_nonnorm} does not apply and we might hope that {\twoimpliesone} holds. Unfortunately, this may not resolve the problem as there is another obstruction to {\twoimpliesone} for the following specific form of a lift. 
\begin{proposition}\label{prop:multilinear_lifts_norm}
Suppose $\varphi\colon\calM\to\calX$ is a smooth lift of the form
\begin{align*}
    \varphi(\lambda,Y_1,\ldots,Y_d) = \sum_{i=1}^r\lambda_i\cdot (Y_1)_{:,i}\otimes\cdots\otimes (Y_d)_{:,i},
\end{align*}
where $\calM\subseteq \RR^r\times\RR^{n_1\times r}\times\cdots\times\RR^{n_d\times r}$. Denote $X=\varphi(\lambda,Y_1,\ldots,Y_d)$. If $d\geq 3$ and \begin{align}
    \mathrm{col}(Y_1)^{\perp}\otimes\cdots\otimes\mathrm{col}(Y_d)^{\perp}\not\subseteq(\T_X\calX)^*,
    \label{eq:multilin_col_cond}
\end{align}
then $\varphi$ does not satisfy {\twoimpliesone} at $(\lambda,Y_1,\ldots,Y_d)$ for any $\lambda\in\RR^r$. If $d=2$ and~\eqref{eq:multilin_col_cond} holds, then $\varphi$ does not satisfy {\twoimpliesone} at $(0,Y_1,Y_2)$.
\end{proposition}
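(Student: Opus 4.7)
The approach I would take is to apply the necessary condition at the bottom of Theorem~\ref{thm:2implies1_chain}: if $\varphi$ satisfies {\twoimpliesone} at $y = (\lambda,Y_1,\ldots,Y_d)$, then $(\im\Lmap_y)^\perp\cap(\Qmap_y(\T_y\calM))^*\subseteq(\T_X\calX)^*$, where $X=\varphi(y)$. The plan is to exhibit a linear subspace $E$ contained in $(\im\Lmap_y)^\perp\cap(\Qmap_y(\T_y\calM))^*$; then the hypothesis $E \not\subseteq (\T_X\calX)^*$ supplies a witness $w$ violating the necessary condition, so {\twoimpliesone} fails at $y$. The natural candidate, dictated by the statement, is $E = \mathrm{col}(Y_1)^\perp \otimes \cdots \otimes \mathrm{col}(Y_d)^\perp$.

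Because $\varphi$ is multilinear in its $d+1$ arguments, it extends to a smooth map $\overline{\varphi}$ on all of $\calE'=\RR^r\times\RR^{n_1\times r}\times\cdots\times\RR^{n_d\times r}$, so formula~\eqref{eq:L_and_Q_embedded} gives $\Lmap_y(v)=\D\overline{\varphi}(y)[v]$ and $\Qmap_y(v)=\D^2\overline{\varphi}(y)[v,v]+\D\overline{\varphi}(y)[u_v]$ for some $u_v \in \T^2_{y,v}\calM$. By the product-rule expansion of a multilinear map, $\D\overline{\varphi}(y)[v]$ is a sum of $d+1$ terms, each replacing exactly one argument of $\overline{\varphi}$ by its dotted counterpart, while $\D^2\overline{\varphi}(y)[v,v]$ is a sum of terms each replacing exactly two. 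Every such term has the form $\sum_i \alpha_i\bigotimes_{k=1}^d(Z_k)_{:,i}$, where $(Z_k)_{:,i}$ equals $(Y_k)_{:,i}$ for every index $k$ that is not ``dotted'' in that term. Pairing any rank-one $w_1\otimes\cdots\otimes w_d \in E$ against such a term annihilates it whenever at least one $k$ is intact, since $\langle w_k,(Y_k)_{:,i}\rangle=0$; by linearity the same holds for all of $E$.

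In $\D\overline{\varphi}(y)$ at most one $Y$-argument is dotted per term, so at least $d-1 \ge 1$ intact $Y$-factors remain; the same is true of $\D\overline{\varphi}(y)[u_v]$, whose expansion has the identical form with the ambient acceleration $u_v$ in place of $v$. Hence $E\subseteq(\im\Lmap_y)^\perp$ and the $u_v$-contribution to $\Qmap_y(v)$ pairs to zero against $E$. In $\D^2\overline{\varphi}(y)[v,v]$ at most two arguments are dotted, leaving at least $d-2$ intact $Y$-factors; for $d\ge 3$ this still yields at least one, so $\langle w,\Qmap_y(v)\rangle=0$ for all $v\in\T_y\calM$ and all $w \in E$, placing $E \subseteq (\Qmap_y(\T_y\calM))^*$. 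For $d=2$ the single problematic second-derivative term dots both $Y_1$ and $Y_2$ and equals $\sum_i\lambda_i (\dot Y_1)_{:,i}\otimes(\dot Y_2)_{:,i}$, which vanishes exactly when $\lambda=0$; at $\lambda = 0$ all other $\lambda$-bearing pieces of $\Lmap_y(v)$ and $\D\overline{\varphi}(y)[u_v]$ vanish as well, while the ``$\dot\lambda$''-bearing terms retain both $(Y_k)_{:,i}$ intact.

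In both regimes $E \subseteq (\im\Lmap_y)^\perp \cap (\Qmap_y(\T_y\calM))^*$, so any $w \in E \setminus (\T_X\calX)^*$ supplied by the hypothesis~\eqref{eq:multilin_col_cond} violates the necessary condition in Theorem~\ref{thm:2implies1_chain}, completing the proof. The main obstacle is not conceptual but notational: enumerating the terms of $\D^2\overline{\varphi}$ and verifying that each one (in the stated regime on $d$ and $\lambda$) carries at least one intact $(Y_k)_{:,i}$ factor requires careful bookkeeping; once this structural pattern is articulated, every required inner product evaluates to zero by one clean application of multilinearity.
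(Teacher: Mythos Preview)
Your proposal is correct and follows essentially the same route as the paper: both verify that $E=\mathrm{col}(Y_1)^{\perp}\otimes\cdots\otimes\mathrm{col}(Y_d)^{\perp}$ lies in $(\im\Lmap_y)^{\perp}\cap(\Qmap_y(\T_y\calM))^*$ by counting how many intact $Y_k$-factors survive in each term of the first and second ambient derivatives of the multilinear map, then invoke the necessary condition at the bottom of Theorem~\ref{thm:2implies1_chain}. Your treatment is in fact slightly more explicit than the paper's---you spell out why the $\D\overline{\varphi}(y)[u_v]$ contribution from~\eqref{eq:L_and_Q_embedded} also pairs to zero against $E$, whereas the paper absorbs this into the single observation that $\langle W,\D\varphi(y)[\cdot]\rangle=0$ for \emph{all} ambient inputs.
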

\begin{proof}[Proof of Proposition~\ref{prop:multilinear_lifts_norm}]
For any $W\in \mathrm{col}(Y_1)^{\perp}\otimes\ldots\otimes\mathrm{col}(Y_d)^{\perp}$, we have 
\begin{align*}
    \langle W,\D\varphi(\lambda,Y_1,\ldots,Y_d)[\dot\lambda,\dot Y_1,\ldots,\dot Y_d]\rangle = \langle W,\D^2\varphi(\lambda,Y_1,\ldots,Y_d)[(\dot\lambda,\dot Y_1,\ldots,\dot Y_d),(\dot\lambda,\dot Y_1,\ldots,\dot Y_d)]\rangle = 0,
\end{align*}
for all $(\dot Y_1,\ldots,\dot Y_d)$ if $d\geq 3$ or $d=2$ and $\lambda=0$, by multilinearity.
Since $\Lmap_{(\lambda,Y_1,\ldots, Y_d)}$ is the restriction of $\D\varphi(\lambda,Y_1,\ldots,Y_d)$ to $\T_{(\lambda,Y_1,\ldots,Y_d)}\calM$ and $\Qmap_{(\lambda,Y_1,\ldots,Y_d)}$ is given by~\eqref{eq:L_and_Q_embedded}, we get
\begin{align*}
    \mathrm{col}(Y_1)^{\perp}\otimes\ldots\otimes\mathrm{col}(Y_d)^{\perp}\subseteq (\im\Lmap_{(\lambda,Y_1,\ldots,Y_d)})^{\perp}\cap(\Qmap_{(\lambda,Y_1,\ldots,Y_d)}(\T_{(\lambda,Y_1,\ldots,Y_d)}\calM))^*,
\end{align*}
if either $d\geq 3$ or $d=2$ and $\lambda=0$.
Thus, if $\mathrm{col}(Y_1)^{\perp}\otimes\ldots\otimes\mathrm{col}(Y_d)^{\perp}\not\subseteq(\T_X\calX)^*$ then the necessary condition for {\twoimpliesone} from Theorem~\ref{thm:2implies1_chain} does not hold.
\end{proof}
Proposition~\ref{prop:multilinear_lifts_norm} applies in particular to lifts corresponding to symmetric and normalized CP decompositions and ODECO tensors~\cite{odeco}, as well as the SVD lift~\eqref{eq:SVD_lift}.
As discussed in Section~\ref{sec:main_results}, these obstructions to {\twoimpliesone} imply that guarantees for second-order optimization algorithms running on~\eqref{eq:Q} must use the structure in the particular cost function involved. This is particularly significant since our obstructions apply to a broad class of lifts arising naturally in several applications.

\section{Conclusions and future work}\label{sec:concs}
For the pair of problems~\eqref{eq:Q} and~\eqref{eq:P}, we characterized the properties the lift $\varphi\colon\calM\to\calX$ needs to satisfy in order to map desirable points of~\eqref{eq:Q} to desirable points of~\eqref{eq:P}. 
We noted that global minima for~\eqref{eq:Q} always map to global minima for~\eqref{eq:P} (Proposition~\ref{prop:global_min_equiv}), and showed that local minima for~\eqref{eq:Q} map to local minima for~\eqref{eq:P} if and only if $\varphi$ is open (Theorem~\ref{thm:localimplocalcharact}).
We also showed that 1-critical points for~\eqref{eq:Q} map to stationary points for~\eqref{eq:P} if and only if the differential of $\varphi$, viewed as a map from tangent spaces of $\calM$ to tangent cones of $\calX$, is surjective (Theorem~\ref{thm:oneimpliesone_char}).
This requires the tangent cones of $\calX$ to be linear spaces.
We then characterized when 2-critical points for~\eqref{eq:Q} map to stationary points for~\eqref{eq:P}, and gave two sufficient conditions and a necessary condition that may be easier to check for some examples (Theorem~\ref{thm:2implies1_chain}). 
We explained several techniques to compute all quantities involved in these conditions in Section~\ref{sec:Qmap_comp}. 

Using our theory, we studied the above properties for a variety of lifts, including several lifts of low-rank matrices and tensors (Section~\ref{sec:examples}) and the Burer--Monteiro lift for smooth SDPs (Corollary~\ref{cor:fiber_prod_exs_revisit}). 
We also proposed a systematic construction of lifts using fiber products that applies when $\calX$ is the preimage of a smooth function (Section~\ref{sec:fiber_prod_lifts}).
We gave conditions under which it satisfies our desirable properties. 
In some cases, we can also obtain an expression for the tangent cones of $\calX$ simultaneously with {\twoimpliesone}, as explained in Section~\ref{sec:low_rk_psd}. 

We end by listing several future directions suggested by this work. 
\begin{enumerate}[(a)]
    \item \textbf{{\kimpliesone} for general $k$:} Several lifts of interest, notably tensor factorizations with more than two factors, do not satisfy {\twoimpliesone}. It would therefore be interesting to characterize {\kimpliesone} for general $k$, i.e., when do $k$-critical points for~\eqref{eq:Q} map to stationary points for~\eqref{eq:P} for any $k$ times differentiable cost $f$? 
    %
    Do lifts that are multilinear in $k$ arguments, such as order-$k$ tensor lifts, satisfy {\kimpliesone}?
    
    What can be said about ``$k\! \Rightarrow\! \ell$'' for $\ell>1$? Already for $\ell=2$, the second-order optimality conditions on $\calX$ can be involved~\cite[Thm.~3.45]{nonlin_optim}. On the positive side, if {\oneimpliesone} holds at a preimage of a smooth point, then ``$k\! \Rightarrow\! k$'' holds there for all $k\geq 1$ by Proposition~\ref{prop:1imp1_implies_locimploc_at_smooth}.
    
    
    \item \textbf{Robust {\kimpliesone}:} Algorithms run for finitely many iterations in practice, hence can only find approximate $k$-critical points for~\eqref{eq:Q}. It is therefore important to characterize ``robust'' versions of {\kimpliesone}, guaranteeing that approximate $k$-critical points for~\eqref{eq:Q} map to approximate stationary points for~\eqref{eq:P}. 
    Note that if $\calX$ lacks regularity, care is needed when defining approximate stationarity for~\eqref{eq:P}, see~\cite{levin2021finding}.
    
    
    \item \textbf{Obstructions to {\localimplieslocal} and {\kimpliesone}:} 
    For some sets $\calX$, we are not aware of any lifts which satisfy, say, {\localimplieslocal} or {\twoimpliesone}.
    Are there fundamental obstructions which preclude existence of such lifts for those sets and others?
    For example, is there a lift for low-rank tensors satisfying {\twoimpliesone}? Is there a lift for $\Rmnlr$ satisfying {\localimplieslocal}?
    

    \item \textbf{Regularization on the lift:} It is common to modify~\eqref{eq:Q} by adding a regularizer to $g = f\circ\varphi$, see~\cite{kolb2023smoothing,NEURIPS2022_8f41d580,NIPS2004_e0688d13}.
    For example, with the lift $(L, R) \mapsto LR\transpose$, we may regularize~\eqref{eq:Q} by adding $\frac{1}{2}\left(\|L\|_\mathrm{F}^2 + \|R\|_\mathrm{F}^2\right)$, motivated by the fact that its minimum over a fiber $\{ (L, R) : LR\transpose = X \}$ is the nuclear norm $\|X\|_*$~\cite{NIPS2004_e0688d13}.
    Our framework does not directly apply in this case (because the regularizer may not be constant over fibers, hence may not factor through $\varphi$).
    Can it be extended to relate the landscape of the regularized~\eqref{eq:Q} to that of~\eqref{eq:P}?
    
    \item \textbf{Bypassing tangent cones via lifts:} 
    To verify {\oneimpliesone} and {\twoimpliesone} on concrete examples of $\calX$ using the theory in this paper, we need to understand the tangent cones to $\calX$, which is often challenging. Many sets $\calX$ encountered in applications are only defined implicitly via a lift $\varphi\colon\calM\to\calE$. Examples include the set of tensors admitting a certain type of factorization, the set of functions parametrized by a given neural network architecture, and the set of positions and orientations attainable by a robotic arm with a given joint configuration. Are there sufficient conditions for {\kimpliesone} that can be checked using $\varphi$ and $\calM$ alone, without an explicit expression for the tangent cones to $\calX$? 
    
    \item \textbf{Dynamical systems on $\calM$ and their image on $\calX$:} This paper is focused on comparing properties of points on $\calM$ and their images on $\calX$. In contrast, several applications are concerned with properties of entire trajectories of dynamical systems on $\calM$, and it may be interesting to compare these properties with their counterparts for the images of the trajectories on $\calX$. Examples of such comparisons include relating gradient flow on the weights of a neural network to gradient flow in function or measure spaces~\cite{gradflow_NNs1,bach2021gradient,neural_tangent_kernels}, and the ``algorithmic equivalence'' technique used in~\cite{NEURIPS2020_604b37ea,ghai2022non,li2022implicit} to study mirror descent by showing that its continuous-time analogue is equivalent to gradient flow on a reparametrized problem. 
\end{enumerate}

\section*{Acknowledgements}
We thank Christopher Criscitiello and Quentin Rebjock for numerous conversations and comments on drafts of this paper. 
%
%
NB was supported by the Swiss State Secretariat for Education, Research and Innovation (SERI) under contract number MB22.00027.
JK was supported in part by NSF DMS 2309782 and NSF CISE-IIS 2312746.

\bibliographystyle{abbrv}
\bibliography{bibl_lifts}

\appendix

\section{Lifts preserving local minima}
\label{apdx:liftsopen}

We characterize the lifts that map local minima of~\eqref{eq:Q} to local minima of~\eqref{eq:P}. To this end, we introduce a number of properties related to preservation of local minima and then prove that they are all equivalent.
Recall that $\overline{S}$ is our notation for the closure of a set $S$.

\begin{definition}\label{def:preserv_loc_minima}
Let $\varphi \colon \calM \to \calX$ be a continuous, surjective 
map from a topological space $\calM$
to a metric space $\calX$ with distance $\dist$,
and let $x = \varphi(y)$.
\begin{enumerate}
    \item $\varphi$ is \emph{open} at $y$ if $\varphi(U)$ is a neighborhood of $x$ in $\calX$ for all neighborhoods $U$ of $y$ in $\calM$.
    
    \item $\varphi$ is \emph{approximately open} at $y$ if $\overline{\varphi(U)}$ is a neighborhood of $x$ in $\calX$ for all neighborhoods $U$ of $y$ in $\calM$.
    
    \item $\varphi$ satisfies the \emph{Subsequence Lifting Property (SLP)} at $y$ if for every sequence $(x_i)_{i \geq 1} \subseteq \calX$ converging to $x$ there exists a subsequence indexed by $(i_j)_{j\geq 1}$ and a sequence $(y_{i_j})_{j \geq 1} \subseteq \calM$ converging to $y$ such that $\varphi(y_{i_j}) = x_{i_j}$ for all $j \geq 1$.

    \item $\varphi$ satisfies the \emph{Approximate Subsequence Lifting Property (ASLP)} at $y$ if for every sequence $(x_i)_{i > 1} \subseteq \calX$ converging to $x$ and every sequence $(\epsilon_i)_{i \geq 1} \subseteq \RR_{> 0}$ converging to 0 there exists a subsequence indexed by $(i_j)_{j\geq 1}$ and a sequence $(y_{i_j})_{j \geq 1} \subseteq \calM$ converging to $y$ such that $\dist(\varphi(y_{i_j}), x_{i_j}) \leq \epsilon_{i_j}$ for all $j \geq 1$.
\end{enumerate}
\end{definition}

\begin{theorem} \label{thm:openmapsonmanifolds}
    If $\calM$ is Hausdorff, second-countable and locally compact (all of which hold if $\calM$ is a topological manifold), then the four properties of $\varphi$ at $y \in \calM$ in Definition~\ref{def:preserv_loc_minima} are equivalent to each other and to the {\localimplieslocal} property at $y$ (Definition~\ref{def:desirable_lifts}(a)).
\end{theorem}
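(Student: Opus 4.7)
The plan is to close the circle via $1 \Rightarrow$ {\localimplieslocal} $\Rightarrow 2 \Rightarrow 1$, together with $1 \Leftrightarrow 3$ and $2 \Leftrightarrow 4$. Theorem~\ref{thm:localimplocalcharact} already supplies $1 \Rightarrow$ {\localimplieslocal}, and the trivial inclusion $\varphi(U) \subseteq \overline{\varphi(U)}$ gives $1 \Rightarrow 2$ and $3 \Rightarrow 4$. The real work then lies in three arguments.

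First, {\localimplieslocal} $\Rightarrow 2$ would go by contrapositive. If approximate openness fails at $y$, pick a neighborhood $U$ of $y$ and a sequence $(x_i) \to x$ with $x_i \notin \overline{\varphi(U)}$, and define the continuous cost $f(x') = -\dist(x', \overline{\varphi(U)})$ on $\calX$. Then $g = f \circ \varphi$ vanishes on $U$ (so $y$ is a local minimum of $g$), while $f(x_i) < 0 = f(x)$ shows that $x$ is not a local minimum of $f$, contradicting {\localimplieslocal}.

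Second, $2 \Rightarrow 1$ combines local compactness of $\calM$ with Hausdorffness of $\calX$. Given a neighborhood $U$ of $y$, pick a precompact open $V$ with $y \in V \subseteq \overline{V} \subseteq U$. Approximate openness makes $\overline{\varphi(V)}$ a neighborhood of $x$; compactness of $\overline{V}$ forces $\varphi(\overline{V})$ to be compact and therefore closed in $\calX$, so the chain $\overline{\varphi(V)} \subseteq \varphi(\overline{V}) \subseteq \varphi(U)$ shows that $\varphi(U)$ is a neighborhood of $x$.

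Third, $1 \Leftrightarrow 3$ (and, by essentially the same argument, $2 \Leftrightarrow 4$) would use second-countability to fix a countable decreasing neighborhood basis $(U_n)$ at $y$. For $1 \Rightarrow 3$, openness makes each $\varphi(U_n)$ a neighborhood of $x$, so a diagonal extraction yields $i_1 < i_2 < \cdots$ with $x_{i_j} \in \varphi(U_j)$; choosing preimages $y_{i_j} \in U_j$ produces a sequence with $y_{i_j} \to y$. Conversely, a failure of openness at $U$ produces $(x_i) \to x$ in $\calX \setminus \varphi(U)$ whose preimages all lie outside $U$ and hence admit no subsequence converging to $y$, so SLP fails. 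The variant $2 \Leftrightarrow 4$ replaces $x_i \notin \varphi(U)$ by $\dist(x_i, \varphi(U)) > 0$ and exploits the quantifier over all $\epsilon_i \to 0$ in the ASLP. The main obstacle I expect to spend the most care on is the $2 \Rightarrow 1$ step, specifically verifying that $\varphi(\overline{V})$ is closed in $\calX$ (which relies on $\calX$ being Hausdorff in $\calE$) and that precompact neighborhoods exist at every $y$ (which relies on local compactness plus Hausdorffness of $\calM$).
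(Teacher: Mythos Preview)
Your proposal is correct, but the organization differs from the paper's. The paper closes a single cycle $\mathrm{ASLP}\Rightarrow\text{approx.\ open}\Rightarrow\text{open}\Rightarrow\mathrm{SLP}\Rightarrow\text{\localimplieslocal}\Rightarrow\mathrm{ASLP}$, whereas you close the smaller cycle $1\Rightarrow\text{\localimplieslocal}\Rightarrow 2\Rightarrow 1$ and then attach $1\Leftrightarrow 3$ and $2\Leftrightarrow 4$ separately. Two substantive differences are worth noting. First, your \localimplieslocal$\,\Rightarrow 2$ step uses the cost $f(x')=-\dist(x',\overline{\varphi(U)})$, which is continuous but not smooth; this suffices for Theorem~\ref{thm:openmapsonmanifolds} as stated, but the paper instead proves \localimplieslocal$\,\Rightarrow\mathrm{ASLP}$ via a sum of smooth bump functions on disjoint balls, precisely so that the witnessing cost is smooth---this is what delivers the second sentence of Theorem~\ref{thm:localimplocalcharact}. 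Second, your $1\Rightarrow 3$ argument is cleaner than the paper's: you pick $y_{i_j}\in U_j$ directly from a decreasing countable neighborhood basis and conclude $y_{i_j}\to y$ immediately, using only first-countability; the paper instead invokes Lemma~\ref{lem:nice_basis} to get neighborhoods with compact closures and $\bigcap_i U_i=\{y\}$, passes to a convergent subsequence by compactness, and then identifies the limit. Your route is more economical for the equivalence itself; the paper's route pays for the stronger smoothness conclusion elsewhere.
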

\begin{proof}
We show that ASLP $\implies$ approximate openness $\implies$ openness $\implies$ SLP $\implies$ \localimplieslocal $\implies$ ASLP.

Suppose $\varphi$ satisfies ASLP at $y$.
Suppose there exists a neighborhood $U$ of $y$ such that $\overline{\varphi(U)}$ is not a neighborhood of $x=\varphi(y)$.
Then we can find a sequence $(x_i)_{i\geq 1}\subseteq\calX$ such that $x_i\to x$ but $x_i \notin \overline{\varphi(U)}$ for all $i$.
Set $\epsilon_i = \frac{1}{2}\dist(x_i, \overline{\varphi(U)}) > 0$ and apply ASLP to find a sequence $(y_i)_{i\geq 1} \subseteq \calM$ such that $y_i\to y$ and $\dist(\varphi(y_i), x_i) \leq \epsilon_i$.
Because $\dist(\varphi(y_i), x_i) < \dist(x_i, \overline{\varphi(U)})$, we have $\varphi(y_i) \notin \overline{\varphi(U)}$ for all $i$.
However, because $U$ is a neighborhood of $y$ and $y_i\to y$, we must have $y_i\in U$ for all large $i$, a contradiction.
Thus, $\overline{\varphi(U)}$ is a neighborhood of $x$, so $\varphi$ is approximately open at $y$.

Suppose $\varphi$ is approximately open at $y$, and let $U$ be a neighborhood of $y$ in $\calM$.
Because $\calM$ is locally compact, we can find a compact neighborhood $V \subseteq U$ of $x$. 
Since $\varphi$ is continuous and $V$ is compact, we have that $\varphi(V)$ is compact; since $\calX$ is Hausdorff (it is a metric space), it follows that $\varphi(V)$ is closed.
Combining with the fact that $\varphi$ is approximately open at $y$, we deduce that $\varphi(V)$ is a neighborhood of $x$.
Since $\varphi(U) \supseteq \varphi(V)$, we conclude that $\varphi(U)$ is a neighborhood of $x$ as well.
Thus, $\varphi$ is open at $y$.

Suppose $\varphi$ is open at $y$, and $(x_j)_{j\geq 1}\subseteq\calX$ converges to $x = \varphi(y)$.
Owing to the topological properties of $\calM$, there is a sequence of open neighborhoods $U_i$ of $y$ with compact closures such that $U_i \supseteq \overline{U_{i+1}}$ and $\bigcap_{i=1}^{\infty} U_i = \{y\}$, see Lemma~\ref{lem:nice_basis} following this proof.
Because $\varphi$ is open, each $\varphi(U_i)$ is an open neighborhood of $x$ such that $\varphi(U_i)\supseteq\varphi(U_{i+1})$ and $x\in\bigcap_{i=1}^{\infty}\varphi(U_i)$.
Moreover, because $\varphi(U_i)$ is a neighborhood of $x$ and $x_j\to x$, there exists index $J(i)$ such that $x_j\in\varphi(U_i)$ for all $j\geq J(i)$.
After passing to a subsequence of $(x_j)$, we may assume $x_j\in\varphi(U_j)$ and pick $y_j\in U_j$ satisfying $x_j=\varphi(y_j)$.
Because $(y_j)$ is an infinite sequence contained in the compact set $\overline{U_1}$, after passing to a subsequence again we may assume that $\lim_jy_j$ exists.
With $i$ arbitrary, we have for all $j > i$ that $y_j\in U_j\subseteq U_{i+1}$, hence that $\lim_j y_j \in \overline{U_{i+1}} \subseteq U_i$.
This holds for all $i$, hence $\lim_jy_j\in\bigcap_iU_i=\{y\}$.
Thus, $y=\lim_iy_i$ and $\varphi(y_i)=x_i$, so $\varphi$ satisfies SLP.

Suppose $\varphi$ satisfies SLP at $y$.
Let $f \colon \calX \to \RR$ be a cost function on $\calX$ and $g = f \circ \varphi$.
Suppose $x = \varphi(y)$ is not a local minimum for $f$ on $\calX$, that is, there exists a sequence $(x_i)_{i\geq 1} \subseteq \calX$ converging to $x$ such that $f(x_i) < f(x)$ for all $i$.
Applying SLP, after passing to a subsequence we can find a sequence $(y_i)_{i\geq1} \subseteq \calM$ converging to $y$ such that $\varphi(y_i) = x_i$.
Since $g(y_i) = f(x_i) < f(x) = g(y)$ and $y_i \to y$, we conclude that $y$ is not a local minimum for $g$.
By contrapositive, this shows that $\varphi$ satisfies the {\localimplieslocal} property at $y$. 

For the last implication, we proceed by contrapositive once again.
Suppose $\varphi$ does not satisfy ASLP at $y$.
Then, we can find sequences $(x_i)_{i\geq 1}\subseteq\calX$ converging to $x$ and $(\epsilon_i)_{i \geq 1}\subseteq\RR_{> 0}$ converging to 0 such that no subsequence of $(x_i)$ can be approximately lifted to $\calM$ in the sense of ASLP.
Let $\bar B(x, \epsilon) = \{x'\in\calX:\dist(x, x') \leq \epsilon\}$.
Notice that $x=\varphi(y) \notin \bar B(x_i,\epsilon_i)$ for all but finitely many indices $i$, as otherwise the constant sequence $y_i \equiv y$ would give an approximate lift of a subsequence.
Since $x_i \to x$ and $\epsilon_i \to 0$, after passing to a subsequence we may assume that the closed balls $\bar B(x_i, \epsilon_i)$ are pairwise disjoint and none contain $x$.
Define the following sum of smooth bump functions centered at the $x_i$
\begin{align*}
    f(x') & = \begin{cases} -\exp\left(1-\frac{1}{1-(\dist(x_i, x')/\epsilon_i)^2}\right) & \textrm{ if } x'\in \bar B(x_i,\epsilon_i) \text{ for some $i$,} \\ 0 & \textrm{ otherwise.} \end{cases}
\end{align*}
This is well defined because the balls $\bar B(x_i,\epsilon_i)$ are disjoint.
(As a side note, we remark that if $\calX$ is a metric subspace of a Euclidean space $\calE$ as in our general treatment, then $f$ extends to a smooth function on $\calE$.)
Note that $x$ is not a local minimum for $f$ since $x_i\to x$ and $f(x_i)=-1<0=f(x)$.
However, $y$ is a local minimum for $g = f\circ\varphi$.
Indeed, if there was a sequence $(y_i)$ converging to $y$ such that $g(y_i) < g(y) = 0$, then we would have $\varphi(y_i) \in \bar B(x_{n_i}, \epsilon_{n_i})$ for an infinite subsequence $(n_i)$ with $n_i \to \infty$ (since we must have $\varphi(y_i)\to x$ by continuity of $\varphi$), showing that $(y_i)$ is an approximate lift of the subsequence $(x_{n_i})$: a contradiction to our assumptions about $(x_i), (\epsilon_i)$.
Thus, $\varphi$ does not satisfy the {\localimplieslocal} property at $y$.
\end{proof}

\begin{lemma}\label{lem:nice_basis}
Suppose $\calM$ is Hausdorff, second-countable, and locally compact. Then for any $y\in\calM$ there is a sequence of open neighborhoods $U_i$ of $y$ with compact closures such that $U_i \supseteq \overline{U_{i+1}}$ and $\bigcap_{i=1}^{\infty} U_i = \{y\}$. 
\end{lemma}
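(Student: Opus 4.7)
The plan is to build the $U_i$ inductively, each time shrinking the previous neighborhood to fit inside one more element of a fixed countable neighborhood base at $y$ while maintaining compact closure and the nesting $\overline{U_{i+1}} \subseteq U_i$. The two standard ingredients I will invoke are: (i) in a second-countable space, every point has a countable neighborhood base; and (ii) in a locally compact Hausdorff space, for every open neighborhood $U$ of a point $y$ there is an open neighborhood $V$ of $y$ with $\overline{V}$ compact and $\overline{V} \subseteq U$.

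First, I would use second-countability to pick a countable base $\{B_n\}_{n\geq 1}$ of $\calM$, and let $(W_n)_{n\geq 1}$ be the enumeration of those $B_n$ that contain $y$; this is a countable neighborhood base at $y$. Next I would verify ingredient (ii): pick a precompact open neighborhood $V_0$ of $y$ by local compactness; for any open $U \ni y$, apply Hausdorffness to the compact set $\overline{V_0} \setminus U$ and the point $y$ (separating $y$ from each point of that compact set and extracting a finite subcover) to produce an open $V$ with $y \in V \subseteq \overline{V} \subseteq V_0 \cap U$ and $\overline{V}$ compact. (This is the standard ``regularity of locally compact Hausdorff spaces'' argument; I would cite it rather than redo it if a reference is at hand.)

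With (ii) in hand, I would construct the sequence recursively. Set $U_1$ to be any open neighborhood of $y$ with $\overline{U_1}$ compact and $\overline{U_1} \subseteq W_1$. Given $U_i$, apply (ii) to the open neighborhood $U_i \cap W_{i+1}$ of $y$ to obtain $U_{i+1}$ open with $y \in U_{i+1}$, $\overline{U_{i+1}}$ compact, and $\overline{U_{i+1}} \subseteq U_i \cap W_{i+1} \subseteq U_i$. By construction $U_i \subseteq W_i$ for every $i$, so $\{U_i\}$ is itself a neighborhood base at $y$.

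Finally, I would show $\bigcap_i U_i = \{y\}$. For any $z \neq y$, Hausdorffness provides disjoint open sets separating $y$ and $z$, so some open neighborhood of $y$ omits $z$; because $(W_i)$ is a neighborhood base at $y$, some $W_i$ omits $z$, and then $U_i \subseteq W_i$ omits $z$ as well. This gives the desired intersection. I do not anticipate a main obstacle here: the only point requiring care is the derivation of ingredient (ii) from local compactness and Hausdorffness, which is standard.
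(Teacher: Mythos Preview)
Your proposal is correct and follows essentially the same approach as the paper: build a countable local base at $y$, then inductively shrink using local compactness plus Hausdorffness to get the nesting $\overline{U_{i+1}}\subseteq U_i$, and use Hausdorffness to conclude the intersection is $\{y\}$. The only cosmetic difference is that you isolate the regularity step (ingredient~(ii)) explicitly, whereas the paper folds it into the inductive step by directly picking a compact neighborhood $K_{i+1}\subseteq U_i\cap W_{i+1}$ and taking its interior.
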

\begin{proof}
Because $\calM$ is second-countable and locally compact, we can find a countable basis of open neighborhoods with compact closures $\{V_j\}_{j\geq 1}$ for $y$. Since $\calM$ is Hausdorff and $\{V_j\}$ is a basis for $y$, we have $\bigcap_{j=1}^{\infty}V_j=\{y\}$. Indeed, if $y'\neq y$, then there exists a neighborhood of $y$ not containing $y'$, and this neighborhood contains $V_i$ for some $i$ by definition of a local basis. By replacing $V_i$ by $\bigcap_{j=1}^iV_j$ (which preserves their intersection), we may assume $V_i\supseteq V_{i+1}$. We construct $\{U_i\}_{i\geq 1}$ inductively. Set $U_1=V_1$, which is an open neighborhood of $y$ with compact closure by assumption. Having constructed $U_1,\ldots,U_i$, use local compactness to find a compact neighborhood $K_{i+1}\subseteq V_{i+1}\cap U_i$ of $y$ and let $U_{i+1}$ be the interior of $K_{i+1}$. Then $U_{i+1}$ is an open neighborhood of $y$ by construction, and $\overline{U_{i+1}}\subseteq K_{i+1}\subseteq U_i$ which also shows $\overline{U_{i+1}}$ is compact as a closed subset of the compact set $K_{i+1}$. Finally, we have $\{y\}\subseteq\bigcap_{i=1}^{\infty}U_i\subseteq \bigcap_{i=1}^{\infty}V_i=\{y\}$ hence $\bigcap_{i=1}^{\infty}U_i=\{y\}$. 
\end{proof}

\ifthenelse{\boolean{shortver}}{}
    {

\section{Basic properties of $A_y$ and $B_y$}\label{apdx:AB_sets}
\begin{proof}[Proof of Proposition~\ref{prop:AB_basics}]
    \begin{enumerate}[(a)]
        \item For $w\in A_y$, let $c\colon \RR\to\calM$ satisfy $c(0)=y$, $(\varphi\circ c)'(0)=0$ and $(\varphi\circ c)''(0)=w$. For any $\alpha\geq 0$ define $\widetilde c(t)=c(\sqrt{\alpha} t)$. Note that $\widetilde c(0)=y$, $(\varphi\circ \widetilde c)'(0)=\sqrt{\alpha}(\varphi\circ c)'(0)=0$ and $(\varphi\circ\widetilde c)''(0)=\alpha(\varphi\circ c)''(0)\in A_y$. Thus, $\alpha w\in A_y$ so $A_y$ is a cone. The proof that $B_y$ is a cone is analogous. 
        
        
        Suppose $w_j\in B_y$ and $w_j\to w$. Let $c_{j,i}\colon \RR\to\calM$ satisfy $(\varphi\circ c_{j,i})'(0)\xrightarrow{i\to\infty}0$ and $(\varphi\circ c_{j,i})''(0)\xrightarrow{i\to\infty} w_j$. For each $j$, pick $i_j$ large enough so that $\|(\varphi\circ c_{j,i_j})'(0)\|\leq 1/j$ and $\|(\varphi\circ c_{j,i_j})''(0)- w_j\|\leq 1/j$. Set $\tilde c_j = c_{j,i_j}$ and observe that $(\varphi\circ\tilde c_j)'(0)\to 0$ and $(\varphi\circ\tilde c_j)''(0)\to w$, showing $w\in B_y$ so $B_y$ is closed.
        
        \item We prove the last claim in part (b).
        Since $0\in \im\Lmap_y$, we trivially have $A_y + \im\Lmap_y\supseteq A_y$ and similarly for $B_y$. Conversely, if $w\in A_y$, let $c\colon \RR\to\calM$ satisfy $c(0)=y$, $(\varphi\circ c)'(0)=0$ and $(\varphi\circ c)''(0)=w$. By Lemma~\ref{lem:diff_of_accel}(b), for any $v\in\T_y\calM$ there exists a curve $\widetilde c\colon \RR\to\calM$ satisfying $\widetilde c(0)=y$, $\widetilde c'(0)=c'(0)$, and $\widetilde c''(0) = c''(0) + v$, the sum being taken inside $\T_y\calM$. Then 
        \begin{align*}
            (\varphi\circ \widetilde c)'(0)=\D\varphi(y)[\widetilde c'(0)] = \D\varphi(y)[c'(0)] = (\varphi\circ c)'(0) = 0,
        \end{align*}
        and by Lemma~\ref{lem:diff_of_accel}(a), $(\varphi\circ\widetilde c)''(0) = (\varphi\circ c)''(0) + \Lmap_y(v) = w + \Lmap_y(v)$. This shows $A_y+\im\Lmap_y\subseteq A_y$. The proof that $B_y + \im\Lmap_y\subseteq B_y$ is analogous.
        
        
        
        
    \end{enumerate}
The proofs of the first half of part (b) and parts (c)-(d) are given in the main body.
\end{proof}

To see that $B_y\not\subseteq \T_x\calX$ in general, consider $B_y$ for the lift $\varphi\colon\calM=\RR\times(\RR^2)^2\to\calX=\RR^{2\times 2}_{\leq 1}$ given by $\varphi(\lambda,u,v) = \lambda uv^\top$ at $y=(0,e_1,e_1)$ where $e_1=\begin{bmatrix}1, & 0\end{bmatrix}^\top$.
We remark that {\twoimpliesone} does not hold for this lift by Proposition~\ref{prop:multilinear_lifts_norm}. To see that $\T_x\calX\not\subseteq B_y$ in general, note that if $\T_x\calX\subseteq B_y$ then $B_y^*\subseteq (\T_x\calX)^*$ and hence {\twoimpliesone} holds at $y$ by Corollary~\ref{cor:AB_suff_cond}(b). In particular, we have $\T_x\calX\not\subseteq B_y$ for the above example as well.

\section{Compositions and products of lifts}\label{apdx:comps_and_prods}
\subsection{Compositions}\label{apdx:comps}
\begin{proof}[Proof of Proposition~\ref{prop:composition_of_lifts}]
Recall that {\localimplieslocal} is equivalent to openness by Theorem~\ref{thm:localimplocalcharact}.
    \begin{enumerate}[(a)]
        \item For the first statement, suppose $\varphi\circ\psi$ is open at $z$ and let $V\subseteq\calM$ be a neighborhood of $y$. Because $\psi$ is continuous, $\psi^{-1}(V)$ is a neighborhood of $z$, so $(\varphi\circ\psi)(\psi^{-1}(V))$ is a neighborhood of $\varphi(y)$, hence $\varphi(V)\supseteq (\varphi\circ\psi)(\psi^{-1}(V))$ is a neighborhood of $\varphi(y)$ as well. This shows $\varphi$ is open at $y$, and hence satisfies {\localimplieslocal} there by Theorem~\ref{thm:localimplocalcharact}.
        
        For the second statement, if $U\subseteq\mc N$ is a neighborhood of $z$, then $\psi(U)\subseteq\calM$ is a neighborhood of $y$ since $\psi$ is open at $z$. If $\varphi$ satisfies {\localimplieslocal}, or equivalently, is open at $y$, then $\varphi(\psi(U))\subseteq\calX$ is a neighborhood of $\varphi(y)$, hence $\varphi\circ\psi$ is open at $z$ and satisfies {\localimplieslocal} there by Theorem~\ref{thm:localimplocalcharact}.
        
        \item For {\oneimpliesone}, the chain rule gives $\D(\varphi\circ\psi)(z) = \D\varphi(y)\circ\D\psi(z)$ for any $z\in\mc N$. If $\varphi\circ\psi$ satisfies {\oneimpliesone}, then Theorem~\ref{thm:oneimpliesone_char} shows that $\im \D(\varphi\circ\psi)(z) = \T_x\calX$, hence $\im\D\varphi(y)\supseteq \T_x\calX$. Since $\im\D\varphi(y)\subseteq \T_x\calX$ by Lemma~\ref{lem:Ay1_props}, we conclude that $\im\D\varphi(y)=\T_x\calX$ and hence $\varphi$ satisfies {\oneimpliesone} at $y$ by Theorem~\ref{thm:oneimpliesone_char}. Conversely, suppose $\psi$ is a submersion at $z$, so $\D\psi(z)$ is surjective. Then $\im\D\varphi(y) = \im\D(\varphi\circ\psi)(z)$. By Theorem~\ref{thm:oneimpliesone_char}, we conclude that $\varphi$ satisfies {\oneimpliesone} at $y$ if and only if $\varphi\circ\psi$ does so at $z$. 
        
        For {\twoimpliesone}, suppose first that $\varphi\circ\psi$ satisfies {\twoimpliesone} at $z$ and $y$ is 2-critical for $f\circ\varphi$ where $f$ is some cost function on $\calX$. For any curve $\bar c\colon \RR\to\mc N$ such that $\bar c(0)=z$, the curve $c = \psi\circ \bar c\colon \RR\to\calM$ satisfies $c(0)=y$. Because $y$ is 2-critical, we have 
        \begin{align}
            (f\circ\varphi\circ c)'(0) = (f\circ\varphi\circ\psi\circ \bar c)'(0)=0,\quad \textrm{and}\quad (f\circ\varphi\circ c)''(0) = (f\circ\varphi\circ\psi\circ \bar c)''(0)\geq 0.
            \label{eq:2imp1_comps}
        \end{align}
        This shows $z$ is 2-critical for $f\circ\varphi\circ\psi$ on $\mc N$. Because $\varphi\circ\psi$ satisfies {\twoimpliesone}, we conclude that $\varphi(\psi(z))=\varphi(y)$ is stationary for $f$ on $\calX$, hence $\varphi$ satisfies {\twoimpliesone} at $y$.
        
        Conversely, suppose $\psi$ is a submersion at $z$ and $\varphi$ satisfies {\twoimpliesone} at $y$. Suppose $z$ is 2-critical for $f\circ\varphi\circ\psi$. Because $\psi$ is a submersion at $z$, for any curve $c\colon \RR\to\calM$ satisfying $c(0)=y$, there exists a potentially smaller interval $I\subseteq \RR$ containing $t=0$ in its interior and a curve $\bar c\colon I\to\mc N$ such that $\psi\circ\bar c = c$. For example, by~\cite[Thm.~4.26]{lee_smooth} there exists a smooth local section $\sigma$ of $\psi$ defined near $y$ satisfying $\sigma(y)=z$, in which case we can set $\bar c = \sigma\circ c$. Because $z$ is 2-critical, it follows that $y$ is 2-critical for $f\circ\varphi$ by~\eqref{eq:2imp1_comps}. Since $\varphi$ satisfies {\twoimpliesone} at $y$, we conclude that $\varphi(y)=\varphi(\psi(z))$ is stationary for $f$, which shows $\varphi\circ\psi$ satisfies {\twoimpliesone} at $z$.
        
        \item The first claimed equality is just the chain rule $\D(\varphi\circ\psi)(z) = \D\varphi(y)\circ\D\psi(z)$. 
        For the second claimed equality, let $v = \Lmap_z^{\psi}(u)\in\T_y\calM$. Recall that $\Qmap_z^{\varphi\circ\psi}(u)=(\varphi\circ\psi\circ \bar c_u)''(0)$ where $\bar c_u\colon \RR\to\mc N$ is some curve satisfying $\bar c_u(0)=z$ and $\bar c_u'(0)=u$, and $\Qmap_y^{\varphi}(v)=(\varphi\circ c_{v})''(0)$ for some curve $c_{v}$ on $\calM$ satisfying $c_{v}(0)=y$ and $c_{v}'(0)=v$. 
        Since $\psi\circ\bar c_u$ is another curve on $\calM$ satisfying $(\psi\circ\bar c_u)(0)=y$ and $(\psi\circ\bar c_u)'(0)=\Lmap_z^{\psi}(u)=v$, Lemma~\ref{lem:diff_of_accel} shows that $\Qmap_y^{\varphi}(\Lmap_z^{\psi}(u))-\Qmap_z^{\varphi\circ\psi}(u) \in \im\Lmap_y^{\varphi}=\im\Lmap_z^{\varphi\circ\psi}$ for all $u\in\T_z\mc N$ as claimed, where we used the fact that $\Lmap_z^{\varphi\circ\psi}=\Lmap_y^{\varphi}\circ\Lmap_z^{\psi}$ and the surjectivity of $\Lmap_z^{\psi}$.
        
        Since $\Lmap_z^{\varphi\circ\psi} = \Lmap_y^{\varphi}\circ\Lmap_z^{\psi}$, we have $\ker\Lmap_z^{\varphi\circ\psi}=(\Lmap_z^{\psi})^{-1}(\ker\Lmap_y^{\varphi})$. Because $\Lmap_z^{\psi}$ is surjective, we have $\Lmap_z^{\psi}\Big((\Lmap_z^{\psi})^{-1}(\ker\Lmap_y^{\varphi})\Big)=\ker\Lmap_y^{\varphi}$. Using the results of the preceding paragraph,
        \begin{align*}
            A_z^{\varphi\circ\psi} &= \Qmap_z^{\varphi\circ\psi}(\ker\Lmap_z^{\varphi\circ\psi}) + \im\Lmap_z^{\varphi\circ\psi} = \Qmap_y^{\varphi}\circ\Lmap_z^{\psi}\Big((\Lmap_z^{\psi})^{-1}(\ker\Lmap_y^{\varphi})\Big) + \im\Lmap_y^{\varphi}\\ &= \Qmap_y^{\varphi}(\ker\Lmap_y^{\varphi}) + \im\Lmap_y^{\varphi} = A_y^{\varphi}.
        \end{align*}
        If $w\in B_y^{\varphi}$ then there exist $v_i\in\T_y\calM$ such that $\Lmap_y^{\varphi}(v_i)\to0$ and $w\in\lim_i(\Qmap_y^{\varphi}(v_i)+\im\Lmap^{\varphi}_y)$. Since $\Lmap_z^{\psi}$ is surjective, there exist $u_i\in \T_z\mc N$ satisfying $\Lmap_z^{\psi}(u_i)=v_i$. We then have $\Lmap_z^{\varphi\circ\psi}(u_i) = \Lmap_y^{\varphi}(v_i)\to 0$, and $\Qmap_z^{\varphi\circ\psi}(u_i)+\im\Lmap_z^{\varphi\circ\psi}=\Qmap_y^{\varphi}(v_i)+\im\Lmap_y^{\varphi}$. Taking $i\to\infty$, we conclude that $w\in B_z^{\varphi\circ\psi}$ and $B_y^{\varphi}\subseteq B_z^{\varphi\circ\psi}$. 
        
        Conversely, if $w\in B_z^{\varphi\circ\psi}$ then there exist $u_i\in\T_z\mc N$ such that $\Lmap_z^{\varphi\circ\psi}(u_i)\to0$ and $w\in\lim_i(\Qmap_z^{\varphi\circ\psi}(u_i)+\im\Lmap_z^{\varphi\circ\psi})$. Let $v_i=\Lmap_z^{\psi}(u_i)$ to conclude that $w\in B_y^{\varphi}$, and hence $B_y^{\varphi} = B_z^{\varphi\circ\psi}$.
        
        The argument for part (b) shows that $y$ is 2-critical for $f\circ\varphi$ if and only if $z$ is 2-critical for $f\circ\varphi\circ\psi$, which implies $W_y^{\varphi} = W_z^{\varphi\circ\psi}$ by Definition~\ref{def:W_y}. \qedhere
    \end{enumerate}
\end{proof}

\subsection{Products}\label{apdx:prods}
\begin{proof}[Proof of Proposition~\ref{prop:prod_lifts}]
\begin{enumerate}[(a)]
    \item If sequences $((y_1^{(j)},\ldots,y_k^{(j)}))_{j\geq 1}\subseteq\mc Z$ and $\tau_j\to 0$ such that $\tau_j>0$ satisfy 
    \begin{align*}
        (v_1,\ldots,v_k) = \lim_{j\to\infty}\frac{(y_1^{(j)},\ldots,y_k^{(j)})-(y_1,\ldots,y_k)}{\tau_j},
    \end{align*}
    then $v_i = \lim_j\frac{y_i^{(j)}-y_i}{\tau_j}$ for all $i=1,\ldots,k$, which shows that if $(v_1,\ldots,v_k)\in \T_{(z_1,\ldots,z_k)}\mc Z$ then $v_i\in \T_{z_i}\mc Z_i$ for all $i$. For an example where the inclusion is strict, see~\cite[Prop.~6.41]{rockafellar2009variational}.
    
    \item Since products of open sets form a basis for the (product) topology on $\mc Z$, we conclude that $\psi$ is open at $y=(y_1,\ldots,y_k)$ iff $\psi(U_1\times\cdots\times U_k)=\psi_1(U_1)\times\cdots\times\psi_k(U_k)$ is a neighborhood of $y$ whenever $U_i\subseteq\calM_i$ is a neighborhood of $y_i$ for all $i$. Because the interior of a product of sets in the product topology is the product of their interiors, we conclude that $\psi_1(U_1)\times\cdots\times\psi_k(U_k)$ is a neighborhood of $y$ if and only if $\psi_i(U_i)$ is a neighborhood of $y_i$ for all $i$. This proves part (b) by Theorem~\ref{thm:localimplocalcharact}.
    
    \item We have $\Lmap_{(y_1,\ldots,y_k)}^{\psi} = \Lmap_{y_1}^{\psi_1}\times\cdots\times\Lmap_{y_k}^{\psi_k}$, which is defined on $\T_{(y_1,\ldots,y_k)}\mc N = \T_{y_1}\mc N_1\times\cdots\times \T_{y_k}\mc N_k$. Therefore, $\im\Lmap_{(y_1,\ldots,y_k)}^{\psi}=\im\Lmap_{y_1}^{\psi_1}\times\cdots\times\im\Lmap_{y_k}^{\psi_k}$. If $\psi_i$ satisfies {\oneimpliesone} at $y_i$ for all $i$, then $\im\Lmap_{y_i}^{\psi_i}=\T_{z_i}\mc Z_i$ for all $i$ by Theorem~\ref{thm:oneimpliesone_char}, so $\im\Lmap_{(y_1,\ldots,y_k)}^{\psi}=\T_{z_1}\mc Z_1\times\cdots\times\T_{z_k}\mc Z_k$ where $z_i=\psi_i(y_i)$. Since $\im\Lmap_{(y_1,\ldots,y_k)}^{\psi}$ is always contained in $\T_{(z_1,\ldots,z_k)}\mc Z$ by Proposition~\ref{prop:AB_basics}(b), we get the chain of inclusions
    \begin{align*}
        \T_{z_1}\mc Z_1\times\ldots\times\T_{z_k}\mc Z_k = \im\Lmap_{(y_1,\ldots,y_k)}^{\psi} \subseteq \T_{(z_1,\ldots,z_k)}\mc Z\subseteq \T_{z_1}\mc Z_1\times\ldots\times\T_{z_k}\mc Z_k,
    \end{align*}
    where the last inclusion is part (a). We conclude that equality in part (a) holds and $\im\Lmap_{(y_1,\ldots,y_k)}^{\psi}=\T_{(z_1,\ldots,z_k)}\mc Z$ so {\oneimpliesone} holds.
    
    \item For any $v=(v_1,\ldots,v_k)\in\T_{(y_1,\ldots,y_k)}\calM$, let $c_{v_i}$ be a smooth curve on $\calM_i$ satisfying $\Qmap_{y_i}^{\psi_i}(v_i)=(\psi_i\circ c_{v_i})''(0)$ as in Definition~\ref{def:LQ_maps}. Then $c(t)=(c_{v_1}(t),\ldots,c_{v_k}(t))$ is a smooth curve on $\calM$ passing through $(y_1,\ldots,y_k)$ with velocity $v$ and 
    \begin{align*}
        (\psi\circ c)''(0) = ((\psi_1\circ c_{v_1})''(0),\ldots,(\psi_1\circ c_{v_k})''(0)) = (\Qmap_{y_1}^{\psi_1}\times\cdots\times\Qmap_{y_k}^{\psi_k})(v),
    \end{align*}
    showing that $\Qmap_{(y_1,\ldots,y_k)}^{\psi}\equiv\Qmap_{y_1}^{\psi_1}\times\cdots\times\Qmap_{y_k}^{\psi_k}\mod\im\Lmap_{(y_1,\ldots,y_k)}^{\psi}$ by Lemma~\ref{lem:diff_of_accel}. By part (c) above, we also have 
    \begin{align}
        \ker\Lmap_{(y_1,\ldots,y_k)}^{\psi} = \ker(\Lmap_{y_1}^{\psi_1}\times\cdots\times\Lmap_{y_k}^{\psi_k}) = \ker\Lmap_{y_1}^{\psi_1}\times\cdots\times\ker\Lmap_{y_k}^{\psi_k},
        \label{eq:ker_of_prod}
    \end{align}
    so Proposition~\ref{prop:2implies1_with_Qmap}(a) gives
    \begin{align*}
        A_{(y_1,\ldots,y_k)}^{\psi} &= \Qmap_{(y_1,\ldots,y_k)}^{\psi}(\ker\Lmap_{(y_1,\ldots,y_k)}^{\psi}) + \im\Lmap_{(y_1,\ldots,y_k)}^{\psi}\\ &= \Qmap_{y_1}^{\psi_1}(\ker\Lmap_{y_1}^{\psi_1})\times\cdots\times\Qmap_{y_k}^{\psi_k}(\ker\Lmap_{y_k}^{\psi_k}) + \im\Lmap_{y_1}^{\psi_1}\times\cdots\times\im\Lmap_{y_k}^{\psi_k}\\
        &= \Big(\Qmap_{y_1}(\ker\Lmap_{y_1}^{\psi_1}) + \im\Lmap_{y_1}^{\psi_1}\Big)\times\cdots\times\Big(\Qmap_{y_k}^{\psi_k}(\ker\Lmap_{y_k}^{\psi_k}) + \im\Lmap_{y_k}^{\psi_k}\Big)\\
        &= A_{y_1}^{\psi_1}\times\cdots\times A_{y_k}^{\psi_k}.
    \end{align*}
    The proof for $B_{(y_1,\ldots,y_k)}$ similarly follows from Proposition~\ref{prop:2implies1_with_Qmap}(b).  
    Now recall Definition~\ref{def:phi_w}. For any $(w_1,\ldots,w_k)\in\calE_1\times\cdots\times\calE_k$ we have
    \begin{align*}
        \psi_{(w_1,\ldots,w_k)}(y_1,\ldots,y_k)=\langle (w_1,\ldots,w_k),\psi(y_1,\ldots,y_k)\rangle = \sum_{i=1}^k\langle w_i,\psi_i(y_i)\rangle = \sum_{i=1}^k(\psi_i)_{w_i}(y_i),
    \end{align*}
    where the first and last equalities are the definitions of $\psi_{(w_1,\ldots,w_k)}(y_1,\ldots,y_k),(\psi_i)_{w_i}(y_i)$ from Definition~\ref{def:phi_w}. Therefore,
    \begin{align*}
        \nabla^2\psi_{(w_1,\ldots,w_k)}(y_1,\ldots,y_k)[\dot y_1,\ldots,\dot y_k]=\Big(\nabla^2(\psi_1)_{w_1}(y_1)[\dot y_1],\ldots,\nabla^2(\psi_k)_{w_k}(y_k)[\dot y_k]\Big).
    \end{align*} 
    The claimed expression for $W_{(y_1,\ldots,y_k)}^{\psi}$ follows from Proposition~\ref{prop:2implies1_with_Qmap}(c) together with~\eqref{eq:ker_of_prod}.
    
    Taking duals in part (a), we get $(\T_{(z_1,\ldots,z_k)}\mc Z)^*\supseteq (\T_{z_1}\mc Z_1)^*\times\cdots\times(\T_{z_k}\mc Z_k)^*$ where we used the fact that the dual of a product of cones is the product of their duals. If $\psi_i$ satisfies {\twoimpliesone} for all $i$, then $W_{y_i}^{\psi_i}\subseteq(\T_{z_i}\mc Z_i)^*$ by Theorem~\ref{thm:W_equiv_cond}, in which case 
    \begin{align*}
        W_{(y_1,\ldots,y_k)}^{\psi}=W_{y_1}^{\psi_1}\times\cdots\times W_{y_k}^{\psi_k}\subseteq (\T_{z_1}\mc Z_1)^*\times\cdots\times(\T_{z_k}\mc Z_k)^*\subseteq (\T_{(z_1,\ldots,z_k)}\mc Z)^*.
    \end{align*}
    Thus, $\psi$ satisfies {\twoimpliesone} at $(y_1,\ldots,y_k)\in\mc N$.
    
    \item This follows from (a) and (d). \qedhere
\end{enumerate}
\end{proof}


\section{Proof of Proposition~\ref{prop:svd_saxs} (SVD lifts)}\label{apdx:svd_lift} 
We first analyze the SVD lift, and then its modification.

\vspace{-1.365em}~\paragraph{SVD lift:} Considering the lift~\eqref{eq:SVD_lift}.
%
Fix $(U,\sigma,V)\in\calM$ such that the entries of $|\sigma|$ are all nonzero but not distinct. Choose $k<\ell$ such that $|\sigma_k|=|\sigma_{\ell}|$ and let $X = \varphi(U,\sigma,V)=U\diag(\sigma)V^\top$. We show {\localimplieslocal} does not hold at $(U,\sigma,V)$ by constructing a sequence converging to $X$ such that no subsequence of it can be lifted to a sequence converging to $(U,\sigma,V)$.
Choose $\alpha_i\in\RR^r$ converging to zero such that $|\sigma+\alpha_i|$ has distinct entries which are all nonzero. 
Let $Q^{(k,\ell)}\in O(r)$ be the Givens rotation matrix rotating by $\pi/4$ in $(k,\ell)$ plane, given explicitly by
\begin{align*}
    Q^{(k,\ell)}_{r,s} = \begin{cases} 1/\sqrt{2} & \textrm{if } r=s=k \textrm{ or } r=s=\ell,\\ 1/\sqrt{2} & \textrm{if } r=\ell,s=k,\\ -1/\sqrt{2} & \textrm{if } r=k,s=\ell,\\ 1 & \textrm{if } r=s\notin\{k,\ell\},\\ 0 & \textrm{otherwise},\end{cases}
\end{align*}
and let $U^{(k,\ell)} = U\mathrm{sign}(\sigma)Q^{(k,\ell)}\mathrm{sign}(\sigma)\in\mathrm{St}(m,r)$ and $V^{(k,\ell)}=VQ^{(k,\ell)}\in\mathrm{St}(n,r)$.
Define
\begin{align*}
    X_i= \varphi(U^{(k,\ell)},\sigma + \alpha_i, V^{(k,\ell)})\in\calM,
\end{align*}
whose limit is 
\begin{align*}
    \lim_{i\to\infty}X_i = U^{(k,\ell)}\mathrm{diag}(\sigma)(V^{(k,\ell)})^\top = U\mathrm{sign}(\sigma)\Big[Q^{(k,\ell)}\diag(|\sigma|)(Q^{(k,\ell)})^\top\Big] V^\top = U\mathrm{diag}(\sigma)V^\top = X.
\end{align*}
The third equality above follows since $|\sigma_k|=|\sigma_{\ell}|$ so the corresponding submatrix of $\diag(\sigma)$ is a multiple of the identity, and $Q^{(k,\ell)}$ is orthogonal and acts by the identity outside of that submatrix. 
Suppose $(U_{i_j},\sigma_{i_j},V_{i_j})$ is a lift of some subsequence of $(X_i)$ converging to $(U,\sigma,V)$. The singular values of $X_{i_j}$ are the entries of $|\sigma_{i_j}|$, which are also the entries of $|\sigma + \alpha_{i_j}|$. Since all these singular values are distinct, $U_{i_j}$ and $V_{i_j}$ must contain the $k$th and $\ell$th columns of $U^{(k,\ell)}$ and $V^{(k,\ell)}$ up to sign, since the singular vectors are unique up to sign~\cite[Thm.~4.1]{trefethen1997numerical}. But then it cannot happen that $(U_{i_j},V_{i_j})\to(U,V)$ by construction of $Q^{(k,\ell)}$, a contradiction. 
Thus, no subsequence of $(X_i)$ can be lifted to a sequence on $\calM$ converging to $(U,\sigma,V)$, showing that {\localimplieslocal} does not hold there. Proposition~\ref{prop:1imp1_implies_locimploc_at_smooth} together with the fact that $\calX^{\mathrm{smth}}=\RR^{m\times n}_{=r}$ implies that {\oneimpliesone} does not hold at such $(U,\sigma,V)$ either.

Next, fix $(U,\sigma,V)\in\calM$ such that $\sigma_k=0$ for some $k$, and let $X=\varphi(U,\sigma,V)$. Since $\rank(X)<r<\min(m,n)$, there exist unit vectors $u_k'\in\RR^m$ and $v_k'\in\RR^n$  such that $U^\top u_k'=0$ and $V^\top v_k'=0$.
Let $U^{(k)},V^{(k)}$ be obtained from $U,V$ by replacing their $k$th columns with $u_k',v_k'$, respectively, and let $\alpha_i\in\RR^r$ be a sequence converging to zero such that $|\sigma+\alpha_i|$ has distinct entries that are all nonzero. Define $X_i = \varphi(U^{(k)},\sigma+\alpha_i,V^{(k)})$ which converge to $X$ as $i\to\infty$. 
If $(U_{i_j},\sigma_{i_j},V_{i_j})$ is a lift of a subsequence of $(X_i)$ converging to $(U,\sigma,V)$, then one of the columns of $U_{i_j},V_{i_j}$ must be $u_k',v_k'$ up to sign because $|\sigma_{i_j}|$ has distinct entries which are the singular values of $X_{i_j}$. This contradicts $U_{i_j}\to U$ and $V_{i_j}\to V$.
Thus, {\localimplieslocal} does not hold at such $(U,\sigma,V)$. Theorem~\ref{thm:oneimpliesone_char} shows that {\oneimpliesone} does not hold there either since $\T_X\calX$ is not a linear space.


Finally, fix $(U,\sigma,V)\in\calM$ such that all the entries of $|\sigma|$ are nonzero and distinct. We verify that {\oneimpliesone} holds there using Theorem~\ref{thm:oneimpliesone_char} by showing $\im\Lmap_{(U,\sigma,V)}=\T_X\calX$. Since $\calM$ is a product of embedded submanifolds of linear spaces, we have from~\eqref{eq:L_and_Q_embedded} that
\begin{align*}
    \T_{(U,\sigma,V)}\calM = \T_U\mathrm{St}(m,r)\times \T_{\sigma}\RR^r\times \T_V\mathrm{St}(n,r) = \{(\dot U,\dot\sigma,\dot V):U^\top\dot U+\dot U^\top U = V^\top \dot V + \dot V^\top V = 0\},
\end{align*}
and
\begin{align*}
    \Lmap_{(U,\sigma,V)}(\dot U,\dot \sigma,\dot V) = \D\varphi(U,\sigma,V)[\dot U,\dot \sigma, \dot V] = \dot U\diag(\sigma) V^\top + U\diag(\dot\sigma)V^\top + U\diag(\sigma)\dot V^\top,
\end{align*}
where $(\dot U,\dot\sigma,\dot V)\in\T_{(U,\sigma,V)}\calM$. 
Let $U_{\perp}\in\mathrm{St}(m,m-r)$ and $V_{\perp}\in\mathrm{St}(n,n-r)$ satisfy $U^\top U_{\perp}=0$ and $V^\top V_{\perp}=0$. By~\cite[\S7.3]{optimOnMans}, we have $\dot U\in\T_U\mathrm{St}(m,r)$ and $\dot V\in\T_V\mathrm{St}(n,r)$ if and only if 
\begin{align}
    \dot U = U\Omega_u + U_{\perp}B_u,\quad \dot V = V\Omega_v + V_{\perp}B_v,
    \label{eq:ST_tangents}
\end{align}
where $\Omega_u,\Omega_v\in\mathrm{Skew}(r)\coloneqq\{\Omega\in \RR^{r\times r}:\Omega^\top+\Omega=0\}$ and $B_u\in\RR^{(m-r)\times r}$, $B_v\in\RR^{(n-r)\times r}$ are arbitrary. 
Using this parametrization,
\begin{align}\label{eq:svd_Lmap}
    \Lmap_{(U,\sigma,V)}(\dot U,\dot \sigma,\dot V) = \begin{bmatrix} U & U_{\perp}\end{bmatrix}\begin{bmatrix} \Omega_u\diag(\sigma) - \diag(\sigma)\Omega_v + \diag(\dot\sigma) & \diag(\sigma)B_v^\top\\ B_u\diag(\sigma) & 0\end{bmatrix}\begin{bmatrix} V & V_{\perp}\end{bmatrix}^\top. 
\end{align}
Since $\T_X\calX$ is a linear space and $\dim\T_{(U,\sigma,V)}\calM=\dim\T_X\calX$, we have $\im\Lmap_{(U,\sigma,V)}=\T_X\calX$ if and only if $\Lmap_{(U,\sigma,V)}$ is injective. Suppose therefore that $\Lmap_{(U,\sigma,V)}(\dot U,\dot \sigma,\dot V)=0$. By~\eqref{eq:svd_Lmap} this is equivalent to
\begin{align}
    B_u = 0,\quad B_v = 0,\quad \dot\sigma = 0,\quad \Omega_u\diag(\sigma)-\diag(\sigma)\Omega_v = 0,
    \label{eq:kerL_svd}
\end{align}
where $\dot\sigma=0$ follows by considering the diagonal of the top left block in~\eqref{eq:svd_Lmap}. The fourth equality in~\eqref{eq:kerL_svd} together with the skew-symmetry of $\Omega_u,\Omega_v$ gives for all $i,j$ that
\begin{align*}
    (\Omega_u)_{i,j}=\frac{\sigma_i}{\sigma_j}(\Omega_v)_{i,j} \implies -\frac{\sigma_i}{\sigma_j}(\Omega_v)_{i,j} = (\Omega_u)_{j,i}=-\frac{\sigma_j}{\sigma_i}(\Omega_v)_{i,j}\implies \frac{\sigma_i^2-\sigma_j^2}{\sigma_i\sigma_j}(\Omega_v)_{i,j}=0.
\end{align*}
Since $|\sigma_i|\neq|\sigma_j|$ whenever $i\neq j$, we get $(\Omega_v)_{i,j}=0$ and $(\Omega_u)_{i,j}=0$ for all $i\neq j$, hence $\Omega_u=\Omega_v=0$. We conclude that $(\dot U,\dot\sigma,\dot V)=0$ so $\Lmap_{(U,\sigma,V)}$ is injective and {\oneimpliesone} holds. By Proposition~\ref{prop:1imp1_implies_locimploc_at_smooth}, {\localimplieslocal} holds there as well.

All cases have been checked, so the first bullet in Proposition~\ref{prop:svd_saxs} is proved.

\vspace{-1.25em}~\paragraph{Modified SVD lift:} We now consider the modified SVD lift $\mathrm{St}(m,r)\times\mbb S^r\times\mathrm{St}(n,r)\to\Rmnlr$ defined by $\varphi(U,M,V)=UMV^\top$.
Fix $(U,M,V)\in\calM$ such that $\rank(M)=r$ and $\lambda_k(M)+\lambda_{\ell}(M)=0$ for some $k<\ell$. We show {\localimplieslocal} does not hold at $(U,M,V)$ by constructing a sequence converging to $X=\varphi(U,M,V)$ such that no subsequence of it can be lifted to a sequence converging to $(U,M,V)$. Let $\alpha_i\in\RR^r$ be a sequence converging to zero such that $|\lambda(M)+\alpha_i|$ has distinct entries that are all nonzero. Let $M = W\mathrm{diag}(\lambda(M))W^\top$ be an eigendecomposition of $M$, where $W\in\mathrm{O}(r)$. Define
\begin{align}
    U^{(k,\ell)}=UWT^{(k,\ell)}S^{(k,\ell)}W^\top\in\mathrm{St}(m,r),\quad V^{(k,\ell)}=VWT^{(k,\ell)}W^\top\in\mathrm{St}(n,r),
    \label{eq:UV_mod_svd}
\end{align}
where $T^{(k,\ell)}\in \mathrm{O}(r)$ is the permutation interchanging the $k$th and $\ell$th entries of a vector while fixing all the others, and $S^{(k,\ell)}\in \mathrm{O}(r)$ flips the signs of these entries (so it's a diagonal matrix with all 1's on the diagonal except for the $k$th and $\ell$th entries which are $-1$). Note that $T^{(k,\ell)}$ and $S^{(k,\ell)}$ are symmetric and commute.
Let 
\begin{align*}
    X_i = \varphi(U^{(k,\ell)}, M + W\mathrm{diag}(\alpha_i)W^\top, V^{(k,\ell)}),
\end{align*}
whose singular values are $|\lambda(M)+\alpha_i|$ and are all distinct. Note that $X_i\to X$, because 
\begin{align*}
    \lim_{i\to\infty} X_i &= U^{(k,\ell)}M(V^{(k,\ell)})^\top = (UW)T^{(k,\ell)}(S^{(k,\ell)}\mathrm{diag}(\lambda(M)))(T^{(k,\ell)})^\top(VW)^\top\\ &= (UW)\mathrm{diag}(\lambda(M))(VW)^\top = UMV^\top = X,
\end{align*}
where the first equality on the second line follows because $S^{(k,\ell)}$ flips the signs of $\lambda_k(M)$ and of $\lambda_{\ell}(M)=-\lambda_k(M)$, and conjugation by $T^{(k,\ell)}$ interchanges them again.

Suppose $(U_{i_j},M_{i_j},V_{i_j})$ is a lift of a subsequence of $(X_i)$ converging to $(U,M,V)$. The singular values of $X_{i_j}$ are the entries $|\lambda(M_{i_j})|$, which are also the entries of $|\lambda(M) + \alpha_{i_j}|$ (possibly permuted).
Combining this with $\lambda(M_{i_j})\to \lambda(M)$ (since $M_{i_j}\to M$), $\alpha_{i_j}\to0$, and the fact that $\lambda(M)$ has no zero entries, we further get that the entries of $\lambda(M_{i_j})$ are equal to the entries of $\lambda(M)+\alpha_{i_j}$ for all large $j$. After passing to a subsequence, we may assume that this equality holds for all $j$.
Let $M_{i_j}=W_{i_j}\mathrm{diag}(\lambda(M)+\alpha_{i_j})W_{i_j}^\top$ be an eigendecomposition of $M_{i_j}$. 
Since $\mathrm{O}(r)$ is compact, we may also assume after further passing to a subsequence that 
\begin{align*}
    \lim_{j\to\infty}W_{i_j} = \widetilde W\in\mathrm{O}(r),
\end{align*}
exists.
At this point, we have two SVDs of $X_{i_j}$, namely
\begin{align*}
    X_{i_j} &= [U^{(k,\ell)}W\mathrm{sign}(\lambda(M) + \alpha_{i_j})]\mathrm{diag}(|\lambda(M)+\alpha_{i_j}|)[V^{(k,\ell)}W]^\top\\ 
    &= [U_{i_j}W_{i_j}\mathrm{sign}(\lambda(M) + \alpha_{i_j})]\mathrm{diag}(|\lambda(M)+\alpha_{i_j}|)[V_{i_j}W_{i_j}]^\top. 
\end{align*}
Because the singular values $|\lambda(M)+\alpha_{i_j}|$ of $X_{i_j}$ are distinct, its singular vectors are unique up to sign~\cite[Thm.~4.1]{trefethen1997numerical}. Specifically, there exists $S_{i_j}\in\mathrm{diag}(\{\pm 1\}^r)$ satisfying 
\begin{align*}
    &U^{(k,\ell)}W\mathrm{sign}(\lambda(M) + \alpha_{i_j}) = U_{i_j}W_{i_j}\mathrm{sign}(\lambda(M) + \alpha_{i_j})S_{i_j}\quad \textrm{implying}\quad U^{(k,\ell)}W = U_{i_j}W_{i_j}S_{i_j},\\
    &V^{(k,\ell)}W=V_{i_j}W_{i_j}S_{i_j},
\end{align*}
where in the first line we used the fact that diagonal matrices commute. Because $S_{i_j}$ takes values in a finite set, after passing to a subsequence again we may assume $S_{i_j}=S$ is fixed. Then $U^{(k,\ell)} = U_{i_j}W_{i_j}SW^\top$ and $V^{(k,\ell)} = V_{i_j}W_{i_j}SW^\top$ for all $j$, and taking $j\to\infty$ we conclude that $U^{(k,\ell)} = U\widetilde W SW^\top$ and $V^{(k,\ell)} = V\widetilde WSW^\top$. Equating this to~\eqref{eq:UV_mod_svd}, we obtain 
\begin{align*}
    U\widetilde WSW^\top = UWT^{(k,\ell)}S^{(k,\ell)}W^\top\quad \textrm{and}\quad V\widetilde WSW^\top = VWT^{(k,\ell)}W^\top. 
\end{align*}
Rearranging gives $W^\top\widetilde WS = T^{(k,\ell)}S^{(k,\ell)}$ and $W^\top\widetilde WS = T^{(k,\ell)}$ so in fact, $T^{(k,\ell)}S^{(k,\ell)}=T^{(k,\ell)}$, a contradiction. Thus, no subsequence of $(X_i)$ can be lifted, so the lift does not satisfy {\localimplieslocal} at $(U,M,V)$. By Proposition~\ref{prop:1imp1_implies_locimploc_at_smooth} and the fact that $X\in\RR^{m\times n}_{=r}=\calX^{\mathrm{smth}}$, we conclude that {\oneimpliesone} is not satisfied at $(U,M,V)$ either.

Now fix $(U,M,V)$ such that $\lambda_k(M)=0$ for some $k$, and let $X,W$ be as above. Since $r<\min\{m,n\}$, there exist unit vectors $u_k'\in\RR^m$ and $v_k'\in\RR^n$ such that $(UW)^\top u_k'=0$ and $(VW)^\top v_k'=0$.
Let $Y\in \mathrm{O}(m)$ and $Z\in\mathrm{O}(n)$ send the $k$th columns of $UW$ and $VW$ to $u_k'$ and $v_k'$, respectively, and act by the identity on their orthogonal complements. 
Let $\alpha_i\in\RR^r$ converge to zero such that $|\lambda(M)+\alpha_i|$ are distinct and nonzero. 
Define $X_i = \varphi(YU,M + \alpha_i,ZV)$, which converge to $YUM(ZV)^\top = X$. 
Suppose $(U_{i_j},M_{i_j},V_{i_j})$ is a lift of a subsequence of $(X_i)$ converging to $(U,M,V)$. 
Let $M_{i_j}=W_{i_j}\mathrm{diag}(\lambda(M) + \alpha_i)W_{i_j}^\top$ be an eigendecomposition. 
After passing to a subsequence, we may assume $W_{i_j}\to\widetilde W$ in $\mathrm{O}(r)$.
Because the singular vectors of $X_{i_j}$ are unique up to sign, there exists $S_{i_j}\in\mathrm{diag}(\{\pm 1\}^r)$ satisfying 
\begin{align*}
    YU = U_{i_j}W_{i_j}S_{i_j}W^\top,
\end{align*}
and similarly for $ZV$. Because $S_{i_j}$ takes values in a finite set, after passing to a subsequence again we may assume $S_{i_j} = S$ is fixed for all $j$, in which case we get 
\begin{align*}
    YU = U_{i_j}W_{i_j}SW^\top \to U\widetilde WSW^\top.    
\end{align*}
This is a contradiction since $\mathrm{col}(YU)\neq \mathrm{col}(U) = \mathrm{col}(U\widetilde WSW^\top)$ by construction of $Y$. Thus, no subsequence of $(X_i)$ can be lifted so {\localimplieslocal} does not hold at such $(U,M,V)$. Since $\T_X\calX$ is not a linear space, Theorem~\ref{thm:oneimpliesone_char} shows that {\oneimpliesone} does not hold there either.

Finally, fix $(U,M,V)\in\calM$ such that $\lambda_i(M)+\lambda_j(M)\neq 0$ for all $i,j$ and let $X=\varphi(U,M,V)$. We show that {\oneimpliesone} holds at $(U,M,V)$ by showing $\im\Lmap_{(U,M,V)}=\T_X\calX$ and appealing to Theorem~\ref{thm:oneimpliesone_char}. Note that $\T_{(U,M,V)}\calM = \T_U\mathrm{St}(m,r)\times \mbb S^r\times \T_V\mathrm{St}(n,r)$. Then~\eqref{eq:L_and_Q_embedded} gives
\begin{align*}
    \Lmap_{(U,M,V)}(\dot U,\dot M,\dot V) = \dot UMV^\top + U\dot MV^\top + UM\dot V^\top. 
\end{align*}
Writing $\dot U,\dot V$ as in~\eqref{eq:ST_tangents}, we get similarly to~\eqref{eq:svd_Lmap} that
\begin{align}
    \Lmap_{(U,M,V)}(\dot U,\dot M,\dot V) = \begin{bmatrix} U & U_{\perp}\end{bmatrix}\begin{bmatrix} \Omega_uM - M\Omega_v + \dot M & MB_v^\top \\ B_uM & 0 \end{bmatrix}\begin{bmatrix} V & V_{\perp}\end{bmatrix}^\top. 
    \label{eq:svd_mod_Lmap}
\end{align}
Since $\rank(M)=r$, we have $\mathrm{col}(U)=\mathrm{col}(X)$ and $\mathrm{col}(V)=\mathrm{col}(X^\top)$. By~\cite[Thm.~3.1]{schneider2015convergence}, we have
\begin{align*}
    \T_X\Rmnlr = \left\{\dot X=\begin{bmatrix} U & U_{\perp}\end{bmatrix}\begin{bmatrix} \dot X_1 & \dot X_2 \\ \dot X_3 & 0 \end{bmatrix}\begin{bmatrix} V & V_{\perp}\end{bmatrix}^\top: \dot X_1\in\RR^{r\times r}, \dot X_2\in\RR^{r\times(n-r)}, \dot X_3\in\RR^{(m-r)\times r}\right\}. 
\end{align*}
Since $\lambda_i(M)+\lambda_j(M)\neq 0$ for all $i,j$, for any $\dot X_1\in\RR^{r\times r}$ we can pick $\Omega\in\mathrm{Skew}(r)$ such that $\Omega M + M\Omega = \mathrm{skew}(\dot X_1)=\frac{\dot X_1-\dot X_1^\top}{2}$. Indeed, if $M=W\Lambda W^\top$ is an eigendecomposition of $M$, define $\Omega$ by setting
\begin{align*}
    (W^\top\Omega W)_{i,j} = \frac{(W^\top \mathrm{skew}(\dot X_1) W)_{i,j}}{\lambda_i(M)+\lambda_j(M)},
\end{align*}
which is clearly skew-symmetric and solves the above equation. We set $\Omega_u=-\Omega_v=\Omega$ and $\dot M = \mathrm{sym}(\dot X_1)=\frac{\dot X_1+\dot X_1^\top}{2}\in\mbb S^r$. Finally, we set
\begin{align*}
    B_v = \dot X_2^\top M^{-\top},\quad B_u = \dot X_3 M^{-1}.
\end{align*}
With these choices, we get $\Lmap_{(U,M,V)}(\dot U,\dot M,\dot V) = \dot X$, showing that $\Lmap_{(U,M,V)}$ is surjective and {\oneimpliesone} holds. By Proposition~\ref{prop:1imp1_implies_locimploc_at_smooth}, the lift satisfies {\localimplieslocal} at $(U,M,V)$ as well. 

\section{Notation and basic definitions}\label{apdx:notation}
Here we collect notation and standard definitions used throughout the paper.

\begin{itemize}
    \item $\calE$ is a linear space endowed with an inner product $\langle\cdot,\cdot\rangle$ and induced norm $\|\cdot\|$.
    
    \item $\calM$ is a smooth Riemannian manifold endowed with a Riemannian metric, also denoted $\langle\cdot,\cdot\rangle$, with its induced norm $\|\cdot\|$. The tangent space to $\calM$ at $y\in\calM$ is denoted $\T_y\calM$. 
    
    \item If $S\subseteq\calE$ is a subspace of an inner product space $\calE$, we denote by $\Proj_S(x)$ the orthogonal projection of $x\in\calE$ onto $S$.
    
    \item $\calX$ is endowed with its subspace topology from $\calE$, and $\calM$ is endowed with its manifold topology.
    
    \item A neighborhood of a point $x$ is a set that contains $x$ in its interior. (We do not require neighborhoods to be open.)
    
    \item A map $\varphi$ between two topological spaces is \emph{open at $y$} if it maps neighborhoods of $y$ in $\calM$ to neighborhoods of $\varphi(y)$ in $\calX$.
    Globally, a map is \emph{open} if it is open at all points, or equivalently if it maps open subsets of $\calM$ to open subsets of $\calX$.
    
    \item A smooth curve on $\calM$ passing through $y\in\calM$ with velocity $v\in\T_y\calM$ is a smooth map $c\colon \RR\to\calM$ satisfying $c(0)=y$ and $c'(0)=v$. 
    
    \item If $c\colon \RR\to\calM$ is a smooth curve on a Riemannian manifold $\calM$ such that $c(0)=y$, then $c''(0)\in \T_y\calM$ denotes its intrinsic (Riemannian) acceleration. Accordingly, if $\gamma=\varphi\circ c$ then $\gamma''(0)$ denotes its (standard) acceleration in the Euclidean space $\calE$.
    
    \item A cone is a set $K\subseteq\calE$ such that $v\in K$ implies $\alpha v\in K$ for all $\alpha>0$.
    \item The dual cone $K^*$ of a cone $K$ is $K^* = \{u\in \calE: \inner{u}{v}\geq 0 \textrm{ for all } v\in K\}$.
	We use the following properties throughout (see~\cite[Prop.~4.5]{deutsch2012best} for proofs):
	\begin{itemize}
	    \item The dual cone is always a closed convex cone.
	    \item If $K_1\subseteq K_2$, then $K_2^*\subseteq K_1^*$.
	    \item The bidual cone $K^{**}$ of $K$ is equal to the closure of its convex hull $K^{**}=\overline{\mathrm{conv}}(K)$. In particular, $K^{**}\supseteq K$.
	    \item If $K$ is a linear space, then its dual $K^*$ is equal to its orthogonal complement $K^{\perp}$.
	\end{itemize}
	
	\item If $\psi\colon\mc N\to\calM$ is a smooth map between smooth manifolds, its differential at $z\in\mc N$ is denoted $\D\psi(z)\colon \T_z\mc N\to\T_{\psi(z)}\calM$.
	\item If $g\colon \calM\to\RR$ is a smooth function, its Riemannian gradient at $y\in\calM$ is denoted $\nabla g(y)\in\T_y\calM$. It is the unique element of $\T_y\calM$ satisfying $\langle\nabla g(y),v\rangle = \D g(y)[v]$ for all $v\in\T_y\calM$. 
	
	The Riemannian Hessian of $g$ at $y$ is a self-adjoint linear map denoted $\nabla^2g(y)\colon \T_y\calM\to\T_y\calM$, see~\cite[\S8.11]{optimOnMans} for the definition.
	
	If $f\colon \calE\to\RR$ is a smooth function on a linear space, the usual definitions of the directional derivative $\D f(x)[v]$, the gradient $\nabla f(x)$, and the Hessian $\nabla^2f(x)$ coincide with the above definitions specialized to $\calM=\calE$. 
	
	\item We write $S\succeq0$ to denote a positive-semidefinite (PSD) matrix or a PSD self-adjoint linear map $S$, and we write $S\succ0$ to indicate $S$ is positive-definite.
\end{itemize}
}
\end{document}